\newtheorem{lemma}{Lemma}[section]
\newtheorem{theorem}[lemma]{Theorem}
\newtheorem{corollary}[lemma]{Corollary}
\newtheorem{remark}[lemma]{Remark}
\newtheorem{proposition}[lemma]{Proposition}
\newtheorem{definition}[lemma]{Definition}
\newcommand{\N}{\mathbb{N}}
\newcommand{\Z}{\mathbb{Z}}
\newcommand{\C}{\mathbb{C}}
\newcommand{\R}{\mathbb{R}}
\newcommand{\quat}{\mathbb{H}}
\newcommand{\K}{\mathbb{K}}
\newcommand{\sym}[1]{C^{#1}(\mathrm{Sym}^2(T^*M))}       
\newcommand{\spinor}[3]{C^{#3}(\Sigma^{#2}#1)}
\newcommand{\Lspinor}[3]{L^{#3}(\Sigma^{#2}#1)}
\newcommand{\Hspinor}[3]{H^{#3}(\Sigma^{#2}#1)}
\newcommand{\bmatrix}[2]{\left(\begin{array}{#1}#2\end{array}\right)}
\def\1{\mbox{\rm 1 \hspace{-1.05 em} 1}}
\def\dom{\mathrm{dom}}
\def\Hom{\mathrm{Hom}}
\def\End{\mathrm{End}}
\def\Aut{\mathrm{Aut}}
\def\Mat{\mathrm{Mat}}
\def\grad{\mathrm{grad}}
\def\scal{\mathrm{scal}}
\def\im{\mathrm{im}\,}
\def\Re{\mathrm{Re}}
\def\div{\mathrm{div}}
\def\tr{\mathrm{tr}}
\def\SO{\mathrm{SO}}
\def\SU{\mathrm{SU}}
\def\GL{\mathrm{GL}}
\def\Mat{\mathrm{Mat}}
\def\Spin{\mathrm{Spin}}
\def\P{\mathrm{P}}
\def\Cl{\mathrm{Cl}}
\def\CCl{\mathbb{C}\mathrm{l}}
\def\dv{\mathrm{dv}}
\def\id{\mathrm{Id}}
\def\span{\mathrm{span}}
\def\vol{\mathrm{vol}}
\def\spec{\mathrm{spec}}
\def\codim{\mathrm{codim}\,}
\def\supp{\mathrm{supp}}
\def\gdw{\Longleftrightarrow}
\def\ep{{\varepsilon}}
\def\th{{\vartheta}}
\def\ph{{\varphi}}
\def\phi{{\varphi}}
\def\geucl{g_{\mathrm{eucl}}}
\def\gcan{g_{\mathrm{can}}}
\begin{document}

\begin{titlepage}
\begin{center}
 {}
\end{center}
\vspace{10em}
\begin{center}
{\Huge Dirac eigenspinors for generic metrics}\\ 
\end{center}
\vspace{3em}
\begin{figure}[h]
\center
\end{figure}
\vspace{5em}
\begin{center}
\textsc{Dissertation zur Erlangung des Doktorgrades\\
der Naturwissenschaften (Dr.\,rer.\,nat.)\\
an der Fakult\"at f\"ur Mathematik der Universit\"at Regensburg}\\
\vspace{2em}
vorgelegt von \\
\textbf{Andreas Hermann} \\
Regensburg, im Januar 2012
\end{center}
\end{titlepage}

\newpage

\noindent
{}
\vspace{30em}

\noindent 
Promotionsgesuch eingereicht am 17.1.2012\\
\vspace{2em}

\noindent
Die Arbeit wurde angeleitet von Prof. Dr. Bernd Ammann\\
\vspace{2em}

\noindent
Pr\"ufungsausschuss:\\
\vspace{0em}

\begin{tabular}{ll}
Vorsitzender:                    & Prof. Dr. Harald Garcke\\
1. Gutachter:                    & Prof. Dr. Bernd Ammann\\
2. Gutachter:                    & Prof. Dr. Marc Herzlich, Universit\"at Montpellier\\
weiterer Pr\"ufer:               & Prof. Dr. Roman Sauer\\
Ersatzpr\"ufer:                  & Prof. Dr. Felix Finster
\end{tabular}

\newpage

\begin{center}
  \textbf{Abstract}
\end{center}

We consider a Riemannian spin manifold $(M,g)$ with a fixed spin structure. 
The zero sets of solutions of generalized Dirac equations on $M$ 
play an important role in some questions arising in 
conformal spin geometry and in mathematical physics. 
In this setting the mass endomorphism has been defined as
the constant term in an expansion of Green's function for the Dirac operator. 
One is interested in obtaining metrics, for which it is not zero. 

In this thesis we study the dependence of the zero sets of eigenspinors 
of the Dirac operator on the Riemannian metric. 
We prove that on closed spin manifolds of dimension $2$ or $3$ 
for a generic Riemannian metric the non-harmonic eigenspinors have no zeros. 
Furthermore we prove that on closed spin manifolds of dimension $3$ 
the mass endomorphism is not zero for a generic Riemannian metric. 

\begin{center}
  \textbf{Zusammenfassung}
\end{center}

Sei $(M,g)$ eine Riemannsche Spin-Mannigfaltigkeit mit einer fixierten Spin-Struktur. 
In manchen Fragen aus der konformen Spin-Geo\-me\-trie oder der mathematischen 
Physik spielen Null\-stel\-len\-men\-gen von L\"osungen verallgemeinerter Dirac-Gleichungen 
auf $M$ eine wichtige Rolle. 
In diesem Zusammenhang wurde der Mas\-sen-En\-do\-mor\-phis\-mus
als der konstante Term in einer asymptotischen 
Entwicklung der Greenschen Funktion des Dirac-Operators definiert. 
Gesucht sind Riemannsche Metriken, f\"ur die er nicht Null ist. 

In dieser Dissertation un\-ter\-suchen wir die Ab\-h\"an\-gig\-keit der Null\-stel\-len\-men\-ge der 
Eigenspinoren des Dirac-Operators von der Rie\-mann\-schen Metrik. 
Wir beweisen, dass auf einer geschlossenen Spin-Man\-nig\-fal\-tig\-keit der Dimension 
$2$ oder $3$ f\"ur eine generische Rie\-mann\-sche Metrik die nicht-har\-mo\-ni\-schen Eigenspinoren 
keine Null\-stel\-len haben. Weiter zeigen wir, dass auf einer geschlossenen Spin-Mannigfaltigkeit 
der Dimension $3$ f\"ur eine generische Rie\-mann\-sche Metrik der Mas\-sen-En\-do\-mor\-phis\-mus nicht Null ist.

\newpage
\textbf{Acknowledgements}

I am grateful to many people, who have supported me during the preparation of this thesis. 
First of all I would like to thank my advisor Prof.\,\,Bernd Ammann for accepting me 
as his student and for proposing a very interesting research topic. 
I have profited very much from his explanations and numerous mathematical discussions 
with him. He has encouraged me a lot during the time that progress on my work was slow. 
Many thanks go to my colleagues for creating a nice working atmosphere and especially 
to Dr.\,\,Nadine Gro\ss e and Dr.\,\,Nicolas Ginoux for many interesting discussions. 

I am grateful to Dr.\,\,Mattias Dahl who invited me to Stockholm and from whom I could learn 
very much about techniques for demonstrating generic properties. 
Also I would like to thank Prof.\,\,Emmanuel Humbert for an invitation to Nancy. 

Thanks are due to the DFG for supporting part of my work by the graduate program 
``Curvature, Cycles and Cohomology''. 
I have also profited very much from conferences and seminars within this program. 

Finally I would like to thank my family for their emotional support during all the years of my studies.

\tableofcontents

\newpage

\chapter{Overview}

Some questions arising in conformal spin geometry and in mathematical phy\-sics 
involve the study of the zero sets of solutions to generalized Dirac equations.
For the first example let $(M,g)$ be a compact Riemannian spin manifold 
with a fixed orientation and a fixed spin structure. 
One is interested in finding bounds on the eigenvalues 
of the Dirac operator $D^g$ which are uniform in the conformal class $[g]$ of $g$. 
The two conformal invariants 
\begin{displaymath}
  \lambda_{\min}^{\pm}:=\inf_{h\in[g]}|\lambda_1^{\pm}(h)|\,\vol(M,h)^{1/n}
\end{displaymath}
have been studied by many authors. 
A natural question is whether the infimum is attained 
at a Riemannian metric. By a result of B.\,Ammann 
this is the case, if the nonlinear partial differential equation 
\begin{equation}
  \label{nonlinear_dirac_eq}
  D^g\psi=\lambda_{\min}^+|\psi|_g^{2/(n-1)}\psi,\qquad 
  \|\psi\|_{2n/(n-1)}=1
\end{equation}
has a solution $\psi$, which is nowhere zero on $M$ (see \cite{am09}). 
It is not obvious that a solution without zeros exists. 

A second example comes from general relativity, more precisely 
from a remarkable proof of the positive energy theorem 
obtained by E.\,Witten (see \cite{wi}). 
He uses harmonic spinors (i.\,e.\,spinors $\psi$ satisfying $D^g\psi=0$) 
on asymptotically flat manifolds, which are called Witten spinors. 
It has been suggested to use these spinors in order to construct 
special orthonormal frames of the tangent bundle of an asymptotically 
flat manifold of dimension $3$ (see \cite{n}, \cite{dm}, \cite{fns}). 
It turns out that this is possible, if one can find a Witten spinor 
which is nowhere zero. 
However it is not clear that such a spinor exists.

In this thesis we first consider the zero sets of eigenspinors 
of the Dirac operator on closed spin manifolds. 
It is interesting for several reasons. 
First of all it is easier than the questions 
mentioned above, since the underlying manifold is compact and 
all the operators involved are linear. 
Apart from that it is useful to have eigenspinors, which are nowhere zero. 
For example one obtains in this case a simple proof of Hijazi's inequality.

The spectrum of the Dirac operator has been computed explicitly 
for some Riemannian spin manifolds. 
The result on the round sphere (see e.\,g.\,\cite{baer96}) 
shows that the multiplicity of an eigenvalue can 
be greater than the rank of the spinor bundle, which implies 
that there exist eigenspinors with non-empty zero set for special 
choices of a Riemannian metric. 
However we can show that the situation is different for generic Riemannian metrics 
(see Section \ref{gen_eigenspinor_section} for a precise definition of the term ``generic''). 
More precisely let $M$ be a closed spin manifold and denote by $R(M)$ the set of all 
smooth Riemannian metrics on $M$. 
For every $g\in R(M)$ denote by $[g]\subset R(M)$ the conformal class of $g$. 
Furthermore let $N(M)$ be the set of all $g\in R(M)$ such that all the non-harmonic 
eigenspinors of $D^g$ are nowhere zero on $M$. 
Then we prove the following. 

\begin{theorem}
\label{main_theorem}
Let $M$ be a closed connected spin manifold of dimension $2$ or $3$ 
with a fixed orientation and a fixed spin structure. 
Then the set $N(M)\cap[g]$ is residual in $[g]$ 
with respect to every $C^k$-topology, $k\geq1$.
\end{theorem}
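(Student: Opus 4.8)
The plan is to use a transversality/Sard–Smale argument on a Banach manifold of metrics, realizing $N(M) \cap [g]$ as a countable intersection of open dense sets. First I would reduce the claim to a local statement: it suffices to show that for each point $p \in M$ and each eigenvalue index, the set of metrics (in the conformal class, with some fixed Sobolev or $C^k$ completion) for which some eigenspinor of $D^g$ with nonzero eigenvalue vanishes at $p$ is contained in a set of the appropriate codimension. Since $M$ is second countable, one covers $M$ by countably many charts and takes the intersection; the residual set is then obtained by intersecting over a countable dense set of conformal factors, over the countably many eigenvalue branches, and over the countable cover. The key is a dimension count: an eigenspinor at a point $p$ lives in $\Sigma_p M$, which has real dimension $2$ when $n=2$ and $4$ when $n=3$; meanwhile the space of conformal perturbations of the metric is infinite-dimensional, so generically one can push the value of an eigenspinor at $p$ off zero. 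The restriction to dimensions $2$ and $3$ is exactly what makes the fiber dimension small enough relative to the (regularity-controlled) deformation space.

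The main technical tool will be a parametrized transversality theorem (Sard–Smale): consider the universal eigenspinor bundle over the product of (a Banach manifold completion of) $[g]$ with the set of eigenpairs, and the evaluation map sending $(g, \lambda, \psi, p) \mapsto \psi(p) \in \Sigma_p M$. One must show this evaluation map is a submersion onto the zero section, transversal to $0$; then the preimage is a Banach submanifold, and its projection to the metric parameter has image a set whose complement is residual, by Sard–Smale applied to a Fredholm map of negative index (negative because the fiber $\Sigma_p M$ has dimension exceeding the "excess" dimension coming from the eigenvalue parameter). Concretely, for fixed $p$ one shows: given an eigenspinor $\psi$ with $\psi(p) = 0$ is \emph{not} forced, i.e.\ one can find conformal perturbations $g_t = e^{2tu}g$ such that $\frac{d}{dt}\big|_{t=0}$ of the corresponding eigenspinor evaluated at $p$ spans all of $\Sigma_p M$. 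This derivative is computed from first-order perturbation theory for $D^{g_t}$: it involves the variation formula for the Dirac operator under conformal change (which is explicit and classical) composed with the Green operator $(D^g - \lambda)^{-1}$ restricted to the orthogonal complement of the eigenspace, followed by evaluation at $p$. One needs the unique continuation property of the Dirac operator to ensure that a nonzero eigenspinor and its derivative cannot both be controlled to vanish, and to guarantee that the relevant maps are genuinely surjective.

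The hard part will be the surjectivity of the differential of the evaluation map: showing that the conformal variations of the metric can move the value of an eigenspinor at a single point in every direction of $\Sigma_p M$. This is where unique continuation and the structure of the Dirac operator under conformal deformations both enter crucially, and where the low-dimensionality hypothesis is used to keep $\dim_{\mathbb{R}} \Sigma_p M$ small. A secondary subtlety is handling eigenvalue crossings: as $g$ varies the eigenvalues of $D^g$ may collide, so one should work with the spectral projection onto a window of eigenvalues rather than with a single eigenbranch, or invoke an analytic perturbation argument (Kato's theory) along generic paths; the stratification of the space of metrics by multiplicity should be handled so that the bad set within each stratum has the right codimension. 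Once the evaluation map is shown to be transversal to the zero section, the rest is a routine assembly: the set $N(M) \cap [g]$ equals a countable intersection of the complements of the (closed, nowhere dense) bad sets, hence is residual in $[g]$ in every $C^k$-topology, $k \geq 1$, after transferring from the Banach completion back to the smooth category by a standard approximation argument.
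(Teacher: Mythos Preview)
Your broad outline---transversality for the evaluation map, perturbation theory for eigenspinors, unique continuation---matches the paper's strategy. But two points in your plan are genuinely off, and one of them is the heart of the argument.

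First, your explanation of why the theorem is restricted to $n\in\{2,3\}$ is wrong. The naive dimension count $\dim\Sigma^gM=n+2^{1+[n/2]}>2n$ (so that transversality to the zero section is equivalent to missing it) holds in \emph{all} dimensions; also, $\Sigma^g_pM$ has real dimension $4$ for both $n=2$ and $n=3$, not $2$ as you wrote for $n=2$. The actual obstruction in higher dimensions is algebraic: to prove surjectivity of the differential of the evaluation map one argues by contradiction, obtaining $\varphi\in\Sigma^g_pM\setminus\{0\}$ with $\Re\langle\psi,G^g_\lambda(\cdot,p)\varphi\rangle\equiv 0$, and then one must show that all jets of $\psi$ vanish at $p$. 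This step uses that for $n\le 3$ the spinors $\varphi, e_1\cdot\varphi,\ldots$ form a real orthogonal basis of the fiber, which lets one disentangle the Taylor coefficients; for $n\ge 4$ the Clifford products $e_i\cdot e_j\cdot\varphi$ are no longer independent (e.g.\ $e_1e_2\cdot\varphi$ and $e_3e_4\cdot\varphi$ can coincide), and the induction breaks down. Your proposal does not contain this mechanism at all: you say the surjectivity will use unique continuation and the conformal variation formula, but you never say \emph{how} one extracts from $\Re\langle\psi,G^g_\lambda(\cdot,p)\varphi\rangle=0$ the vanishing of all derivatives of $\psi$ at $p$. The paper does this via an explicit asymptotic expansion of Green's function at the singularity (the leading terms $-\frac{1}{\omega_{n-1}|x|^n}x\cdot\gamma+\frac{\lambda}{(n-2)\omega_{n-1}|x|^{n-2}}\gamma+\cdots$), matched order by order against the Taylor expansion of $\psi$. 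This is the key lemma, and it is not a formality.

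Second, your Banach-manifold/Sard--Smale setup has a regularity trap that the paper explicitly avoids. If you complete $[g]$ in $C^k$, the Dirac coefficients are only $C^{k-1}$, the eigenspinors are not smooth, and Aronszajn's unique continuation theorem (which needs $\nabla^*\nabla+\text{lower order}$ acting on smooth sections) is not available in the form you need. The paper sidesteps this by working with \emph{finite-dimensional} families $V_{f_1\ldots f_r}=\{(1+\sum t_if_i)g\}$ of smooth conformal metrics and applying the finite-dimensional parametric transversality theorem; the passage back to the full $C^k$ topology is then by density of simple eigenvalues and openness of the nowhere-vanishing condition. Your ``standard approximation argument'' to go from a Banach completion back to smooth metrics would have to be done \emph{before} invoking unique continuation, not after, and you have not indicated how.
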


Recall that a subset is residual, if it contains a countable 
intersection of open and dense sets. 
In Section \ref{surfaces_section} we will give an example showing that 
in dimension $2$ an analogue of this theorem 
for harmonic spinors does not hold. 

The main idea of the proof of Theorem \ref{main_theorem} is as follows. 
If $g,h$ are two Riemannian metrics on the spin manifold $M$, 
then a natural isomorphism between the two vector bundles 
$\Sigma^gM$ and $\Sigma^hM$ is well known. 
We construct a continuous map $F$ defined on a suitable space of Riemannian 
metrics, which associates to every metric $h$ 
an eigenspinor of the corresponding Dirac operator $D^h$ viewed as a section of $\Sigma^gM$. 
Theorem \ref{main_theorem} then follows from a transversality theorem. 
In order to apply this theorem we have to make sure that the evaluation map 
corresponding to $F$ is transverse to the zero section of $\Sigma^gM$. 
Our assumption that this is not the case 
leads to an equation involving Green's function 
for the operator $D^g-\lambda$ with $\lambda\in\R$. 
From the expansion of this Green's function 
we obtain a contradiction using the unique continuation property 
of the Dirac operator. 



In this thesis we also treat a certain aspect of the question on 
conformal bounds of Dirac eigenvalues mentioned above. 
The non-linear partial differential equation (\ref{nonlinear_dirac_eq}) 
cannot be solved by standard methods, since the corresponding Sobolev embedding 
is critical. 
However under some additional assumptions on the Riemannian spin manifold $(M,g)$
the existence of a solution has been shown, if there is a point $p$ on $M$ 
such that a certain endomorphism of the fibre $\Sigma^g_pM$ of the spinor bundle 
does not vanish (see \cite{ahm}). 
This endomorphism can be regarded as the constant term in an expansion of 
Green's function for the Dirac operator around $p$. 
It is called the mass endomorphism at $p$ because of an analogy in 
conformal geometry: the constant term of Green's function $\Gamma(.,p)$ 
for the conformal Laplacian at $p$ is related to the mass of 
the asymptotically flat Riemannian manifold $(M\setminus\{p\},\Gamma(.,p)^{4/(n-2)}g)$. 
Unfortunately the mass endomorphism is known explicitly 
only for very few spin manifolds. 
However in dimension $3$ we can show that in the generic case it is 
not zero. 
More precisely, for a fixed point $p\in M$ we denote by $R_p(M)$ 
the set of all Riemannian metrics on $M$, such that the mass endomorphism 
at $p$ can be defined and we denote by $S_p(M)$ the set of all such 
Riemannian metrics, for which the mass endomorphism does not vanish. 
Then we prove the following. 

\begin{theorem}
Let $M$ be a closed spin manifold of dimension $3$ with a fixed spin 
structure and let $p\in M$. Then $S_p(M)$ is dense in $R_p(M)$.
\end{theorem}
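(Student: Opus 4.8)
The plan is to exploit the close parallel with Theorem~\ref{main_theorem}: both statements say that a certain ``vanishing'' phenomenon for a Green's-function-type object is non-generic, and the proof of Theorem~\ref{main_theorem} already produces, from the assumption that the relevant evaluation map fails to be transverse to the zero section, an equation on Green's function for $D^g-\lambda$ that contradicts unique continuation. Here I would set $\lambda=0$ and work with Green's function $\Gamma^g(\cdot,p)$ of the Dirac operator $D^g$ itself (restricting to the open set $R_p(M)$ where $p$ is not a zero of the relevant harmonic spinors, so that $\Gamma^g$ and hence the mass endomorphism $\alpha^g_p$ are defined). The first step is to recall the expansion $\Gamma^g(x,p)=\frac{1}{\omega_{n-1}}\frac{x-p}{|x-p|^n}\cdot(\text{parallel transport})+ \text{(the mass endomorphism term)}+o(1)$ in normal coordinates centred at $p$; vanishing of the mass endomorphism at $p$ means the constant term $\alpha^g_p\in\End(\Sigma^g_pM)$ is zero.

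Next, I would view the mass endomorphism as (essentially) a value of a smooth map on the space of metrics: using the canonical identification $\Sigma^gM\cong\Sigma^hM$ for nearby metrics $h$, I would define, for a spinor $\psi_0\in\Sigma^g_pM$ of unit length, a map $h\mapsto \alpha^h_p\psi_0\in\Sigma^g_pM$ and aim to show that for generic $h$ (in a neighbourhood of $g$ inside $R_p(M)$) this is nonzero for some choice of $\psi_0$, equivalently $\alpha^h_p\neq0$. The technical heart is a first-variation computation: differentiating the defining equation $D^h\Gamma^h(\cdot,p)=\delta_p\cdot\id$ with respect to $h$ in the direction of a symmetric $2$-tensor $k$ yields $D^g(\partial_h\Gamma)+\dot D(k)\Gamma^g(\cdot,p)=0$, where $\dot D(k)$ is the known derivative of the Dirac operator under a metric variation (a first-order operator with coefficients linear in $k$ and $\nabla k$). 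One then extracts the constant term of $\partial_h\Gamma$ at $p$, i.e.\ $\partial_h\alpha_p(k)$, by a Green's-formula / duality argument: pairing $\partial_h\Gamma$ against $\Gamma^g(\cdot,p)$ and integrating by parts turns the variation of the mass endomorphism into an explicit bilinear expression in $\Gamma^g(\cdot,p)$, $\nabla\Gamma^g(\cdot,p)$ and $k$, integrated over $M$ (or over $M\setminus B_\varepsilon(p)$ with a boundary term surviving as $\varepsilon\to0$).

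Assuming $\alpha^g_p=0$ and that this persists to first order for all $k$, I would deduce that this bilinear functional of $k$ vanishes identically; since $k$ ranges over all smooth symmetric $2$-tensors (or all conformal perturbations — one should check which variations are actually available within $R_p(M)$, but conformal changes already move $\alpha_p$ by a controlled conformal weight, so generic behaviour inside a conformal class is the natural first target, mirroring Theorem~\ref{main_theorem}), this forces a pointwise algebraic identity on $\Gamma^g(x,p)$ and $\nabla\Gamma^g(x,p)$ at every $x$. Because $\Gamma^g(\cdot,p)$ is a nontrivial solution of $D^g u=0$ away from $p$, the unique continuation property of the Dirac operator says it cannot vanish on an open set; combined with the pointwise identity this yields a contradiction (the argument is local, so it suffices to derive the contradiction on any small ball away from $p$). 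This shows $S_p(M)$ meets every neighbourhood of every $g\in R_p(M)$, i.e.\ $S_p(M)$ is dense. The main obstacle I anticipate is the bookkeeping in the variational step: controlling the singularity of $\Gamma^g(\cdot,p)$ near $p$ carefully enough that the integration by parts is legitimate and that the boundary term at $p$ is exactly (a nonzero multiple of) $\partial_h\alpha_p(k)$ with no spurious contributions — this is where the precise order of the expansion of Green's function, and the dimension restriction $n=3$ (so that the leading singularity is $|x-p|^{-2}$ and the mass term is the genuine constant term), really enter.
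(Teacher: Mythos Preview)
Your variational set-up is essentially the one the paper uses (Lemma~\ref{dtm_lemma}): one differentiates $D^{g,g_t}w^{g_t}_\varphi$ and pairs against $G^g(\cdot,p)\varphi$ to obtain
\[
\frac{d}{dt}\langle m^{g,g_t}\varphi,\varphi\rangle\Big|_{t=0}
=\frac{1}{2}\int_{M\setminus\{p\}}(k,Q_{G^g(\cdot,p)\varphi})\,\dv^g,
\]
where $Q$ is the energy--momentum tensor. Two remarks on the set-up. First, the singularity issue you worry about is handled not by a delicate boundary analysis but simply by restricting to perturbations $k$ with $k\equiv0$ on a neighbourhood of $p$; this keeps $g_t$ flat near $p$ (so $g_t\in R_p(M)$) and makes all integrals over $M\setminus\{p\}$ trivially convergent. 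Second, you should \emph{not} restrict to conformal perturbations: the paper needs the full freedom of arbitrary $k$ supported away from $p$, both to conclude $Q_{G^g(\cdot,p)\varphi}\equiv0$ and, earlier, to arrange that $g$ itself (inside the hypothetical open set $Q\subset R_p(M)$ where $m^g_p$ vanishes) is \emph{nowhere conformally flat} on some open $V\subset M\setminus U$.

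The genuine gap is in your endgame. From ``the bilinear functional of $k$ vanishes'' you correctly get a pointwise identity, but that identity is $Q_{G^g(\cdot,p)\varphi}=0$, and this does \emph{not} force $G^g(\cdot,p)\varphi$ to vanish on any open set; unique continuation in the form you invoke gives nothing here. The paper's route is instead geometric: on the open set $W=\{G^g(\cdot,p)\varphi\neq0\}$ one conformally rescales by $e^{2u}$ with $u=\ln|G^g(\cdot,p)\varphi|_g$, so that the transformed spinor $\psi$ has $|\psi|_h\equiv1$ and $D^h\psi=0$; by Lemma~\ref{Q_conform} one still has $Q_\psi=0$. Writing $\nabla^h_X\psi=A(X)\cdot\psi$, a short Clifford-algebra computation specific to $n=3$ (using $e_1\cdot e_2\cdot e_3=-\id$) shows $A$ is symmetric, and then $Q_\psi=0$ kills its eigenvalues, so $\psi$ is parallel. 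Hence $(W,h)$ is Ricci-flat, and in dimension~$3$ Ricci-flat implies flat, contradicting the earlier choice of $g$ nowhere conformally flat on $V$ (unique continuation is used only to guarantee $V\cap W\neq\emptyset$). So the dimension restriction enters not through the order of the singularity of the Green's function, but through the Clifford identity $\omega_{\C}=\id$ and the implication Ricci-flat $\Rightarrow$ flat.
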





The structure of this thesis is as follows. 
In Chapter \ref{recap_chapter} we review the basic definitions 
and results in spin geometry. We begin with the definition 
of spin groups and the Dirac operator and then proceed 
with an overwiew on real and quaternionic structures on 
spinor modules. 
Since for two different Riemannian 
metrics the spinor bundles are two different vector bundles, 
we need a natural way of identifying 
spinors for different metrics. 
This is known in the literature and is described 
in Section \ref{identification_section}. 
Results from perturbation theory, which apply in our 
situation, are collected in the Appendix.

In Chapter \ref{motivation_chapter} we explain the questions 
mentioned above in more detail. 

Chapter \ref{examples_chapter} 
contains some examples of zero sets of Dirac eigenspinors. 
This should serve as an illustration. 
The reader who is only interested in the main results 
may skip Sections \ref{sphere_section}, \ref{tori_section}. 

In Chapter \ref{green_chapter} we introduce 
Green's function for the Dirac operator, which 
will be one of our tools in the proofs of our results. 
We describe a method for the explicit calculation 
of some terms in the expansion of Green's function 
around the singularity. 
After that we give the definition of the mass endomorphism 
from the literature. 

In Chapter \ref{gen_eigenspinor_chapter} we state and 
prove our main results. First let $M$ be a closed 
spin manifold of dimension $2$ or $3$. We prove 
that for a generic Riemannian metric on $M$ 
the non-harmonic eigenspinors of the Dirac operator 
are nowhere zero. 
The proof is based on a well known 
transversality theorem, which we state in Section 
\ref{transversality_section} including its proof. 
After that we prove that on every 
closed spin manifold of dimension $3$ 
for a generic Riemannian metric
the mass endomorphism is not zero.

\chapter{Recapitulation of some facts}
\label{recap_chapter}

\section{Review of spin geometry}

We review some basic definitions and results in spin geometry in order to fix 
the notation. Brief and nicely written introductions to spin geometry 
can be found in \cite{hij99}, \cite{bgm05}. The reader can find 
a detailed treatment of this subject in the books \cite{lm}, \cite{fr00}. 
Our notation will be similar to some of the notation in these texts. 

Let $V$ be a real vector space with a scalar product $g$. Then the 
real Clifford algebra $\Cl(V,g)$ for $(V,g)$ is the unital $\R$-algebra 
generated by $V$ with the relation
\begin{equation}
  \label{clifford_relation}
  v\cdot w+w\cdot v=-2g(v,w)1,\quad v,w\in V.
\end{equation}
We denote by $\CCl(V,g):=\Cl(V,g)\otimes_{\R}\C$ its complexification 
and call it the Clifford algebra for $(V,g)$. 
In the case $V=\R^n$, $n\in\N\setminus\{0\}$, $g=\geucl$ we use the notation
\begin{displaymath}
  \Cl(n):=\Cl(\R^n,\geucl),\quad\CCl(n):=\CCl(\R^n,\geucl).
\end{displaymath}
Every $v\in\R^n\setminus\{0\}$ is invertible in $\Cl(n)$ and the map
\begin{displaymath}
  Ad_v:\quad\Cl(n)\to\Cl(n),\quad w\mapsto v\cdot w\cdot v^{-1}
\end{displaymath}
preserves the subspace $\R^n\subset\Cl(n)$. The restriction acts on $\R^n$ as 
the reflection at the line generated by $v$ and thus is in $O(n)$. 

Let $\Cl(n)^*$ be the multiplicative group of invertible elements of $\Cl(n)$. 
We define the spin group $\Spin(n)$ as the subgroup of $\Cl(n)^*$ 
generated by elements of the form $v_1\cdot v_2$, where $v_1$, $v_2\in\R^n$, $|v_1|=|v_2|=1$. 
We obtain a group homomorphism
\begin{displaymath}
  \vartheta:\quad\Spin(n)\to\SO(n),\quad q\mapsto Ad_q
\end{displaymath}
which is a two-fold covering. The covering is nontrivial for $n\geq2$ 
and it is the universal covering for $n\geq3$. 

If $(E_i)_{i=1}^n$ denotes the standard basis of $\R^n$, then 
\begin{displaymath}
  \omega_{\C}:=i^{[(n+1)/2]}E_1\cdot...\cdot E_n\in\CCl(n)
\end{displaymath}
is called the complex volume element. Here $[.]$ denotes the integer part. 
We have $\omega_{\C}^2=1$ and
\begin{displaymath}
  \omega_{\C}\cdot v=(-1)^{n-1}v\cdot\omega_{\C}
\end{displaymath}
for all $v\in\R^n$.

For even $n$ there is exactly one irreducible complex representation of $\CCl(n)$. 
The module has complex dimension $2^{n/2}$ and is denoted by $\Sigma_n$. 
It is the direct sum of the eigenspaces $\Sigma^{\pm}_n$ of $\omega_{\C}$ 
for the values $\pm1$: 
\begin{displaymath}
  \Sigma_n=\Sigma^+_n\oplus\Sigma^-_n.
\end{displaymath}
Since $\omega_{\C}$ anticommutes with elements of $\R^n$, the two eigenspaces 
have the same dimension.

For odd $n$ the complex volume element commutes with all elements 
of $\CCl(n)$ and thus by Schur's lemma acts as a multiple of the identity 
on every irreducible module. There exist exactly two inequivalent irreducible 
complex representations of $\CCl(n)$, both of dimension $2^{(n-1)/2}$, and they 
are distinguished by the action of $\omega_{\C}$ as $\id$ or $-\id$ respectively. 
In this thesis we will use the representation for which $\omega_{\C}$ acts as $\id$. 
The module is again denoted by $\Sigma_n$.

Thus for every $n$ we have $\Sigma_n\cong\C^N$, where $N:=2^{[n/2]}$. 
The representation will be denoted by $\rho$. 
The action of $\CCl(n)$ on $\Sigma_n$ via $\rho$ is called the Clifford 
multiplication on $\Sigma_n$. It will be denoted by 
$v\cdot\sigma:=\rho(v)\sigma$ for $v\in\CCl(n)$, $\sigma\in\Sigma_n$.

On $\Sigma_n$ there exists a positive definite hermitian inner product 
$\langle.,.\rangle$, such that Clifford multiplication with all 
elements of $\R^n$ is antisymmetric with respect to $\langle.,.\rangle$, i.\,e. 
such that
\begin{displaymath}
  \langle v\cdot\psi,\varphi\rangle+\langle\psi,v\cdot\varphi\rangle=0
\end{displaymath}
for all $\varphi$, $\psi\in\Sigma_n$ and all $v\in\R^n$.

Let $M$ be an $n$-dimensional oriented manifold. We denote by $\GL^+(n,\R)$ 
the group of all real $n\times n$-matrices with positive determinant and we write 
$A$: $\widetilde{\GL}^+(n,\R)\to\GL^+(n,\R)$ for its connected two-fold covering. 
Let
\begin{displaymath}
  \pi:\quad \P_{\GL^+}(M)\to M
\end{displaymath}
be the principal $\GL^+(n,\R)$-bundle over $M$ whose fibre over $x\in M$ 
consists of all positively oriented bases of $T_xM$. 
It is called the bundle of positively oriented frames of the tangent bundle. 

\begin{definition}
A spin structure on $M$ is a principal $\widetilde{\GL}^+(n,\R)$-bundle
\begin{displaymath}
  \pi':\quad \P_{\widetilde{\GL}^+}(M)\to M
\end{displaymath}
over $M$ together with a two-fold covering 
$\Theta$: $\P_{\widetilde{\GL}^+}(M)\to\P_{\GL^+}(M)$, such that the following diagram commutes
\[
\begin{xy}
  \xymatrix{          
    \P_{\widetilde{\GL}^+}(M)\times\widetilde{\GL}^+(n,\R) \ar[r] \ar[dd]_{\Theta\times A} & \P_{\widetilde{\GL}^+}(M) \ar[dd]^\Theta \ar[rd]^{\pi'} &\\
                                                                                           &                                                         & M\\
    \P_{\GL^+}(M)\times\GL(n,\R)     \ar[r]                                                & \P_{\GL^+}(M) \ar[ru]_{\pi}                             &
  }
\end{xy}
\]
where the horizontal arrows denote the group actions. 
$M$ is called a spin manifold, if there exists a spin structure on $M$.
\end{definition}

Not every oriented manifold has a spin structure and some oriented ma\-ni\-folds 
have more than one spin structure. 
An oriented manifold $M$ is a spin manifold if and only if the second 
Stiefel-Whitney class 
\begin{displaymath}
  w_2(TM)\in H^2(M,\Z/2\Z)
\end{displaymath}
vanishes.
If $w_2(TM)=0$, then the distinct spin structures on $M$ are in one-to-one correspondence with 
the elements of $H^1(M,\Z/2\Z)$. 
For example every orientable manifold $M$ of dimension $n\leq3$ is a 
spin manifold. 

For every Riemannian metric $g$ on an oriented spin manifold $M$ we denote by $\P_{\SO}(M,g)\subset\P_{\GL^+}(M)$ 
the principal $\SO(n)$-bundle over $M$ whose fibre over $x\in M$ 
consists of all positively oriented $g$-orthonormal bases of $T_xM$. 
It is called the bundle of positively oriented $g$-orthonormal frames of the tangent bundle. 
The restriction of $A$: $\widetilde{\GL}^+(n,\R)\to\GL^+(n,\R)$ to 
the preimage of $\SO(n)\subset\GL^+(n,\R)$ coincides with $\vartheta$: $\Spin(n)\to\SO(n)$. 
Furthermore 
\begin{displaymath}
  \P_{\Spin}(M,g):=\Theta^{-1}(P_{\SO}(M,g))
\end{displaymath}
is a principal $\Spin(n)$-bundle over $M$ and the maps in the above commutative diagram 
restrict to the following commutative diagram
\[
\begin{xy}
  \xymatrix{          
    \P_{\Spin}(M,g)\times\Spin(n) \ar[r]  \ar[dd]_{\Theta\times\th} & \P_{\Spin}(M,g) \ar[dd]^\Theta \ar[rd]^{\pi'}  &\\
                                                                    &                                                & M\\
    \P_{\SO}(M,g)\times\SO(n)     \ar[r]                            & \P_{\SO}(M,g) \ar[ru]_{\pi}                    &
  }
\end{xy}
\]

By a Riemannian spin manifold $(M,g,\Theta)$ we will always mean 
an orientable Riemannian spin manifold $(M,g)$ together with a fixed orientation and a 
fixed spin structure $\Theta$.

To every principal bundle over $M$ we can associate a vector bundle 
in the following way. Let $G$ be a Lie group and let $\pi$: $P\to M$ be 
a principal $G$-bundle over a manifold $M$. 
Furthermore let $\K=\R$ or $\C$, let $V$ be a vector space 
over $\K$ and let $\rho$: $G\to\Aut_{\K}(V)$ be a representation of $G$. 
On $P\times V$ we define an equivalence relation by
\begin{displaymath}
  (p,v)\sim(p',v')\gdw\exists g\in G\textrm{ such that }p'=pg^{-1}\textrm{ and }v'=\rho(g)v.
\end{displaymath}
We denote by $[p,v]$ the equivalence class of $(p,v)$ and by 
$P\times_{\rho}V$ the set of all equivalence classes. 
Then $P\times_{\rho}V$ is a vector bundle over $M$ with fibre $V$. 
It is called the associated vector bundle to $P$ via $\rho$.

As an example let $\tau$: $\SO(n)\to\Aut_{\R}(\R^n)$ be the 
standard representation. Then there exists a canonical isomorphism 
of vector bundles 
\begin{displaymath}
  TM\cong \P_{\SO}(M,g)\times_{\tau}\R^n.
\end{displaymath}

Now let $(M,g,\Theta)$ be an $n$-dimensional Riemannian spin manifold. 
The restriction of the complex spinor representation $\rho$ 
to $\Spin(n)\subset\CCl(n)$ is again denoted by $\rho$, i.\,e.
\begin{displaymath}
  \rho:\quad\Spin(n)\to\Aut_{\C}(\Sigma_n).
\end{displaymath}

\begin{definition}
The complex spinor bundle $\Sigma^gM$ over $M$ for the metric $g$ and 
the spin structure $\Theta$ is the associated vector bundle
\begin{displaymath}
  \Sigma^gM:=\P_{\Spin}(M,g)\times_{\rho}\Sigma_n.
\end{displaymath}
\end{definition}
The complex spinor bundle $\Sigma^gM$ is a vector bundle with fibre $\Sigma_n\cong\C^N$. 
For $(M,g)=(\R^n,\geucl)$ equipped with the unique spin structure we will write 
\begin{displaymath}
  \Sigma\R^n:=\Sigma^{\geucl}\R^n.
\end{displaymath}

We define the Clifford multiplication on $\Sigma^g_xM$, $x\in M$, by
\begin{displaymath}
  T_xM\otimes\Sigma^g_xM\to\Sigma^g_xM,\quad
  [\Theta(s),v]\otimes[s,\sigma]\mapsto[s,v\cdot\sigma].
\end{displaymath}
The inner product $\langle.,.\rangle$ on $\Sigma_n$ yields a hermitian metric 
on $\Sigma^gM$ which we also denote by $\langle.,.\rangle$. It is defined by 
\begin{displaymath}
  \langle[s,\sigma_1],[s,\sigma_2]\rangle:=\langle\sigma_1,\sigma_2\rangle.
\end{displaymath}
We will denote the induced norm by 
\begin{displaymath}
  |\psi|_g:=\langle\psi,\psi\rangle^{1/2}.
\end{displaymath}
Clifford multiplication with elements of $TM$ is antisymmetric 
with respect to $\langle.,.\rangle$, i.\,e. we have
\begin{displaymath}
  \langle v\cdot\psi,\varphi\rangle+\langle\psi,v\cdot\varphi\rangle=0
\end{displaymath}
for all $v\in T_xM$, $\psi$, $\varphi\in\Sigma^g_xM$, $x\in M$.

Let $(e_i)_{i=1}^n$ be a positively oriented local orthonormal frame of $TM$. Then 
\begin{displaymath}
  \omega_{\C}:=i^{[(n+1)/2]}e_1\cdot...\cdot e_n
\end{displaymath}
is independent of the choice of the $e_i$ and thus can be defined on all of $M$. 
Fibrewise Clifford multiplication by $\omega_{\C}$ is an endomorphism of $\Sigma^gM$ 
which is again denoted by $\omega_{\C}$. We find $\omega_{\C}^2=1$ and
\begin{displaymath}
  \omega_{\C}\cdot v=(-1)^{n-1}v\cdot\omega_{\C}
\end{displaymath}
for all $v\in TM$. If $n$ is odd, we have by convention $\omega_{\C}=\id$. 
If $n$ is even, then, since $\omega_{\C}$ commutes with $\Spin(n)$, 
the above splitting of the spinor module induces a splitting of the spinor bundle
\begin{displaymath}
  \Sigma^gM=\Sigma^+M\oplus\Sigma^-M.
\end{displaymath}

A local section of the spinor bundle $\Sigma^gM$ is called a spinor. 
If $n$ is even the local sections of $\Sigma^{\pm}M$ are called positive 
respectively negative spinors. 
We denote by $\spinor{M}{g}{r}$, $r\in\N$, (resp. $\spinor{M}{g}{\infty}$) 
the space of all $r$ times continuously differentiable (resp. smooth) spinors. 

In order to define a covariant derivative on the spinor bundle we recall the following 
general fact. Let $G$ be a Lie group, $P\to M$ a principal $G$-bundle and 
let $\rho$: $G\to\Aut_{\C}(V)$ be a representation of $G$ on a complex vector space $V$. 
Let $\omega$ be a connection one-form on $P$. 
Every section $\psi$ of the associated vector bundle $P\times_{\rho}V$ is locally given 
by $\psi=[s,\sigma]$, where $s$ is a locally defined section of $P$ on an open subset 
$U\subset M$ and $\sigma$ is a function on $U$ with values in $V$. 
Then for $X\in TM|_U$ we define the spinor $\nabla_X\psi$ on $U$ by
\begin{equation}
  \label{cov_deriv}
  \nabla_X\psi:=[s,X(\sigma)+d\rho(\omega(ds(X)))\sigma].
\end{equation}
Here and henceforth for any differentiable function $f$ defined on an open subset $U\subset M$ 
with values in a real or complex vector space and any vector field $X$ on $U$ we denote 
by $X(f)$ the derivative of $f$ in the direction $X$. 
One can show that $\nabla$ is well-defined and yields a covariant derivative 
on $P\times_{\rho}V$. Furthermore if $V$ carries a $G$-invariant hermitian scalar product, 
then one obtains a hermitian metric on $P\times_{\rho}V$ and $\nabla$ 
is compatible with this metric.

The Levi Civita connection $\nabla^g$ for the Riemannian metric $g$ on $M$ 
induces a covariant derivative on $\Sigma^gM$ also denoted by $\nabla^g$ as follows. 
The connection one-form of the Levi Civita connection lifts to a 
connection one-form $\omega$ on $\P_{\Spin}(M,g)$. 
Then we apply the formula (\ref{cov_deriv}) to $\omega$. The result may locally 
be written as follows. 
Let $(e_i)_{i=1}^n$ be a positively oriented local orthonormal frame of $TM$ on an open subset $U\subset M$. 
There exists a locally defined section $s\in\Gamma(U,\P_{\Spin}(M,g)|_U)$ such that 
$(e_i)_{i=1}^n=\Theta\circ s$ on $U$. Let $(E_i)_{i=1}^N$ be the standard basis of $\C^N$. 
The section $s$ determines a local orthonormal frame $(\psi_i)_{i=1}^N$ 
of $\Sigma^gM|_U$ via $\psi_i=[s,E_i]$ for $1\leq i\leq N$. 
We denote by $\partial$ the locally defined flat connection with respect to the 
local frame $(\psi_i)_{i=1}^N$, i.\,e.\,for $h_1$,...,$h_N\in C^{\infty}(U,\C)$ and $X\in TM|_U$ we define
\begin{displaymath}
  \partial_X\big(\sum_{i=1}^N h_i\psi_i\big):=\sum_{i=1}^N X(h_i) \psi_i.
\end{displaymath}
We define the symbols $\widetilde{\Gamma}^k_{ij}$ by the equation
\begin{displaymath}
  \widetilde{\Gamma}^k_{ij}:=g(\nabla^g_{e_i}e_j,e_k)
\end{displaymath}
and write locally $\psi=\sum_{i=1}^N h_i\psi_i$. 
Then by (\ref{cov_deriv}) for all $i\in\{1,...,n\}$ we have on $U$ 
\begin{equation}
  \label{connection_local}
  \nabla^g_{e_i}\psi=\partial_{e_i}\psi+\frac{1}{4}\sum_{j,k=1}^n\widetilde{\Gamma}^k_{ij} e_j\cdot e_k\cdot\psi
  =\partial_{e_i}\psi+\frac{1}{4}\sum_{j=1}^n e_j\cdot(\nabla^g_{e_i}e_j)\cdot\psi
\end{equation}
(see \cite{lm}, p. 103, 110). 
One finds that $\nabla^g$ is a metric connection with respect to $\langle.,.\rangle$ 
and that it satisfies
\begin{displaymath}
  \nabla^g_X(Y\cdot\psi)=(\nabla^g_X Y)\cdot\psi+Y\cdot\nabla^g_X\psi
\end{displaymath}
for all $X\in TM|_U$, $Y\in C^{\infty}(TM|_U)$, $\psi\in\spinor{M|_U}{g}{\infty}$. 

Let $(e_i)_{i=1}^n$ be a local orthonormal frame of $TM$. 
The Dirac operator is defined as
\begin{displaymath}
  D^g:\quad\spinor{M}{g}{\infty}\to\spinor{M}{g}{\infty},\quad D^g\psi:=\sum_{i=1}^n e_i\cdot\nabla^g_{e_i}\psi.
\end{displaymath}
It is easily seen that the definition does not depend on the choice of the 
local frame $(e_i)_{i=1}^n$. If $n$ is even, then with respect to the above 
splitting of the spinor bundle the Dirac operator has the form 
\begin{equation}
  \label{dirac_op_decomp}
  D^g=\bmatrix{cc}{0 & D^- \\ D^+ & 0}
\end{equation}
with $D^{\pm}$: $\spinor{M}{\pm}{\infty}\to\spinor{M}{\mp}{\infty}$. 

If $X$, $Y$ are smooth vector fields on $M$ we define the second covariant 
derivative operator $\nabla^2_{X,Y}$: $\spinor{M}{g}{\infty}\to\spinor{M}{g}{\infty}$ by
\begin{displaymath}
  \nabla^2_{X,Y}\psi:=\nabla^g_X\nabla^g_Y\psi-\nabla^g_{\nabla^g_X Y}\psi
\end{displaymath}
and the connection Laplacian $\nabla^*\nabla$: $\spinor{M}{g}{\infty}\to\spinor{M}{g}{\infty}$ by
\begin{displaymath}
  \nabla^*\nabla\psi:=-\tr(\nabla^2_{.,.}\psi),
\end{displaymath}
i.\,e.\,if $(e_i)_{i=1}^n$ is a local orthonormal frame on $M$ we have
\begin{equation}
  \label{laplacian_local}
  \nabla^*\nabla\psi=-\sum_{i=1}^n \nabla^g_{e_i}\nabla^g_{e_i}\psi
  +\sum_{i=1}^n \nabla^g_{\nabla^g_{e_i}e_i}\psi.
\end{equation}
A very important result is the Schr\"odinger-Lichnerowicz formula 
\begin{theorem}
For all $\psi\in\spinor{M}{g}{\infty}$ the formula
\begin{equation}
  \label{schroed_lichn}
  (D^g)^2\psi=\nabla^*\nabla\psi+\frac{\scal^g}{4}\psi.
\end{equation}
holds, where $\scal^g$ is the scalar curvature of $(M,g)$.
\end{theorem}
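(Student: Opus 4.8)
The plan is to prove the identity pointwise. Fix $x\in M$ and choose a positively oriented local orthonormal frame $(e_i)_{i=1}^n$ of $TM$ that is \emph{synchronous at $x$}, i.e.\ $\nabla^g_{e_i}e_j|_x=0$ for all $i,j$; such a frame is obtained by parallel transport of an orthonormal basis of $T_xM$ along the radial geodesics emanating from $x$, and it also satisfies $[e_i,e_j]|_x=0$. All equalities below are to be read at the point $x$. Using the Leibniz rule $\nabla^g_X(Y\cdot\psi)=(\nabla^g_XY)\cdot\psi+Y\cdot\nabla^g_X\psi$ together with $\nabla^g_{e_i}e_j|_x=0$, one first expands
\begin{displaymath}
  (D^g)^2\psi=\sum_{i,j=1}^n e_i\cdot\nabla^g_{e_i}(e_j\cdot\nabla^g_{e_j}\psi)=\sum_{i,j=1}^n e_i\cdot e_j\cdot\nabla^g_{e_i}\nabla^g_{e_j}\psi,
\end{displaymath}
and splits this double sum into its diagonal part ($i=j$) and its off-diagonal part ($i\neq j$). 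By the Clifford relation (\ref{clifford_relation}) we have $e_i\cdot e_i=-1$, so the diagonal part equals $-\sum_{i=1}^n\nabla^g_{e_i}\nabla^g_{e_i}\psi$, which at $x$ coincides with $\nabla^*\nabla\psi$, since the correction term $\sum_i\nabla^g_{\nabla^g_{e_i}e_i}\psi$ in (\ref{laplacian_local}) vanishes for a synchronous frame.

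For the off-diagonal part I would use the antisymmetry $e_i\cdot e_j=-e_j\cdot e_i$ ($i\neq j$) to symmetrize over the pair $(i,j)$, which turns the second covariant derivatives into curvature:
\begin{displaymath}
  \sum_{i\neq j}e_i\cdot e_j\cdot\nabla^g_{e_i}\nabla^g_{e_j}\psi=\frac12\sum_{i\neq j}e_i\cdot e_j\cdot R^{\Sigma}(e_i,e_j)\psi,
\end{displaymath}
where $R^{\Sigma}(e_i,e_j)=\nabla^g_{e_i}\nabla^g_{e_j}-\nabla^g_{e_j}\nabla^g_{e_i}$ at $x$, the bracket term of the spinor curvature dropping out because $[e_i,e_j]|_x=0$. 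Differentiating the local connection formula (\ref{connection_local}) once more yields the standard expression of the spinor curvature through the Riemann curvature tensor $R^g$ of $g$,
\begin{displaymath}
  R^{\Sigma}(e_i,e_j)\psi=\frac14\sum_{k,l=1}^n g(R^g(e_i,e_j)e_k,e_l)\,e_k\cdot e_l\cdot\psi,
\end{displaymath}
so that, abbreviating $R_{ijkl}:=g(R^g(e_i,e_j)e_k,e_l)$, the off-diagonal part becomes $\frac18\sum_{i,j,k,l=1}^n R_{ijkl}\,e_i\cdot e_j\cdot e_k\cdot e_l\cdot\psi$, the summation now ranging over all $i,j$ since $R^g$ is antisymmetric in its first two arguments.

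The one genuinely non-routine step, which I expect to be the main obstacle, is the purely algebraic evaluation of this quadruple Clifford sum. The key point is that the terms of $\sum_{i,j,k,l}R_{ijkl}\,e_ie_je_ke_l$ in which $i,j,k,l$ are pairwise distinct cancel: for such quadruples $e_ie_je_k$ is totally antisymmetric in $i,j,k$, whereas the first Bianchi identity $R_{ijkl}+R_{jkil}+R_{kijl}=0$ together with the antisymmetry $R_{ijkl}=-R_{jikl}$ forces the component of $R_{ijkl}$ that is totally antisymmetric in the first three indices to vanish. The remaining terms, in which some index among $\{i,j\}$ equals some index among $\{k,l\}$, are reduced by repeated use of $e_ae_b+e_be_a=-2\delta_{ab}$ and by contracting the curvature tensor via its symmetries $R_{ijkl}=-R_{jikl}=-R_{ijlk}=R_{klij}$; a short bookkeeping then shows that the whole sum is a constant multiple of the scalar curvature, and with the usual sign conventions one obtains $\sum_{i,j,k,l}R_{ijkl}\,e_ie_je_ke_l=2\,\scal^g$. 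Hence the off-diagonal part equals $\frac18\cdot2\,\scal^g\,\psi=\frac14\scal^g\,\psi$, and adding the diagonal and off-diagonal contributions gives $(D^g)^2\psi=\nabla^*\nabla\psi+\frac{\scal^g}{4}\psi$ at $x$. Since $x\in M$ was arbitrary and both sides are manifestly independent of the chosen local frame, the formula holds throughout $M$.
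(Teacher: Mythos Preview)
Your proof is correct and is precisely the standard argument for the Schr\"odinger--Lichnerowicz formula; the paper itself does not give a proof but simply refers to \cite{lm}, p.~160, where essentially the same computation (synchronous frame, splitting into diagonal and curvature part, Bianchi identity to reduce the Clifford sum) is carried out. There is nothing to add.
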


\begin{proof}
see \cite{lm} p. 160.
\end{proof}

Let $\psi\in\spinor{M}{g}{\infty}$ and $f\in C^{\infty}(M,\C)$. Then 
the following Leibniz rule holds on $M$:
\begin{displaymath}
  D^g(f\psi)=\grad^g(f)\cdot\psi+fD^g\psi,
\end{displaymath}
where $\grad^g(f)$ is the gradient of $f$ with respect to $g$. 
For a proof see \cite{lm}, p. 116.

Let $\mu\in\C$. A spinor $\psi\in\spinor{M}{g}{\infty}$ 
is called a Killing spinor for $\mu$\index{Killing spinor}, if we have
\begin{displaymath}
  \nabla^g_X\psi=\mu X\cdot\psi
\end{displaymath}
for all $X\in TM$.

A spinor $\psi\in\spinor{M}{g}{\infty}$ 
is called parallel\index{parallel spinor}, if we have $\nabla^g_X\psi=0$ for all $X\in TM$.

\begin{definition}
The elements of $\ker D^g$ are called harmonic spinors.\index{harmonic spinor} 
If $n$ is even, then the elements of $\ker D^{\pm}$ are called positive respectively 
negative harmonic spinors.
\end{definition}

Let $M$ be compact. The hermitian metric $\langle.,.\rangle$ on $\Sigma^gM$ induces 
a positive definite scalar product $(.,.)_2$ on smooth spinors by  
\begin{displaymath}
  (\psi,\varphi)_2:=\int_M\langle\psi,\varphi\rangle\dv^g,
\end{displaymath}
where $\dv^g$ denotes the volume form induced by the Riemannian metric $g$. 
Let $p\in\R$ be positive. The $L^p$-norm of a smooth spinor $\psi$ is by definition 
\begin{displaymath}
  \|\psi\|_p:=\big(\int_M |\psi|_g^p \dv^g\big)^{1/p}.
\end{displaymath}
The completion of $\spinor{M}{g}{\infty}$ 
with respect to $\|.\|_p$ is called $\Lspinor{M}{g}{p}$. 
For $k\in\N$ the Sobolev $k$-norm of a smooth spinor $\psi$ is defined by 
\begin{displaymath}
  \|\psi\|_{H^k}:=\sum_{i=0}^k\|\nabla^i\psi\|_2
\end{displaymath}
and the Sobolev space $\Hspinor{M}{g}{k}$ is the completion of $\spinor{M}{g}{\infty}$ 
with respect to $\|.\|_{H^k}$. 
If $(M,g)$ is complete, then with respect to the scalar product $(.,.)_2$ 
the Dirac operator $D^g$ is essentially self-adjoint, i.\,e.\,its closure 
in $\Lspinor{M}{g}{2}$ is self-adjoint. 
If $(M,g,\Theta)$ is a closed Riemannian spin manifold, then the spectrum $\spec(D^g)$ 
of the Dirac operator consists of a sequence of isolated real eigenvalues, 
which is neither bounded from above nor bounded from below.

\section{Real and quaternionic structures}
\label{real_quat_struct_section}

In certain dimensions $n$ there exist real or quaternionic structures 
on the modules $\Sigma_n$ of the complex spinor representation 
which are $\Spin(n)$-equivariant. 
Given a Riemannian spin manifold $(M,g,\Theta)$ of dimension $n$ these 
structures then induce conjugate linear endomorphisms of $\Sigma^gM$. 

\begin{definition}
Let $W$ be a complex vector space.
\begin{enumerate}
  \item A real structure\index{structure!real} on $W$ is a $\R$-linear map $J$: $W\to W$ 
  such that $J^2=\id$ and $J(iw)=-iJ(w)$ for all $w\in W$. 
  \item A quaternionic structure\index{structure!quaternionic} on $W$ is a $\R$-linear map $J$: $W\to W$ 
  such that $J^2=-\id$ and $J(iw)=-iJ(w)$ for all $w\in W$.
\end{enumerate}
\end{definition}

Let $\rho$: $\CCl(n)\to\End_{\C}(\Sigma_n)$ be the complex spinor 
representation. A real or quaternionic structure $J$ on $\Sigma_n$ 
is called commuting, if it commutes with Clifford multiplication 
by elements of $\R^n$, i.\,e.\,if
\begin{displaymath}
  J(x\cdot\sigma)=x\cdot J(\sigma)
  \textrm{ for all }x\in\R^n\subset\CCl(n),\sigma\in\Sigma_n.
\end{displaymath}
It is called anti-commuting, if it anti-commutes with 
Clifford multiplication by elements of $\R^n$, i.\,e.\,if
\begin{displaymath}
  J(x\cdot\sigma)=-x\cdot J(\sigma)
  \textrm{ for all }x\in\R^n\subset\CCl(n),\sigma\in\Sigma_n.
\end{displaymath}

The existence of real or quaternionic structures on $\Sigma_n$ 
for certain $n$ is proved in \cite{fr00}. 
In the following theorem we state the result and mention some 
further structures. 

\begin{theorem}
\label{theorem_real_quat}
On $\Sigma_n$ the following structures exist
\renewcommand{\labelenumi}{\alph{enumi})}
\begin{enumerate}
\item If $n\equiv 0\bmod 8$ there exist a commuting real structure 
and an anti-commuting real structure.
\item If $n\equiv 1\bmod 8$ there exists an anti-commuting real structure. 
There exists no commuting real structure.
\item If $n\equiv 2\bmod 8$ there exist an anti-commuting real structure 
and a commuting quaternionic structure. 
There exists no commuting real structure.
\item If $n\equiv 3\bmod 8$ there exists a commuting quaternionic structure. 
There exists no commuting real structure.
\item If $n\equiv 4\bmod 8$ there exist a commuting quaternionic structure 
and an anti-commuting quaternionic structure. 
There exists no commuting real structure.
\item If $n\equiv 5\bmod 8$ there exists an anti-commuting quaternionic structure. 
There exists no commuting real structure.
\item If $n\equiv 6\bmod 8$ there exist a commuting real structure 
and an anti-commuting quaternionic structure. 
\item If $n\equiv 7\bmod 8$ there exists a commuting real structure. 
\end{enumerate}
\end{theorem}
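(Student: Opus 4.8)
The plan is to deduce the statement from the Atiyah--Bott--Shapiro classification of the real Clifford algebras $\Cl(n)$ (with the convention (\ref{clifford_relation})) together with the $8$-periodicity $\Cl(n+8)\cong\Cl(n)\otimes_{\R}\Mat_{16}(\R)$, and correspondingly $\Sigma_{n+8}\cong\Sigma_n\otimes_{\C}\Sigma_8$. Recall that each $\Cl(n)$ is, up to isomorphism, a matrix algebra or a sum of two matrix algebras over $\K\in\{\R,\C,\quat\}$, and which ``type'' occurs depends only on $n\bmod 8$: real type for $n\equiv 0,6,7$, complex type for $n\equiv 1,5$, quaternionic type for $n\equiv 2,3,4$.

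First I would treat the \emph{commuting} structures. Since $\R^n$ generates $\Cl(n)$, a commuting real (resp.\ quaternionic) structure is precisely a conjugate-linear $J$ on $\Sigma_n$ with $J^2=\id$ (resp.\ $J^2=-\id$) that commutes with $\rho(\Cl(n))$. Such a $J$ with $J^2=\id$ exists iff $\Sigma_n$ is the complexification of a real $\Cl(n)$-module, i.e.\ iff $\Cl(n)$ is of real type; a $J$ with $J^2=-\id$ makes $\Sigma_n$ into a module over $\Cl(n)\otimes_{\R}\quat$ compatibly, which is possible iff $\Cl(n)$ is of quaternionic type; and in the complex case the commutant of $\rho(\Cl(n))$ in $\End_{\R}(\Sigma_n)$ is just $\C$, so neither kind exists. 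Matching this against the type table gives every assertion of a)--h) about commuting structures, and in particular the non-existence of a commuting real structure in b)--f) (a commuting real structure would force $\Cl(n)$ to be of real type).

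Next I would handle the \emph{anti-commuting} structures (the ``further structures''). For $n$ even, $\omega_{\C}$ anti-commutes with $\R^n$, so from a commuting structure $J$ one gets the anti-commuting map $J':=\rho(\omega_{\C})\circ J$; using that $J$ commutes with each $\rho(E_i)$ and is conjugate-linear on the scalar $i^{[(n+1)/2]}$ appearing in $\omega_{\C}$, a short computation gives $(J')^2=(-1)^{[(n+1)/2]}J^2$. Evaluating the sign for $n\equiv 0,2,4,6\bmod 8$ reproduces exactly the anti-commuting entries (and their type) in a), c), e), g). For $n$ odd we have $\rho(\omega_{\C})=\id$, so this trick fails; here $\rho(E_1\cdots E_n)$ is the central scalar $i^{-[(n+1)/2]}\,\id$, and an anti-commuting $J$ would have to satisfy $\overline{i^{-[(n+1)/2]}}\,J(\sigma)=(-1)^n i^{-[(n+1)/2]}J(\sigma)=-i^{-[(n+1)/2]}J(\sigma)$, forcing $i^{-[(n+1)/2]}$ to be purely imaginary, i.e.\ $n\equiv 1\bmod 4$. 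This rules out anti-commuting structures for $n\equiv 3,7\bmod 8$, matching d) and h). For the remaining cases $n\equiv 1\bmod 8$ and $n\equiv 5\bmod 8$ I would use the periodicity $\Sigma_{n+8}\cong\Sigma_n\otimes_{\C}\Sigma_8$: tensoring with a commuting real structure on $\Sigma_8$ preserves both the square and the (anti-)commuting behaviour, so it suffices to exhibit an anti-commuting real structure on $\Sigma_1$ (one checks that ordinary complex conjugation works, since $\rho(e_1)$ acts as multiplication by $-i$) and an anti-commuting quaternionic structure on $\Sigma_5$, written down directly from the explicit model of $\Sigma_5$.

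The main obstacle is the sign and grading bookkeeping: pinning down for each residue class whether the anti-commuting structure is real or quaternionic (the $(J')^2=(-1)^{[(n+1)/2]}J^2$ computation and its odd-dimensional counterpart), and checking that the periodicity isomorphism $\Sigma_{n+8}\cong\Sigma_n\otimes_{\C}\Sigma_8$ really intertwines Clifford multiplication in a way that transports both the square and the commuting/anti-commuting property of a structure. None of this is deep, but each step is sensitive to the normalisation of $\omega_{\C}$, to the choice of the irreducible module for odd $n$, and to the grading signs in the tensor decomposition, so the bulk of the work is getting these conventions straight; the rest then follows from \cite{fr00} and the finitely many base cases $0\le n\le 7$.
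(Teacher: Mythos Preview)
Your proposal is correct and rests on the same two ingredients the paper uses, namely the classification of real Clifford algebras (equivalently, the results quoted from \cite{fr00}) and the $\omega_{\C}$-twist $J\mapsto\rho(\omega_{\C})\circ J$ in even dimensions. The organisation, however, is genuinely different. The paper imports from \cite{fr00} exactly one structure per residue class (an anti-commuting real structure for $n\equiv 0,1$, a commuting quaternionic one for $n\equiv 2,3$, an anti-commuting quaternionic one for $n\equiv 4,5$, a commuting real one for $n\equiv 6,7$), and then applies the $\omega_{\C}$-twist only in the even cases to produce the second structure listed. For the non-existence of a commuting real structure in b)--f) the paper does not invoke the type classification at all but gives a direct dimension count: the $+1$-eigenspace of such a $J$ would be a real $\Cl(n)$-module of half the real dimension of $\Sigma_n$, contradicting the known dimension of the irreducible real module.

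Your route is more uniform and more self-contained: you read off all commuting structures (existence and non-existence simultaneously) from the real/complex/quaternionic type of $\Cl(n)$, then manufacture the anti-commuting ones via $\omega_{\C}$ in even dimensions and via periodicity plus the base cases $\Sigma_1$, $\Sigma_5$ in the odd cases $n\equiv 1,5$. This buys you a cleaner conceptual picture and avoids the separate dimension argument, at the price of having to check the sign $(J')^2=(-1)^{[(n+1)/2]}J^2$ carefully and to verify that the periodicity isomorphism $\Sigma_{n+8}\cong\Sigma_n\otimes_{\C}\Sigma_8$ transports (anti-)commuting structures as claimed. The paper's argument is shorter precisely because it outsources the odd anti-commuting cases and one structure in each even case to \cite{fr00}; your argument reproves more of that content from scratch. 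Note also that your observation ruling out anti-commuting structures for $n\equiv 3,7\bmod 8$ is correct but not required by the statement.
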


\begin{proof}
\renewcommand{\labelenumi}{\alph{enumi})}
\begin{enumerate}
\item $n\equiv 0\bmod 8$: By \cite{fr00}, p. 33 there exists an anti-commuting 
real structure $J$. Then $\omega_{\C}\circ J$ is a commuting real structure.
\item $n\equiv 1\bmod 8$: By \cite{fr00} there exists an anti-commuting 
real structure. Suppose that there was a commuting real structure $J$. 
We restrict the spinor representation to $\Cl(n)\subset\CCl(n)$. 
If $n=8k+1$ then the eigenspace of $J$ corresponding to $1$ is a real 
subspace of $\Sigma_n$ of real dimension $2^{4k}$ and is an invariant 
subspace of this restriction. This is a contradiction, since 
every irreducible real module for $\Cl(n)$ has real dimension 
$2^{4k+1}$ (see \cite{lm}, p. 33).
\item $n\equiv 2\bmod 8$: By \cite{fr00} there exists a commuting 
quaternionic structure $J$. Then $\omega_{\C}\circ J$ is an anti-com\-mut\-ing 
real structure. 
As in the case $n\equiv1$ mod $8$ one sees that there is no 
commuting real structure.
\item $n\equiv 3\bmod 8$: By \cite{fr00} there exists a commuting 
quaternionic structure. 
As in the case $n\equiv 1\bmod 8$ one sees that there is no 
commuting real structure.
\item $n\equiv 4\bmod 8$: By \cite{fr00} there exists an anti-commuting 
quaternionic structure $J$. 
Then $\omega_{\C}\circ J$ is a commuting quaternionic structure. 
As in the case $n\equiv 1\bmod 8$ one sees that there is no 
commuting real structure.
\item $n\equiv 5\bmod 8$: By \cite{fr00} there exists an anti-commuting 
quaternionic structure $J$. 
As in the case $n\equiv1\bmod 8$ one sees that there is no 
commuting real structure.
\item $n\equiv 6\bmod 8$: By \cite{fr00} there exists a commuting 
real structure $J$. 
Then $\omega_{\C}\circ J$ is an anti-commuting quaternionic structure.
\item $n\equiv 7\bmod 8$: By \cite{fr00} there exists a commuting real structure.
\end{enumerate}
The assertion follows.
\end{proof}

As an example we consider the case $n=2$. 
The spinor representation of $\CCl(2)=\End_{\C}(\C^2)$ is the 
standard representation of $\Mat(2,\C)$ on $\C^2$. 
We define the action of $E_1$ and $E_2$ by
\begin{displaymath}
  E_1=\bmatrix{cc}{0 & -1 \\ 1 & 0},\quad
  E_2=\bmatrix{cc}{0 & i \\ i & 0}.
\end{displaymath}
We define the map $J$: $\C^2\to\C^2$ by 
$J(z,w)=(-\overline{w},\overline{z})$. 
Then $J$ is a commuting quaternionic structure. 
A motivation for this definition comes from considering the Hamilton quaternions 
\begin{displaymath}
  \quat:=\{a+ib+cj+dk|a,b,c,d\in\R,i^2=j^2=k^2=-1,ij=-ji=k\}
\end{displaymath}
as a complex vector space, $\C$ acting by quaternion multiplication 
from the right, and from using the $\C$-linear isomorphism
\begin{displaymath}
  \C^2\to\quat,\quad(z,w)\mapsto z+jw.
\end{displaymath}
Under this isomorphism the actions of $E_1$, $E_2$ correspond to 
quaternion multiplication with $j$, $-k$ from the left and $J$ 
corresponds to quaternion multiplication with $j$ from the right.
We define the map $K$: $\C^2\to\C^2$ by 
$K(z,w):=(-\overline{w},-\overline{z})$. 
Then $K$ is an anticommuting real structure. 

Next let $n=3$. 
The Clifford algebra $\CCl(3)=\End_{\C}(\C^2)\oplus\End_{\C}(\C^2)$ 
has two inequivalent irreducible complex representations $\rho_1$, $\rho_2$ given by 
\begin{displaymath}
  \rho_1(A,B)(x):=Ax,\quad\rho_2(A,B)(x):=Bx
\end{displaymath}
for $x\in\C^2$. We define the action of $E_1$, $E_2$, $E_3$ under $\rho_1$ by 
\begin{displaymath}
  E_1=\bmatrix{cc}{i & 0 \\ 0 & -i},\quad
  E_2=\bmatrix{cc}{0 & -1 \\ 1 & 0},\quad
  E_3=\bmatrix{cc}{0 & -i \\ -i & 0 }.
\end{displaymath}
Under the isomorphism 
\begin{displaymath}
  \C^2\to\quat,\quad(z,w)\mapsto z+jw.
\end{displaymath}
this corresponds to quaternion multiplication with $i$, $j$, $k$ from the left. 
The map $J$: $\C^2\to\C^2$ by $J(z,w)=(-\overline{w},\overline{z})$ is a commuting 
quaternionic structure.

Now let $(M,g,\Theta)$ be a Riemannian spin manifold of dimension $n$. 
Let $J$ be one of the structures mentioned above. 
Then $J$ commutes with the action of $\Spin(n)$ on $\Sigma_n$ 
and thus induces a map 
$\Sigma^gM\to\Sigma^gM$, $[s,\sigma]\mapsto[s,J\sigma]$ 
which will again be denoted by $J$. This map is 
fibre-preserving and $\R$-linear on the fibres and it 
satisfies $Ji=-iJ$ and $J^2=\pm\id$. 
Furthermore for any $g\in\Spin(n)$ we have 
\begin{displaymath}
  J\circ\rho(g)=\rho(g)\circ J.
\end{displaymath}
Taking the derivative of this equation we obtain $J\circ d\rho=d\rho\circ J$. 
It follows from the formula (\ref{cov_deriv}) that $\nabla^g_XJ\psi=J\nabla^g_X\psi$ 
for all $X\in TM$ and for all $\psi\in\spinor{M}{g}{\infty}$, 
i.\,e.\,$J$ is parallel with respect to $\nabla^g$. 

We note the following important consequences for the 
spectrum of the Dirac operator.

\begin{remark}
\label{remark_real_quat}
If $J$ is an anti-commuting real structure, it anti-commutes 
with the Dirac operator. It follows that if $\lambda$ 
is an eigenvalue of the Dirac operator, then also $-\lambda$ 
is an eigenvalue. 
By Theorem \ref{theorem_real_quat} this occurs in dimensions 
$n\equiv 0,1,2\bmod 8$.

If $J$ is a commuting quaternionic structure, it 
commutes with the Dirac operator. Since for every nonzero spinor 
$\psi$ the system $\{\psi,J\psi\}$ is linearly independent over $\C$, 
every eigenspace of $D^g$ has even complex dimension. 
By Theorem \ref{theorem_real_quat} this is the case 
in dimensions $n\equiv2,3,4\bmod 8$.

If $J$ is an anti-commuting quaternionic structure, 
it anti-commutes with the Dirac operator. In this case the kernel 
of $D^g$ has even complex dimension and, if $\lambda$ 
is an eigenvalue of $D^g$, then also $-\lambda$ is an eigenvalue 
of $D^g$. By Theorem \ref{theorem_real_quat} this occurs in dimensions 
$n\equiv 4,5,6\bmod 8$.
\end{remark}

\section{Spinors for different metrics}
\label{identification_section}

Let $(M,g,\Theta)$ be a Riemannian spin manifold and let $h$ be another 
Riemannian metric on $M$. 
Since the spinor bundles $\Sigma^gM$ and $\Sigma^hM$ 
are two different vector bundles, the question 
arises how one can identify spinors on $(M,g,\Theta)$ with spinors 
on $(M,h,\Theta)$ in a natural way. The case of conformally related 
metrics $g$ and $h$ has been treated in \cite{hit74}, \cite{hij86}. 
For general Riemannian metrics $g$ and $h$ Bourguignon and Gauduchon 
\cite{bg} have solved this problem. 
The question when such an identification can be obtained in 
the case of semi-Riemannian metrics has been treated in \cite{bgm05}. 
For our purpose we will recall the method of \cite{bg} 
and use some remarks from \cite{m}.

Given the metrics $g$ and $h$ there exists a unique 
endomorphism $a_{g,h}$ of $TM$ such that for all $x\in M$ 
and for all $v$, $w\in T_xM$ we have
\begin{displaymath}
  g(a_{g,h} v,w)=h(v, w).
\end{displaymath}
For each $x\in M$ the endomorphism $a_{g,h}(x)\in\End(T_xM)$ is 
$g$-self-adjoint and positive definite. Thus there exists a 
unique endomorphism $b_{g,h}(x)$ of $T_xM$ which is positive 
definite and satisfies $b_{g,h}(x)^2=a_{g,h}(x)^{-1}$. 
In this way we obtain an endomorphism $b_{g,h}$ of $TM$. 
The compositions of the endomorphisms $a_{g,h}$ and $b_{g,h}$ 
have the following properties. 

\begin{lemma}
\label{lemma_agh_ahk}
Let $g$, $h$, $k$ be three Riemannian metrics on $M$. Then we have $a_{g,h}a_{h,k}=a_{g,k}$.
The equation $b_{g,h}\circ b_{h,k}=b_{g,k}$ holds if and only if $a_{g,h}$ and $a_{h,k}$ commute.
\end{lemma}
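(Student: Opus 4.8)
The plan is to verify the first identity directly from the defining property of the $a$-endomorphisms, and then to analyze the composition $b_{g,h}\circ b_{h,k}$ using the fact that a positive definite self-adjoint square root is uniquely determined. First I would prove $a_{g,h}a_{h,k}=a_{g,k}$. Fix $x\in M$ and $v,w\in T_xM$. By definition $g(a_{g,k}v,w)=k(v,w)$. On the other hand, writing $u:=a_{h,k}v$, we have $h(u,w)=k(v,w)$ by the definition of $a_{h,k}$, and $g(a_{g,h}u,w)=h(u,w)$ by the definition of $a_{g,h}$. Chaining these gives $g(a_{g,h}a_{h,k}v,w)=k(v,w)=g(a_{g,k}v,w)$ for all $w$, hence $a_{g,h}a_{h,k}=a_{g,k}$ since $g$ is nondegenerate. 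Note this computation does not require any of the endomorphisms to be symmetric with respect to a common metric, so no commutativity is needed here.

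For the second statement, recall that $b_{g,h}(x)$ is the unique $g$-positive-definite endomorphism with $b_{g,h}(x)^2=a_{g,h}(x)^{-1}$, and similarly $b_{h,k}$ is $h$-positive-definite with square $a_{h,k}^{-1}$, while $b_{g,k}$ is $g$-positive-definite with square $a_{g,k}^{-1}=(a_{g,h}a_{h,k})^{-1}=a_{h,k}^{-1}a_{g,h}^{-1}$. Suppose first that $b_{g,h}\circ b_{h,k}=b_{g,k}$. Then squaring, $b_{g,h}b_{h,k}b_{g,h}b_{h,k}=a_{h,k}^{-1}a_{g,h}^{-1}$; but the left side also equals $(b_{g,h}b_{h,k})^2$ and I want to extract commutativity of $a_{g,h}$ and $a_{h,k}$, equivalently of $a_{g,h}^{-1}$ and $a_{h,k}^{-1}$, equivalently (taking positive square roots, which are polynomials in the operators) of $b_{g,h}^2$ and $b_{h,k}^2$. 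The cleanest route is: $b_{g,k}$ is $g$-self-adjoint, so if $b_{g,k}=b_{g,h}b_{h,k}$ then $b_{g,h}b_{h,k}$ is $g$-self-adjoint, i.e. $b_{g,h}b_{h,k}=(b_{g,h}b_{h,k})^{\ast_g}=b_{h,k}^{\ast_g}b_{g,h}$. Here $b_{g,h}$ is genuinely $g$-self-adjoint, but $b_{h,k}$ is $h$-self-adjoint rather than $g$-self-adjoint, so $b_{h,k}^{\ast_g}$ is not obviously $b_{h,k}$; this is exactly the subtle point. One resolves it by using $h(u,v)=g(a_{g,h}u,v)$ to compute $b_{h,k}^{\ast_g}=a_{g,h}^{-1}b_{h,k}^{\ast_h}a_{g,h}=a_{g,h}^{-1}b_{h,k}a_{g,h}$, so $g$-self-adjointness of $b_{g,h}b_{h,k}$ becomes $b_{g,h}b_{h,k}=a_{g,h}^{-1}b_{h,k}a_{g,h}b_{g,h}$, i.e. $a_{g,h}b_{g,h}b_{h,k}=b_{h,k}a_{g,h}b_{g,h}$. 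Since $a_{g,h}$ commutes with $b_{g,h}=(a_{g,h}^{-1})^{1/2}$, this reads $b_{g,h}a_{g,h}b_{h,k}=b_{h,k}b_{g,h}a_{g,h}$, i.e. $b_{g,h}a_{g,h}$ commutes with $b_{h,k}$; rearranging and using again that $b_{g,h}$ and $a_{g,h}$ commute one deduces that $a_{g,h}$ and $a_{h,k}$ commute. Conversely, if $a_{g,h}$ and $a_{h,k}$ commute, then so do all their functions, in particular $b_{g,h}^2=a_{g,h}^{-1}$ and $b_{h,k}^2=a_{h,k}^{-1}$, hence also $b_{g,h}$ and $b_{h,k}$; then $b_{g,h}b_{h,k}$ is a product of commuting positive definite symmetric operators, hence itself positive definite and ($g$-)self-adjoint, and its square is $b_{g,h}^2b_{h,k}^2=a_{g,h}^{-1}a_{h,k}^{-1}=a_{g,k}^{-1}$, so by uniqueness of the square root $b_{g,h}b_{h,k}=b_{g,k}$.

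The main obstacle is the bookkeeping around \emph{which} metric each endomorphism is self-adjoint with respect to: $a_{g,h}$ and $b_{g,h}$ are $g$-self-adjoint, $a_{h,k}$ and $b_{h,k}$ are $h$-self-adjoint, and these two notions of adjoint differ by conjugation with $a_{g,h}$. Getting the converse direction is routine once commutativity is in hand; the forward direction requires the computation above converting $g$-self-adjointness of the composite into the commutativity relation, and care is needed because positive square roots are unique but only self-adjointness with respect to a \emph{fixed} metric characterizes them. I would present the forward direction by the self-adjointness argument sketched above and double-check it by an eigenbasis argument: over a point where $a_{g,h}$ and $a_{h,k}$ commute they are simultaneously diagonalizable in a $g$-orthonormal basis, making everything in sight diagonal, which both confirms the converse and clarifies why commutativity is precisely the obstruction.
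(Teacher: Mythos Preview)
Your argument is essentially correct and far more detailed than the paper's, which simply says ``This follows immediately from the definitions of $a_{g,h}$ and $b_{g,h}$.'' So there is nothing substantive to compare on the level of strategy; you have supplied what the paper leaves to the reader.

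Two small points to clean up. First, your conjugation formula for the $g$-adjoint is inverted: from $h(u,v)=g(a_{g,h}u,v)=g(u,a_{g,h}v)$ one gets $T^{\ast_g}=a_{g,h}\,T^{\ast_h}\,a_{g,h}^{-1}$, not $a_{g,h}^{-1}T^{\ast_h}a_{g,h}$. This does not affect your conclusion, since either way you end up showing that some nonzero real power of $a_{g,h}$ (namely $a_{g,h}^{-3/2}$ rather than $a_{g,h}^{1/2}$) commutes with $b_{h,k}$, and positivity lets you pass to $a_{g,h}$ itself.

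Second, in the converse direction you write that $b_{g,h}b_{h,k}$ is ``a product of commuting positive definite symmetric operators, hence itself \dots\ ($g$-)self-adjoint,'' but $b_{h,k}$ is a priori only $h$-self-adjoint. The missing observation is that commutativity of $a_{g,h}$ and $a_{h,k}$ forces $a_{h,k}$ (hence $b_{h,k}$) to be $g$-self-adjoint as well: since $a_{g,k}=a_{g,h}a_{h,k}$ and $a_{g,h}$ are both $g$-self-adjoint, $a_{h,k}=a_{g,h}^{-1}a_{g,k}$ is $g$-self-adjoint precisely when $a_{g,h}$ and $a_{g,k}$ commute, which is equivalent to your hypothesis. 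Your eigenbasis remark at the end implicitly uses this; it would be worth stating it explicitly before invoking simultaneous diagonalization in a $g$-orthonormal frame.
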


\begin{proof}
This follows immediately from the definitions of $a_{g,h}$ and $b_{g,h}$.
\end{proof}

For example if $g$, $h$, $k$ are conformally related, 
then $a_{g,h}$ and $a_{h,k}$ commute. 

\begin{lemma}
The endomorphism $b_{g,h}$ induces an isomorphism 
of principal $SO(n)$-bundles
\begin{displaymath}
  c_{g,h}:\quad \P_{\SO}(M,g)\to\P_{\SO}(M,h),
  \quad (e_i)_{i=1}^n\mapsto(b_{g,h} e_i)_{i=1}^n
\end{displaymath}
\end{lemma}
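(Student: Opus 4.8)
The plan is to check, in turn, that $c_{g,h}$ is well defined as a map into $\P_{\SO}(M,h)$, that it is a morphism of principal $\SO(n)$-bundles over $M$, and that it is invertible with inverse $c_{h,g}$.

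First I would show that for a positively oriented $g$-orthonormal frame $(e_i)_{i=1}^n$ of $T_xM$ the tuple $(b_{g,h}e_i)_{i=1}^n$ is a positively oriented $h$-orthonormal frame of $T_xM$. Using the defining property $g(a_{g,h}v,w)=h(v,w)$, the identity $b_{g,h}^2=a_{g,h}^{-1}$ (which gives $a_{g,h}b_{g,h}=b_{g,h}^{-1}$), and the $g$-self-adjointness of $b_{g,h}$ (hence of $b_{g,h}^{-1}$), one computes
\begin{displaymath}
  h(b_{g,h}e_i,b_{g,h}e_j)=g(a_{g,h}b_{g,h}e_i,b_{g,h}e_j)=g(b_{g,h}^{-1}e_i,b_{g,h}e_j)=g(e_i,e_j)=\delta_{ij},
\end{displaymath}
so $(b_{g,h}e_i)$ is $h$-orthonormal. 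Since $b_{g,h}(x)$ is positive definite it has positive determinant, hence preserves the orientation, and $(b_{g,h}e_i)$ indeed lies in $\P_{\SO}(M,h)$.

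Next I would verify that $c_{g,h}$ is a morphism of principal $\SO(n)$-bundles covering $\id_M$. It covers the identity because $b_{g,h}(x)$ maps $T_xM$ into $T_xM$. It is $\SO(n)$-equivariant because $b_{g,h}$ is fibrewise $\R$-linear: if $A=(A_{ji})\in\SO(n)$ acts on a frame by $(e_i)\cdot A=(\sum_j e_jA_{ji})$, then $c_{g,h}((e_i)\cdot A)=(\sum_j(b_{g,h}e_j)A_{ji})=c_{g,h}((e_i))\cdot A$. Smoothness of $c_{g,h}$ follows from smoothness of $b_{g,h}$, which in turn comes from the smoothness of the operator square root on positive definite endomorphisms.

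Finally I would show $c_{g,h}$ is an isomorphism by exhibiting $c_{h,g}$ as its inverse. By Lemma \ref{lemma_agh_ahk} applied with the third metric equal to $g$ we get $a_{g,h}a_{h,g}=a_{g,g}=\id$, so $a_{h,g}=a_{g,h}^{-1}$ and hence $b_{h,g}^2=a_{h,g}^{-1}=a_{g,h}=(b_{g,h}^{-1})^2$. Both $b_{h,g}$ and $b_{g,h}^{-1}$ are positive definite, and both are $h$-self-adjoint — for $b_{h,g}$ by definition, and for $b_{g,h}^{-1}$ because $a_{g,h}$ is simultaneously $g$- and $h$-self-adjoint and $b_{g,h}^{-1}$ is a polynomial in $a_{g,h}$. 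By uniqueness of the positive $h$-self-adjoint square root, $b_{h,g}=b_{g,h}^{-1}$, whence $c_{h,g}\circ c_{g,h}=\id$ and $c_{g,h}\circ c_{h,g}=\id$. (Alternatively one may simply invoke the general fact that a smooth $G$-equivariant bundle map over the identity between principal $G$-bundles is automatically an isomorphism.) The only mildly delicate point — and the place I would be most careful — is this identification of the inverse: one must observe that $b_{g,h}^{-1}$ is $h$-self-adjoint so that uniqueness of positive square roots can be applied with respect to $h$; everything else is a routine unwinding of the definitions of $a_{g,h}$ and $b_{g,h}$.
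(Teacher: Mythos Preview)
Your proposal is correct and follows the same approach as the paper's proof, which is a terse two-sentence ``one easily finds\ldots obviously\ldots'' argument; you simply fill in all the details the paper omits. In particular your careful justification that $b_{h,g}=b_{g,h}^{-1}$ via uniqueness of the positive $h$-self-adjoint square root is a welcome elaboration of what the paper merely asserts.
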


\begin{proof}
One easily finds that $b_{g,h}$ maps positively oriented $g$-orthonormal bases 
to positively oriented $h$-ortho\-nor\-mal bases. Obviously the map $c_{g,h}$ 
is $SO(n)$-equivariant and an isomorphism with $c_{h,g}=c_{g,h}^{-1}$.
\end{proof}

\begin{lemma}
The isomorphism $c_{g,h}$ lifts to an isomorphism of principal 
$\Spin(n)$-bundles
\begin{displaymath}
  \gamma_{g,h}:\quad\P_{\Spin}(M,g)\to\P_{\Spin}(M,h).
\end{displaymath}
\end{lemma}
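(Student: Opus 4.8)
The plan is to lift $c_{g,h}$ through the two-fold coverings $\Theta: \P_{\Spin}(M,g) \to \P_{\SO}(M,g)$ and $\Theta: \P_{\Spin}(M,h) \to \P_{\SO}(M,h)$ using the standard covering-space criterion, and then to check that the resulting lift can be chosen $\Spin(n)$-equivariant. First I would consider the composite map $c_{g,h}\circ\Theta: \P_{\Spin}(M,g) \to \P_{\SO}(M,h)$. Since $\P_{\Spin}(M,h) \to \P_{\SO}(M,h)$ is a covering with deck group $\Z/2\Z$, a continuous lift $\gamma_{g,h}: \P_{\Spin}(M,g) \to \P_{\Spin}(M,h)$ with $\Theta\circ\gamma_{g,h} = c_{g,h}\circ\Theta$ exists if and only if the image of $\pi_1(\P_{\Spin}(M,g))$ under $(c_{g,h}\circ\Theta)_*$ lands inside the image of $\pi_1(\P_{\Spin}(M,h))$ in $\pi_1(\P_{\SO}(M,h))$. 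The key point making this work is that $c_{g,h}$ is an \emph{isomorphism} of principal $\SO(n)$-bundles covering the identity of $M$, so it is in particular a homotopy equivalence identifying the two $\SO(n)$-bundles; under this identification the subgroup of $\pi_1$ determined by the spin structure on $(M,h)$ is carried to exactly the corresponding subgroup, because both spin structures $\Theta$ are \emph{the same} spin structure on $M$ transported to the respective orthonormal frame bundles. Concretely, $\P_{\SO}(M,g)$ and $\P_{\SO}(M,h)$ are both reductions of $\P_{\GL^+}(M)$, the spin structure is a bundle over $\P_{\GL^+}(M)$, and $\P_{\Spin}(M,g) = \Theta^{-1}(\P_{\SO}(M,g))$, $\P_{\Spin}(M,h) = \Theta^{-1}(\P_{\SO}(M,h))$ are its restrictions; since $b_{g,h}$ (hence $c_{g,h}$) is the restriction of a bundle automorphism that is the identity on the base, it lifts canonically to $\P_{\widetilde{\GL}^+}(M)$, and $\gamma_{g,h}$ is obtained by restricting that lift.

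Having produced $\gamma_{g,h}$ I would pin down uniqueness and equivariance. A lift of $c_{g,h}\circ\Theta$ is unique once its value at a single point of each connected component of $\P_{\Spin}(M,g)$ is fixed; there are exactly two lifts, differing by the nontrivial deck transformation. To make $\gamma_{g,h}$ $\Spin(n)$-equivariant, I would observe that for any fixed $q\in\Spin(n)$ the maps $p\mapsto\gamma_{g,h}(pq)$ and $p\mapsto\gamma_{g,h}(p)q$ are both lifts of $p\mapsto c_{g,h}(\Theta(p)\vartheta(q)) = c_{g,h}(\Theta(p))\vartheta(q)$, using that $\Theta$ and $c_{g,h}$ are equivariant over $\SO(n)$ with respect to $\vartheta$; hence the two maps differ by a deck transformation, i.e.\ by a locally constant sign depending on $q$. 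This sign defines a homomorphism $\Spin(n)\to\Z/2\Z$ on each component, which must be trivial since $\Spin(n)$ is connected (indeed perfect for $n\geq 3$); a small separate argument handles $n=2$ where $\Spin(2)\cong S^1$ — there one checks directly on generators that the sign is trivial, or one simply invokes that the canonical lift through $\P_{\widetilde{\GL}^+}(M)$ is equivariant by construction. Therefore $\gamma_{g,h}(pq) = \gamma_{g,h}(p)q$ for all $q$, so $\gamma_{g,h}$ is an isomorphism of principal $\Spin(n)$-bundles, and $\gamma_{h,g} = \gamma_{g,h}^{-1}$ follows by the uniqueness of lifts together with $c_{h,g} = c_{g,h}^{-1}$ from the previous lemma.

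The main obstacle I anticipate is the $\pi_1$-bookkeeping in the very first step: one must argue cleanly that the covering-lifting criterion is satisfied, and the cleanest route is not to chase loops at all but to factor everything through $\P_{\widetilde{\GL}^+}(M) \to \P_{\GL^+}(M)$. That is, $b_{g,h}: TM\to TM$ induces a bundle automorphism $\hat b_{g,h}$ of $\P_{\GL^+}(M)$ over $\mathrm{Id}_M$ carrying $\P_{\SO}(M,g)$ onto $\P_{\SO}(M,h)$; since automorphisms of $\P_{\GL^+}(M)$ over the identity act on the set of spin structures and $\hat b_{g,h}$ is homotopic through such automorphisms to the identity (the space of fibre metrics being convex, hence the $b$'s connected to the identity endomorphism), it preserves the isomorphism class of the given spin structure and in fact lifts to $\Ph: \P_{\widetilde{\GL}^+}(M)\to\P_{\widetilde{\GL}^+}(M)$ with $\Theta\circ\Ph = \hat b_{g,h}\circ\Theta$. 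Restricting $\Ph$ to $\Theta^{-1}(\P_{\SO}(M,g)) = \P_{\Spin}(M,g)$ and noting its image is $\Theta^{-1}(\P_{\SO}(M,h)) = \P_{\Spin}(M,h)$ gives $\gamma_{g,h}$ directly with equivariance built in, sidestepping the fundamental-group computation entirely; the only thing still to verify by hand is the reduction of the structure group from $\widetilde{\GL}^+(n,\R)$ to $\Spin(n)$, which is immediate since $c_{g,h}$ already realizes the corresponding reduction downstairs.
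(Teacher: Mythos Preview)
Your proposal is correct, and the paper's proof is exactly what you describe as the ``cleanest route'' in your final paragraph, carried out explicitly: it takes the straight-line path $g_t=(1-t)g+th$, defines $F(s,t)=c_{g,g_t}(\Theta(s)):\P_{\Spin}(M,g)\times[0,1]\to\P_{\GL^+}(M)$, applies the homotopy lifting property of $\Theta:\P_{\widetilde{\GL}^+}(M)\to\P_{\GL^+}(M)$ with initial lift the inclusion, and sets $\gamma_{g,h}(s):=G(s,1)$; equivariance and $\gamma_{h,g}=\gamma_{g,h}^{-1}$ then follow from the $\SO(n)$-equivariance of $c_{g,g_t}$ together with uniqueness of lifts, just as in your argument.

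Your first approach via the $\pi_1$ lifting criterion is a valid alternative that trades the explicit homotopy for the covering-space lifting theorem; as you yourself note, it requires more care with basepoints and the identification of the relevant subgroups, whereas the homotopy argument gives the lift and its equivariance essentially for free. One minor remark: your sign-homomorphism argument for equivariance already works for all $n\geq 2$ by connectedness of $\Spin(n)$ alone (any continuous homomorphism from a connected group to a discrete group is trivial), so no separate treatment of $n=2$ is needed.
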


\begin{proof}
Let $\Theta$: $\P_{\Spin}(M,g)\to\P_{\SO}(M,g)$ be the covering map. 
We define a family of Riemannian metrics $(g_t)_{t\in[0,1]}$ 
on $M$ by $g_t:=(1-t)g+th$ and we define the map
\begin{displaymath}
  F:\quad \P_{\Spin}(M,g)\times[0,1]\to\P_{\GL^+}(M),\quad (s,t)\mapsto c_{g,g_t}(\Theta(s)).
\end{displaymath}
We consider the following commutative diagram
\[
\begin{xy}
  \xymatrix{          
    \P_{\Spin}(M,g)\times\{0\} \ar[r]^/0.7em/j  \ar[d]_i & \P_{\widetilde{\GL}^+}(M) \ar[d]^{\Theta} \\ 
    \P_{\Spin}(M,g)\times[0,1] \ar[r]^/0.9em/F           & \P_{\GL^+}(M) 
  }
\end{xy}
\]
where $i$ and $j$ are inclusions. Since the map $\Theta$ has 
the homotopy lifting property, there exists a unique map
\begin{displaymath}
  G:\quad \P_{\Spin}(M,g)\times[0,1]\to\P_{\widetilde{\GL}^+}(M)
\end{displaymath}
such that 
$\Theta\circ G=F$ and $G(s,0)=s$ for all $s\in\P_{\Spin}(M,g)$. 
We define $\gamma_{g,h}(s):=G(s,1)$. 
The definition of $\gamma_{g,h}$ does not depend on the family 
of Riemannian metrics $g_t$ chosen above. Namely any two paths 
between $g$ and $h$ are homotopic and therefore yield the same 
result $G(s,1)$. We find that $\gamma_{g,h}$ is $\Spin(n)$-equivariant, 
since $c_{g,g_t}$ is $\SO(n)$-equivariant. 
Using the uniqueness of lifts one can show that 
$\gamma_{g,h}$ is an isomorphism with $\gamma_{h,g}=\gamma_{g,h}^{-1}$.
\end{proof}

\begin{lemma}
\label{definition_beta}
The isomorphism $\gamma_{g,h}$ induces an isomorphism of vector bundles
\begin{displaymath}
  \beta_{g,h}:\quad\Sigma^gM\to\Sigma^hM,\quad [s,\sigma]\mapsto[\gamma_{g,h}(s),\sigma]
\end{displaymath}
which is a fibrewise isometry with respect to the hermitian metrics on 
$\Sigma^gM$ and $\Sigma^hM$. Furthermore for all $v\in TM$ and for all 
$\psi\in\Sigma^gM$ we have $\beta_{g,h}(v\cdot\psi)=b_{g,h}v\cdot\beta_{g,h}\psi$, 
where $\cdot$ denotes both Clifford multiplications on $\Sigma^gM$ and on $\Sigma^hM$.
\end{lemma}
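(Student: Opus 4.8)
The plan is to verify in turn that $\beta_{g,h}$ is well defined, that it is a fibrewise $\C$-linear isomorphism covering the identity of $M$, that it is a fibrewise isometry, and finally that it intertwines the two Clifford multiplications as stated; the first three properties are formal consequences of the $\Spin(n)$-equivariance of $\gamma_{g,h}$ and of the definitions of the associated-bundle structures recalled above. Concretely, if $[s,\sigma]=[s',\sigma']$ in $\Sigma^gM$ then $s'=sq^{-1}$ and $\sigma'=\rho(q)\sigma$ for some $q\in\Spin(n)$, so $\gamma_{g,h}(s')=\gamma_{g,h}(s)q^{-1}$ and hence $[\gamma_{g,h}(s'),\sigma']=[\gamma_{g,h}(s),\sigma]$, which shows that $\beta_{g,h}$ is well defined; it obviously covers $\id_M$. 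Picking a local section $s$ of $\P_{\Spin}(M,g)$ and using $\gamma_{g,h}\circ s$ as a local section of $\P_{\Spin}(M,h)$, the map $\beta_{g,h}$ becomes the identity of $\Sigma_n$ in the resulting local trivializations, hence is $\C$-linear and bijective on each fibre, and $\gamma_{h,g}=\gamma_{g,h}^{-1}$ gives $\beta_{h,g}=\beta_{g,h}^{-1}$. For the isometry claim, the hermitian metric on an associated bundle satisfies $\langle[s,\sigma_1],[s,\sigma_2]\rangle=\langle\sigma_1,\sigma_2\rangle$, so $\langle\beta_{g,h}[s,\sigma_1],\beta_{g,h}[s,\sigma_2]\rangle=\langle[\gamma_{g,h}(s),\sigma_1],[\gamma_{g,h}(s),\sigma_2]\rangle=\langle\sigma_1,\sigma_2\rangle$.

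For the Clifford relation I would first record the identity $\Theta\circ\gamma_{g,h}=c_{g,h}\circ\Theta$, where on the left $\Theta$ denotes the covering $\P_{\Spin}(M,h)\to\P_{\SO}(M,h)$ and on the right the covering for $g$: this follows from the construction of $\gamma_{g,h}$, since the lift $G$ there satisfies $\Theta\circ G=F$ with $F(s,1)=c_{g,g_1}(\Theta(s))=c_{g,h}(\Theta(s))\in\P_{\SO}(M,h)$, and the spin-level coverings are restrictions of the covering $\P_{\widetilde{\GL}^+}(M)\to\P_{\GL^+}(M)$. Now fix $x\in M$, $\psi=[s,\sigma]\in\Sigma^g_xM$, put $(e_i)_{i=1}^n:=\Theta(s)$, and write $v\in T_xM$ as $v=\sum_i v^i e_i$, i.e. $v=[\Theta(s),\xi]$ with $\xi:=(v^1,\dots,v^n)\in\R^n$. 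By the definition of Clifford multiplication on $\Sigma^gM$ we get $v\cdot\psi=[s,\xi\cdot\sigma]$ and hence $\beta_{g,h}(v\cdot\psi)=[\gamma_{g,h}(s),\xi\cdot\sigma]$. On the other hand $\Theta(\gamma_{g,h}(s))=c_{g,h}(\Theta(s))=(b_{g,h}e_i)_{i=1}^n$, and since $b_{g,h}v=\sum_i v^i b_{g,h}e_i$ the coordinate vector of $b_{g,h}v$ in this $h$-orthonormal frame is again $\xi$, so $b_{g,h}v=[\Theta(\gamma_{g,h}(s)),\xi]$; therefore $b_{g,h}v\cdot\beta_{g,h}\psi=b_{g,h}v\cdot[\gamma_{g,h}(s),\sigma]=[\gamma_{g,h}(s),\xi\cdot\sigma]=\beta_{g,h}(v\cdot\psi)$.

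The only real obstacle is the bookkeeping in this last step: one has to express both $v$ and $b_{g,h}v$ with respect to the matched orthonormal frames $(e_i)_{i=1}^n$ and $(b_{g,h}e_i)_{i=1}^n$ and observe, using $\Theta\circ\gamma_{g,h}=c_{g,h}\circ\Theta$, that they have literally the same coordinate vector $\xi\in\R^n$; once this is in place the equality of the two Clifford actions is immediate from the defining formulas for Clifford multiplication on $\Sigma^gM$ and on $\Sigma^hM$. Everything else reduces directly to the equivariance of $\gamma_{g,h}$.
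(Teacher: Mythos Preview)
Your proof is correct and follows essentially the same route as the paper: both arguments reduce the Clifford compatibility to the identity $\Theta\circ\gamma_{g,h}=c_{g,h}\circ\Theta$ and the observation that $v=[\Theta(s),\xi]$ and $b_{g,h}v=[\Theta(\gamma_{g,h}(s)),\xi]$ have the same coordinate vector $\xi\in\R^n$ in the matched frames. Your write-up is slightly more explicit (spelling out well-definedness and the coordinate computation), but the substance is identical.
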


\begin{proof}
The map $\beta_{g,h}$ is well defined, since $\gamma_{g,h}$ 
is $\Spin(n)$-equivariant. We find that $\beta_{h,g}=\beta_{g,h}^{-1}$. 
By the definition of the hermitian metrics on $\Sigma^gM$ 
and $\Sigma^hM$ the map $\beta_{g,h}$ is a fibrewise isometry. 
For the last assertion observe that under the isomorphism 
$TM\cong\P_{\SO}(M,g)\times_{\tau}\R^n$ for all $s\in\P_{\Spin}(M,g)$, 
$w\in\R^n$ we have
\begin{displaymath}
  b_{g,h}([\Theta(s),w])=[c_{g,h}(\Theta(s)),w]=[\Theta(\gamma_{g,h}(s)),w]
\end{displaymath}
and therefore for all $s\in\P_{\Spin}(M,g)$, $w\in\R^n$, $\sigma\in\Sigma_n$
\begin{displaymath}
  b_{g,h}([\Theta(s),w])\cdot\beta_{g,h}([s,\sigma])=[\gamma_{g,h}(s),w\cdot\sigma]
  =\beta_{g,h}([\Theta(s),w]\cdot[s,\sigma])
\end{displaymath}
which completes the proof.
\end{proof}

Next we want to compare the Dirac operators $D^g$ and $D^h$. 
The map $\beta_{g,h}$ does not induce an isometry of Hilbert spaces 
$\Lspinor{M}{g}{2}\to\Lspinor{M}{h}{2}$, since the volume forms 
$\dv^g$, $\dv^h$ induced by the metrics $g$, $h$ are different (see \cite{m}). 
In order to compensate this we note, that there exists a smooth positive function $f_{g,h}$ 
on $M$ such that $\dv^h=f_{g,h}^2\dv^g$. We define
\begin{displaymath}
  \overline{\beta}_{g,h}:=\frac{1}{f_{g,h}}\beta_{g,h}:\quad
  \Sigma^gM\to\Sigma^hM.
\end{displaymath}
The maps $\beta_{g,h}$, $\overline{\beta}_{g,h}$ induce isomorphisms 
$\spinor{M}{g}{\infty}\to\spinor{M}{h}{\infty}$, which will also be 
denoted by $\beta_{g,h}$, $\overline{\beta}_{g,h}$. We use the map 
$\overline{\beta}_{g,h}$ to pull back the Dirac operator on $\Sigma^hM$ to 
spinors for the metric $g$.
\begin{displaymath}
  D^{g,h}:=\overline{\beta}_{h,g} D^h \overline{\beta}_{g,h}.
\end{displaymath}

We see that these operators have the following properties.

\begin{lemma}
\label{definition_beta_bar}
The map $\overline{\beta}_{g,h}$ induces an isometry of Hilbert spaces 
\begin{displaymath}
  \overline{\beta}_{g,h}:\quad\Lspinor{M}{g}{2}\to\Lspinor{M}{h}{2}.
\end{displaymath}
If $D^h$ has self-adjoint closure on $\Lspinor{M}{h}{2}$, then $D^{g,h}$ has 
self-adjoint closure on $\Lspinor{M}{g}{2}$.
\end{lemma}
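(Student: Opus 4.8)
The claim has two parts. The first is that $\overline{\beta}_{g,h}\colon\Lspinor{M}{g}{2}\to\Lspinor{M}{h}{2}$ is an isometry of Hilbert spaces. Since $\overline{\beta}_{g,h}=f_{g,h}^{-1}\beta_{g,h}$ and by Lemma \ref{definition_beta} the map $\beta_{g,h}$ is a fibrewise isometry for the hermitian metrics on $\Sigma^gM$ and $\Sigma^hM$, for a smooth spinor $\psi$ we have $|\overline{\beta}_{g,h}\psi|_h = f_{g,h}^{-1}|\psi|_g$ pointwise. Hence
\begin{displaymath}
  \|\overline{\beta}_{g,h}\psi\|_{L^2(\Sigma^hM)}^2
  =\int_M f_{g,h}^{-2}|\psi|_g^2\,\dv^h
  =\int_M f_{g,h}^{-2}|\psi|_g^2\,f_{g,h}^2\,\dv^g
  =\int_M |\psi|_g^2\,\dv^g
  =\|\psi\|_{L^2(\Sigma^gM)}^2,
\end{displaymath}
using exactly the defining property $\dv^h=f_{g,h}^2\dv^g$ of the conformal factor $f_{g,h}$. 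So $\overline{\beta}_{g,h}$ is an isometry on the dense subspace $\spinor{M}{g}{\infty}$, and since $\overline{\beta}_{h,g}$ provides a two-sided inverse on smooth spinors, $\overline{\beta}_{g,h}$ extends uniquely to a unitary isomorphism of the completions. This part is essentially a one-line computation; no obstacle here.

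For the second part, we must show $D^{g,h}:=\overline{\beta}_{h,g}D^h\overline{\beta}_{g,h}$ has self-adjoint closure on $\Lspinor{M}{g}{2}$ whenever $D^h$ has self-adjoint closure on $\Lspinor{M}{h}{2}$. The point is that unitary conjugation preserves self-adjointness. Concretely, $D^h$ is defined on the dense domain $\spinor{M}{h}{\infty}\subset\Lspinor{M}{h}{2}$ and by hypothesis its closure $\overline{D^h}$ is self-adjoint with some domain $\dom(\overline{D^h})$. Set $U:=\overline{\beta}_{g,h}\colon\Lspinor{M}{g}{2}\to\Lspinor{M}{h}{2}$, a unitary with $U^{-1}=\overline{\beta}_{h,g}$ by the first part. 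Then $D^{g,h}=U^{-1}\overline{D^h}U$ on the domain $U^{-1}(\dom(\overline{D^h}))$ is densely defined (since $U^{-1}$ is unitary and $\dom(\overline{D^h})$ is dense), and one checks directly from the definition of adjoint that $(U^{-1}AU)^*=U^{-1}A^*U$ for any densely defined $A$ and unitary $U$; hence $U^{-1}\overline{D^h}U$ is self-adjoint.

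It then remains to identify this self-adjoint operator $U^{-1}\overline{D^h}U$ with the closure of $D^{g,h}$ as originally defined on $\spinor{M}{g}{\infty}$. For this I would argue: $D^{g,h}$ on $\spinor{M}{g}{\infty}$ is by definition the restriction of $U^{-1}\overline{D^h}U$ to $U^{-1}(\spinor{M}{h}{\infty})=\spinor{M}{g}{\infty}$ (using that $\overline{\beta}_{g,h}$ and $\overline{\beta}_{h,g}$ map smooth spinors to smooth spinors, noted just before Lemma \ref{definition_beta_bar}). Since $\spinor{M}{h}{\infty}$ is a core for $\overline{D^h}$ (its closure is $\overline{D^h}$), and $U^{-1}$ is a homeomorphism for both the $L^2$ and the graph norms, $\spinor{M}{g}{\infty}$ is a core for $U^{-1}\overline{D^h}U$; equivalently, the closure of $D^{g,h}$ equals $U^{-1}\overline{D^h}U$, which we have just shown is self-adjoint.

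The mildly delicate point — really the only thing requiring care — is the core/closure bookkeeping: one must be sure that conjugating a core by a unitary that is simultaneously an $L^2$-isometry and maps the smooth domain bijectively onto the smooth domain again yields a core, and that "has self-adjoint closure" is thereby transported. This is a standard fact about self-adjoint extensions under unitary equivalence, so I would either cite it or spell out the three short verifications (densely defined, symmetric on the smooth domain, closure equals the self-adjoint operator $U^{-1}\overline{D^h}U$) without further ado.
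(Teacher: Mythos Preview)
Your proof is correct and is exactly the argument the paper has in mind: the paper's own proof is the single sentence ``This is clear from the definitions of $\overline{\beta}_{g,h}$ and $D^{g,h}$,'' and you have simply unpacked that sentence---the $L^2$-isometry computation from $\dv^h=f_{g,h}^2\dv^g$ together with the standard fact that unitary conjugation transports essential self-adjointness. The additional care you take with the core/closure bookkeeping is more than the paper provides but is of course the right thing to verify.
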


\begin{proof}
This is clear from the definitions of $\overline{\beta}_{g,h}$ and $D^{g,h}$.
\end{proof}

Next we want to express the operator $D^{g,h}$ in terms of a local orthonormal frame. 
For a Riemannian metric $g$ on $M$ we denote both the Levi Civita connection on $TM$ and the 
induced connection on $\Sigma^gM$ by $\nabla^g$.

\begin{theorem}
Let $(e_i)_{i=1}^n$ be a local $g$-orthonormal frame defined on an open 
subset $U\subset M$. Then for all $\psi\in\spinor{M}{g}{\infty}$ we have on $U$:
\begin{eqnarray}
  \nonumber
  D^{g,h}\psi&=&\sum_{i=1}^n e_i\cdot\nabla^g_{b_{g,h}(e_i)}\psi\\
  \nonumber
  &&{}+\frac{1}{4}\sum_{i,j=1}^n e_i\cdot e_j\cdot(b_{h,g}\nabla^h_{b_{g,h(e_i)}}(b_{g,h}e_j)-\nabla^g_{b_{g,h}(e_i)}e_j)\cdot\psi\\
  \label{Dgh}
  &&{}-\frac{1}{f_{g,h}} b_{g,h}(\grad^g (f_{g,h}))\cdot\psi.
\end{eqnarray}
\end{theorem}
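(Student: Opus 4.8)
The plan is to compute $D^{g,h}=\overline{\beta}_{h,g}D^h\overline{\beta}_{g,h}$ directly by unwinding the definition of the Dirac operator $D^h$ in a local orthonormal frame and transporting everything to the metric $g$ via the bundle isomorphisms of Lemma \ref{definition_beta}. First I would fix a local $g$-orthonormal frame $(e_i)_{i=1}^n$ on $U$; then $(b_{g,h}e_i)_{i=1}^n$ is a local $h$-orthonormal frame, since $b_{g,h}$ intertwines the two inner products by construction. For $\psi\in\spinor{M}{g}{\infty}$ we have $\overline{\beta}_{g,h}\psi=f_{g,h}^{-1}\beta_{g,h}\psi$, and by definition of $D^h$ applied to this frame,
\begin{displaymath}
  D^h(\overline{\beta}_{g,h}\psi)=\sum_{i=1}^n (b_{g,h}e_i)\cdot\nabla^h_{b_{g,h}e_i}(\overline{\beta}_{g,h}\psi).
\end{displaymath}
Applying $\overline{\beta}_{h,g}=f_{g,h}\beta_{h,g}=f_{g,h}\beta_{g,h}^{-1}$ and using the Clifford-compatibility $\beta_{h,g}((b_{g,h}e_i)\cdot\varphi)=e_i\cdot\beta_{h,g}\varphi$ from Lemma \ref{definition_beta} turns the leading Clifford factors $(b_{g,h}e_i)\cdot$ back into $e_i\cdot$, so the whole computation reduces to understanding $\beta_{h,g}\circ\nabla^h_{b_{g,h}e_i}\circ(f_{g,h}^{-1}\beta_{g,h})$ acting on $\psi$.

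The core step is therefore to compare the pulled-back spin connection $\beta_{h,g}\circ\nabla^h_X\circ\beta_{g,h}$ with $\nabla^g_X$. Here I would use the local formula (\ref{connection_local}): for the $h$-orthonormal frame $(b_{g,h}e_j)_j$ one has $\nabla^h_X(\cdot)=\partial^h_X(\cdot)+\tfrac14\sum_j (b_{g,h}e_j)\cdot(\nabla^h_X(b_{g,h}e_j))\cdot(\cdot)$, where $\partial^h$ is the flat connection attached to the corresponding spinor frame $\psi^h_i=[\gamma_{g,h}(s),E_i]$. Since $\beta_{g,h}$ sends the $g$-spinor frame $\psi_i=[s,E_i]$ to $\psi^h_i$, it intertwines $\partial^g$ with $\partial^h$; hence $\beta_{h,g}\circ\partial^h_X\circ\beta_{g,h}=\partial^g_X$, and the difference between $\beta_{h,g}\nabla^h_X\beta_{g,h}$ and $\nabla^g_X$ comes entirely from the curvature-type terms, giving (again via Clifford compatibility, which converts $(b_{g,h}e_j)\cdot(b_{g,h}e_k)\cdot$ into $e_j\cdot e_k\cdot$)
\begin{displaymath}
  \beta_{h,g}\nabla^h_X\beta_{g,h}\psi
  =\partial^g_X\psi+\frac14\sum_{j,k=1}^n e_j\cdot e_k\cdot g\big(b_{h,g}\nabla^h_X(b_{g,h}e_j),e_k\big)\,\psi.
\end{displaymath}
Rewriting $\partial^g_X\psi=\nabla^g_X\psi-\tfrac14\sum_{j,k} e_j\cdot e_k\cdot g(\nabla^g_X e_j,e_k)\,\psi$ and taking $X=b_{g,h}e_i$ combines the two curvature terms into the single sum $\tfrac14\sum_{i,j}e_i\cdot e_j\cdot(b_{h,g}\nabla^h_{b_{g,h}e_i}(b_{g,h}e_j)-\nabla^g_{b_{g,h}e_i}e_j)\cdot\psi$ appearing in (\ref{Dgh}) — using that $e_j\cdot e_k$ is antisymmetric in $j,k$ so only the skew part of the matrix $g(\cdot,e_k)$ contributes and one may drop the explicit projection onto skew-symmetric matrices.

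Finally I would account for the scalar factor $f_{g,h}^{-1}$: writing $\overline{\beta}_{g,h}\psi=f_{g,h}^{-1}\beta_{g,h}\psi$ and using the derivation property of $\nabla^h$ together with the Leibniz rule for Clifford multiplication, the extra term is $\sum_i (b_{g,h}e_i)\cdot\big((b_{g,h}e_i)(f_{g,h}^{-1})\big)\beta_{g,h}\psi$, and after applying $\overline{\beta}_{h,g}=f_{g,h}\beta_{h,g}$ this becomes $-f_{g,h}^{-1}\sum_i e_i\cdot\big((b_{g,h}e_i)(f_{g,h})\big)\psi = -f_{g,h}^{-1}b_{g,h}(\grad^g f_{g,h})\cdot\psi$, the last equality because for an orthonormal frame $\sum_i e_i\, X(f)\cdot$ with $X=b_{g,h}e_i$ reconstitutes $b_{g,h}(\grad^g f)\cdot$ (one checks $\sum_i e_i\, g(b_{g,h}e_i,\grad^g f)=\sum_i e_i\, g(e_i,b_{g,h}\grad^g f)=b_{g,h}\grad^g f$). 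Adding the three pieces gives exactly (\ref{Dgh}). The main obstacle is purely bookkeeping: keeping straight which connection ($\nabla^g$ versus $\nabla^h$, $\partial^g$ versus $\partial^h$) and which frame each object is expressed in, and correctly using Lemma \ref{definition_beta} to shuttle Clifford multiplications across $\beta_{g,h}$ at every stage; there is no serious analytic difficulty.
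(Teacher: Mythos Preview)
Your proposal is correct and follows essentially the same approach as the paper: both compute $D^h$ in the $h$-orthonormal frame $(b_{g,h}e_i)$, use the Clifford compatibility of Lemma \ref{definition_beta} to convert Clifford factors back to $e_i\cdot$, and compare the two spin connections via the local formula (\ref{connection_local}), with the $\partial$-terms cancelling because $\beta_{g,h}$ intertwines the associated spinor frames. The only cosmetic difference is that the paper first applies the Leibniz rule for $D^h$ to split off the $f_{g,h}^{-1}$ factor (obtaining the gradient term as $\grad^h(f_{g,h}^{-1})$ and then rewriting it via $b_{h,g}\grad^h=b_{g,h}\grad^g$), whereas you carry $f_{g,h}^{-1}$ through $\nabla^h$ and extract it at the end; both routes give the same third term. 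Your aside about antisymmetry of $e_j\cdot e_k$ is unnecessary, since $\sum_k e_k\, g(V,e_k)=V$ already collapses the double sum to $e_j\cdot V_j\cdot\psi$ without any skew-symmetry argument.
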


Here $\grad^g(f)$ denotes the gradient of a smooth function $f$ with respect to the metric $g$.

\begin{proof}
We find
\begin{eqnarray*}
  D^{g,h}\psi&=&\overline{\beta}_{h,g}\big(\frac{1}{f_{g,h}}D^h\beta_{g,h}\psi-\frac{1}{f_{g,h}^2}\grad^h(f_{g,h})\cdot\beta_{g,h}\psi\big)\\
  &=&\beta_{h,g}D^h\beta_{g,h}\psi-\frac{1}{f_{g,h}}b_{h,g}(\grad^h(f_{g,h}))\cdot\psi\\
  &=&\beta_{h,g}D^h\beta_{g,h}\psi-\frac{1}{f_{g,h}}b_{g,h}(\grad^g(f_{g,h}))\cdot\psi.
\end{eqnarray*}
For the first summand we obtain
\begin{eqnarray*}
  \beta_{h,g}D^h\beta_{g,h}\psi&=&\beta_{h,g}\big(\sum_{i=1}^n b_{g,h}e_i\cdot\nabla^h_{b_{g,h}(e_i)}\beta_{g,h}\psi\big)\\
  &=&\sum_{i=1}^n e_i\cdot\beta_{h,g}\nabla^h_{b_{g,h}(e_i)}\beta_{g,h}\psi\\
  &=&\sum_{i=1}^n e_i\cdot\nabla^g_{b_{g,h}(e_i)}\psi+\sum_{i=1}^n e_i\cdot
  (\beta_{h,g}\nabla^h_{b_{g,h}(e_i)}\beta_{g,h}\psi-\nabla^g_{b_{g,h}(e_i)}\psi).
\end{eqnarray*}
Using the formula (\ref{connection_local}) we get
\begin{eqnarray*}
  &&\beta_{h,g}\nabla^h_{b_{g,h}(e_i)}\beta_{g,h}\psi-\nabla^g_{b_{g,h}(e_i)}\psi\\
  &=&\frac{1}{4}\sum_{j=1}^n \beta_{h,g}(b_{g,h}e_j\cdot(\nabla^h_{b_{g,h}(e_i)}(b_{g,h}e_j))\cdot\beta_{g,h}\psi) 
  -\frac{1}{4}\sum_{j=1}^n e_j\cdot (\nabla^g_{b_{g,h}(e_i)}e_j)\cdot\psi\\
  &=&\frac{1}{4}\sum_{j=1}^n e_j\cdot (b_{h,g}\nabla^h_{b_{g,h}(e_i)}(b_{g,h}e_j)-\nabla^g_{b_{g,h}(e_i)}e_j)\cdot\psi.
\end{eqnarray*}
This gives the formula of the assertion.
\end{proof}

For $r\in\N$ denote by $\sym{r}$ (resp. $\sym{\infty}$) the space of 
$r$ times continuously differentiable (resp. smooth) symmetric $(2,0)$ 
tensor fields on $M$. 
In order to compute the derivative of $D^{g,h}$ with respect to the metric $h$ 
we let $k\in\sym{\infty}$. 
Then there exists an open neighborhood $I\subset\R$ 
of $0$ such that for every $t\in I$ the tensor field $g_t:=g+tk$ 
is a Riemannian metric on $M$.

\begin{theorem}
Let $(e_i)_{i=1}^n$ be a local $g$-orthonormal frame defined on an open 
subset $U\subset M$. Then for all $\psi\in\spinor{M}{g}{\infty}$ we have on $U$:
\begin{equation}
  \label{dirac_derivative}
  \frac{d}{dt}D^{g,g_t}\big|_{t=0}\psi=-\frac{1}{2}\sum_{i=1}^n e_i\cdot\nabla^g_{a_{g,k}(e_i)}\psi
  -\frac{1}{4}\sum_{i=1}^n \div^g(k)(e_i)e_i\cdot\psi.
\end{equation}
\end{theorem}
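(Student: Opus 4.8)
The strategy is to reduce the derivative of $D^{g,g_t}$ at $t=0$ to the derivative of the various ingredients appearing in formula (\ref{Dgh}), and to observe that at $t=0$ the identifications $\beta_{g,g_t}$, $b_{g,g_t}$ and the function $f_{g,g_t}$ are all the identity (resp. equal to $1$), so only first-order terms survive. First I would record the infinitesimal behaviour of the basic objects: from $g_t=g+tk$ one gets $a_{g,g_t}=\id+t\,a_{g,k}$, hence $\frac{d}{dt}\big|_{t=0}a_{g,g_t}=a_{g,k}$, and since $b_{g,g_t}^2=a_{g,g_t}^{-1}$ one obtains $\frac{d}{dt}\big|_{t=0}b_{g,g_t}=-\tfrac12 a_{g,k}$ and $\frac{d}{dt}\big|_{t=0}b_{g_t,g}=+\tfrac12 a_{g,k}$. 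For the volume forms, $\dv^{g_t}=\sqrt{\det(\id+t a_{g,k})}\,\dv^g$ gives $f_{g,g_t}^2=\det(\id+ta_{g,k})$, so $\frac{d}{dt}\big|_{t=0}f_{g,g_t}=\tfrac14\tr(a_{g,k})$. Finally, differentiating $\beta_{g,g_t}$ requires differentiating the lift $\gamma_{g,g_t}$; but because $\beta_{g,g_t}$ is built fibrewise from $b_{g,g_t}$ via the spin covering and $b_{g,g_t}\to\id$ as $t\to0$, one checks (as in \cite{bg}, \cite{m}) that $\frac{d}{dt}\big|_{t=0}\beta_{g,g_t}\psi$ is the Clifford-type action of the symmetric endomorphism $-\tfrac12 a_{g,k}$ coming from $d\rho$ applied to the $\mathfrak{so}(n)$-part; since $-\tfrac12 a_{g,k}$ is \emph{symmetric}, its $\mathfrak{so}(n)$-component vanishes and this contribution is zero.

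Next I would differentiate each of the three summands of (\ref{Dgh}) term by term at $t=0$. In the first summand $\sum_i e_i\cdot\nabla^g_{b_{g,g_t}(e_i)}\psi$, only the $b_{g,g_t}$ inside the subscript depends on $t$, so its derivative is $\sum_i e_i\cdot\nabla^g_{-\frac12 a_{g,k}(e_i)}\psi=-\tfrac12\sum_i e_i\cdot\nabla^g_{a_{g,k}(e_i)}\psi$, which is exactly the first term of (\ref{dirac_derivative}). For the second summand, the bracket $b_{g_t,g}\nabla^{g_t}_{b_{g,g_t}(e_i)}(b_{g,g_t}e_j)-\nabla^g_{b_{g,g_t}(e_i)}e_j$ vanishes identically at $t=0$ (both terms equal $\nabla^g_{e_i}e_j$), so by the product rule its $t$-derivative is just $\tfrac14\sum_{i,j}e_i\cdot e_j\cdot\big(\frac{d}{dt}\big|_{t=0}[\cdots]\big)\cdot\psi$; here one needs the first variation of the Levi-Civita connection, $\frac{d}{dt}\big|_{t=0}\nabla^{g_t}_XY=\tfrac12\big((\nabla^g_X a_{g,k})Y+(\nabla^g_Y a_{g,k})X-?\big)$ — more precisely the standard formula $2g(\dot\Gamma(X,Y),Z)=(\nabla^g_X k)(Y,Z)+(\nabla^g_Y k)(X,Z)-(\nabla^g_Z k)(X,Y)$ — together with the derivatives of $b_{g,g_t}$, $b_{g_t,g}$ computed above. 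For the third summand $-\tfrac{1}{f_{g,g_t}}b_{g,g_t}(\grad^g f_{g,g_t})\cdot\psi$, since $f_{g,g}=1$ and $\grad^g 1=0$, the only surviving term is $-\grad^g\big(\frac{d}{dt}\big|_{t=0}f_{g,g_t}\big)\cdot\psi=-\tfrac14\grad^g(\tr a_{g,k})\cdot\psi$.

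The remaining work is to collect the terms from the second and third summands and check that their sum equals $-\tfrac14\sum_i\div^g(k)(e_i)\,e_i\cdot\psi$. This is the step I expect to be the main obstacle: one must carefully contract the first-variation-of-connection formula against $\sum_{i,j}e_i\cdot e_j\cdot(\,\cdot\,)$, use the Clifford relation (\ref{clifford_relation}) to kill the antisymmetric pieces, and match conventions for $\div^g(k)$ (the trace $\sum_i(\nabla^g_{e_i}k)(e_i,\cdot)$ versus $-\sum_i(\nabla^g_{e_i}k)(e_i,\cdot)$) and for $\tr a_{g,k}=\tr_g k$. A useful bookkeeping device is that $\sum_{i,j}e_i\cdot e_j\cdot S_{ij}$ for a symmetric $S$ equals $-\tr(S)$, while for the part that is antisymmetric in $(i,j)$ one gets the genuine Clifford curvature-type contribution; organizing the computation so that the connection variation is split into its symmetric and antisymmetric parts in the frame indices makes the cancellations transparent. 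Once the algebra is done, the gradient term $-\tfrac14\grad^g(\tr_g k)\cdot\psi$ from the third summand combines with the symmetric part of the second summand to produce precisely $-\tfrac14\sum_i\div^g(k)(e_i)e_i\cdot\psi$, completing the proof. I would also remark that the final expression is manifestly independent of the chosen frame $(e_i)$, as it must be, and that it extends by continuity from $k\in\sym{\infty}$ to lower regularity as needed later.
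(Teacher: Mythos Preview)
Your approach is essentially the same as the paper's: differentiate each of the three summands of (\ref{Dgh}) at $t=0$, use the first-variation formula $2g(\dot\nabla_XY,Z)=(\nabla^g_Xk)(Y,Z)+(\nabla^g_Yk)(X,Z)-(\nabla^g_Zk)(X,Y)$ for the middle term, and then do the Clifford contraction so that the $\grad^g(\tr^g k)$ piece coming from the second summand cancels against the third summand, leaving exactly $-\tfrac14\sum_i\div^g(k)(e_i)\,e_i\cdot\psi$. Two small corrections: one has $f_{g,g_t}=\det(\id+t\,a_{g,k})^{1/4}$ (since $\dv^{g_t}=\det(\id+t\,a_{g,k})^{1/2}\dv^g$ and $\dv^h=f_{g,h}^2\dv^g$), which nonetheless gives your stated $\frac{d}{dt}\big|_{t=0}f_{g,g_t}=\tfrac14\tr^g(k)$; and no separate differentiation of $\beta_{g,g_t}$ is needed at all, because formula (\ref{Dgh}) is already expressed entirely through $b_{g,g_t}$, $\nabla^{g_t}$ and $f_{g,g_t}$.
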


Here $\div^g(k)$ denotes the divergence\index{divergence} of $k$ with respect to the metric $g$. 
If $(e_i)_{i=1}^n$ is a local orthonormal frame of $TM$, it is the 
one-form on $M$ which is locally defined by
\begin{displaymath}
  \div^g(k)(X):=\sum_{i=1}^n(\nabla^g_{e_i} k)(X,e_i)
\end{displaymath}
for all $X\in TM$. By $\tr^g(k)\in C^{\infty}(M,\R)$ we will denote 
the $g$-trace\index{trace} of $k$, which is locally defined by
\begin{displaymath}
  \tr^g(k):=\sum_{i=1}^n k(e_i,e_i).
\end{displaymath}
Of course these definitions are independent of the choice of 
orthonormal frame.

\begin{proof}
From $g_t=g+tk$ we obtain $b_{g,g_t}=(\id+ta_{g,k})^{-1/2}$. 
It follows that 
\begin{displaymath}
  \frac{d}{dt}b_{g,g_t}\big|_{t=0}=-\frac{1}{2}a_{g,k}.
\end{displaymath}
The first summand is now obtained from the first summand of (\ref{Dgh}). 
By \cite{bes}, p. 62, we have for all $X$, $Y$, $Z\in TM$
\begin{displaymath}
  2g(\frac{d}{dt}\nabla^{g_t}_X Y\big|_{t=0},Z)=(\nabla^g_X k)(Y,Z)+(\nabla^g_Y k)(X,Z)
  -(\nabla^g_Z k)(X,Y).
\end{displaymath}
We calculate
\begin{eqnarray*}
  &&\frac{d}{dt}(b_{g_t,g}\nabla^{g_t}_{b_{g,g_t(e_i)}}(b_{g,g_t}e_j)-\nabla^g_{b_{g,g_t}(e_i)}e_j)\big|_{t=0}\\
  &=&\frac{1}{2}a_{g,k}(\nabla^g_{e_i}e_j)+\frac{d}{dt}\nabla^{g_t}_{e_i}e_j\big|_{t=0}
  -\frac{1}{2}\nabla^g_{a_{g,k}(e_i)}e_j-\frac{1}{2}\nabla^g_{e_i}(a_{g,k}e_j)
  +\frac{1}{2}\nabla^g_{a_{g,k}(e_i)}e_j\\
  &=&\frac{1}{2}\sum_{m=1}^n k(\nabla^g_{e_i}e_j,e_m)e_m+\frac{d}{dt}\nabla^{g_t}_{e_i}e_j\big|_{t=0}
  -\frac{1}{2}\sum_{m=1}^n g(\nabla^g_{e_i}(a_{g,k}e_j),e_m)e_m\\
  &=&\frac{1}{2}\sum_{m=1}^n (k(\nabla^g_{e_i}e_j,e_m)+k(e_j,\nabla^g_{e_i}e_m)-\partial_{e_i} k(e_j,e_m))e_m
  +\frac{d}{dt}\nabla^{g_t}_{e_i}e_j\big|_{t=0}\\
  &=&-\frac{1}{2}\sum_{m=1}^n (\nabla^g_{e_i}k)(e_j,e_m)e_m+\frac{d}{dt}\nabla^{g_t}_{e_i}e_j\big|_{t=0}\\
  &=&\frac{1}{2}\sum_{m=1}^n ((\nabla^g_{e_j}k)(e_i,e_m)-(\nabla^g_{e_m}k)(e_i,e_j))e_m.
\end{eqnarray*}
It follows that
\begin{eqnarray*}
  &&\frac{d}{dt}
  \sum_{i,j=1}^n e_i\cdot e_j\cdot(b_{g_t,g}\nabla^{g_t}_{b_{g,g_t(e_i)}}(b_{g,g_t}e_j)-\nabla^g_{b_{g,g_t}(e_i)}e_j)\big|_{t=0}\\
  &=&\frac{1}{2}\sum_{i,j,m=1}^n (\nabla^g_{e_j}k)(e_i,e_m)e_i\cdot e_j\cdot e_m
  -\frac{1}{2}\sum_{i,j,m=1}^n (\nabla^g_{e_m}k)(e_i,e_j)e_i\cdot e_j\cdot e_m\\
  &=&-\sum_{i,j=1}^n (\nabla^g_{e_j}k)(e_i,e_j)e_i
  -\sum_{i,j,m=1}^n (\nabla^g_{e_m}k)(e_i,e_j)e_i\cdot e_j\cdot e_m\\
  &=&-\sum_{i=1}^n \div^g(k)(e_i)e_i+\grad^g(\tr^g(k)).
\end{eqnarray*}
From $\dv^{g_t}=\det(\id+ta_{g,k})^{1/2}\dv^g$ 
it follows that $f_{g,g_t}=\det(\id+ta_{g,k})^{1/4}$. 
Since $f_{g,g}\equiv1$ we obtain
\begin{displaymath}
  \frac{d}{dt}\frac{1}{f_{g,g_t}} b_{g,g_t}(\grad^g (f_{g,g_t}))\big|_{t=0}
  =\grad^g(\frac{d}{dt}f_{g,g_t}\big|_{t=0})=\frac{1}{4}\grad^g(\tr^g(k)).
\end{displaymath}
The assertion follows.
\end{proof}

We define the following $(2,0)$ tensor field on $M$. 

\begin{definition}
Let $\psi\in\spinor{M}{g}{\infty}$. The energy momentum tensor\index{energy momentum tensor} for $\psi$ 
is a symmetric $(2,0)$ tensor field on $M$ defined by
\begin{displaymath}
  Q_{\psi}(X,Y):=
  \frac{1}{2}\Re\langle X\cdot\nabla^g_Y\psi+Y\cdot\nabla^g_X\psi,\psi\rangle
\end{displaymath}
for $X,Y\in TM$.
\end{definition}

Again let $k\in\sym{\infty}$ and let $I\subset\R$ be an open interval 
containing $0$ such that for all $t\in I$ the tensor field $g_t=g+tk$ 
is a Riemannian metric on $M$.
Let $\lambda$ be an eigenvalue of $D^g$ with 
$d:=\dim_{\C}\ker(D^g-\lambda)$.
By Theorem \ref{rellich_theorem} and Lemma \ref{analyticity_lemma} 
there exist real-analytic functions $\lambda_1$,...,$\lambda_d$ on $I$ such that 
$\lambda_j(t)$ is an eigenvalue of $D^{g,g_t}$ for all $j$ and all $t$ and 
$\lambda_j(0)=\lambda$ for all $j$. 
Furthermore there exist spinors $\psi_{j,t}$, $1\leq j\leq n$, $t\in I$, 
which are real-analytic in $t$, such that $\psi_{j,t}$ is an eigenspinor 
of $D^{g,g_t}$ corresponding to $\lambda_j(t)$. We can choose these spinors 
such that $\|\psi_{j,t}\|_2=1$ for all $j$ and all $t$. 
Then we have
\begin{displaymath}
  \lambda_j(t)=(\psi_{j,t},D^{g,g_t}\psi_{j,t})_2
\end{displaymath}
for all $t$. 
If we take the derivative with respect to $t$ at $t=0$, then, 
since $D^g$ is self-adjoint and $\psi_{j,t}$ is normalized, the only 
contribution comes from the derivative of the Dirac operator. 
\begin{equation}
  \label{lambda_derivative1}
  \frac{d\lambda_j(t)}{dt}\big|_{t=0}=\big(\psi_{j,0},\frac{d}{dt}D^{g,g_t}\big|_{t=0}\psi_{j,0}\big)_2
\end{equation}
Recall that if $(e_i)_{i=1}^n$ is a local orthonormal frame, then by (\ref{dirac_derivative}) we have locally 
\begin{displaymath}
  \frac{d}{dt}D^{g,g_t}\big|_{t=0}\psi_{j,0}=-\frac{1}{2}\sum_{i=1}^n e_i\cdot\nabla^g_{a_{g,k}(e_i)}\psi_{j,0}
  -\frac{1}{4}\sum_{i=1}^n \div^g(k)(e_i)e_i\cdot\psi_{j,0}.
\end{displaymath} 
The scalar product of the first term with $\psi_{j,0}$ can locally be written as
\begin{eqnarray*}
  &&-\frac{1}{2}\sum_{i=1}^n \langle \psi_{j,0}, e_i\cdot\nabla^g_{a_{g,k}(e_i)} \psi_{j,0} \rangle\\
  &=&-\frac{1}{4}\sum_{i,m=1}^n k(e_i,e_m) \langle \psi_{j,0}, e_i\cdot\nabla^g_{e_m} \psi_{j,0}
  +e_m\cdot\nabla^g_{e_i}\psi_{j,0} \rangle
\end{eqnarray*}
The scalar product of the second term with $\psi_{j,0}$ is purely imaginary. 
Since the left hand side of (\ref{lambda_derivative1}) is real, 
it is sufficient to consider the real part. 
Then the second term gives no contribution. 
If $(.,.)$ denotes the standard pointwise inner product of $(2,0)$ tensor fields, 
then we obtain
\begin{equation}
  \label{lambda_derivative2}
  \frac{d\lambda_j(t)}{dt}\big|_{t=0}=-\frac{1}{2}\int_M(k,Q_{\psi_{j,0}})\dv^g.
\end{equation}

We consider now the special case of conformally related Riemannian metrics on $M$. 
Let $u\in C^{\infty}(M,\R)$ and let $g$, $h$ be two Riemannian metrics on $M$ such that 
$h=e^{2u}g$. The connections on the spinor bundles $\Sigma^gM$ and $\Sigma^hM$ are then 
related in the following way.

\begin{theorem}
For all $\psi\in\spinor{M}{g}{\infty}$ and for all $X\in TM$ we have
\begin{eqnarray}
  \nonumber
  \nabla^{h}_X \beta_{g,h}\psi&=&\beta_{g,h}\big( \nabla^g_X\psi-\frac{1}{2}\,X\cdot\grad^g(u)\cdot\psi -\frac{1}{2}\,X(u)\psi\big)\\
  \label{nabla_conform}
  \nabla^{h}_X \overline{\beta}_{g,h}\psi&=&\overline{\beta}_{g,h}\big( \nabla^g_X\psi-\frac{1}{2}\,X\cdot\grad^g(u)\cdot\psi -\frac{n+1}{2}\,X(u)\psi\big).
\end{eqnarray}
\end{theorem}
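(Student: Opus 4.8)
The plan is to reduce the statement to a purely local computation with respect to a normal frame and then to invoke the known conformal transformation law for spinors together with the behaviour of the identification map $\beta_{g,h}$ under Clifford multiplication. First I would recall from Lemma~\ref{definition_beta} that $\beta_{g,h}$ intertwines Clifford multiplication in the sense $\beta_{g,h}(v\cdot\psi)=b_{g,h}v\cdot\beta_{g,h}\psi$; in the conformal case $h=e^{2u}g$ one has $a_{g,h}=e^{2u}\,\id$ and hence $b_{g,h}=e^{-u}\,\id$, so $\beta_{g,h}(v\cdot\psi)=e^{-u}\,v\cdot\beta_{g,h}\psi$ where on the right $\cdot$ is the Clifford multiplication of $h$. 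This already makes the classical conformal spinor identification transparent and will be used repeatedly below.

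Next I would fix a point $x\in M$ and a $g$-orthonormal frame $(e_i)_{i=1}^n$ near $x$, and apply the local formula~(\ref{connection_local}) for the spinorial Levi-Civita connection to both $g$ and $h$. The key input is the standard relation between the Levi-Civita connections of $g$ and $h=e^{2u}g$: for vector fields $X,Y$ one has $\nabla^h_XY=\nabla^g_XY+X(u)Y+Y(u)X-g(X,Y)\grad^g(u)$. Plugging this into~(\ref{connection_local}) written for $h$ with the $h$-orthonormal frame $e^{-u}e_i$, and translating every $h$-Clifford product back to a $g$-Clifford product via the factor $e^{-u}$ above, the Christoffel correction terms organise into a Clifford-algebra expression. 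After the usual cancellations (using $v\cdot w+w\cdot v=-2g(v,w)$ from~(\ref{clifford_relation})) the correction collapses to precisely $-\tfrac12 X\cdot\grad^g(u)\cdot\psi-\tfrac12 X(u)\psi$, which gives the first displayed equation. I expect this bookkeeping — keeping track of which Clifford multiplication is in force and carrying the $e^{-u}$ factors through the sum $\sum_j e_j\cdot(\nabla e_j)\cdot\psi$ — to be the main obstacle, i.e. the step where sign and index errors are easiest to make; it is essentially the computation in \cite{hit74}, \cite{hij86}, \cite{bg}, so I would follow those references closely.

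Finally, for the second equation I would simply combine the first with the definition $\overline{\beta}_{g,h}=f_{g,h}^{-1}\beta_{g,h}$. Since $\dv^h=e^{nu}\dv^g$ in the conformal case, we get $f_{g,h}=e^{nu/2}$, a scalar function, so $\overline{\beta}_{g,h}\psi=e^{-nu/2}\beta_{g,h}\psi$. Applying $\nabla^h_X$ and using the Leibniz rule together with the fact that $\nabla^h$ on the scalar factor is just ordinary differentiation, one picks up an extra term $-\tfrac n2 X(u)\,\overline{\beta}_{g,h}\psi$; adding this to the $-\tfrac12 X(u)$ term from the first equation yields the coefficient $-\tfrac{n+1}{2}X(u)$, as claimed. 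This last step is routine once the first equation is in hand.
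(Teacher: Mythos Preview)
Your proposal is correct and follows essentially the same approach as the paper: both use the local formula~(\ref{connection_local}) for the spin connection with the $h$-orthonormal frame $e^{-u}e_i$, the conformal change law $\nabla^h_XY=\nabla^g_XY+X(u)Y+Y(u)X-g(X,Y)\grad^g(u)$, and the intertwining property of $\beta_{g,h}$, then simplify via the Clifford relation; the second formula is obtained in both cases from the first by the Leibniz rule applied to $\overline{\beta}_{g,h}=e^{-nu/2}\beta_{g,h}$.
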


\begin{proof}
By \cite{bes}, p. 58 we have for all $X$, $Y\in TM$
\begin{displaymath}
  \nabla^h_X Y=\nabla^g_X Y+X(u)Y+Y(u)X-g(X,Y)\grad^g(u).
\end{displaymath}
Note that $b_{g,h}=e^{-u}\id$. 
Let $(e_i)_{i=1}^n$ be a local $g$-orthonormal frame on an open subset $U\subset M$. 
Then $(b_{g,h}e_i)_{i=1}^n$ is a local $h$-orthonormal frame on $U$. 
We obtain by the formula (\ref{connection_local})
\begin{eqnarray*}
  \nabla^h_X\beta_{g,h}\psi&=&\partial_X \beta_{g,h}\psi+\frac{1}{4}\sum_{j=1}^n b_{g,h}e_j\cdot(\nabla^h_X b_{g,h}e_j)\cdot\beta_{g,h}\psi\\
  &=&\partial_X \beta_{g,h}\psi+\frac{1}{4}\sum_{j=1}^n b_{g,h}e_j\cdot b_{g,h}(\nabla^h_X e_j-X(u)e_j)\cdot\beta_{g,h}\psi\\
  &=&\partial_X \beta_{g,h}\psi+\frac{1}{4}\sum_{j=1}^n \beta_{g,h}(e_j\cdot(\nabla^g_X e_j+e_j(u)X)\cdot\psi)\\
  &&{}-\frac{1}{4}\beta_{g,h}(X\cdot\grad^g(u)\cdot\psi)\\
  &=&\beta_{g,h}(\nabla^g_X\psi+\frac{1}{4}\grad^g(u)\cdot X\cdot\psi-\frac{1}{4}X\cdot\grad^g(u)\cdot\psi).
\end{eqnarray*}
The first formula in the assertion now follows from (\ref{clifford_relation}). 
Furthermore using that $f_{g,h}=e^{nu/2}$ we get 
\begin{displaymath}
  \nabla^h_X\overline{\beta}_{g,h}\psi=X(e^{-nu/2})\beta_{g,h}\psi+e^{-nu/2}\nabla^h_X\beta_{g,h}\psi.
\end{displaymath}
The second formula now follows.
\end{proof}

The Dirac operators on $\Sigma^gM$ and $\Sigma^hM$ are related by the following formulas.

\begin{theorem}
For all $\psi\in\spinor{M}{g}{\infty}$ we have
\begin{eqnarray}
  \nonumber
  D^{h}(e^{-(n-1)u/2}\beta_{g,h}\psi)&=&e^{-(n+1)u/2}\beta_{g,h}D^g\psi\\
  \label{dirac_conform}
  D^{h}(e^{u/2}\overline{\beta}_{g,h}\psi)&=&e^{-u/2}\overline{\beta}_{g,h}D^{g}\psi.
\end{eqnarray}
\end{theorem}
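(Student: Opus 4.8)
The plan is to exploit the conformal transformation law for the connection, namely the first formula in the previous theorem, and reduce the statement to a purely algebraic manipulation with Clifford multiplication. Concretely, I would start from the left-hand side $D^h(e^{-(n-1)u/2}\beta_{g,h}\psi)$, apply the Leibniz rule for the Dirac operator (the formula $D^h(f\varphi)=\grad^h(f)\cdot\varphi+fD^h\varphi$ stated earlier in the excerpt), and then express $D^h\beta_{g,h}\psi$ in terms of $\nabla^h_X\beta_{g,h}\psi$ by summing Clifford multiplication over a local $h$-orthonormal frame. The natural frame to use is $(b_{g,h}e_i)_{i=1}^n=(e^{-u}e_i)_{i=1}^n$, where $(e_i)_{i=1}^n$ is a local $g$-orthonormal frame, so that $D^h\beta_{g,h}\psi=\sum_i b_{g,h}e_i\cdot\nabla^h_{b_{g,h}e_i}\beta_{g,h}\psi=e^{-u}\sum_i e_i\cdot\nabla^h_{e_i}\beta_{g,h}\psi$, using Lemma \ref{definition_beta} to move $b_{g,h}$ through the Clifford multiplication.

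Next I would substitute the conformal connection formula $\nabla^h_X\beta_{g,h}\psi=\beta_{g,h}\big(\nabla^g_X\psi-\tfrac12 X\cdot\grad^g(u)\cdot\psi-\tfrac12 X(u)\psi\big)$ with $X=e_i$, contract with $e_i\cdot$, and sum over $i$. The term $\sum_i e_i\cdot\nabla^g_{e_i}\psi$ reassembles into $D^g\psi$. For the middle term one computes $\sum_i e_i\cdot e_i\cdot\grad^g(u)\cdot\psi=-n\,\grad^g(u)\cdot\psi$ using the Clifford relation \eqref{clifford_relation}, and for the last term $\sum_i e_i(u)\,e_i\cdot\psi=\grad^g(u)\cdot\psi$. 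Collecting these: $\sum_i e_i\cdot\nabla^h_{e_i}\beta_{g,h}\psi=\beta_{g,h}\big(D^g\psi+\tfrac{n}{2}\grad^g(u)\cdot\psi-\tfrac12\grad^g(u)\cdot\psi\big)=\beta_{g,h}\big(D^g\psi+\tfrac{n-1}{2}\grad^g(u)\cdot\psi\big)$. Hence $D^h\beta_{g,h}\psi=e^{-u}\beta_{g,h}D^g\psi+\tfrac{n-1}{2}e^{-u}\beta_{g,h}(\grad^g(u)\cdot\psi)$.

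Finally I would reinsert this into the Leibniz expansion. We have $\grad^h(e^{-(n-1)u/2})=b_{g,h}\big(\grad^g(e^{-(n-1)u/2})\big)$ since $b_{g,h}$ is a fibrewise $g$-to-$h$ isometry on $TM$ (equivalently $\grad^h f=e^{-2u}\grad^g f$ for $h=e^{2u}g$), so $\grad^h(e^{-(n-1)u/2})\cdot\beta_{g,h}\psi=-\tfrac{n-1}{2}e^{-(n-1)u/2}\,b_{g,h}(\grad^g u)\cdot\beta_{g,h}\psi=-\tfrac{n-1}{2}e^{-(n-1)u/2}\beta_{g,h}(\grad^g(u)\cdot\psi)$, again by Lemma \ref{definition_beta}. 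Adding $e^{-(n-1)u/2}D^h\beta_{g,h}\psi$, the two terms proportional to $\grad^g(u)\cdot\psi$ cancel exactly, leaving $D^h(e^{-(n-1)u/2}\beta_{g,h}\psi)=e^{-(n-1)u/2}e^{-u}\beta_{g,h}D^g\psi=e^{-(n+1)u/2}\beta_{g,h}D^g\psi$, which is the first identity. The second identity then follows quickly: since $\overline{\beta}_{g,h}=e^{-nu/2}\beta_{g,h}$, one has $e^{u/2}\overline{\beta}_{g,h}\psi=e^{-(n-1)u/2}\beta_{g,h}\psi$, so applying $D^h$ and multiplying out gives $D^h(e^{u/2}\overline{\beta}_{g,h}\psi)=e^{-(n+1)u/2}\beta_{g,h}D^g\psi=e^{-u/2}\overline{\beta}_{g,h}D^g\psi$.

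The routine computations are the Clifford-algebra contractions and bookkeeping of exponential factors; the only real subtlety — and the step I would present most carefully — is the exact cancellation of the $\grad^g(u)$-terms, which is precisely what forces the conformal weight $(n-1)/2$ on the left and pins down the weight $(n+1)/2$ on the right. No unique continuation or analytic input is needed here; it is a direct local computation.
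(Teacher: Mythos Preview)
Your approach is essentially the same as the paper's: compute $D^h\beta_{g,h}\psi$ from the conformal connection formula and then combine with the Leibniz rule. Your key intermediate identity
\[
D^h\beta_{g,h}\psi=e^{-u}\beta_{g,h}\Big(D^g\psi+\tfrac{n-1}{2}\,\grad^g(u)\cdot\psi\Big)
\]
is correct and matches the paper exactly. (The line $D^h\beta_{g,h}\psi=e^{-u}\sum_i e_i\cdot\nabla^h_{e_i}\beta_{g,h}\psi$ is notationally ambiguous, since it is unclear which Clifford multiplication $e_i\cdot$ denotes on $\Sigma^hM$; but your end result for $D^h\beta_{g,h}\psi$ is right, so this is cosmetic.)

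There is, however, a genuine slip in the Leibniz step. You assert $\grad^h f=b_{g,h}(\grad^g f)$, but for $h=e^{2u}g$ one has $a_{g,h}=e^{2u}\id$, $b_{g,h}=e^{-u}\id$, and hence
\[
\grad^h f=a_{g,h}^{-1}\grad^g f=b_{g,h}^{\,2}\,\grad^g f=e^{-2u}\grad^g f,
\]
which is exactly the formula you quote parenthetically. The isometry property of $b_{g,h}$ does \emph{not} give $\grad^h=b_{g,h}\circ\grad^g$. With your (incorrect) version the gradient term becomes $-\tfrac{n-1}{2}e^{-(n-1)u/2}\beta_{g,h}(\grad^g(u)\cdot\psi)$, whereas the $\grad^g(u)$ contribution from $e^{-(n-1)u/2}D^h\beta_{g,h}\psi$ carries the exponent $e^{-(n+1)u/2}$; so as written the two terms do \emph{not} cancel. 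If you use $\grad^h f=e^{-2u}\grad^g f$ together with $v\cdot_h\beta_{g,h}\psi=e^{u}\beta_{g,h}(v\cdot_g\psi)$ (equivalently $b_{g,h}v\cdot_h\beta_{g,h}\psi=\beta_{g,h}(v\cdot_g\psi)$ from Lemma~\ref{definition_beta}), the gradient term becomes $-\tfrac{n-1}{2}e^{-(n+1)u/2}\beta_{g,h}(\grad^g(u)\cdot\psi)$ and the cancellation goes through. The derivation of the second identity from the first via $\overline{\beta}_{g,h}=e^{-nu/2}\beta_{g,h}$ is fine.
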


\begin{proof}
Let $(e_i)_{i=1}^n$ be a local orthonormal frame. Using the formula (\ref{nabla_conform}) we find
\begin{eqnarray*}
  D^h\beta_{g,h}\psi&=&\sum_{i=1}^n b_{g,h}(e_i)\cdot\nabla^h_{b_{g,h}(e_i)}\beta_{g,h}\psi\\
  &=&\sum_{i=1}^n \beta_{g,h}\big(e_i\cdot e^{-u}(\nabla^g_{e_i}\psi-\frac{1}{2}e_i(u)\psi-\frac{1}{2}e_i\cdot\grad^g(u)\cdot\psi)\big)\\
  &=&\beta_{g,h}\big(e^{-u}D^g\psi+\frac{n-1}{2}e^{-u}\textrm{grad}^g(u)\cdot\psi\big).
\end{eqnarray*}
The first formula follows from this calculation. 
Using $\overline{\beta}_{g,h}=e^{-nu/2}\beta_{g,h}$ one obtains the second formula. 
\end{proof}

We specialise the above formulas for the derivative of the Dirac operator and 
the derivative of the eigenvalue to the case of conformal changes of the Riemannian metric. 
Let $f\in C^{\infty}(M,\R)$ and let $I\subset\R$ be an open interval around $0$ such that 
for all $t\in I$ the tensor field $g_t:=g+tfg$ is a Riemannian metric on $M$. 
Then we immediately obtain the following.

\begin{theorem}
For all $\psi\in\spinor{M}{g}{\infty}$ we have
\begin{equation}
  \label{dirac_derivative_conform}
  \frac{d}{dt}D^{g,g_t}\big|_{t=0}\psi=-\frac{f}{2}D^g\psi-\frac{1}{4}\grad^g(f)\cdot\psi
\end{equation}
and
\begin{equation}
  \label{lambda_derivative_conform}
  \frac{d\lambda_j(t)}{dt}\big|_{t=0}=-\frac{\lambda}{2}\int_M f|\psi_{j,0}|_g^2\dv^g.
\end{equation}
\end{theorem}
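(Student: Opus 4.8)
The plan is to derive both identities by specializing the general variation formulas (\ref{dirac_derivative}) and (\ref{lambda_derivative2}) to the tensor field $k=fg$, for which the associated endomorphism of $TM$ is as simple as possible. First I would determine $a_{g,k}$: from $g(a_{g,k}v,w)=k(v,w)=fg(v,w)$ for all $v,w\in T_xM$ one reads off $a_{g,k}=f\cdot\id$. Substituting this into the first summand of (\ref{dirac_derivative}) gives
\begin{displaymath}
-\frac{1}{2}\sum_{i=1}^n e_i\cdot\nabla^g_{a_{g,k}(e_i)}\psi=-\frac{1}{2}\sum_{i=1}^n e_i\cdot\nabla^g_{f e_i}\psi=-\frac{f}{2}\sum_{i=1}^n e_i\cdot\nabla^g_{e_i}\psi=-\frac{f}{2}\,D^g\psi.
\end{displaymath}
For the divergence term I would use that $\nabla^g g=0$, so $(\nabla^g_{e_i}k)(X,e_i)=e_i(f)\,g(X,e_i)$ and hence $\div^g(k)(X)=\sum_i e_i(f)\,g(X,e_i)=X(f)$; consequently $\sum_i\div^g(k)(e_i)\,e_i\cdot\psi=\sum_i e_i(f)\,e_i\cdot\psi=\grad^g(f)\cdot\psi$. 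Adding the two contributions in (\ref{dirac_derivative}) then yields (\ref{dirac_derivative_conform}).

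For the eigenvalue derivative I would start from (\ref{lambda_derivative2}) with $k=fg$, keeping the notation $\lambda_j$, $\psi_{j,0}$ and $\lambda=\lambda_j(0)$ from there. Since the standard pointwise inner product of symmetric $(2,0)$ tensors satisfies $(fg,Q_{\psi_{j,0}})=f\,\tr^g(Q_{\psi_{j,0}})$, and since
\begin{displaymath}
\tr^g(Q_{\psi_{j,0}})=\sum_{i=1}^n Q_{\psi_{j,0}}(e_i,e_i)=\sum_{i=1}^n\Re\langle e_i\cdot\nabla^g_{e_i}\psi_{j,0},\psi_{j,0}\rangle=\Re\langle D^g\psi_{j,0},\psi_{j,0}\rangle,
\end{displaymath}
I would invoke that $\psi_{j,0}$ is an eigenspinor of $D^g$ for the real eigenvalue $\lambda$, so that $\Re\langle D^g\psi_{j,0},\psi_{j,0}\rangle=\lambda\,|\psi_{j,0}|_g^2$. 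Plugging this back into (\ref{lambda_derivative2}) gives $\frac{d\lambda_j(t)}{dt}\big|_{t=0}=-\frac{1}{2}\int_M f\lambda\,|\psi_{j,0}|_g^2\,\dv^g$, which is (\ref{lambda_derivative_conform}).

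All of this is a direct substitution, so I do not expect a genuine obstacle; the only points requiring mild care are the bookkeeping of the constants $\frac{1}{4}$ versus $\frac{1}{2}$ in the divergence term and the factor $2$ built into the definition of $Q_\psi$, which is why I would keep the expression $Q_{\psi_{j,0}}(e_i,e_i)=\Re\langle e_i\cdot\nabla^g_{e_i}\psi_{j,0},\psi_{j,0}\rangle$ explicit rather than quote a contracted identity. As an independent check one could rederive (\ref{dirac_derivative_conform}) from the conformal formula (\ref{dirac_conform}) by writing $g_t=(1+tf)g=e^{2u(t)}g$ with $u(t)=\frac{1}{2}\log(1+tf)$, so that $u'(0)=\frac{f}{2}$, and differentiating at $t=0$; the direct route above is shorter.
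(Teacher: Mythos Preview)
Your argument is correct and is exactly the specialization the paper has in mind: the paper simply says these formulas are immediately obtained from (\ref{dirac_derivative}) and (\ref{lambda_derivative2}) with $k=fg$, and you have written out precisely that computation. Your bookkeeping of $a_{g,k}=f\,\id$, $\div^g(fg)=df$, and $\tr^g(Q_{\psi_{j,0}})=\Re\langle D^g\psi_{j,0},\psi_{j,0}\rangle=\lambda|\psi_{j,0}|_g^2$ is accurate, and the alternative check via (\ref{dirac_conform}) you mention would indeed give the same answer.
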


\chapter{Motivation}
\label{motivation_chapter}

\section{Conformal bounds on Dirac eigenvalues}

Let $(M,g,\Theta)$ be a compact Riemannian spin manifold of dimension $n\geq2$. 
We denote by $\lambda_1^+(g)$ the smallest positive eigenvalue of $D^g$ and 
by $\lambda_1^-(g)$ the largest negative eigenvalue of $D^g$. 
The question arises how the values $\lambda_1^{\pm}(g)$ depend on the geometry 
of the underlying manifold. 
Lower bounds on $|\lambda_1^{\pm}(g)|$ in terms of the scalar curvature of $(M,g)$ have been obtained in 
\cite{li}, \cite{fr80}, \cite{ki86}, \cite{ki88}, \cite{ksw98}, \cite{ksw99}. 
Searching for lower bounds which are uniform on the conformal class\index{conformal class} 
\begin{displaymath}
  [g]:=\{e^{2u}g|u\in C^{\infty}(M,\R)\}
\end{displaymath}
of the metric $g$ one observes first that for every constant $a>0$ one has 
$\lambda_1^{\pm}(a^2g)=a^{-1}\lambda_1^{\pm}(g)$ by (\ref{dirac_conform}). 
Thus the expressions $|\lambda_1^{\pm}(g)|\vol(M,g)^{1/n}$ are invariant under 
constant rescalings of the metric. 
We define the two invariants 
\begin{eqnarray*}
  \lambda_{\min}^+(M,[g],\Theta)&:=&\inf_{h\in[g]}\lambda_1^+(h)\vol(M,h)^{1/n}\\
  \lambda_{\min}^-(M,[g],\Theta)&:=&\inf_{h\in[g]}|\lambda_1^-(h)|\vol(M,h)^{1/n}
\end{eqnarray*}
of the conformal class $[g]$. It was proven in \cite{am03} that
\begin{displaymath}
  \lambda_{\min}^{\pm}(M,[g],\Theta)>0.
\end{displaymath}
These invariants have been treated by many authors 
(see e.\,g.\,\cite{hij86}, \cite{hij91}, \cite{lo}, \cite{baer92}, \cite{am03hab}, \cite{am09}). 

Explicit values are known only for very few spin manifolds. 
For example if $\gcan$ denotes the standard metric on $S^n$ and 
$\sigma$ the unique spin structure on $S^n$, we use the notation 
$\mathbb{S}^n:=(S^n,[\gcan],\sigma)$. Then we have 
\begin{displaymath}
  \lambda_{\min}^+(\mathbb{S}^n)=\lambda_{\min}^-(\mathbb{S}^n)=\frac{n}{2}\vol(S^n,\gcan)^{1/n}
\end{displaymath}
(see e.\,g.\,\cite{am03}, \cite{am09}). 
Furthermore one has the following inequalities.

\begin{theorem}[\cite{am03}, \cite{aghm}]
For every compact Riemannian spin manifold $(M,g,\Theta)$ we have 
\begin{displaymath}
  \lambda_{\min}^+(M,[g],\Theta)\leq\lambda_{\min}^+(\mathbb{S}^n),\quad
  \lambda_{\min}^-(M,[g],\Theta)\leq\lambda_{\min}^+(\mathbb{S}^n).
\end{displaymath}
\end{theorem}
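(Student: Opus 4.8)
The plan is to prove the inequality by a gluing / connected-sum argument combined with the formula for the conformal invariant on the sphere. The starting point is the variational characterization of $\lambda_{\min}^+$: one knows (from \cite{am03}) that $\lambda_{\min}^+(M,[g],\Theta)$ can be computed as the infimum over all nonzero spinors $\psi$ (in the right Sobolev space, for some metric in the conformal class) of a scale-invariant Rayleigh-type quotient built from $\|D^g\psi\|$, $\|\psi\|$ and the $L^{2n/(n-1)}$-norm; equivalently it is governed by the nonlinear Dirac equation \eqref{nonlinear_dirac_eq}. Since this quantity is conformally invariant, one may work with a fixed background metric $g$ and test spinors. The idea is then to take a spinor on the sphere $\mathbb{S}^n$ that (nearly) realizes $\lambda_{\min}^+(\mathbb{S}^n)$ — in fact the Killing spinors on $(S^n,\gcan)$, which are eigenspinors of $D^{\gcan}$ for the eigenvalue $\pm n/2$ — transplant it via a stereographic chart to a small geodesic ball in $(M,g)$ using a cut-off function, and estimate the Rayleigh quotient of this test spinor on $M$.

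The key steps, in order, are: (1) recall that on $(S^n,\gcan)$ the extremal object for $\lambda_{\min}^+$ is explicitly a Killing spinor, and that under stereographic projection $S^n\setminus\{pt\}\cong\R^n$ this Killing spinor corresponds, after the conformal change \eqref{dirac_conform}, to a spinor on flat $\R^n$ solving the model nonlinear equation with a concentrated profile; (2) fix a point $p\in M$, choose normal coordinates, and for small $\varepsilon>0$ define a test spinor $\psi_\varepsilon$ on $M$ by rescaling the flat model spinor to concentrate at scale $\varepsilon$ near $p$, multiplied by a cut-off supported in a ball of radius $\sqrt\varepsilon$; here one uses the identification $\beta_{g,h}$ of spinors for different metrics from Section \ref{identification_section} to make sense of the transplanted spinor; (3) expand the metric $g$ in normal coordinates as $\geucl + O(r^2)$ and estimate, using \eqref{dirac_conform} and the Leibniz rule $D^g(f\psi)=\grad^g(f)\cdot\psi+fD^g\psi$, that the numerator and denominator of the Rayleigh quotient of $\psi_\varepsilon$ differ from the flat/spherical model values by terms that are $o(1)$ as $\varepsilon\to 0$ (the error from the cut-off is exponentially or polynomially small because the model spinor decays, and the error from the curvature is $O(\varepsilon)$); (4) conclude that $\lambda_{\min}^+(M,[g],\Theta)\leq \lambda_{\min}^+(\mathbb{S}^n)+o(1)$, hence $\leq\lambda_{\min}^+(\mathbb{S}^n)$. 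For the $\lambda_{\min}^-$ statement one repeats the argument with the Killing spinor for the eigenvalue $-n/2$, or notes that reversing orientation exchanges $\lambda_1^+$ and $\lambda_1^-$ while the sphere is symmetric, so the same bound drops out; this is presumably where \cite{aghm} enters.

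The main obstacle will be step (3): controlling the cross terms in $\|D^g\psi_\varepsilon\|_2^2$ coming from differentiating the cut-off function against the slowly-decaying tail of the model spinor, and simultaneously keeping the $L^{2n/(n-1)}$-normalization under control, since the Sobolev exponent $2n/(n-1)$ is exactly the critical one for the embedding $H^{1/2}\hookrightarrow L^{2n/(n-1)}$ — this criticality is precisely why the naive argument is delicate and why one must concentrate the test spinor rather than use a fixed bump. One has to choose the cut-off scale (here $\sqrt\varepsilon$, an intermediate scale between $\varepsilon$ and $1$) carefully so that all error terms are genuinely $o(1)$; this is a standard but technical "Aubin-type" test-function computation adapted to spinors, and the delicate point is that the spinorial analogue of the Yamabe test function does not have compact support, so every estimate is an estimate on tails. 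Once the test-spinor estimates are in place, the inequality follows immediately from the infimum characterization of $\lambda_{\min}^+$.
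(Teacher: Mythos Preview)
The paper does not prove this theorem; it is stated as a known result with citations to \cite{am03} and \cite{aghm}, and no argument is given in the text. Your sketch---transplanting the Killing spinor from $\mathbb{S}^n$ via stereographic projection onto a small ball in $(M,g)$, rescaling to concentrate at a point, and showing the Rayleigh quotient converges to the spherical value as $\varepsilon\to 0$---is indeed the approach taken in those references (the ``spinorial Aubin inequality'' of \cite{aghm}), so your plan is consistent with the literature the paper relies on.
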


It is a natural question whether the infimum in the definition of the invariants 
$\lambda_{\min}^{\pm}(M,[g],\Theta)$ 
is attained at a Riemannian metric $h\in[g]$. 
This has been investigated in \cite{am09}, where the following result is obtained. 

\begin{theorem}
\label{theorem_lambdamin+}
Let $(M,g,\Theta)$ be a compact Riemannian spin manifold of dimension $n\geq2$. 
Assume that we have the strict inequality 
\begin{displaymath}
  \lambda_{\min}^+:=\lambda_{\min}^+(M,[g],\Theta)<\lambda_{\min}^+(\mathbb{S}^n).
\end{displaymath}
Then there exists a spinor $\psi\in\spinor{M}{g}{2}$, 
which is smooth on $M\setminus\psi^{-1}(0)$, such that 
\begin{equation}
  \label{nonlinear_ev_equation}
  D^g\psi=\lambda_{\min}^+|\psi|_g^{2/(n-1)}\psi,\qquad 
  \|\psi\|_{2n/(n-1)}=1.
\end{equation}
If there is such a spinor $\psi$, which is nowhere zero on $M$, 
then $h:=|\psi|_g^{4/(n-1)}g$ is a Riemannian metric on $M$ such that 
\begin{displaymath}
  \lambda_{\min}^+=\lambda_1^+(h)\vol(M,h)^{1/n}.
\end{displaymath}
An analogous assertion holds for $\lambda_{\min}^-(M,[g],\Theta)$. 
\end{theorem}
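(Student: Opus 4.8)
The plan is to prove the two assertions separately. The existence of a spinor $\psi$ solving (\ref{nonlinear_ev_equation}) is the spinorial analogue of Aubin's solution of the Yamabe problem, and I would obtain it by variational methods combined with a concentration analysis in which the hypothesis $\lambda_{\min}^+<\lambda_{\min}^+(\mathbb{S}^n)$ plays exactly the role of Aubin's strict inequality. The statement that $h:=|\psi|_g^{4/(n-1)}g$ realises the infimum is then a short computation using the conformal covariance of the Dirac operator recorded in (\ref{dirac_conform}); this part I would carry out in full, the first part being as in \cite{am09}.

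For the existence part I would start from the variational characterisation of $\lambda_{\min}^+(M,[g],\Theta)$ as the infimum of a conformally invariant functional on $\Hspinor{M}{g}{1/2}$ --- concretely, minimising $\psi\mapsto\big(\int_M|D^g\psi|^{2n/(n+1)}\dv^g\big)^{(n+1)/n}$ under the constraint $\int_M\langle D^g\psi,\psi\rangle\,\dv^g=1$ --- whose Euler--Lagrange equation is precisely (\ref{nonlinear_ev_equation}). The obstruction is that the Sobolev embedding $\Hspinor{M}{g}{1/2}\hookrightarrow\Lspinor{M}{g}{2n/(n-1)}$ is critical, hence noncompact, so a direct minimisation need not converge. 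I would therefore use subcritical approximation: for $2<p<2n/(n-1)$ the analogous functional lives on a compact embedding, so standard critical point theory produces spinors $\psi_p$ solving $D^g\psi_p=\lambda_p|\psi_p|_g^{p-2}\psi_p$ with $\|\psi_p\|_p=1$ and $\lambda_p$ bounded, and elliptic estimates for $D^g$ give a uniform bound on $\|\psi_p\|_{\Hspinor{M}{g}{1/2}}$. Letting $p\uparrow 2n/(n-1)$ and passing to a subsequence, a blow-up analysis yields a dichotomy: either $\psi_p$ converges strongly to a nonzero limit $\psi$, which then solves (\ref{nonlinear_ev_equation}), or the $L^{2n/(n-1)}$-mass concentrates at a point, in which case a suitable rescaling of $\psi_p$ converges to a nonzero solution of the model equation on $(\R^n,\geucl)$, which by conformal covariance and the conformal identification $\R^n\cong S^n\setminus\{\mathrm{pt}\}$ corresponds to a solution on the round sphere and forces $\lambda_{\min}^+(M,[g],\Theta)=\lambda_{\min}^+(\mathbb{S}^n)$. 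The hypothesis $\lambda_{\min}^+<\lambda_{\min}^+(\mathbb{S}^n)$ excludes the second case, so a nonzero $\psi$ exists; and on $M\setminus\psi^{-1}(0)$ the coefficient $|\psi|_g^{2/(n-1)}$ is smooth, so elliptic bootstrapping for $D^g$ makes $\psi$ smooth there. I expect this concentration step --- setting up the blow-up, identifying the limiting problem on the sphere and computing its optimal value --- to be the main obstacle.

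For the second assertion, assume $\psi$ is nowhere zero. Then $f:=|\psi|_g^{4/(n-1)}\in C^{\infty}(M,\R)$ is positive, so $h:=fg$ is a Riemannian metric in $[g]$; write $h=e^{2u}g$ with $e^u=|\psi|_g^{2/(n-1)}$. Since $\dv^h=e^{nu}\dv^g=|\psi|_g^{2n/(n-1)}\dv^g$ and $\|\psi\|_{2n/(n-1)}=1$, we get $\vol(M,h)=1$. Because $e^{-(n-1)u/2}=|\psi|_g^{-1}$ and $e^{-(n+1)u/2}=|\psi|_g^{-(n+1)/(n-1)}$, the first identity in (\ref{dirac_conform}) gives
\begin{eqnarray*}
  D^h\big(|\psi|_g^{-1}\beta_{g,h}\psi\big)
  &=&|\psi|_g^{-(n+1)/(n-1)}\,\beta_{g,h}\big(D^g\psi\big)\\
  &=&\lambda_{\min}^+\,|\psi|_g^{-(n+1)/(n-1)}\,|\psi|_g^{2/(n-1)}\,\beta_{g,h}\psi\\
  &=&\lambda_{\min}^+\,|\psi|_g^{-1}\,\beta_{g,h}\psi.
\end{eqnarray*}
Hence $\varphi:=|\psi|_g^{-1}\beta_{g,h}\psi$ satisfies $D^h\varphi=\lambda_{\min}^+\varphi$, and since $\beta_{g,h}$ is a fibrewise isometry (Lemma \ref{definition_beta}) we have $|\varphi|_h\equiv1$, in particular $\varphi\neq0$. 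As $\lambda_{\min}^+>0$ by \cite{am03}, it follows that $\lambda_{\min}^+$ is a positive eigenvalue of $D^h$, so $\lambda_1^+(h)\leq\lambda_{\min}^+$, i.e.\ $\lambda_1^+(h)\vol(M,h)^{1/n}\leq\lambda_{\min}^+$; the reverse inequality holds because $h\in[g]$ and $\lambda_{\min}^+$ is the infimum of $\lambda_1^+(\cdot)\vol(M,\cdot)^{1/n}$ over $[g]$. Therefore $\lambda_{\min}^+=\lambda_1^+(h)\vol(M,h)^{1/n}$. The assertion for $\lambda_{\min}^-$ follows by the same argument, with the smallest positive eigenvalue, $\lambda_{\min}^+(\mathbb{S}^n)$ and (\ref{nonlinear_ev_equation}) replaced by the largest negative eigenvalue and the corresponding negative counterparts.
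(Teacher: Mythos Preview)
The paper does not prove this theorem; it is stated as a result of Ammann and attributed to \cite{am09} without any argument given in the text. There is therefore no ``paper's own proof'' to compare against. Your sketch of the existence part (subcritical approximation, blow-up analysis, exclusion of concentration via the strict inequality) is indeed the strategy of \cite{am09}, and you correctly flag that the concentration-compactness step is where the work lies; your detailed verification of the second assertion using (\ref{dirac_conform}) and the normalisation $\|\psi\|_{2n/(n-1)}=1$ is correct and is exactly the short computation one expects.
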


Therefore one is interested in finding solutions to (\ref{nonlinear_ev_equation}) 
which are nowhere zero. It is not clear, whether this is possible. 
An important result by C.\,B\"ar on the zero sets of solutions 
of generalized Dirac equations is the following. 

\begin{theorem}[\cite{baer97}]
Let $(M,g,\Theta)$ be a connected Riemannian spin manifold of dimension $n\geq2$, 
not necessarily compact and possibly with boundary. 
Let $h$ be a smooth endomorphism field of $\Sigma^gM$ and let $\psi$ 
be a nontrivial solution of
\begin{displaymath}
  (D^g+h)\psi=0.
\end{displaymath}
Then the zero set of $\psi$ is a countably $(n-2)$-rectifiable set and 
thus has Hausdorff dimension $n-2$ at most. If $n=2$, then the zero set 
of $\psi$ is a discrete subset of $M$.
\end{theorem}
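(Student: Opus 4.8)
The plan is to combine the strong unique continuation property of $D^g$ with a frozen-coefficient analysis of $\psi$ near its zeros, and then to stratify the zero set $Z:=\psi^{-1}(0)$ by order of vanishing. First, elliptic regularity makes $\psi$ smooth; applying $D^g$ to the identity $D^g\psi=-h\psi$ and inserting the Schr\"odinger--Lichnerowicz formula (\ref{schroed_lichn}) produces a second-order elliptic equation $\nabla^*\nabla\psi=A\cdot\nabla^g\psi+B\,\psi$ with smooth bounded coefficients $A,B$, whose principal part is the scalar Laplacian. Aronszajn's unique continuation theorem then applies: since $M$ is connected and $\psi\not\equiv 0$, the spinor $\psi$ cannot vanish to infinite order at any point, so for every $x_0\in Z$ the order of vanishing $\mu(x_0)$ is a finite integer $\geq 1$.

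Second, I would set up the local model. Around a zero $x_0$ choose $g$-normal coordinates and a $\nabla^g$-synchronous orthonormal frame; then $(D^g+h)\psi=0$ reads $D^{\mathrm{eucl}}\psi+(\text{coefficients vanishing at }x_0)\cdot\nabla^g\psi+(\text{bounded})\cdot\psi=0$, where $D^{\mathrm{eucl}}$ is the Euclidean Dirac operator. Writing the Taylor expansion $\psi=P+O(|x|^{k+1})$ with $k:=\mu(x_0)$ and $P\neq 0$ the leading homogeneous part of degree $k$, and collecting the homogeneous terms of degree $k-1$, one obtains $D^{\mathrm{eucl}}P=0$: the leading term of $\psi$ at each of its zeros is a nonzero $\Sigma_n$-valued homogeneous polynomial annihilated by $D^{\mathrm{eucl}}$ (in particular each component is harmonic). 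The basic algebraic observation is the case $k=1$: the real rank of $d\psi_{x_0}$ is at least $2$, for if it were $1$, say $d\psi_{x_0}(v)=\omega(v)\,\sigma$ with $\omega$ a nonzero covector and $\sigma$ a spinor, then freezing the equation at $x_0$ gives $0=\sum_i e_i\cdot d\psi_{x_0}(e_i)=\omega^\sharp\cdot\sigma$, and since Clifford multiplication by the nonzero vector $\omega^\sharp$ is invertible this forces $\sigma=0$, a contradiction. Hence near a first-order zero, $Z$ is contained in a smooth submanifold of dimension $\leq n-2$.

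Third, stratify $Z=\bigcup_{k\geq 1}Z_k$ with $Z_k:=\{x_0\in Z:\mu(x_0)=k\}$, a countable decomposition. Around $x_0\in Z_k$ one uses homogeneity of $P$ and the remainder bound $|\psi-P|\leq C|x|^{k+1}$ to confine $Z$ to an arbitrarily thin neighbourhood of the model zero cone $\{P=0\}$; and $\{P=0\}$ has Hausdorff dimension $\leq n-2$ because it is a real-algebraic cone to which a dimension-reduction argument applies: at a smooth point of a hypothetical $(n-1)$-dimensional piece one differentiates $P$ along the tangent hyperplane to produce a Dirac-harmonic homogeneous polynomial of strictly lower degree vanishing there, iterating until the first-order case forces a contradiction. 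Assembling the strata gives that $Z$ is countably $(n-2)$-rectifiable. When $n=2$, $P$ is a nonzero homogeneous polynomial solving a Cauchy--Riemann type system on $\R^2$, so its only zero is the origin, and the remainder bound then forces $x_0$ to be an isolated zero; since $Z$ is closed this makes it discrete. The whole analysis is interior, so a boundary causes no difficulty.

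\textbf{The main obstacle} I anticipate is this third step: proving that the zero cone of a general Dirac-harmonic homogeneous polynomial has codimension at least $2$, and quantitatively comparing $Z$ near a higher-order zero with that cone. The regularity and unique-continuation inputs and the $k=1$ rank computation are comparatively soft; the stratified dimension estimate is where the real work lies.
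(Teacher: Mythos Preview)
The paper does not prove this theorem: it is quoted from \cite{baer97} as background and no proof is given. Your outline is in fact close to B\"ar's original argument in that reference: strong unique continuation via Aronszajn applied to the second-order equation obtained from Schr\"odinger--Lichnerowicz, a Taylor expansion in normal coordinates whose leading homogeneous part $P$ satisfies $D^{\geucl}P=0$, and a stratification of $Z$ by vanishing order. You have correctly located the only substantial step, namely that the zero set of a nonzero Euclidean Dirac-harmonic homogeneous polynomial has real codimension at least $2$; B\"ar handles this by an argument that generalises your $k=1$ rank computation (Clifford multiplication by a nonzero vector is invertible, so a Dirac-harmonic polynomial cannot vanish on a hyperplane germ).

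One caution: showing that each stratum, or the model cone $\{P=0\}$, has Hausdorff dimension at most $n-2$ is strictly weaker than showing $Z$ is countably $(n-2)$-rectifiable. Rectifiability asks for a covering by countably many Lipschitz images of subsets of $\R^{n-2}$, and this does not follow from a dimension bound alone. In B\"ar's proof the rectifiability comes from applying the implicit function theorem on each stratum to a pair of real components of $\psi$ whose differentials are linearly independent (this is exactly what your rank-$\geq 2$ observation provides at first order, and the higher-order version is part of the ``real work'' you flagged). So your diagnosis of where the difficulty lies is accurate; just be aware that the target statement is rectifiability, not merely a Hausdorff-dimension estimate.
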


One can apply this theorem for $n=2$. 
However, if $\psi$ is zero somewhere and $n\geq3$, 
then $|\psi|_g^{2/(n-1)}$ is not smooth on the zero set of $\psi$ 
and thus the theorem cannot be applied. 
In this case one can still prove that the zero set of a nontrivial solution 
does not contain any open subset of $M$ and that its complement 
is connected (see \cite{amm}). 

In this thesis we will not solve this problem. 
However our result on the zero sets of eigenspinors for generic metrics 
suggests that for generic metrics any solution to 
(\ref{nonlinear_ev_equation}) should be nowhere zero.

\section{Witten spinors}

In his proof of the positive energy theorem for spin manifolds Witten 
introduces asymptotically constant harmonic spinors on certain non-compact 
Riemannian manifolds (see \cite{wi}). 
A rigorous proof of his ideas is given in \cite{pt}. Following the latter 
article we will state existence and uniqueness of these so called 
Witten spinors. Then we will see that the zero set of certain 
Witten spinors is not empty. 
For $x\in\R^n$ we denote by $r$ its euclidean distance from $0$. 
We assume that $n\geq3$.
\begin{definition}
For $s\in\R$ we define
\begin{displaymath}
  f\in O''(r^s) :\gdw \nabla^j f\in O(r^{s-j})\textrm{ for }j=0,1,2.
\end{displaymath}
\end{definition}

\begin{definition}
A complete Riemannian manifold $(M,g)$ of dimension $n$ is called 
asymptotically flat, if there exists a compact subset $K\subset M$ 
such that $M\setminus K$ is the disjoint union of a finite 
number of subsets $M_1$,...,$M_d$ with the property that 
for every $i\in\{1,...,d\}$ there exists a contractible and compact 
subset $K_i\subset\R^n$ and a diffeomorphism 
$\Phi_i$: $\R^n\setminus K_i\to M_i$, 
such that in the standard coordinates 
of $\R^n$ for all $j$, $k$ we have:
\begin{displaymath}
  (\Phi_i^*g)_{jk}-\delta_{jk}\in O''(r^{2-n})
\end{displaymath}
as $r\to\infty$. 
The subsets $M_i$ are called the ends of $M$.
\end{definition}

The simplest example of an asymptotically flat Riemannian manifold is 
the Euclidean space $(\R^n,\geucl)$. 
A more interesting example is $M=\R^3\setminus\{0\}$ 
with the Schwarz\-schild metric $g=(1+\frac{1}{r})^4\geucl$, where $m>0$ is a constant. 
It has two asymptotically flat ends, one at infinity and one at zero. 
In order to see the end at zero note that the map $x\mapsto\frac{1}{r^2}x$ 
is an isometry of $(M,g)$ (see \cite{bn}). 
Using this map one obtains a diffeomorphism, which defines the end at zero.

Since the different spin structures over $\R^n\setminus K_i$ are classified by 
the elements of $H^1(\R^n\setminus K_i,\Z/2\Z)=0$ we see that the pullback bundle 
$\Phi_i^*\Sigma^gM$ is equal to the trivial spinor bundle over
$\R^n\setminus K_i$ and thus extends to the trivial bundle 
$\R^n\times\Sigma_n$ over all of $\R^n$. The pullbacks under $\Phi_i^{-1}$ 
of the constant sections of this bundle will be called the constant 
spinors on the end $M_i$. From now we assume $n=3$.

\begin{theorem}[\cite{pt}]
\label{pt_theorem}
Let $(M,g,\Theta)$ be a $3$-dimensional asymptotically flat Riemannian spin manifold. 
For all $i\in\{1,...,d\}$ let $\gamma_i$ be a constant spinor 
on the end $M_i$. Then there exists a unique spinor $\psi$ on $M$ 
such that $D^g\psi=0$ and such that for every $\ep>0$ we have 
$\lim_{r\to\infty} r^{1-\ep}|\psi-\gamma_i|=0$ on each end $M_i$.
\end{theorem}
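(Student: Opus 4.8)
The plan is to prove existence and uniqueness separately, working on a weighted Sobolev space adapted to the asymptotically flat ends. First I would fix, for each end $M_i$, a cutoff function $\chi_i$ supported in $M_i$ that equals $1$ near infinity on that end, and form the spinor $\gamma:=\sum_i\chi_i\gamma_i$, which is a smooth spinor on all of $M$ that agrees with the prescribed constant spinor $\gamma_i$ outside a compact set on each end. Since $\gamma_i$ is constant with respect to the flat connection coming from the chart $\Phi_i$, and the metric $g$ differs from the Euclidean metric only by terms in $O''(r^{2-n})=O''(r^{-1})$ (recall $n=3$), one computes that $D^g\gamma$ is supported in the compact region where the cutoffs vary together with a tail on the ends that decays like $O(r^{-2})$; in particular $D^g\gamma\in L^2$. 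So the ansatz $\psi=\gamma+\varphi$ reduces the problem to solving $D^g\varphi=-D^g\gamma$ for a spinor $\varphi$ that decays at infinity, namely $\varphi\in H^1$ (or a suitable weighted space $H^1_\delta$ with $\delta$ a negative weight strictly above $-1/2$).

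The key analytic input is that on a $3$-dimensional asymptotically flat manifold the Dirac operator $D^g\colon H^1_\delta\to L^2_{\delta-1}$ is an isomorphism for the appropriate range of weights $\delta$. This in turn rests on two facts: a Lichnerowicz-type Bochner argument and the weighted elliptic (Fredholm) theory for $D^g$ on asymptotically flat manifolds. For injectivity one uses the Schrödinger--Lichnerowicz formula \eqref{schroed_lichn}: if $D^g\varphi=0$ with $\varphi$ decaying, then integrating $(D^g)^2\varphi=\nabla^*\nabla\varphi+\tfrac{\scal^g}{4}\varphi$ against $\varphi$ over $M$ and discarding the boundary term at infinity (legitimate because of the decay and the $O''(r^{-1})$ fall-off of the metric) gives
\begin{displaymath}
  0=\int_M|\nabla^g\varphi|^2\dv^g+\frac14\int_M\scal^g|\varphi|_g^2\dv^g;
\end{displaymath}
under the standard hypothesis $\scal^g\ge 0$ (which is implicit in this context, being the setting of Witten's positive-energy argument) this forces $\nabla^g\varphi\equiv 0$, and a parallel spinor on an asymptotically flat manifold that decays at infinity must vanish identically, so $\varphi=0$. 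Surjectivity then follows from Fredholmness of $D^g$ between the weighted spaces (a consequence of $D^g$ being asymptotic to the constant-coefficient Euclidean Dirac operator, which is an isomorphism on $\R^n$ in the relevant weighted spaces) together with the vanishing of the cokernel, which by the weighted elliptic theory is identified with the kernel of the formal adjoint $D^g$ on the dual weighted space — and that kernel vanishes by the same Bochner argument.

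Granting this, existence is immediate: solve $D^g\varphi=-D^g\gamma$ in $H^1_\delta$ and set $\psi=\gamma+\varphi$; elliptic regularity makes $\psi$ smooth, $D^g\psi=0$, and since $\varphi\in H^1_\delta$ with $\delta$ just above $-1/2$ one gets $r^{1-\varepsilon}|\psi-\gamma_i|=r^{1-\varepsilon}|\varphi|\to 0$ on each end for every $\varepsilon>0$, using the weighted Sobolev embedding. Uniqueness is the injectivity statement restated: the difference of two such spinors is harmonic and decays like $o(r^{-1+\varepsilon})$, hence lies in $H^1_\delta$, hence vanishes by the Bochner argument. The main obstacle — and the step I would develop most carefully — is the weighted Fredholm/isomorphism theory for $D^g$: one must pin down the admissible interval of weights (the "indicial roots" of the Euclidean Dirac operator on $\R^3$), verify that $\delta$ just above $-1/2$ lies in it, and make sure the decay hypothesis $\lim r^{1-\varepsilon}|\psi-\gamma_i|=0$ genuinely places the error term in the right weighted space so that the vanishing argument applies; the Bochner part is routine once the boundary term at infinity is controlled, which the metric fall-off $O''(r^{2-n})$ guarantees.
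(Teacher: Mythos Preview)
The paper does not give its own proof of this theorem; it is quoted verbatim as a result of Parker--Taubes \cite{pt} and used as a black box in the motivation chapter. So there is no ``paper's proof'' to compare against. That said, your outline is exactly the Parker--Taubes strategy: extend the asymptotic data by cutoffs, solve $D^g\varphi=-D^g\gamma$ in a weighted Sobolev space, and obtain injectivity via the Schr\"odinger--Lichnerowicz identity.

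Two points deserve care. First, the hypothesis $\scal^g\ge 0$ (or more generally the dominant energy condition) is \emph{essential} for the Bochner step and hence for both existence and uniqueness; you flag it as ``implicit,'' but the theorem as stated in this thesis omits it, and without it the kernel of $D^g$ on the weighted space need not vanish. You should state it explicitly rather than rely on context. Second, your handling of the weights is loose: the phrase ``$\delta$ just above $-1/2$'' together with the claimed embedding $r^{1-\varepsilon}|\varphi|\to 0$ does not match under the standard Bartnik convention (where $H^1_\delta$ gives decay $O(r^\delta)$, so you would want $\delta$ near $-1$, not $-1/2$). The correct argument in \cite{pt} pins down the admissible weight interval for $D^g$ on $\R^3$ and then bootstraps the decay of $\varphi$ using the equation $D^g\varphi=-D^g\gamma$ and the known $O(r^{-2})$ decay of the right-hand side; you should either fix the weight or indicate the bootstrap explicitly.
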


The spinor $\psi$ is called the Witten spinor for the constant spinors 
$\gamma_1$,...,$\gamma_d$.

Next we describe the relation between nowhere vanishing spinors on 
manifolds of dimension $3$ and orthonormal frames of its tangent bundle. 
It has long been known that every orientable manifold $M$ of dimension $3$ is 
parallelizable, i.\,e.\,there exists a global frame of the tangent bundle, 
which can of course be orthonormalized (see \cite{s}, \cite{w}). 
More recently it has been suggested by people working in general relativity 
(see \cite{n}, \cite{dm}, \cite{fns}) to use Witten spinors in order 
to construct special orthonormal frames of $TM$. This is possible if the 
Witten spinor is nowhere zero on $M$. 
In order to explain this, we first recall that $\Spin(3)=\SU(2)$ and therefore
\begin{displaymath}
  \Spin(3)=\left\{\left.\bmatrix{cc}{ a & b \\ -\overline{b} & \overline{a} }
  \in\GL(2,\C)\right| |a|^2+|b|^2=1\right\}.
\end{displaymath}
It follows that for any $v$, $w\in S^3\subset\C^2$ there exists exactly one 
$q\in\SU(2)$ such that $\rho(q)v=w$. Using this fact we can show:

\begin{theorem}
\label{frame_theorem}
Let $(M,g,\Theta)$ be a Riemannian spin manifold of dimension $3$. 
Then every smooth spinor on $M$ 
which is nowhere zero leads to a global orthonormal frame of $TM$.
\end{theorem}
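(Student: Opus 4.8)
The plan is to convert the nowhere-vanishing spinor into a global section of $\P_{\SO}(M,g)$; since a section of $\P_{\SO}(M,g)$ is by definition a global positively oriented $g$-orthonormal frame of $TM$, this proves the claim.

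First I would normalize. As $\psi\in\spinor{M}{g}{\infty}$ is nowhere zero, $|\psi|_g$ is a smooth positive function, so after replacing $\psi$ by $|\psi|_g^{-1}\psi$ we may assume $|\psi(x)|_g=1$ for all $x$; this does not affect the conclusion. Recall $\Sigma_3\cong\C^2$; fix the unit spinor $\sigma_0:=(1,0)\in\Sigma_3$. By the remarks preceding the statement, $\Spin(3)=\SU(2)$ acts on $S^3\subset\C^2$ via $\rho$ in such a way that for every $v\in S^3$ there is a \emph{unique} $q_v\in\Spin(3)$ with $\rho(q_v)\sigma_0=v$. In particular the orbit map $\Psi_0\colon\Spin(3)\to S^3$, $q\mapsto\rho(q)\sigma_0$, is a smooth bijection; it is equivariant for left translation on $\Spin(3)$ and the $\rho$-action on $S^3$, and since its isotropy group is trivial its differential is injective, hence (both manifolds being $3$-dimensional) an isomorphism at every point. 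Thus $\Psi_0$ is a diffeomorphism; write $\Psi_0^{-1}$ for its smooth inverse.

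Next I would define, for $x\in M$,
\begin{displaymath}
  s_0(x):=\text{the unique }s\in\P_{\Spin}(M,g)\text{ with }\pi'(s)=x\text{ and }[s,\sigma_0]=\psi(x).
\end{displaymath}
For existence and uniqueness, pick any $s\in(\pi')^{-1}(x)$ and write $\psi(x)=[s,\tau]$ with $\tau\in S^3$; then $[sq,\sigma_0]=[s,\rho(q)\sigma_0]=\psi(x)$ holds iff $\rho(q)\sigma_0=\tau$ iff $q=q_\tau$, so $s_0(x)=s\,q_\tau$ is the unique solution. For smoothness, on an open set $U\subseteq M$ admitting a smooth section $s\colon U\to\P_{\Spin}(M,g)$ we have $\psi|_U=[s,\tau]$ with $\tau\colon U\to S^3$ smooth (this is the local trivialization of $\Sigma^gM$ induced by $s$, together with $|\psi|_g\equiv1$), and then $s_0|_U=s\cdot(\Psi_0^{-1}\circ\tau)$ is smooth. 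By uniqueness these local expressions agree on overlaps, so $s_0\colon M\to\P_{\Spin}(M,g)$ is a global smooth section. Finally $\Theta\circ s_0\colon M\to\P_{\SO}(M,g)$ is a smooth global section, i.e. a global positively oriented $g$-orthonormal frame $(e_1,e_2,e_3)$ of $TM$.

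The only step needing genuine care is the smoothness of $s_0$, which I have reduced to smoothness of $\Psi_0^{-1}$; this in turn follows once $\Psi_0$ is recognized as a diffeomorphism, the key input being that the stabilizer of $\sigma_0$ in $\Spin(3)$ is trivial — equivalently, the uniqueness of $q_v$ — which is precisely what makes $s_0$ an honest section of $\P_{\Spin}(M,g)$ rather than merely a reduction of its structure group.
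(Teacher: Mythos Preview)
Your proof is correct and is essentially the same as the paper's: both normalize $\psi$, use the simple transitivity of $\Spin(3)=\SU(2)$ on $S^3\subset\Sigma_3$ to convert the unit spinor into a global section of $\P_{\Spin}(M,g)$, and then compose with $\Theta$. Your $s_0(x)=s\cdot\Psi_0^{-1}(\tau)$ coincides with the paper's $s(x)q(x)^{-1}$ (your $q_\tau$ is precisely their $q(x)^{-1}$), and you are somewhat more explicit than the paper about why the assignment $\tau\mapsto q_\tau$ is smooth.
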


\begin{proof}
Let $\psi\in\spinor{M}{g}{\infty}$ be nowhere zero on $M$. We define 
$\chi\in\spinor{M}{g}{\infty}$ by $\chi(x):=\frac{\psi(x)}{|\psi(x)|_g}$. 
Let $U\subset M$ be open such that there exists a smooth section $s$ of $P_{\Spin}(M,g)|_U$ 
and a smooth map $\sigma$: $U\to S^3\subset\C^2$ such that on $U$ we have $\chi|_U=[s,\sigma]$. 
For $x\in U$ let $q(x)\in\Spin(3)$ 
be the unique element such that $\rho(q(x))\sigma(x)$ is equal to the first 
vector $E_1$ of the standard basis of $\C^2$. We obtain $q\in C^{\infty}(U,\Spin(3))$ 
such that for all $x\in U$ we have $\chi|_U(x)=[s(x)q(x)^{-1},E_1]$. 
It follows that $s(x)q(x)^{-1}$ is independent of the choices of $s$, $\sigma$ 
and thus can be defined on all of $M$. 
Then $e$: $M\to P_{\SO}(M,g)$ with 
\begin{displaymath}
  e(x):=\Theta(s(x)q(x)^{-1})
\end{displaymath}
is a smooth section of $P_{\SO}(M,g)$. 
\end{proof}

However it is easy to see that Witten spinors can have zeros. 
More precisely if $W$ denotes the space of all Witten spinors on an 
asymptotically flat manifold $M$ with $d$ ends, 
then by Theorem \ref{pt_theorem} there exists a $\C$-linear isomorphism 
$(\Sigma_3)^d\to W$. Let $x\in M$. The evaluation map
\begin{displaymath}
  ev_x:\quad W\to\Sigma^g_xM,\quad \psi\mapsto\psi(x)
\end{displaymath}
is a $\C$-linear map between complex vector spaces of dimensions $2d$ 
and $2$. Thus in the case $d>1$ for every $x\in M$ 
there exists a Witten spinor on $M$, which is zero in $x$. 

In the case $d=1$ it is not clear whether Witten spinors can have zeros. 
If $\psi$ is a Witten spinor, then $W=\span_{\C}\{\psi,J\psi\}$, 
where $J$ is a quaternionic structure. 
Thus if there exists $x\in M$ and 
a Witten spinor $\psi$ with $\psi(x)=0$, then all Witten spinors 
on $(M,g,\Theta)$ are zero in $x$. 

In this thesis we will not solve this problem. 
However one can hope to carry over some of our techniques 
in order to prove that in the generic case Witten 
spinors are nowhere zero. This would mean that the zero sets 
of Witten spinors have no physical significance.

\section{Zero sets of eigenspinors}

Let $(M,g,\Theta)$ be a closed Riemannian spin manifold of dimension $n\geq2$. 
We observe that eigenspinors of the Dirac operator can have non empty zero set. 
Namely there are explicit computations of eigenvalues whose multiplicities are greater 
than the rank of the spinor bundle (see e.\,g.\,Theorem \ref{theorem_dirac_sphere}). 
Thus for these eigenvalues there exist eigenspinors with zeros. 
Harmonic spinors on Riemann surfaces provide another example, 
which we will treat in Section \ref{surfaces_section}. 

It would be useful to have eigenspinors, which are nowhere zero, since then 
one would have a simple proof of Hijazi's inequality, which we now explain. 
First let 
\begin{displaymath}
  \Delta^g: C^{\infty}(M,\R)\to C^{\infty}(M,\R)
\end{displaymath}
denote the Laplace operator for the metric $g$ acting on functions on $M$. 
If $x_1$,...,$x_n$ are local coordinates on $M$ with coordinate vector fields 
$\partial_1$,...,$\partial_n$ and the matrices $(g_{ij})$ and $(g^{ij})$ 
are defined by
\begin{displaymath}
  g_{ij}:=g(\partial_i,\partial_j),\quad g:=(g_{ij}),\quad (g^{ij}):=g^{-1},
\end{displaymath}
then we have for all locally defined smooth functions $f$:
\begin{equation}
  \label{laplace}
  \Delta^g f=-\det(g)^{-1/2}\sum_{i,j=1}^n \partial_i(g^{ij}\det(g)^{1/2}\partial_j f).
\end{equation}
The conformal Laplace operator is a linear second order differential operator 
\begin{displaymath}
  L^g:\quad C^{\infty}(M,\R)\to C^{\infty}(M,\R),
\end{displaymath}
which is defined by 
\begin{displaymath}
  L^g f:=\frac{4(n-1)}{n-2}\Delta^g f+\scal^g f,
\end{displaymath}
where $\scal^g$ denotes the scalar curvature for the metric $g$. 
The operator $L^g$ is called the conformal Laplace operator because 
it satisfies a nice transformation law under conformal changes of the metric. 
More precisely let $g$ and $h=e^{2u}g$ be two conformally related metrics, 
where $u$ is a smooth function on $M$. Then we have for all $f\in C^{\infty}(M,\R)$ 
(see \cite{lp}, p. 43)
\begin{equation}
  \label{Lg_conform}
  L^g(e^{(n-2)u/2}f)=e^{(n+2)u/2}L^hf.
\end{equation}
The spectrum of $L^g$ consists of a sequence of isolated real eigenvalues, 
which is bounded from below by the smallest eigenvalue $\mu$ and is not bounded from above. 
\begin{displaymath}
  \mu:=\mu_0\leq\mu_1\leq...\to\infty.
\end{displaymath}
Hijazi's inequality gives a lower bound on the eigenvalues of 
the Dirac operator in terms of the smallest eigenvalue 
of the conformal Laplace operator. 

\begin{theorem}[\cite{hij86}]
\label{hij_ineq}
Let $(M,g,\Theta)$ be a closed Riemannian spin manifold of dimension $n\geq3$ 
and let $\mu$ be the smallest eigenvalue of the conformal Laplace operator. 
Then any eigenvalue $\lambda$ of the Dirac operator satisfies
\begin{displaymath}
  \lambda^2\geq\frac{n}{4(n-1)}\mu.
\end{displaymath}
\end{theorem}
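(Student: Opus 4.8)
The plan is to run the classical Friedrich--Hijazi argument: deform the metric conformally using the first eigenfunction of the conformal Laplace operator, transport the eigenspinor to the new metric by the conformal covariance of the Dirac operator, and then feed the resulting pointwise identity into a refined Schr\"odinger--Lichnerowicz estimate obtained from the twistor (Penrose) operator.

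First I would choose a smooth function $\phi>0$ on $M$ with $L^g\phi=\mu\phi$; such a positive first eigenfunction exists by standard elliptic theory for the Schr\"odinger-type operator $L^g$ (here the hypothesis $n\geq3$ enters, since $L^g$ involves $n-2$). Set $u:=\frac{2}{n-2}\log\phi$ and $h:=e^{2u}g=\phi^{4/(n-2)}g$. Applying the conformal transformation law (\ref{Lg_conform}) with this $u$ and $f\equiv1$, and using $L^h(1)=\scal^h$ together with $L^g(e^{(n-2)u/2})=L^g\phi=\mu\phi$, one obtains
\begin{displaymath}
  \scal^h=\mu\,e^{-2u}\quad\text{on }M.
\end{displaymath}
Now let $\psi\in\spinor{M}{g}{\infty}$ be an eigenspinor with $D^g\psi=\lambda\psi$, and define $\varphi:=e^{-(n-1)u/2}\beta_{g,h}\psi\in\spinor{M}{h}{\infty}$. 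By the first identity in (\ref{dirac_conform}), $D^h\varphi=e^{-(n+1)u/2}\beta_{g,h}D^g\psi=\lambda\,e^{-u}\varphi$, so that $|D^h\varphi|_h^2=\lambda^2 e^{-2u}|\varphi|_h^2$ pointwise, and $\varphi\not\equiv0$ because $\beta_{g,h}$ is a fibrewise isometry and $\psi\not\equiv0$.

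Next I would apply the Schr\"odinger--Lichnerowicz formula (\ref{schroed_lichn}) for the metric $h$ to $\varphi$ and integrate over the closed manifold $M$, using self-adjointness of $D^h$, to get
\begin{displaymath}
  \int_M|D^h\varphi|_h^2\,\dv^h=\int_M|\nabla^h\varphi|_h^2\,\dv^h+\frac14\int_M\scal^h|\varphi|_h^2\,\dv^h.
\end{displaymath}
The key refinement is the pointwise twistor estimate $|\nabla^h\varphi|_h^2\geq\frac1n|D^h\varphi|_h^2$: setting $P_X\varphi:=\nabla^h_X\varphi+\frac1n X\cdot D^h\varphi$ for a local $h$-orthonormal frame $(e_i)$, the antisymmetry of Clifford multiplication and the fact that Clifford multiplication by a unit vector is an isometry give $\sum_i|P_{e_i}\varphi|_h^2=|\nabla^h\varphi|_h^2-\frac1n|D^h\varphi|_h^2\geq0$. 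Combining this with the displayed identity yields
\begin{displaymath}
  \frac{n-1}{n}\int_M|D^h\varphi|_h^2\,\dv^h\geq\frac14\int_M\scal^h|\varphi|_h^2\,\dv^h.
\end{displaymath}
Substituting $|D^h\varphi|_h^2=\lambda^2 e^{-2u}|\varphi|_h^2$ and $\scal^h=\mu e^{-2u}$, and cancelling the common strictly positive number $\int_M e^{-2u}|\varphi|_h^2\,\dv^h$, one is left with $\frac{n-1}{n}\lambda^2\geq\frac{\mu}{4}$, which is the claimed inequality.

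The step that needs the most care is matching the conformal weights: since $\lambda$ is not a conformal invariant one cannot simply rescale to constant scalar curvature, and the whole reason for choosing $h=\phi^{4/(n-2)}g$ with $\phi$ the conformal-Laplacian eigenfunction is that $\scal^h$ and $|D^h\varphi|_h^2$ then carry exactly the same factor $e^{-2u}$, which therefore cancels at the end. The remaining technical points---the positivity of $\phi$ via the maximum principle, and the algebraic verification of the twistor identity---are routine but should be recorded carefully.
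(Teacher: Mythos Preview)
Your argument is correct and is in fact the classical Hijazi proof from \cite{hij86}. The paper, however, deliberately presents a \emph{different} route, explicitly conditioned on Assumption~(A) that the eigenspinor $\psi$ is nowhere zero. There the conformal factor is built from the spinor itself, $u=\frac{2}{n-1}\ln|\psi|_g$, so that the transformed spinor satisfies $|\varphi|_h\equiv1$; after Schr\"odinger--Lichnerowicz and the twistor estimate this yields $\int_M f\,L^g f\,\dv^g\le\frac{4(n-1)}{n}\lambda^2\int_M f^2\,\dv^g$ with $f=e^{(n-2)u/2}$, and the inequality follows from the Rayleigh characterisation of $\mu$. Your approach instead builds the conformal factor from a positive first eigenfunction of $L^g$, forcing $\scal^h=\mu e^{-2u}$, after which the same weight $e^{-2u}$ appears in both the Dirac and scalar-curvature terms and cancels directly. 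Your version is unconditional and more economical; the paper's version is there precisely as motivation---to show that having eigenspinors without zeros (the paper's central theme) would give an alternative, self-contained proof.
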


For the new simple proof we need the following assumption 
\begin{displaymath}
  \textrm{(A)   }
  \begin{array}{l}
  \textrm{There exists an eigenspinor }\psi
  \textrm{ corresponding to }\lambda,\\
  \textrm{which is nowhere zero on }M.
  \end{array}
\end{displaymath}

\begin{proof}[Proof of Theorem \ref{hij_ineq} using Assumption (A)]
We define a smooth function $u$ on $M$ by $u:=\frac{2}{n-1}\ln|\psi|_g$. 
The metric $h:=e^{2u}g$ is conformal to $g$ and the spinor 
\begin{displaymath}
  \varphi:=e^{-(n-1)u/2}\beta_{g,h}\psi\in\spinor{M}{h}{\infty}
\end{displaymath}
satisfies $|\varphi|_h\equiv1$ on $M$ 
and $D^h\varphi=e^{-u}\lambda\varphi$ by (\ref{dirac_conform}). 
By the Schr\"odinger-Lichnerowicz formula (\ref{schroed_lichn}) we find 
\begin{displaymath}
  \int_M |D^h\varphi|_h^2\,\dv^h=\int_M |\nabla^h\varphi|_h^2\,\dv^h+\frac{1}{4}\int_M \scal^h\,\dv^h.
\end{displaymath}
We use the estimate 
\begin{displaymath}
  \int_M |\nabla^h\varphi|_h^2\,\dv^h\geq\frac{1}{n}\int_M |D^h\varphi|_h^2\,\dv^h.
\end{displaymath}
Since the volume elements are related by $\dv^h=e^{nu}\dv^g$ we find 
\begin{displaymath}
  \int_M |D^h\varphi|_h^2\,\dv^h=\lambda^2\int_M e^{(n-2)u}\dv^g
\end{displaymath}
and thus altogether 
\begin{displaymath}
  \int_M \scal^h\,\dv^h \leq \frac{4(n-1)}{n}\lambda^2 \int_M e^{(n-2)u}\dv^g.
\end{displaymath}
By the transformation formula (\ref{Lg_conform}) we find 
\begin{displaymath}
  \scal^h=L^h1=e^{-(n+2)u/2}L^g(e^{(n-2)u/2})
\end{displaymath}
and thus by setting $f:=e^{(n-2)u/2}$ we obtain 
\begin{displaymath}
  \int_M f L^gf\dv^g\leq\frac{4(n-1)}{n}\lambda^2 \int_M f^2\dv^g.
\end{displaymath}
Since $L^g$ is an elliptic self-adjoint differential operator on a compact manifold, 
there exists an $L^2$-orthonormal Hilbert basis of $L^2(M,\R)$ 
consisting of eigenfunctions of $L^g$ (see e.\,g.\,\cite{lm}, p. 196). 
We write $f$ as a possibly infinite linear combination of these eigenfunctions 
and conclude that the left hand side is bounded from below by $\mu\int_M f^2\dv^g$.  
\end{proof}

The eigenvalues of $D^g$ depend continuously on $g$ 
with respect to the $C^k$-topology for all $k\geq1$ (see Proposition \ref{cont_eigenvalues}). 
For $k\in\N$ the $k$-th eigenvalue $\mu_k(g)$ of the conformal Laplacian $L^g$ 
can be characterized as follows. For every $(k+1)$-dimensional subspace $V_{k+1}$ 
of $C^{\infty}(M,\R)$ we define
\begin{displaymath}
  \Lambda_g(V_{k+1}):=\sup\{ (f,L^gf)_2/\|f\|^2_2\,\,|f\in V_{k+1}\setminus\{0\}\}.
\end{displaymath}
Then we have
\begin{displaymath}
  \mu_k(g):=\inf_{V_{k+1}} \Lambda_g(V_{k+1}),
\end{displaymath}
where the infimum is taken over all $(k+1)$-dimensional subspaces. 
This can be proven similarly to Proposition 2.1 in \cite{bu}. 
Since the scalar curvature contains second derivatives of the metric, 
it follows that $\mu_k(g)$ depends continuously on $g$ 
with respect to the $C^2$-topology. 

We will show in Section \ref{gen_eigenspinor_section} that for $n\in\{2,3\}$ the subset of all 
Riemannian metrics on $M$, for which Assumption (A) holds, is dense in the 
space of all smooth Riemannian metrics on $M$ with respect to any $C^k$-topology, $k\geq1$.
By continuity of the eigenvalues of $D^g$ and $L^g$ 
we obtain a new proof of Theorem \ref{hij_ineq} for $n=3$.

\chapter{Examples of zero sets of eigenspinors}
\label{examples_chapter}

\section{Harmonic spinors on Riemann surfaces}
\label{surfaces_section}

First we recall a way of identifying positive harmonic spinors on closed Riemann 
surfaces with holomorphic sections of certain line bundles (see \cite{hit74}). 
Let $(M,g)$ be a compact K\"ahler manifold of complex dimension $m$. 
We denote by $K:=\Lambda^m T^{1,0}M$ the canonical line bundle over $M$. 
Then $M$ is a spin manifold if and only if there exists a holomorphic line bundle 
$L$ such that $L\otimes_{\R} L=K$. In this case the different spin structures 
on $M$ are in one-to-one-correspondence with such holomorphic line bundles. 
The spinor bundle is isomorphic to 
$\Lambda^*T^{0,1}M\otimes_{\R} L$ and the $\Z/2\Z$-grading is given by the 
decomposition of $\Lambda^*T^{0,1}M$ into even and odd forms. Let
\begin{displaymath}
  \overline{\partial}: \Lambda^pT^{0,1}M\to\Lambda^{p+1}T^{0,1}M
\end{displaymath}
be the exterior derivative in the Dolbeault complex. 
If $s$ is a local holomorphic section of $L$ and $\varphi$ is a 
smooth local section of $\Lambda^*T^{0,1}M$, we define 
$\overline{\partial}(\varphi\otimes s):=\overline{\partial}\varphi\otimes s$. 
Then the Dirac operator corresponds to 
$\sqrt{2}(\overline{\partial}+\overline{\partial}^*)$. 
Now let $M$ be a closed orientable surface with a Riemannian metric $g$. Then $M$ is 
a spin manifold and since $\Lambda^0T^{0,1}M$ is the trivial line bundle, 
there is a canonical isomorphism $\Sigma^+M\cong L$. 
Thus positive harmonic spinors can be identified with holomorphic sections of $L$.

Next let $(M,g,\Theta)$ be a two-dimensional Riemannian spin manifold. 
We recall a way of associating a vector field to a positive or negative spinor 
on $M$ (see \cite{am98}). 
First we define $\tau_{\pm}$: $\SO(2)\to\C$ by
\begin{displaymath}
  \bmatrix{cc}{\cos t & -\sin t \\ \sin t & \cos t}\mapsto\exp(\pm it).
\end{displaymath}
We define a complex structure $J$ on $M$ such that for every $p\in M$ and for every unit vector  
$X\in T_pM$ the system $(X,JX)$ is a positively oriented orthonormal basis of $T_pM$. 
Then the map $\P_{\SO}(M,g)\times_{\tau_+}\C\to(TM,J)$ which sends $[(e_1,e_2),1]$ to $e_1$ 
is an isomorphism of complex line bundles. In the same way we obtain an isomorphism 
$\P_{\SO}(M,g)\times_{\tau_-}\C\to(TM,-J)$. 
We quote the following Lemma from \cite{am98} including the proof. 
It enables us to associate a vector field on $M$ 
to a given section of $\Sigma^{\pm}M$.

\begin{lemma}
\label{spinor_to_vector}
Let $\Theta$ be a spin structure on $(M,g)$. Then the map
\begin{eqnarray*}
\Phi_{\pm}:\quad\Sigma^{\pm}M=\P_{\Spin}(M,g)\times_{\rho}\Sigma^{\pm}_2&\to&\P_{\SO}(M,g)\times_{\tau_{\mp}}\C=(TM,\mp J)\\
{}[s,\sigma]&\mapsto&[\Theta(s),\sigma^2]
\end{eqnarray*}
is well defined.
\end{lemma}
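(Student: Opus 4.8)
The plan is to show that the prescription $\Phi_{\pm}([s,\sigma]):=[\Theta(s),\sigma^2]$ does not depend on the representative $(s,\sigma)$ of the equivalence class $[s,\sigma]\in\Sigma^{\pm}M$, where $\sigma^2$ denotes the square of the single complex component of $\sigma\in\Sigma^{\pm}_2\cong\C$. Recall that for $n=2$ the module $\Sigma_2$ is two-dimensional with $\Sigma_2^{\pm}$ the $(\pm1)$-eigenspaces of $\om_{\C}$, each of complex dimension $1$; so fixing a unit vector in $\Sigma^{\pm}_2$ identifies it with $\C$ and makes the expression $\sigma^2$ meaningful. Two representatives of the same class differ by an element $q\in\Spin(2)$: the pair $(sq^{-1},\rho(q)\sigma)$ represents the same point. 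On the target side, $\Theta(sq^{-1})=\Theta(s)\cdot\vartheta(q)$, so I must check that $[\Theta(s)\vartheta(q),(\rho(q)\sigma)^2]=[\Theta(s),\sigma^2]$ in $\P_{\SO}(M,g)\times_{\tau_{\mp}}\C$, i.e.\ that $\tau_{\mp}(\vartheta(q))\cdot(\rho(q)\sigma)^2=\sigma^2$, equivalently that $\rho(q)$ acts on the line $\Sigma^{\pm}_2$ by a scalar whose square equals $\tau_{\mp}(\vartheta(q))^{-1}$.

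The key computational step is therefore to determine the scalar by which $q\in\Spin(2)$ acts on $\Sigma^{\pm}_2$ and to compare it with $\tau_{\mp}\circ\vartheta$. Concretely, $\Spin(2)$ is the circle consisting of elements $q(t)=\cos t\cdot 1+\sin t\cdot E_1E_2$ (obtained from products of unit vectors in $\R^2$), and $\vartheta(q(t))$ is the rotation by angle $2t$. Using the explicit matrices for $E_1,E_2$ given in the excerpt for $n=2$, one computes $E_1\cdot E_2$ as a diagonal matrix, reads off its eigenvalues $\mp i$ on $\Sigma^{\pm}_2$, and finds that $\rho(q(t))$ acts on $\Sigma^{\pm}_2$ as multiplication by $\exp(\mp it)$. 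On the other hand $\tau_{\mp}$ applied to the rotation by $2t$ gives $\exp(\mp i\cdot 2t)=\exp(\mp 2it)$. Hence $(\rho(q(t))\sigma)^2=\exp(\mp 2it)\sigma^2=\tau_{\mp}(\vartheta(q(t)))\,\sigma^2$, and since $\tau_{\mp}$ takes values in the unit circle this is exactly the relation needed for $[\Theta(s)\vartheta(q),(\rho(q)\sigma)^2]=[\Theta(s),\sigma^2]$. This establishes well-definedness. I would also remark briefly that $\Phi_{\pm}$ is fibrewise the squaring map $\C\to\C$, hence smooth, and is a genuine (nonlinear) bundle map rather than a bundle isomorphism.

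The main obstacle is purely bookkeeping: getting the sign and the factor of $2$ in the exponent to match, which hinges on (a) the convention relating $\vartheta(q(t))$ to rotation by $2t$ rather than $t$, and (b) the sign convention in the definition of $\tau_{\pm}$ versus the action of $\om_{\C}=iE_1E_2$ distinguishing $\Sigma^+_2$ from $\Sigma^-_2$. The factor of $2$ coming from squaring $\sigma$ is precisely what compensates the factor of $2$ in the double cover $\vartheta$, which is the conceptual reason the construction works and the reason one uses $\sigma^2$ rather than $\sigma$. Once the matrix computation is carried out carefully with the conventions fixed in the excerpt, there is nothing further to check.
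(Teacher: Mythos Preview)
Your approach is exactly the paper's: parametrize $\Spin(2)$ by $q(t)=\exp(tE_1E_2)$, compute that $E_1E_2$ acts on $\Sigma^{\pm}_2$ as $\mp i$, compute $\vartheta_*(E_1E_2)=2J$, and match the squared scalar against $\tau_{\mp}\circ\vartheta$. The core computation $(\rho(q(t))\sigma)^2=\exp(\mp 2it)\,\sigma^2=\tau_{\mp}(\vartheta(q(t)))\,\sigma^2$ is correct and is precisely what the paper establishes (phrased there as ``$g^2$ acts on $\Sigma^{\pm}_2$ as $\tau_{\mp}(\vartheta(g))$'').

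There is, however, a bookkeeping slip of exactly the kind you warned yourself about. The map $\Theta$ is equivariant, so $\Theta(sq^{-1})=\Theta(s)\,\vartheta(q)^{-1}$, not $\Theta(s)\,\vartheta(q)$. With the wrong sign you derive the condition $\tau_{\mp}(\vartheta(q))\cdot(\rho(q)\sigma)^2=\sigma^2$, whereas the correct condition is $(\rho(q)\sigma)^2=\tau_{\mp}(\vartheta(q))\,\sigma^2$ --- and the latter is what you actually verify. Your remark ``since $\tau_{\mp}$ takes values in the unit circle this is exactly the relation needed'' does not bridge this gap (unit modulus does not give $\tau=\tau^{-1}$); rather, once you correct the equivariance formula the two relations coincide and no bridge is needed. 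Fix that one line and the proof is complete and identical in substance to the paper's.
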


\begin{proof}
For any $n$ the multiplicative group $\Cl(n)^*$ of invertible 
elements in the Clifford algebra $\Cl(n)$ is an open subset. 
Thus one may identify the Lie algebra of the spin group $\Spin(n)$ 
with a subspace of $\Cl(n)$. By \cite{lm}, Proposition I.6.1 the 
Lie algebra of $\Spin(2)$ is spanned by $E_1\cdot E_2$. Therefore 
every element of $\Spin(2)$ can be written in the form $\exp(tE_1\cdot E_2)$. 
Since $E_1\cdot E_2$ acts on $\Sigma^{\pm}_2$ as $\mp i\id$, 
we find that $\exp(tE_1\cdot E_2)$ acts as $\exp(\mp it)$. 
Furthermore by \cite{lm}, Proposition I.6.2 we have
\begin{displaymath}
 \vartheta_*(E_1\cdot E_2)=\bmatrix{cc}{0 & -2 \\ 2 & 0}=2J.
\end{displaymath}
Thus for every $g\in\Spin(n)$ the element $g^2$ acts 
on $\Sigma^{\pm}_2$ as $\tau_{\mp}(\vartheta(g))$. 
We conclude that
\begin{eqnarray*}
[\Theta(sg),\sigma^2]&=&[\Theta(s)\vartheta(g),\sigma^2]\\
&=&[\Theta(s),\tau_{\mp}(\vartheta(g))\cdot\sigma^2]\\
&=&[\Theta(s),(g\cdot\sigma)^2]
\end{eqnarray*}
and therefore $\Phi_{\pm}$ is well defined.
\end{proof}

From now assume that $M$ is a closed oriented surface of genus $\gamma$ 
with a Riemannian metric $g$. 
Assume that $\psi$ is a nontrivial positive harmonic spinor on $M$ and that 
$p\in M$ is a point with $\psi(p)=0$. Since $\Sigma^+M$ can be 
canonically identified with a holomorphic line bundle, there exists 
a local chart of $M$ mapping $p$ to $0\in\C$ 
and a local trivialization of $\Sigma^+M$ 
around $p$, such that in this chart and in this trivialization $\psi$ corresponds 
to a holomorphic function $f$: $U\to\C$ on an open 
subset $U\subset\C$ containing $0$. 
Then in a neighborhood of $0$ the function $f$ 
is given by a convergent power series $f(z)=\sum_{k=1}^{\infty} a_k z^k$ 
with $a_k\in\C$. Define $m_p$ as the smallest integer $k$ 
such that $a_k\neq0$. Let $X$ be the vector field on $M$ associated 
to $\psi$ via Lemma \ref{spinor_to_vector}. It follows that $X$ 
has an isolated zero at $p$ with index equal to $-2m_p$. 
Let $\chi(M)=2-2\gamma$ denote the Euler characteristic of $M$. 
Denote by $N$ the zero set of $\psi$. Since $M$ is compact, 
the set $N$ is finite. By the Poincar\'e-Hopf Theorem 
(see e.\,g.\,\cite{mi}, p. 35) we obtain 
$-2\sum_{p\in N}m_p=\chi(M)$. Thus we find the following result.

\begin{theorem}
\label{theorem_surfaces}
Assume that $\psi$ is a positive harmonic spinor on a 
closed oriented surface $M$ and let $N\subset M$ 
be its zero set. Then $N$ is finite and we have
\begin{displaymath}
  \sum_{p\in N}m_p=-\frac{1}{2}\chi(M)=\gamma-1.
\end{displaymath}
\end{theorem}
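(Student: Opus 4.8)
The plan is to set up the standard correspondence between positive harmonic spinors and holomorphic sections of the line bundle $L$, and then translate the vanishing orders of $\psi$ at the zeros into the degree of $L$, which is determined by the topology of $M$.

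First I would invoke the identification $\Sigma^+M\cong L$ from Section \ref{surfaces_section}, together with $D^g$ corresponding (up to the constant $\sqrt2$) to $\overline\partial+\overline\partial^*$, so that a positive harmonic spinor $\psi$ is exactly a holomorphic section of $L$. Near each zero $p\in N$, choosing a holomorphic chart and a local holomorphic trivialization of $L$, the section $\psi$ becomes a holomorphic function $f(z)=\sum_{k\ge m_p} a_k z^k$ with $a_{m_p}\ne0$, so the vanishing order of $\psi$ at $p$ in the sense of holomorphic sections is exactly the integer $m_p$ defined in the text. Since $M$ is compact, a nontrivial holomorphic section has only finitely many zeros, so $N$ is finite. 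The classical fact that for a holomorphic line bundle on a compact Riemann surface the number of zeros of a nontrivial holomorphic section, counted with multiplicity, equals the degree of the bundle then gives
\begin{displaymath}
  \sum_{p\in N} m_p=\deg L.
\end{displaymath}

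Next I would compute $\deg L$. By definition $L\otimes_\R L=K$, where $K$ is the canonical bundle of the Riemann surface $M$, and $\deg K=2\gamma-2=-\chi(M)$ by the classical computation of the degree of the canonical bundle (equivalently, $K\cong T^{*1,0}M$ and the zeros of a meromorphic $1$-form count $-\chi(M)$). Since the degree is additive under tensor products, $2\deg L=\deg K=-\chi(M)$, hence $\deg L=-\tfrac12\chi(M)=\gamma-1$. Combining with the previous display yields $\sum_{p\in N} m_p=\gamma-1$, which is the assertion. (The alternative route via the vector field $X$ of Lemma \ref{spinor_to_vector} and the Poincar\'e--Hopf theorem, as sketched in the text, also works: $X$ has an isolated zero of index $-2m_p$ at each $p\in N$, and $\sum$ of the indices equals $\chi(M)$, giving $-2\sum_{p\in N} m_p=\chi(M)$ directly.)

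The only genuinely nontrivial input is the degree--zero-count identity for holomorphic line bundles on compact Riemann surfaces (and the value $\deg K=2\gamma-2$); everything else is a matter of unwinding definitions. So the main obstacle is simply to cite or briefly justify these standard facts cleanly; there is no hard estimate or delicate argument here, and the Poincar\'e--Hopf version even sidesteps the line-bundle language entirely at the cost of invoking Lemma \ref{spinor_to_vector} and the index computation $-2m_p$.
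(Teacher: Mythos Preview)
Your proposal is correct, and you have in fact anticipated both possible routes. The paper's own argument is the Poincar\'e--Hopf route you mention at the end: it uses Lemma~\ref{spinor_to_vector} to produce the vector field $X=\Phi_+(\psi)$ on $(TM,-J)$, observes that in a local holomorphic chart and trivialization $\psi$ corresponds to a holomorphic function $f(z)=a_{m_p}z^{m_p}+\ldots$, hence $X$ corresponds to $f(z)^2$ and has index $-2m_p$ at $p$, and then applies Poincar\'e--Hopf to obtain $-2\sum_{p\in N}m_p=\chi(M)$.

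Your primary route via $\sum_{p\in N}m_p=\deg L$ and $2\deg L=\deg K=2\gamma-2$ is a genuinely different and arguably cleaner argument: it avoids the squaring map of Lemma~\ref{spinor_to_vector} and the local index computation entirely, at the cost of invoking the standard degree--divisor identity for holomorphic line bundles on compact Riemann surfaces. The paper's route is more self-contained within differential topology (Poincar\'e--Hopf plus the explicit squaring map), while yours stays inside the holomorphic category. Both are standard; either is fully acceptable here.
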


From this we can see the well known fact, that for $\gamma=0$ 
there are no harmonic spinors for any Riemannian metric (see e.\,g.\,\cite{hit74}). 
In the case $\gamma=1$ it has been proven in \cite{fr84} that for the flat 
Riemannian metric $g$ there exists exactly one spin structure with 
$\dim\ker(D^g)=2$ and exactly three spin structures with no 
harmonic spinors. Since the dimension of the space of harmonic spinors 
is conformally invariant and since any two Riemannian metrics on $M$ 
are conformally related, it follows that this result holds for all 
Riemannian metrics on $M$. If $\psi$ is a positive harmonic spinor 
and $J$ is a parallel quaternionic structure on the spinor bundle, 
then $J\psi$ is a negative harmonic spinor and it has the same zero 
set as $\psi$. By Theorem \ref{theorem_surfaces} 
all the harmonic spinors are nowhere zero on $M$. 
For $\gamma=2$ it follows from \cite{hit74}, Proposition 2.3, that 
there exist 6 distinct spin structures on $M$, such that for every Riemannian 
metric $g$ on $M$ we have $\dim\ker(D^g)=2$. 
We take one of these spin structures and fix a Riemannian metric $g$ on $M$. 
Using again a parallel quaternionic structure and using Theorem \ref{theorem_surfaces} 
we find that the zero set of every harmonic spinor consists of exactly 
one point and it is the same point for all harmonic spinors.

\section{The sphere with the standard metric}
\label{sphere_section}

We will explicitly calculate some zero sets of eigenspinors on the 
sphere with the standard metric. 
The reader, who is only interested in the main results may skip this 
section and continue in Chapter \ref{green_chapter}.

\begin{theorem}[\cite{baer96}]
\label{theorem_dirac_sphere}
Let $M=S^n$, $n\geq2$, be equipped with the standard metric and the unique spin structure. 
The eigenvalues of the Dirac operator are $\lambda=\pm(\frac{n}{2}+m)$, $m\in\N$, 
with multiplicities
\begin{displaymath}
  2^{[n/2]}\bmatrix{c}{ m+n-1 \\ m}.
\end{displaymath}
\end{theorem}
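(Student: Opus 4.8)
The plan is to diagonalize the Dirac operator on $S^n$ using the homogeneous structure $S^n = \Spin(n+1)/\Spin(n)$ and representation-theoretic branching. First I would realize the spinor bundle $\Sigma^{\gcan}S^n$ as the homogeneous vector bundle $\Spin(n+1)\times_{\rho}\Sigma_n$ associated to the restriction of the spinor representation $\rho$ of $\Spin(n)$. By the Peter--Weyl theorem, the Hilbert space $L^2(\Sigma^{\gcan}S^n)$ decomposes as a Hilbert sum $\bigoplus_{\gamma}V_\gamma\otimes\Hom_{\Spin(n)}(V_\gamma|_{\Spin(n)},\Sigma_n)$, where $\gamma$ runs over the irreducible unitary representations $V_\gamma$ of $\Spin(n+1)$. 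The multiplicity space is governed by the branching rule for $\Spin(n+1)\downarrow\Spin(n)$, which is multiplicity-free for these groups; one determines exactly which $V_\gamma$ contain $\Sigma_n$ upon restriction. For $n$ even, $\Sigma_n$ is reducible as a $\Spin(n)$-module, but the half-spinor pieces are exchanged by Clifford multiplication, so the full bundle is the right object to work with.

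Next I would compute the action of $D^{\gcan}$ on each isotypic summand. Since $D^{\gcan}$ is $\Spin(n+1)$-equivariant, by Schur's lemma it acts as a scalar on each irreducible constituent appearing with multiplicity one; on a $\gamma$-summand that contains $\Sigma_n$ with multiplicity two (which happens precisely for the representations with highest weight $(m+\tfrac12,\tfrac12,\dots,\pm\tfrac12)$), Clifford multiplication by a suitable vector intertwines the two copies and $D^{\gcan}$ acts as a $2\times 2$ matrix whose eigenvalues are $\pm(\tfrac n2+m)$. The explicit eigenvalue can be pinned down either by Parthasarathy's formula relating $(D^{\gcan})^2$ to the Casimir operators of $\Spin(n+1)$ and $\Spin(n)$ via the Schr\"odinger--Lichnerowicz formula (\ref{schroed_lichn}) together with $\scal^{\gcan}=n(n-1)$ for the unit sphere, or by writing $D^{\gcan}$ in terms of the Killing vector fields induced by $\Spin(n+1)$. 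Matching the Casimir eigenvalue of the representation with highest weight $\lambda_m=(m+\tfrac12,\tfrac12,\dots,\tfrac12)$ against $(\tfrac n2+m)^2$ confirms the value.

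Finally I would read off the multiplicities: the eigenspace for $\lambda=\pm(\tfrac n2+m)$ is (a half of, via the anticommuting structure in appropriate dimensions, or directly) the representation space $V_{\lambda_m}$ of $\Spin(n+1)$ whose dimension is computed by the Weyl dimension formula. A standard computation of this dimension gives $2^{[n/2]}\binom{m+n-1}{m}$, where the factor $2^{[n/2]}=\dim\Sigma_n$ accounts for the spinorial nature and the binomial coefficient is the dimension of the space of harmonic polynomials of degree $m$ in $n$ variables. The main obstacle is bookkeeping: correctly handling the parity-dependent structure of $\Sigma_n$ (irreducible for $n$ odd, a sum of half-spinor modules for $n$ even, with the two sign choices $\lambda=\pm(\tfrac n2+m)$ coming either from the two half-spinor bundles or from the $\pm$ in the highest weight), and verifying that the branching is multiplicity-two exactly on the relevant family so that each sign genuinely occurs with the stated multiplicity. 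For the actual proof, however, I would simply cite \cite{baer96}, as the statement is quoted verbatim from there.

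\begin{proof}
See \cite{baer96}.
\end{proof}
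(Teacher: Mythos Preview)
Your representation-theoretic outline is correct and would lead to the stated result, but it takes a genuinely different route from the one described in the paper (which summarizes the method of \cite{baer96}). The paper does not use the homogeneous structure $S^n=\Spin(n+1)/\Spin(n)$ and Peter--Weyl/branching at all. Instead, it restricts constant spinors on $\R^{n+1}$ to $S^n$ to obtain Killing spinors $\varphi_{\pm1,j}$ for the values $\pm\tfrac12$, multiplies them by eigenfunctions $f_k$ of the scalar Laplacian (restrictions of harmonic homogeneous polynomials of degree $k$), and observes that these products are eigenspinors of $(D^g+\tfrac{\mu}{2})^2$ with eigenvalue $(k+\tfrac{n-1}{2})^2$. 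Since the $f_k\varphi_{\mu,j}$ form a Hilbert basis of $L^2(\Sigma^gS^n)$, one then splits $\ker(A^2-\nu^2)\cong\ker(A-\nu)\oplus\ker(A+\nu)$ for $A=D^g+\tfrac{\mu}{2}$ to extract the eigenvalues $\pm(\tfrac n2+m)$ and their multiplicities directly from the known multiplicities of the Laplace eigenvalues.

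The trade-off is this: your approach is cleaner conceptually and generalizes immediately to other compact symmetric spaces, but the bookkeeping you flag (parity of $n$, half-spinor modules, multiplicity-two branching) is real work. The Killing-spinor/spherical-harmonic approach of \cite{baer96} is more elementary and, crucially for this paper, produces \emph{explicit} eigenspinors $\psi^{\varepsilon,\mu}_{k,j}$ whose zero sets can be read off from the zeros of $\grad^{\R^{n+1}}(f_k)$---which is exactly what Section~\ref{sphere_section} goes on to exploit. Your final line, deferring to \cite{baer96}, is of course the appropriate formal proof here.
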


We will describe the method used in \cite{baer96} for the computation 
of the spectrum. In this way we will obtain the eigenspinors and their 
zero sets explicitly.

Let $\Delta^g$ be the Laplace operator acting on smooth functions on $S^n$ 
defined as in (\ref{laplace}). 
It is a classical result that the eigenvalues of $\Delta^g$ are given 
by $k(n+k-1)$, $k=0,1,2,...$ with multiplicities
\begin{displaymath}
  \frac{n+2k-1}{n+k-1}\left(\begin{array}{c}
  n+k-1 \\ k
  \end{array}\right).
\end{displaymath}
The eigenfunctions for $k(n+k-1)$ are exactly 
the restrictions to $S^n$ of polynomial functions defined on $\R^{n+1}$ which 
are homogeneous of degree $k$ and harmonic with respect to the Laplace operator 
on $\R^{n+1}$ (see \cite{bgm}).

If $n$ is even, let $\alpha_j$, $1\leq j\leq N$, be constant and 
pointwise orthonormal sections of $\Sigma\R^{n+1}$. 
If $n$ is odd, then we have a decomposition
\begin{displaymath}
  \Sigma\R^{n+1}=\Sigma^+\R^{n+1}\oplus\Sigma^-\R^{n+1},
\end{displaymath}
and we define $\alpha_j$, $1\leq j\leq N$ to be constant and pointwise orthonormal 
sections of $\Sigma^+\R^{n+1}$. In each case we define spinors $\beta_j$ 
on $\R^{n+1}$ by $x\mapsto x\cdot\alpha_j$, $1\leq j\leq N$. 
One finds that the restrictions of the $\alpha_j$ to the 
hypersurface $(S^n,g)$ are Killing spinors on $S^n$ for the value $1/2$ 
and that the restrictions of the $\beta_j$ to $S^n$ are 
Killing spinors for the value $-\frac{1}{2}$. 
We denote the restrictions of the $\alpha_j$ by $\varphi_{1,j}$ 
and the restrictions of the $\beta_j$ by $\varphi_{-1,j}$. 
Using this one can show that for every eigenfunction $f_k$ of $\Delta^g$ 
corresponding to $k(n+k-1)$, for all $j\in\{1,...,N\}$ and 
for all $\mu\in\{\pm1\}$ the equation
\begin{displaymath}
  \big(D^g+\frac{\mu}{2}\big)^2 (f_k\varphi_{\mu,j})=\big(k+\frac{n-1}{2}\big)^2f_k\varphi_{\mu,j}.
\end{displaymath}
holds. Therefore if $\{f_i\}_{i\in\N}$ is an 
orthonormal Hilbert basis of $L^2(S^n,\R)$ consisting of eigenfunctions of $\Delta^g$ 
on $S^n$ then the $f_i\varphi_{\mu,j}$ form an orthonormal Hilbert basis of 
$\Lspinor{S^n}{g}{2}$ consisting of eigenspinors of $(D^g+\frac{\mu}{2})^2$. 
In general, if $A$ is an endomorphism of a complex vector space and 
$\nu$ is a nonzero complex number, then we have an isomorphism
\begin{displaymath}
  \ker(A^2-\nu^2)\to\ker(A-\nu)\oplus\ker(A+\nu),
  \quad u\mapsto(Au+\nu u)\oplus(Au-\nu u).
\end{displaymath}
Therefore the eigenvectors of $A$ corresponding to $\pm\nu$ are exactly the 
nonzero vectors $Au\pm\nu u$, where $u\in\ker(A^2-\nu^2)$. 
In this way the eigenvalues of $D^g$ and their multiplicities are obtained 
in \cite{baer96}.

We put $A=D^g+\frac{\mu}{2}$, $\lambda=k+\frac{n-1}{2}$ and compute
\begin{eqnarray*}
  \psi^{\varepsilon,\mu}_{k,j}&:=&\big(D^g+\frac{\mu}{2}\big)(f_k\varphi_{\mu,j})+\varepsilon(k+\frac{n-1}{2})f_k\varphi_{\mu,j}\\
              &=&\big(\frac{\mu(1-n)}{2}+\varepsilon\big(k+\frac{n-1}{2}\big)\big)f_k\varphi_{\mu,j}+\grad^{S^n}(f_k)\cdot\varphi_{\mu,j},
\end{eqnarray*}
where $\varepsilon\in\{\pm1\}$. 
If $\psi^{\varepsilon,\mu}_{k,j}$ is not identically zero, then it is an eigenspinor 
of $D^g$ corresponding to the value $\varepsilon(k+\frac{n-1}{2})-\frac{\mu}{2}$. We find
\begin{eqnarray*}
  \psi^{--}_{k,j}&=&-kf_k\varphi_{-1,j}+\grad^{S^n}(f_k)\cdot\varphi_{-1,j},\quad\lambda=-\frac{n}{2}-k+1,\\
  \psi^{+-}_{k,j}&=&(k+n-1)f_k\varphi_{-1,j}+\grad^{S^n}(f_k)\cdot\varphi_{-1,j},\quad\lambda=\frac{n}{2}+k,\\
  \psi^{-+}_{k,j}&=&(-k+1-n)f_k\varphi_{1,j}+\grad^{S^n}(f_k)\cdot\varphi_{1,j},\quad\lambda=-\frac{n}{2}-k,\\
  \psi^{++}_{k,j}&=&kf_k\varphi_{1,j}+\grad^{S^n}(f_k)\cdot\varphi_{1,j},\quad\lambda=\frac{n}{2}+k-1.
\end{eqnarray*}
Therefore all the $\psi^{\varepsilon,\mu}_{k,j}$ except 
$\psi^{--}_{0,j}$ and $\psi^{++}_{0,j}$ are not identically zero. 

In order to investigate the zero sets of the eigenspinors we first note 
that the spinors $\psi^{+-}_{0,j}$, 
$1\leq j\leq 2^{[n/2]}$, are eigenspinors for $\lambda=\frac{n}{2}$ 
and are non-zero multiples of $\varphi_{-1,j}$. 
Thus any nontrivial linear combination of them 
is nowhere zero on $S^n$. Similarly the spinors 
$\psi^{-+}_{0,j}$, $1\leq j\leq 2^{[n/2]}$, 
are eigenspinors for $\lambda=-\frac{n}{2}$ 
and are non-zero multiples of $\varphi_{1,j}$. 
Thus any nontrivial linear combination of them 
is nowhere zero on $S^n$. 
In the case $k\geq1$ we determine the zero set of $\psi^{\varepsilon,\mu}_{k,j}$ 
as follows: We have
\begin{displaymath}
  \psi^{\varepsilon,\mu}_{k,j}(x)=0
  \Longleftrightarrow\grad^{S^n}(f_k)(x)=0\textrm{ and }f_k(x)=0.
\end{displaymath}
Since $f_k$ is homogeneous of degree $k$, we obtain for the 
normal component of $\grad^{\R^{n+1}}(f_k)$ in $x\in S^n$ that
\begin{displaymath}
  \geucl(\grad^{\R^{n+1}}(f_k)(x),x)=kf_k(x).
\end{displaymath}
If $k\geq1$, it follows that
\begin{displaymath}
  \psi^{\varepsilon,\mu}_{k,j}(x)=0
  \Longleftrightarrow\grad^{\R^{n+1}}(f_k)(x)=0.
\end{displaymath}

For example one finds eigenspinors which vanish to high order at some point, 
i.\,e.\,the spinor and all of its derivatives up to a finite order are zero at this point. 
Namely given $m\in\N\setminus\{0\}$ we take
\begin{displaymath}
  f_{m+1}(x_1,...,x_{n+1}):=\Re((x_1+ix_2)^{m+1}).
\end{displaymath}
Then the spinor $\psi^{--}_{m+1,1}$ on $S^n$ defined by
\begin{displaymath}
  \psi^{--}_{m+1,1}:=-(m+1)f_{m+1}\varphi_{-1,1}+\grad^{S^n}(f_{m+1})\cdot\varphi_{-1,1}
\end{displaymath}
is an eigenspinor for the value $-\frac{n}{2}-m$ and with $p=e_{n+1}\in S^n$ 
we have $\nabla^r\psi^{--}_{m+1,1}(p)=0$ for $0\leq r\leq m-1$.

We also find spinors whose zero set is not a submanifold of $S^n$.  
As an example consider $n=3$ and take $f_3(x_1,...,x_4):=x_1x_2x_3$. Then we have 
\begin{displaymath}
  \grad^{\R^4}(f_3)=(x_2x_3,x_1x_3,x_1x_2,0).
\end{displaymath}
Therefore $\psi^{--}_{3,1}$ has the zero set
\begin{displaymath}
  \{x\in S^3|x_1=x_2=0\textrm{ or }x_1=x_3=0\textrm{ or }x_2=x_3=0\}
\end{displaymath}
which consists of three copies of $S^1$ intersecting in $\pm e_4$.

\section{Flat tori}
\label{tori_section}

We will give some explicit computations of zero sets 
of eigenspinors on flat tori. 
The reader, who is only interested in the main results, may skip this section. 

Let $\Gamma\subset\R^n$ and $M=\R^n/\Gamma$, $n\geq2$ and let $g$ 
be the metric on $M$ induced by the Euclidean metric on $\R^n$. 
The spin structures on $(M,g)$ are classified by 
\begin{displaymath}
  H^1(M,\Z/2\Z)\cong\Hom(\Gamma,\Z/2\Z)
\end{displaymath}
and thus there are $2^n$ distinct spin structures. 
The calculation of the eigenvalues of the Dirac operator on 
$(M,g)$ for all spin structures has been carried out in \cite{fr84}. 
It is also explained e.\,g.\,in \cite{g}.
Denote by $\Gamma^*$ the dual lattice. 
Let $(\gamma_j)_{j=1}^n$ be a basis of $\Gamma$ 
and let $(\gamma_j^*)_{j=1}^n$ be the dual basis. 
The spin structures will be denoted by the tuples 
$(\delta_1,...,\delta_n)$, where $\delta_j\in\{0,1\}$ 
is the image of $\gamma_j$ under a homomorphism 
from $\Gamma$ to $\Z/2\Z$, $j\in\{1,...,n\}$. 
For a fixed spin structure $(\delta_1,...,\delta_n)$ 
we define the one-form $\delta$ on $\R^n$ by 
$\delta=\frac{1}{2}\sum_{j=1}^n \delta_j \gamma_j^*$.
Then the result is the following:

\begin{theorem}
The eigenvalues of the Dirac operator on $(\R^n/\Gamma,g)$ 
with the spin structure $(\delta_1,...,\delta_n)$ are 
\begin{displaymath}
  \{\pm2\pi|\alpha+\delta|\,|\alpha\in\Gamma^*\}.
\end{displaymath}
In case $\delta_1=...=\delta_n=0$ the multiplicity of 
the eigenvalue $0$ is $2^{[n/2]}$. For every other 
spin structure $0$ is not an eigenvalue. 
For the spin structure $(\delta_1,...,\delta_n)$ the 
multiplicity of every non-zero eigenvalue $\lambda$ is
\begin{displaymath}
  2^{[n/2]-1}\#\{\alpha\in\Gamma^*|\,2\pi|\alpha+\delta|=|\lambda|\}.
\end{displaymath}
\end{theorem}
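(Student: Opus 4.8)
The plan is to diagonalise $D^g$ by Fourier analysis on the universal cover $\R^n$, turning the eigenvalue problem into a family of linear-algebra problems on the fixed module $\Sigma_n$, one for each element of the dual lattice. The first step is to describe the spinor bundle. Since the Levi-Civita connection of the flat metric is flat, in the standard coordinates the induced connection on the spinor bundle is trivial; passing to $\R^n$, a spinor for the spin structure $(\delta_1,\dots,\delta_n)$ corresponds to a smooth map $\widetilde\psi\colon\R^n\to\Sigma_n$ with $\widetilde\psi(x+\gamma)=\chi_\delta(\gamma)\,\widetilde\psi(x)$ for all $\gamma\in\Gamma$, where $\chi_\delta\colon\Gamma\to\{\pm1\}$ is the homomorphism determined by $\chi_\delta(\gamma_j)=(-1)^{\delta_j}$. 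The sign here records whether the deck transformation $\gamma$ lifts to the principal $\Spin(n)$-bundle as the plain deck transformation or as its composition with the nontrivial central element $-1$, which acts on $\Sigma_n$ as $-\id$. Writing $\widetilde\psi(x)=e^{2\pi i\langle\delta,x\rangle}u(x)$ with $u$ a $\Gamma$-periodic $\Sigma_n$-valued function (note $e^{2\pi i\langle\delta,\gamma_j\rangle}=(-1)^{\delta_j}$) and Fourier-expanding $u$ over $\Gamma^*$, one gets that every $L^2$-spinor has an $L^2$-orthogonal expansion
\begin{displaymath}
  \widetilde\psi(x)=\sum_{\alpha\in\Gamma^*}e^{2\pi i\langle\alpha+\delta,x\rangle}\,\sigma_\alpha,\qquad\sigma_\alpha\in\Sigma_n.
\end{displaymath}

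The second step is to compute $D^g$ termwise. In the flat trivialization a constant $\sigma\in\Sigma_n$ is a parallel local spinor, so $D^g\sigma=0$; hence by the Leibniz rule $D^g(f\psi)=\grad^g(f)\cdot\psi+fD^g\psi$ applied to $f=e^{2\pi i\langle\alpha+\delta,x\rangle}$ we obtain
\begin{displaymath}
  D^g\big(e^{2\pi i\langle\alpha+\delta,x\rangle}\sigma_\alpha\big)=2\pi i\,\xi_\alpha\cdot\sigma_\alpha\;e^{2\pi i\langle\alpha+\delta,x\rangle},
\end{displaymath}
where $\xi_\alpha\in\R^n$ is the vector $g$-dual to the covector $\alpha+\delta$, so that $|\xi_\alpha|=|\alpha+\delta|$. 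Thus $D^g$ preserves each summand and acts on it as the endomorphism $A_\alpha:=2\pi i\,\xi_\alpha\cdot$ of $\Sigma_n$, and it remains only to diagonalise each $A_\alpha$.

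For the third step I would use that Clifford multiplication by a vector $v\in\R^n$ is anti-self-adjoint for $\langle\cdot,\cdot\rangle$ and satisfies $v\cdot v\cdot=-|v|^2\id$. So $A_\alpha$ is self-adjoint with $A_\alpha^2=(2\pi|\alpha+\delta|)^2\id$, hence its eigenvalues lie in $\{\pm2\pi|\alpha+\delta|\}$. If $\alpha+\delta\neq0$, choose a unit vector $w$ orthogonal to $\xi_\alpha$ (possible since $n\geq2$); the Clifford relation gives $(w\cdot)(\xi_\alpha\cdot)(w\cdot)^{-1}=-\xi_\alpha\cdot$, so $\xi_\alpha\cdot$ is conjugate to its negative, hence traceless, and since it squares to $-|\xi_\alpha|^2\id$ its two eigenvalues $\pm i|\xi_\alpha|$ occur with equal multiplicity $\frac12\dim\Sigma_n=2^{[n/2]-1}$. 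Consequently $A_\alpha$ has eigenvalues $+2\pi|\alpha+\delta|$ and $-2\pi|\alpha+\delta|$, each of multiplicity $2^{[n/2]-1}$. If $\alpha+\delta=0$ then $A_\alpha=0$ and its kernel is all of $\Sigma_n$.

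Finally I would reassemble the global spectrum. Summing over $\alpha\in\Gamma^*$ shows that $\spec(D^g)=\{\pm2\pi|\alpha+\delta|:\alpha\in\Gamma^*\}$, and for a fixed nonzero real $\lambda$ the $\lambda$-eigenspace is the orthogonal sum, over those $\alpha$ with $2\pi|\alpha+\delta|=|\lambda|$, of the $\lambda$-eigenspace of $A_\alpha$; each such summand has dimension $2^{[n/2]-1}$, giving multiplicity $2^{[n/2]-1}\,\#\{\alpha\in\Gamma^*:2\pi|\alpha+\delta|=|\lambda|\}$. The value $0$ is attained iff $\alpha+\delta=0$ for some $\alpha\in\Gamma^*$, i.e.\ $\delta\in\Gamma^*$; since $\delta=\frac12\sum_j\delta_j\gamma_j^*$ with $\delta_j\in\{0,1\}$, this forces $\delta_1=\dots=\delta_n=0$, and then $\ker D^g$ is the $\alpha=0$ summand, of dimension $2^{[n/2]}$, while for every other spin structure $A_\alpha$ is invertible for all $\alpha$. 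The only genuinely delicate point is the first step — identifying the $2^n$ spin structures with the sign characters $\chi_\delta$ in exactly this way — which is a topological statement about $T^n$; it can be deduced from the fact that spin structures form a torsor over $H^1(M,\Z/2\Z)\cong\Hom(\Gamma,\Z/2\Z)$ while $\R^n$ carries a unique spin structure, or simply cited from \cite{fr84}, \cite{g}. Everything after that is the termwise Fourier computation and the elementary spectral analysis of Clifford multiplication above.
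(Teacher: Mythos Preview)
Your proof is correct and follows essentially the same approach as the paper: both identify spinors with equivariant $\Sigma_n$-valued maps on $\R^n$, decompose via Fourier modes $e^{2\pi i(\alpha+\delta)(x)}$, and then diagonalise the resulting Clifford multiplication $i\frac{\alpha+\delta}{|\alpha+\delta|}\cdot$ on $\Sigma_n$ using that it is an involution whose $\pm1$-eigenspaces are interchanged by Clifford multiplication with an orthogonal vector. The only difference is cosmetic---the paper writes down the eigenspinors $e^{2\pi i(\alpha+\delta)(x)}\varphi^{\mu}_{\alpha,j}$ first and then observes they form a Hilbert basis, whereas you Fourier-expand first and then diagonalise each block $A_\alpha$.
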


In order to obtain the eigenspinors explicitly we summarize the proof. 
It is well known that the spinors on $M$ equipped with the 
spin structure $(\delta_1,...,\delta_n)$ are in one-to-one-cor\-res\-pon\-dence with 
the spinors $\psi$ on $\R^n$ 
which satisfy the condition 
\begin{equation}
  \label{equiv_cond}
  \psi(x+\gamma_j)=(-1)^{\delta_j}\psi(x)
\end{equation}
for all $x\in\R^n$ and all $j\in\{1,...,n\}$ (see e.\,g.\,\cite{g}). 
We fix a spin structure $(\delta_1,...,\delta_n)$ on $M$.  
If $\delta=0$, we see that every constant spinor 
on $\R^n$ satisfies the equivariance condition (\ref{equiv_cond}) 
and thus is a harmonic spinor on $(M,g,\Theta)$. 
Therefore we have $\dim\ker(D^g)=2^{[n/2]}$. 
Let $\alpha\in\Gamma^*$ and assume that $\alpha\neq0$ 
or that $\delta\neq0$. Then $\alpha+\delta\neq0$ 
and the action of the one-form 
$i\frac{\alpha+\delta}{|\alpha+\delta|}$ 
on $\Sigma\R^n$ is parallel, unitary and an involution. 
Thus one obtains a parallel and orthogonal splitting of 
the spinor bundle
\begin{displaymath}
  \Sigma\R^n=\Sigma^+_{\alpha}\R^n\oplus\Sigma^-_{\alpha}\R^n,
\end{displaymath}
where $\Sigma^{\pm}_{\alpha}\R^n$ are the eigenbundles
corresponding to the eigenvalue $\pm1$. They are isomorphic, 
since the Clifford multiplications with two orthogonal vectors 
anticommute. Note that
\begin{displaymath}
  \Sigma^{\pm}_{-\alpha-2\delta}\R^n=\Sigma^{\mp}_{\alpha}\R^n.
\end{displaymath}
For every $\mu\in\{\pm1\}$ we choose 
sections $\varphi^{\mu}_{\alpha,j}$ of $\Sigma^{\mu}_{\alpha}\R^n$, 
$1\leq j\leq2^{[n/2]-1}$, which are constant and pointwise 
orthonormal. 
Then for all $\mu\in\{\pm1\}$ the spinor 
$\psi_{\alpha,j}^{\mu}$ defined by 
\begin{displaymath}
  \psi_{\alpha,j}^{\mu}(x):=e^{2\pi i (\alpha+\delta)(x)}\varphi_{\alpha,j}^{\mu}
\end{displaymath}
satisfies the equivariance condition (\ref{equiv_cond}) and is an eigenspinor of $D^g$ 
for the value $2\pi \mu|\alpha+\delta|$. 
Since the functions $\{e^{2\pi i\alpha}|\alpha\in\Gamma^*\}$ 
form a Hilbert basis of $L^2(M,\C)$, it follows that 
the spinors $\psi_{\alpha,j}^{\mu}$ 
form a Hilbert basis of $\Lspinor{M}{g}{2}$. 

Concerning the zero set of eigenspinors consider first 
the so called trivial spin structure $\delta=0$. 
It is the only spin structure which admits harmonic 
spinors. These are parallel spinors and they are nowhere zero. 
Next consider an arbitrary spin structure. 
In order to investigate the zero set of  
eigenspinors to nonzero eigenvalues first note that for 
any nonzero $\alpha\in\Gamma^*$ the spinors
\begin{displaymath}
  \psi_{\alpha,1}^{+},...,\psi_{\alpha,2^{[n/2]-1}}^{+},\psi_{-\alpha-2\delta,1}^{+},...,\psi_{-\alpha-2\delta,2^{[n/2]-1}}^{+}
\end{displaymath}
are eigenspinors corresponding to $2\pi|\alpha+\delta|$ 
and they are pointwise linearly independent. Thus any non trivial 
linear combination of them is nowhere zero. 
These spinors are contributed by the elements $\alpha$, $-\alpha-2\delta$ of $\Gamma^*$. 
Assume now that there is $\alpha'\notin\{\alpha,-\alpha-2\delta\}$ 
with $|\alpha'+\delta|=|\alpha+\delta|$. Let $\varphi$ be a 
parallel spinor on $\R^n$ such that $i\frac{\alpha'+\delta}{|\alpha'+\delta|}\cdot\varphi=\varphi$ 
and write $\varphi=\varphi^+ +\varphi^-$ where 
$i\frac{\alpha+\delta}{|\alpha+\delta|}\cdot\varphi^{\pm}=\pm\varphi^{\pm}$. 
Then the spinor $\psi$ defined by 
\begin{eqnarray*}
  \psi(x)&=&e^{2\pi i(\alpha+\delta)(x)}\varphi^{+}+e^{-2\pi i(\alpha+\delta)(x)}\varphi^{-}
  -e^{2\pi i(\alpha'+\delta)(x)}\varphi\\
  &=&\big(e^{2\pi i(\alpha+\delta)(x)}-e^{2\pi i(\alpha'+\delta)(x)}\big)\varphi^{+}
  +\big(e^{-2\pi i(\alpha+\delta)(x)}-e^{2\pi i(\alpha'+\delta)(x)}\big)\varphi^{-}
\end{eqnarray*}
is an eigenspinor corresponding to the value $2\pi|\alpha+\delta|$ 
and has the zero set 
\begin{displaymath}
  \big\{[x]\in M| (\alpha+\delta)(x),(\alpha'+\delta)(x)\in\Z\textrm{ or }(\alpha+\delta)(x),(\alpha'+\delta)(x)\in\frac{1}{2}+\Z\big\}.
\end{displaymath}
This is a submanifold of codimension $2$.

\chapter{Green's function for the Dirac operator}
\label{green_chapter}

\section{Trivialization of the spinor bundle}
\label{bourg-triv_section}

Let $(M,g,\Theta)$ be a Riemannian spin manifold of dimension $n$. 
In this section we explain a certain local trivialization of the 
spinor bundle $\Sigma^gM$, which we will use later. 
In the literature (e.\,g.\,\cite{aghm}, \cite{ah}) it is known as the 
Bourguignon-Gauduchon trivialization. 

Let $p\in M$ and let $W\subset T_pM$ be an open neighborhood of $0$ such that 
the restriction of the exponential map $\exp_p$ to $W$ is a diffeomorphism. 
Let $I$: $(\R^n,\geucl)\to(T_pM,g)$ be a linear isometry, let $U:=\exp_p(W)\subset M$ 
and let $V:=I^{-1}(W)$. Then 
\begin{displaymath}
  \rho:=\exp_p\circ I|_V:\quad V\to U
\end{displaymath}
defines a local parametrization of $M$ by Riemannian normal coordinates. 
For $x\in V$ we use the canonical isomorphism $T_xV\cong\R^n$ and in this way we obtain 
the Euclidean scalar product $\geucl$ on $T_xV$.
In addition we have the scalar product $\rho^*g$ on $T_xV$ defined by
\begin{displaymath}
  (\rho^*g)(v,w)=g(d\rho|_x v,d\rho|_x w),
\end{displaymath}
where $v$, $w\in T_xV$. 
There exists $G_x\in\End(T_xV)$, such that 
for all vectors $v$, $w\in T_xV$ we have
\begin{displaymath}
  (\rho^*g)(v,w)=\geucl(G_x v,w)
\end{displaymath}
and $G_x$ is $\geucl$-self-adjoint and positive definite. 
There is a unique positive definite endomorphism 
$B_x\in\End(T_xV)$ such that we have $B_x^2=G_x^{-1}$. If $(E_i)_{i=1}^n$ is any 
$\geucl$-orthonormal basis of $T_xV$, then $(B_x E_i)_{i=1}^n$, 
is a $\rho^*g$-orthonormal basis of $T_xV$. 
Therefore the vectors $d\rho|_x B_x E_i$, $1\leq i\leq n$, 
form a $g$-orthonormal basis of $T_{\rho(x)}M$. 
We assemble the maps $B_x$ to obtain a vector bundle endomorphism $B$ of $TV$ and we define
\begin{displaymath}
  b:\quad TV\to TM|_U,\quad b=d\rho\circ B.
\end{displaymath}
From this we obtain an isomorphism of principal $\SO(n)$-bundles
\begin{displaymath}
  P_{\SO}(V,\geucl)\to P_{\SO}(U,g),\quad (E_i)_{i=1}^n\mapsto(b(E_i))_{i=1}^n,
\end{displaymath}
which lifts to an isomorphism
\begin{displaymath}
  c:\quad P_{\Spin}(V,\geucl)\to P_{\Spin}(U,g)
\end{displaymath}
of principal $\Spin(n)$-bundles. We define 
\begin{displaymath}
  \beta:\quad \Sigma\R^n|_V\to\Sigma^gM|_U,\quad [s,\sigma]\mapsto[c(s),\sigma].
\end{displaymath}
This gives an identification of the spinor bundles, 
which is a fibrewise isometry with respect to the bundle metrics on $\Sigma\R^n|_V$ 
and on $\Sigma^gM|_U$. Furthermore for all $X\in TV$ and for all $\varphi\in\Sigma\R^n|_V$ 
we have $\beta(X\cdot\varphi)=b(X)\cdot\beta(\varphi)$. 
We obtain an isomorphism
\begin{displaymath}
  A:\quad \spinor{M|_U}{g}{\infty}\to\spinor{\R^n|_V}{}{\infty},\quad
  \psi\mapsto \beta^{-1}\circ\psi\circ\rho,
\end{displaymath}
which sends a spinor on $U$ to the corresponding spinor in the trivialization. 
Let $\nabla^g$ and $\nabla$ denote the Levi Civita connections on $(U,g)$ resp. on $(V,\geucl)$ 
as well as its lifts to $\Sigma^gM|_U$ and $\Sigma\R^n|_V$. Let $(e_i)_{i=1}^n$ 
be a positively oriented orthonormal frame of $TM|_U$. 
Then for all $\psi\in\spinor{M|_U}{g}{\infty}$ we have by the formula (\ref{connection_local})
\begin{eqnarray*}
  \nabla^g_{e_i}\psi&=&\partial_{e_i}\psi+\frac{1}{4}\,\sum_{j,k=1}^n\widetilde{\Gamma}^k_{ij} e_j\cdot e_k\cdot\psi,
\end{eqnarray*}
where
\begin{displaymath}
  \widetilde{\Gamma}^k_{ij}:=g(\nabla^g_{e_i}e_j,e_k).
\end{displaymath}
In particular we can take the standard basis $(E_i)_{i=1}^n$ of $\R^n$ and put $e_i:=b(E_i)$, $1\leq i\leq n$. 
We define the matrix coefficients $B^j_i$ by $B(E_i)=\sum_{j=1}^n B^j_i E_j$. 
It follows that
\begin{eqnarray}
  \nonumber
  A\nabla^g_{e_i}\psi&=&\nabla_{d\rho^{-1}(e_i)}A\psi+\frac{1}{4}\,\sum_{j,k=1}^n\widetilde{\Gamma}^k_{ij} E_j\cdot E_k\cdot A\psi\\
  \nonumber
  &=&\nabla_{B(E_i)}A\psi+\frac{1}{4}\,\sum_{j,k=1}^n\widetilde{\Gamma}^k_{ij} E_j\cdot E_k\cdot A\psi\\
  \label{nabla_triv}
  &=&\nabla_{E_i}A\psi+\sum_{j=1}^n(B^j_i-\delta^j_i)\nabla_{E_j}A\psi+\frac{1}{4}\,\sum_{j,k=1}^n\widetilde{\Gamma}^k_{ij} E_j\cdot E_k\cdot A\psi.\quad
\end{eqnarray}
Hence we obtain
\begin{eqnarray}
  \nonumber
  AD^g\psi&=&D^{\geucl}A\psi
  +\sum_{i,j=1}^n(B^j_i-\delta^j_i)E_i\cdot\nabla_{E_j}A\psi\\
  \label{dirac_triv}
  &&{}+\frac{1}{4}\sum_{i,j,k=1}^n\widetilde{\Gamma}^k_{ij} E_i\cdot E_j\cdot E_k\cdot A\psi.
\end{eqnarray}
Let $\partial_j:=d\rho(E_j)$, $1\leq j\leq n$, be the coordinate vector fields of the normal coordinates. 
It is well known that the Taylor expansion of 
\begin{displaymath}
  g_{ij}:\quad V\to\R,\quad g_{ij}(x):=g(\partial_i|_{\rho(x)},\partial_j|_{\rho(x)})
\end{displaymath}
around $0$ is given by
\begin{displaymath}
  g_{ij}(x)=\delta_{ij}+\frac{1}{3}\sum_{a,b=1}^n R_{iabj}(p)x_a x_b+O(|x|^3),
\end{displaymath}
where $R_{iabj}=g(R(\partial_b,\partial_j)\partial_a,\partial_i)$ denotes the components 
of the Riemann curvature tensor (see e.\,g.\,\cite{lp}, p. 61). 
Since we have $(B^j_i)_{ij}=(g_{ij})_{ij}^{-1/2}$ it follows that
\begin{equation}
  \label{B-expansion}
  B^j_i(x)=\delta^j_i-\frac{1}{6}\sum_{a,b=1}^n R_{iabj}(p)x_a x_b +O(|x|^3).
\end{equation}
By definition we have
\begin{displaymath}
  e_i=b(E_i)=d\rho(B(E_i))=\sum_{j=1}^n B^j_i d\rho(E_j)
  =\sum_{j=1}^n B^j_i\partial_j.
\end{displaymath}
We define the Christoffel symbols $\Gamma^m_{kr}$ by
\begin{displaymath}
  \nabla^g_{\partial_k}\partial_r=\sum_{m=1}^n\Gamma^m_{kr}\partial_m.
\end{displaymath}
We obtain
\begin{eqnarray*}
  \sum_{k=1}^n\widetilde{\Gamma}^k_{ij}e_k&=&\sum_{s,m=1}^n B^s_i\nabla^g_{\partial_s}(B^m_j\partial_m)\\
  &=&\sum_{s,m=1}^n B^s_i\big(\partial_s B^m_j+\sum_{r=1}^n B^r_j\Gamma^m_{sr}\big)\partial_m\\
  &=&\sum_{s,m,k=1}^n B^s_i\big(\partial_s B^m_j+\sum_{r=1}^n B^r_j\Gamma^m_{sr}\big)(B^{-1})^k_m e_k.
\end{eqnarray*}
From the formula
\begin{displaymath}
  \Gamma^m_{kr}=\frac{1}{2}\sum_{s=1}^n g^{ms}(\partial_k g_{rs}+\partial_r g_{ks}-\partial_s g_{kr})
\end{displaymath}
it follows that $\Gamma^m_{kr}(x)=O(|x|)$ and thus 
\begin{equation}
  \label{gammatilde}
  \widetilde{\Gamma}^m_{kr}(x)=O(|x|)
\end{equation}
as $x\to0$ for all $m$, $k$, $r$.

\section{The Euclidean Dirac operator}

The aim of this section is to calculate preimages under the Dirac operator 
of certain spinors on $\R^n\setminus\{0\}$ with the Euclidean metric. 
The results will be useful for obtaining the expansion 
of Green's function for the Dirac operator on a closed spin manifold. 

\begin{definition}
\label{pkmi_def}
For $k\in\R$, $m\in\N$ and $i\in\{0,1\}$ we define the vector subspaces $P_{k,m,i}(\R^n)$ of $\spinor{\R^n|_{\R^n\setminus\{0\}}}{}{\infty}$ 
as follows. For $k\neq0$ define 
\begin{eqnarray*}
  P_{k,m,0}(\R^n)&:=&\span\Big\{x\mapsto x_{i_1}...x_{i_m}|x|^k\gamma\,
  \Big|\begin{array}{l}1\leq i_1,...,i_m\leq n,\\ \gamma\in\Sigma_n\textrm{ constant}\end{array}\Big\}\\
  P_{k,m,1}(\R^n)&:=&\span\Big\{x\mapsto x_{i_1}...x_{i_m}|x|^k x\cdot\gamma\,
  \Big|\begin{array}{l}1\leq i_1,...,i_m\leq n,\\ \gamma\in\Sigma_n\textrm{ constant}\end{array}\Big\}
\end{eqnarray*}
and furthermore define 
\begin{eqnarray*}
  P_{0,m,0}(\R^n)&:=&\span\Big\{x\mapsto x_{i_1}...x_{i_m}\ln|x|\gamma\,
  \Big|\begin{array}{l}1\leq i_1,...,i_m\leq n,\\ \gamma\in\Sigma_n\textrm{ constant}\end{array}\Big\}\\
  P_{0,m,1}(\R^n)&:=&\span\Big\{x\mapsto x_{i_1}...x_{i_m}(1-n\ln|x|) x\cdot\gamma\,
  \Big|\begin{array}{l}1\leq i_1,...,i_m\leq n,\\ \gamma\in\Sigma_n\textrm{ constant}\end{array}\Big\}.
\end{eqnarray*}
\end{definition}

Note that there exist inclusions among these spaces. For example one has 
$P_{k+2,m,0}(\R^n)\subset P_{k,m+2,0}(\R^n)$ for all $m$ and all $k\neq-2$. 
However in the following we will very often not use these inclusions, 
since exceptions like the case $k=-2$ in this example would make 
the following statements rather more complicated. 
The exception $k=-2$ in this example is due to our definition of $P_{0,m,0}(\R^n)$. 
The following proposition will show that this definition is nevertheless useful. 

\begin{proposition}
\label{prop_invert_dirac}
For all $m\in\N$, $k\in\R$ with $-n\leq k$ and $-n<k+m\leq0$ we have 
\begin{displaymath}
  P_{k,m,0}(\R^n)\subset D^{\geucl}\Big(\sum_{j=1}^{[(m+1)/2]}P_{k+2j,m+1-2j,0}(\R^n)+\sum_{j=0}^{[m/2]}P_{k+2j,m-2j,1}(\R^n)\Big)
\end{displaymath}
For all $m\in\N$, $k\in\R$ with $-n\leq k$ and $-n<k+m+1\leq0$ we have 
\begin{displaymath}
  P_{k,m,1}(\R^n)\subset D^{\geucl}\Big(\sum_{j=0}^{[m/2]}P_{k+2+2j,m-2j,0}(\R^n)+\sum_{j=1}^{[(m+1)/2]}P_{k+2j,m+1-2j,1}(\R^n)\Big).
\end{displaymath}
\end{proposition}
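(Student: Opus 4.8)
The plan is to prove both inclusions by a direct computation of $D^{\geucl}$ applied to the generating spinors of the target spaces, and then to solve a small linear system expressing the given generator as a combination of the results. First I would record the two elementary identities on $\R^n\setminus\{0\}$: for a constant $\gamma\in\Sigma_n$ and a monomial $P(x)=x_{i_1}\cdots x_{i_m}$ one has
\begin{displaymath}
  D^{\geucl}\big(P(x)|x|^k\gamma\big)=\grad^{\geucl}\big(P(x)|x|^k\big)\cdot\gamma
  =\Big(\sum_{\ell=1}^m \frac{P(x)}{x_{i_\ell}}\,E_{i_\ell}\Big)|x|^k\cdot\gamma + k\,P(x)|x|^{k-2}\,x\cdot\gamma ,
\end{displaymath}
which lies in $P_{k,m-1,0}(\R^n)+P_{k-2,m+1,1}(\R^n)$, and similarly
\begin{displaymath}
  D^{\geucl}\big(P(x)|x|^k\,x\cdot\gamma\big)
  = \grad^{\geucl}\big(P(x)|x|^k\big)\cdot x\cdot\gamma + P(x)|x|^k\,D^{\geucl}(x\cdot\gamma),
\end{displaymath}
where the Leibniz rule for $D^{\geucl}$ (the formula $D^g(f\psi)=\grad^g f\cdot\psi + fD^g\psi$ from the excerpt) is used, together with $D^{\geucl}(x\cdot\gamma)=-n\gamma$ and the Clifford relation $E_j\cdot x = -x\cdot E_j - 2x_j$ from (\ref{clifford_relation}) to move the extra Clifford factor past $x$. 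Reorganizing, the right-hand side is a sum of a term in $P_{k,m+1,0}(\R^n)$, a term in $P_{k-2,m+3,1}(\R^n)$ (or $P_{k,m+1,1}$ after re-indexing), and a multiple $-nP(x)|x|^k\gamma$; the logarithmic cases $k=-2$, treated via the modified generators in Definition \ref{pkmi_def}, arise exactly because $\grad^{\geucl}(P(x)|x|^{-2}x\cdot\gamma)$ produces a $\ln|x|$ only after the correct normalization, and I would check those boundary cases separately.

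Second, with these formulas in hand I would argue by induction on $m$. For the base cases $m=0$ (and $m=1$) the claimed inclusions reduce to: given $k$ in the stated range, $|x|^k\gamma$ lies in the image of $D^{\geucl}$ restricted to $P_{k+2,0,1}(\R^n)\oplus(\text{lower})$. Concretely $D^{\geucl}(|x|^{k+2}x\cdot\gamma)=(k+2)|x|^k x\cdot x\cdot\gamma - n|x|^{k+2}\gamma = -\big((k+2)+n\big)|x|^{k+2-2}\cdot(\text{wait: }x\cdot x=-|x|^2)$, i.e. $=-(k+2)|x|^k|x|^2\cdot|x|^{-2}\gamma\ldots$ — after simplification $D^{\geucl}(|x|^{k+2}x\cdot\gamma) = -\big(k+2+n\big)|x|^{k}\gamma$ up to the sign bookkeeping, and the coefficient $-(k+2+n)$ is nonzero precisely when $k\neq -n-2$; since $-n\le k$, this is automatic. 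So one divides by that coefficient. For the inductive step, applying $D^{\geucl}$ to a generator of $P_{k+2,m-1,1}(\R^n)$ produces the desired generator of $P_{k,m,0}(\R^n)$ \emph{plus} error terms all of which lie in spaces $P_{k+2j,m-2j,\bullet}(\R^n)$ or $P_{k+2j,m+1-2j,\bullet}(\R^n)$ with strictly smaller $m$-parameter; by the induction hypothesis (whose hypotheses $-n\le k+2j$ and $-n<k+2j+(m-2j)=k+m\le 0$ are inherited from the original range, since increasing $k$ by $2j$ only helps the lower bound and does not change $k+m$) each error term is itself in $D^{\geucl}$ of the allowed sum, so one subtracts them off. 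The same scheme handles the second inclusion, swapping the roles of the $i=0$ and $i=1$ families.

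The bookkeeping — keeping track of exactly which $(k+2j,\,m+1-2j,\,i)$ triples appear, verifying the range hypotheses $-n\le k+2j$ and $-n<k+m\le 0$ are preserved at each stage so the induction hypothesis genuinely applies, and isolating the logarithmic boundary cases $k+m=-2$ or $k+m+1=-2$ where Definition \ref{pkmi_def} switches generators — is the main obstacle; none of it is conceptually deep, but it is where an error would most easily slip in, and it is the reason the statement is phrased with the explicit index ranges rather than via the cleaner inclusions $P_{k+2,m,0}\subset P_{k,m+2,0}$. The only genuinely nonzero-coefficient input needed is that the scalar multipliers produced at each step (of the form $k+2j+n$, $k+2j+m+1$, etc.) never vanish on the stated range, which follows directly from $-n\le k$ and $k+m\le 0$.
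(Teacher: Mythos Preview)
Your approach is essentially the paper's: induction on $m$, with the inductive step driven by an explicit computation of $D^{\geucl}$ on a well-chosen generator of the target space. However, the specific indices you write down are off in both the base case and the inductive step, and these are not cosmetic slips --- they are the whole proof.

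In the base case you apply $D^{\geucl}$ to $|x|^{k+2}x\cdot\gamma\in P_{k+2,0,1}$, but the target space for $P_{k,0,0}$ is $P_{k,0,1}$ (the first sum is empty since $[(m+1)/2]=0$). Your computation actually gives $D^{\geucl}(|x|^{k+2}x\cdot\gamma)=-(k+2+n)|x|^{k+2}\gamma$, i.e.\ you land in $P_{k+2,0,0}$, not in $P_{k,0,0}$. The correct move is $D^{\geucl}\big(-\tfrac{1}{n+k}|x|^k x\cdot\gamma\big)=|x|^k\gamma$, using the coefficient $n+k\neq 0$ from $-n<k$. The logarithmic case $k=0$ then needs the separate identity $D^{\geucl}\big(\tfrac{1-n\ln|x|}{n^2}x\cdot\gamma\big)=\ln|x|\gamma$, and for the second inclusion $D^{\geucl}(\tfrac{1}{k+2}|x|^{k+2}\gamma)=|x|^k x\cdot\gamma$ with the boundary case $D^{\geucl}(\ln|x|\gamma)=|x|^{-2}x\cdot\gamma$.

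In the inductive step you propose applying $D^{\geucl}$ to a generator of $P_{k+2,m-1,1}$, but this produces $-(2m+k+n)Q(x)|x|^{k+2}\gamma\in P_{k+2,m-1,0}$ plus lower terms --- again not $P_{k,m,0}$. The right element is in $P_{k,m,1}$: computing $D^{\geucl}\big(-x_{i_1}\cdots x_{i_m}|x|^k x\cdot\gamma\big)$ with the Clifford relation $E_i\cdot x=-2x_i-x\cdot E_i$ gives
\[
(2m+n+k)\,x_{i_1}\cdots x_{i_m}|x|^k\gamma
+\sum_{j=1}^m x_{i_1}\cdots\widehat{x_{i_j}}\cdots x_{i_m}|x|^k x\cdot(E_{i_j}\cdot\gamma).
\]
The first term is the desired generator (the coefficient $2m+n+k$ is nonzero because $-n<k+m$), and the error sum lies in $P_{k,m-1,1}$, to which the induction hypothesis applies with the range condition $-n<k+(m-1)+1=k+m\le 0$ preserved. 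The companion computation for $P_{k,m,1}$ uses $D^{\geucl}(x_{i_1}\cdots x_{i_m}f_{k+2}\gamma)$ and produces an error in $P_{k+2,m-1,0}$. Once the indices are corrected in this way, your outline matches the paper's argument.
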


\begin{proof}
We use induction on $m$. 
Let $m=0$ and let $\gamma$ be a constant spinor on $(\R^n,\geucl)$. 
We want to prove that $P_{k,0,0}(\R^n)\subset D^{\geucl}(P_{k,0,1}(\R^n))$ for all $k$ with $-n<k\leq0$ 
and $P_{k,0,1}(\R^n)\subset D^{\geucl}(P_{k+2,0,0}(\R^n))$ for all $k$ with $-n\leq k$ and $k+1\leq0$. 
One calculates easily that 
\begin{eqnarray*}
  D^{\geucl}\big(-\frac{1}{n+k}|x|^kx\cdot\gamma\big)&=&|x|^k\gamma,\quad k\neq-n\\
  D^{\geucl}\big(\frac{1-n\ln|x|}{n^2}x\cdot\gamma\big)&=&\ln|x|\gamma,\\
  D^{\geucl}\big(\frac{1}{k+2}|x|^{k+2}\gamma\big)&=&|x|^kx\cdot\gamma,\quad k\neq-2\\
  D^{\geucl}(\ln|x|\gamma)&=&|x|^{-2}x\cdot\gamma.
\end{eqnarray*}
Using the definition of $P_{k,0,i}(\R^n)$ one finds that the assertion holds for $m=0$.

Let $m\geq1$ and assume that all the inclusions in the assertion hold for $m-1$. 
Using the equation $E_i\cdot x=-2x_i-x\cdot E_i$ we find 
\begin{eqnarray*}
  &&D^{\geucl}(-x_{i_1}...x_{i_m}|x|^k x\cdot\gamma)\\
  &=&-\sum_{j=1}^m x_{i_1}...\widehat{x_{i_j}}...x_{i_m}|x|^k E_{i_j}\cdot x\cdot\gamma
  -x_{i_1}...x_{i_m} D^{\geucl}(|x|^k x\cdot\gamma)\\
  &=&(2m+n+k)x_{i_1}...x_{i_m}|x|^k\gamma
  +\sum_{j=1}^m x_{i_1}...\widehat{x_{i_j}}...x_{i_m}|x|^k x\cdot E_{i_j}\cdot\gamma.
\end{eqnarray*}
Since $E_{i_j}\cdot\gamma$ is a parallel spinor the sum on the right hand side 
is contained in $P_{k,m-1,1}(\R^n)$. We apply the induction hypothesis and since $2m+n+k\neq0$ we find 
that the assertion for $P_{k,m,0}(\R^n)$ holds. 
We define $f_k(x):=\frac{1}{k}|x|^k$ for $k\neq0$ and $f_0(x):=\ln|x|$. 
Then we find 
\begin{eqnarray*}
  &&D^{\geucl}(x_{i_1}...x_{i_m}f_{k+2}(x)\gamma)\\
  &=&x_{i_1}...x_{i_m}|x|^k x\cdot\gamma
  +\sum_{j=1}^m x_{i_1}...\widehat{x_{i_j}}...x_{i_m}f_{k+2}(x) E_{i_j}\cdot\gamma.
\end{eqnarray*}
The sum on the right hand side is in $P_{k+2,m-1,0}(\R^n)$. 
Again we apply the induction hypothesis and we find 
that the assertion for $P_{k,m,1}(\R^n)$ holds.
\end{proof}


\section{Expansion of Green's function}

Let~$(M,g,\Theta)$ be a closed Riemannian spin manifold of dimension $n$ 
and let $\lambda\in\R$. 
Since $D^g-\lambda$ is a self-adjoint elliptic differential operator, 
there is an $L^2$-orthogonal decomposition
\begin{displaymath}
  \spinor{M}{g}{\infty}=\ker(D^g-\lambda)\oplus\im(D^g-\lambda).
\end{displaymath}
Let $P$: $\spinor{M}{g}{\infty}\to\ker(D^g-\lambda)$ denote the 
$L^2$-orthogonal projection. 
Then there is an operator $G$: $\spinor{M}{g}{\infty}\to\spinor{M}{g}{\infty}$, 
called Green's operator such that
\begin{displaymath}
  G(D^g-\lambda)=(D^g-\lambda)G=\id-P
\end{displaymath}
and it extends to a bounded linear operator 
$\Hspinor{M}{g}{k}\to\Hspinor{M}{g}{k+1}$ for every $k\in\N$ 
(see e.\,g.\,\cite{lm}, p. 195). In this section we will examine the 
integral kernel of Green's operator, which is called 
Green's function for $D^g-\lambda$. 
Let $\pi_i$: $M\times M\to M$, $i=1,2$ be the projections. We define
\begin{displaymath}
  \Sigma^gM\boxtimes\Sigma^gM^*:=\pi_1^*\Sigma^gM\otimes(\pi_2^*\Sigma^gM)^*
\end{displaymath}
i.\,e.\,$\Sigma^gM\boxtimes\Sigma^gM^*$ is the vector bundle over~$M\times M$ 
whose fibre over the point $(x,y)\in M\times M$ is given by $\Hom_{\C}(\Sigma^g_yM,\Sigma^g_xM)$. 
Let~$\Delta:=\{(x,x)|x\in M\}$ be the diagonal. In the following we will abbreviate
\begin{displaymath}
  \int_{M\setminus\{p\}}:=\lim_{\ep\to0}\int_{M\setminus B_{\ep}(p)}.
\end{displaymath}

\begin{definition}
A smooth section $G^g_{\lambda}$: $M\times M\setminus\Delta\to\Sigma^gM\boxtimes\Sigma^gM^*$ 
which is locally integrable on~$M\times M$ is called a Green's function for $D^g-\lambda$ 
if for all~$p\in M$, for all~$\varphi\in\Sigma^g_pM$ and for all~$\psi\in\im(D^g-\lambda)$ we have
\begin{equation}
  \label{green_function1}
  \int_{M\setminus\{p\}} \big\langle (D^g-\lambda)\psi,G^g_{\lambda}(.,p)\varphi\big\rangle \dv^g=\big\langle \psi(p),\varphi \big\rangle,
\end{equation}
and if for all~$p\in M$, for all~$\varphi\in\Sigma^g_pM$ and for all~$\psi\in\ker(D^g-\lambda)$ we have
\begin{equation}
  \label{green_function2}
  \int_{M\setminus\{p\}} \big\langle \psi,G^g_{\lambda}(.,p)\varphi\big\rangle \dv^g=0.
\end{equation}
\end{definition}
In this section we will prove existence and uniqueness of Green's function for $D^g-\lambda$ 
in such a way that we also obtain 
the expansion of Green's function around the singularity. 
The smooth spinor $G^g_{\lambda}(.,p)\varphi$ on $M\setminus\{p\}$ will sometimes also 
be called Green's function for $\varphi$. 
Thus we have for all $\psi\in\spinor{M}{g}{\infty}$ and for all $\varphi\in\Sigma^g_pM$
\begin{equation}
  \label{greens_function}
  \int_{M\setminus\{p\}} \big\langle (D^g-\lambda)\psi,G^g_{\lambda}(.,p)\varphi\big\rangle \dv^g
  =\big\langle \psi(p)-P\psi(p),\varphi \big\rangle.
\end{equation}

On Euclidean space we define a Green's function as follows.

\begin{definition}
Let $(M,g)=(\R^n,\geucl)$ with the unique spin structure. 
A smooth section $G^g_{\lambda}$: $M\times M\setminus\Delta\to\Sigma^gM\boxtimes\Sigma^gM^*$ 
which is locally integrable on~$M\times M$ is called a Green's function for $D^g-\lambda$ 
if for all~$p\in M$, for all~$\varphi\in\Sigma^g_pM$ and for all~$\psi\in\spinor{M}{g}{\infty}$ 
with compact support the equation (\ref{green_function1}) holds.
\end{definition}

Of course a Green's function for $D^{\geucl}-\lambda$ is not uniquely 
determined by this definition. 
We will explicitly write down a Green's function for $D^{\geucl}-\lambda$ 
and use it later to find the expansion of Green's function 
for $D^g-\lambda$ on a closed Riemannian spin manifold $(M,g,\Theta)$ 
around the diagonal. 
First observe that for every spinor $\chi\in\spinor{\R^n|_{\R^n\setminus\{0\}}}{}{\infty}$ 
and for every $\lambda\in\R$ the equation
\begin{displaymath}
  (D^{\geucl}-\lambda)(D^{\geucl}+\lambda)\chi
  =-\sum_{i=1}^n \nabla_{E_i}\nabla_{E_i}\chi-\lambda^2\chi
\end{displaymath}
holds on $\R^n\setminus\{0\}$. 
Therefore if $\gamma$ is a constant spinor on $(\R^n,\geucl)$ and 
if $f\in C^{\infty}(\R^n\setminus\{0\},\R)$ satisfies
\begin{displaymath}
  -\sum_{i=1}^n \frac{\partial^2 f}{\partial x_i^2}-\lambda^2f=\delta_0,
\end{displaymath}
then the spinor $G^{\geucl}_{\lambda}(.,0)\gamma:=(D^{\geucl}+\lambda)(f\gamma)$ 
is a Green's function. 
Writing $f(x)=g(|x|)$ for a function $g$ of one variable we get a solution if $g$ 
solves the ordinary differential equation 
\begin{equation}
  \label{green_ode}
  g''(z)+\frac{n-1}{z}g'(z)+\lambda^2g(z)=-\delta_0.
\end{equation}

In the following let $\Gamma$ denote the Gamma function and $J_m$, $Y_m$ 
the Bessel functions of the first and second kind for the parameter $m\in\R$. 
In the notation of \cite{as}, p. 360 they are defined for $z\in(0,\infty)$ by 
\begin{eqnarray*}
J_m(z)&=&\frac{1}{2^m\Gamma(m+1)}z^m\big(1+\sum_{k=1}^{\infty}a_k z^{2k}\big),\quad m\in\R,\\
Y_0(z)&=&\frac{2}{\pi}\big(\ln\big(\frac{z}{2}\big)+c\big)J_0(z)+\sum_{k=1}^{\infty}b_k z^{2k},\\
Y_m(z)&=&-\frac{2^m}{\pi}\Gamma(m)z^{-m}\big(1+\sum_{k=1}^{\infty}c_k z^{2k}\big),\quad m=\frac{1}{2}+k,k\in\N, \\
Y_m(z)&=&-\frac{2^m}{\pi}\Gamma(m)z^{-m}\big(1+\sum_{k=1}^{\infty}d_k z^{2k}\big)+\frac{2}{\pi}\ln\big(\frac{z}{2}\big)J_m(z),\quad m\in\N\setminus\{0\},
\end{eqnarray*}
where $c$ is a real constant, the $a_k$, $b_k$, $c_k$, $d_k$ are real coefficients, 
the $a_k$, $c_k$, $d_k$ depend on $m$ 
and all the power series converge for all $z\in(0,\infty)$.
Let $\omega_{n-1}=\vol(S^{n-1},\gcan)$ be the volume of the $(n-1)$-dimensional unit sphere 
with the standard metric.

\begin{theorem}
\label{theorem_green_eucl}
Let $m:=\frac{n-2}{2}$. 
We define $f_{\lambda}$: $\R^n\setminus\{0\}\to\R$ as follows. 
For $\lambda\neq0$ and $n=2$ 
\begin{displaymath}
  f_{\lambda}(x):=-\frac{1}{4}Y_0(|\lambda x|)+\frac{\ln|\lambda|-\ln(2)+c}{2\pi}J_0(|\lambda x|),
\end{displaymath}
for $\lambda\neq0$ and odd $n\geq3$ 
\begin{displaymath}
  f_{\lambda}(x):=-\frac{\pi|\lambda|^m}{2^m\Gamma(m)(n-2)\omega_{n-1}}|x|^{-m}Y_m(|\lambda x|),
\end{displaymath}
for $\lambda\neq0$ and even $n\geq4$ 
\begin{displaymath}
  f_{\lambda}(x):=-\frac{\pi|\lambda|^m}{2^m\Gamma(m)(n-2)\omega_{n-1}}|x|^{-m}\Big(Y_m(|\lambda x|)-\frac{2(\ln|\lambda|-\ln(2))}{\pi}J_m(|\lambda x|)\Big)
\end{displaymath}
and 
\begin{displaymath}
  f_0(x):=-\frac{1}{2\pi}\ln|x|,\quad n=2,\qquad f_0(x):=\frac{1}{(n-2)\omega_{n-1}|x|^{n-2}},\quad n\geq3.
\end{displaymath}
Then for every constant spinor $\gamma$ on $\R^n$ a Green's function for $D^{\geucl}-\lambda$ is given by
\begin{displaymath}
  G^{\geucl}_{\lambda}(x,0)\gamma=(D^{\geucl}+\lambda)(f_{\lambda}\gamma)(x).
\end{displaymath}
\end{theorem}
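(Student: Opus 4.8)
The plan is to use the reduction already carried out in the text preceding the theorem. Since $(D^{\geucl}-\la)(D^{\geucl}+\la)=-\sum_{i=1}^n\partial^2/\partial x_i^2-\la^2$, it suffices to show that in each of the five cases the function $f_\la$ satisfies $-\sum_{i=1}^n\partial^2 f_\la/\partial x_i^2-\la^2 f_\la=\de_0$ in the sense of distributions on $\R^n$, and that $(D^{\geucl}+\la)(f_\la\ga)$ is locally integrable on $\R^n$; then $G^{\geucl}_\la(\cdot,0)\ga:=(D^{\geucl}+\la)(f_\la\ga)$ is a Green's function by the definition. Writing $f_\la(x)=g_\la(|x|)$, this amounts to checking three things: (a) $f_\la$ is locally integrable on $\R^n$; (b) $g_\la$ solves the homogeneous ODE $g''+\frac{n-1}{z}g'+\la^2 g=0$ on $(0,\infty)$; and (c) the singularity of $f_\la$ at the origin is exactly that of the classical fundamental solution $f_0$ of $-\sum_i\partial^2/\partial x_i^2$, up to a remainder too mild to contribute a $\de_0$.

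First I would verify (b). For $\la=0$ the equation $g''+\frac{n-1}{z}g'=0$ is solved by the constants together with $z^{2-n}$ (for $n\geq3$) or $\ln z$ (for $n=2$), so $f_0$ qualifies. For $\la\neq0$ the substitution $g(z)=z^{-m}h(|\la|z)$ with $m=\frac{n-2}{2}$ turns the equation into Bessel's equation of order $m$ for $h$, so $z^{-m}J_m(|\la|z)$ and $z^{-m}Y_m(|\la|z)$ span the solution space on $(0,\infty)$. Inspecting the five formulas, each $f_\la(x)$ is, up to fixed constants, a linear combination of $|x|^{-m}Y_m(|\la x|)$ and $|x|^{-m}J_m(|\la x|)$, the $J_m$-term (present for $n=2$ and even $n$) being itself a homogeneous solution; so (b) follows by a direct substitution using the Bessel equation.

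Next I would pin down the normalization and establish (a) and (c). Using the expansions of $Y_m$ and $J_m$ quoted just before the theorem, near $z=0$ the function $z^{-m}Y_m(|\la|z)$ equals $-\frac{2^m\Gamma(m)}{\pi}|\la|^{-m}z^{-(n-2)}$ up to strictly milder terms when $m>0$, while for $m=0$ (that is $n=2$) the combination of $Y_0$ and $J_0$ appearing in the formula has leading term $-\frac{1}{2\pi}\ln|x|$; multiplying by the stated constant $\frac{\pi|\la|^m}{2^m\Gamma(m)(n-2)\omega_{n-1}}$ (respectively by the $n=2$ constant) is precisely what produces the leading term $\frac{1}{(n-2)\omega_{n-1}}|x|^{-(n-2)}=f_0(x)$, respectively $-\frac{1}{2\pi}\ln|x|=f_0(x)$. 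Hence $f_\la=f_0+r_\la$ with $r_\la$ having a strictly weaker singularity at $0$; in particular $f_\la=O(|x|^{2-n})$ (or $O(\ln|x|)$ for $n=2$) is locally integrable, so (a) holds and $(D^{\geucl}+\la)(f_\la\ga)=O(|x|^{1-n})$ is locally integrable on $\R^n$ as well. Since $-\sum_i\partial^2 f_0/\partial x_i^2=\de_0$ classically, and since applying the Green's identity for $-\sum_i\partial^2/\partial x_i^2-\la^2$ on $\R^n\setminus B_\ep(0)$ to $f_\la$ against a test function leaves, as $\ep\to0$, only the boundary contribution of the singular part $f_0$ (the remainder $r_\la$ and its first derivatives being too mild to survive the limit), we obtain $-\sum_i\partial^2 f_\la/\partial x_i^2-\la^2 f_\la=\de_0$; together with (b) this is the assertion.

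The main obstacle is the case bookkeeping rather than any genuinely new step: the Bessel function $Y_m$ carries a logarithmic term precisely when $m\in\N$, i.e.\ when $n$ is even, which forces the extra $J_m$- and $\ln|\la|$-corrections in the formulas for $n=2$ and for even $n\geq4$, and one has to track these carefully to confirm simultaneously that the leading singularity of $f_\la$ is exactly $f_0$ and that the remainder contributes nothing to the delta. The second delicate point is matching the prefactor $\frac{\pi|\la|^m}{2^m\Gamma(m)(n-2)\omega_{n-1}}$ against the leading coefficients in the Bessel expansions and the sphere volume $\omega_{n-1}$, which is exactly the place where the classical computation of the fundamental solution of the Laplacian on $\R^n$ is reused.
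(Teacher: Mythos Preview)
Your argument is correct and follows essentially the same path as the paper: verify that $g_\lambda$ solves the ODE \eqref{green_ode} on $(0,\infty)$, identify the leading singularity of $f_\lambda$ at the origin via the Bessel expansions, and conclude the $\delta_0$-identity by a boundary integral on $\R^n\setminus B_\ep(0)$ in the limit $\ep\to0$. The only organizational difference is that you carry out the boundary-limit step at the scalar level (for $-\Delta-\lambda^2$ acting on $f_\lambda$, comparing to the classical Newtonian/logarithmic potential $f_0$) and then invoke the factorization $(D^{\geucl}-\lambda)(D^{\geucl}+\lambda)=-\Delta-\lambda^2$, whereas the paper computes the boundary integral directly at the spinor level, using the Dirac boundary formula $\int_{\partial M}\langle\nu\cdot\psi,\varphi\rangle\,dA$ together with the asymptotic $(D^{\geucl}+\lambda)(f_\lambda\gamma)(x)=-\frac{1}{\omega_{n-1}|x|^{n-1}}\frac{x}{|x|}\cdot\gamma+o(|x|^{1-n})$ extracted from the proof of Corollary~\ref{coroll_green_asymp}. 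Both routes amount to the same computation; your version has the minor advantage of recycling the scalar fundamental-solution fact without redoing it in spinor language, while the paper's version makes the spinor Green's function identity \eqref{green_function1} appear more directly.
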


\begin{corollary}
\label{coroll_green_asymp}
For every constant spinor $\gamma\in\Sigma_n$ there exists a Green's function 
of $D^{\geucl}-\lambda$, which has the following form. For $n=2$ 
\begin{displaymath}
  G^{\geucl}_{\lambda}(x,0)\gamma
  =-\frac{1}{2\pi|x|}\frac{x}{|x|}\cdot\gamma-\frac{\lambda}{2\pi}\ln|x|\gamma+\ln|x|\vartheta_{\lambda}(x)+\zeta_{\lambda}(x),
\end{displaymath}
for odd $n\geq3$ 
\begin{displaymath}
  G^{\geucl}_{\lambda}(x,0)\gamma
  =-\frac{1}{\omega_{n-1}|x|^{n-1}}\frac{x}{|x|}\cdot\gamma
  +\frac{\lambda}{(n-2)\omega_{n-1}|x|^{n-2}}\gamma+|x|^{2-n}\zeta_{\lambda}(x),
\end{displaymath}
for even $n\geq4$ 
\begin{eqnarray*}
  G^{\geucl}_{\lambda}(x,0)\gamma
  &=&-\frac{1}{\omega_{n-1}|x|^{n-1}}\frac{x}{|x|}\cdot\gamma
  +\frac{\lambda}{(n-2)\omega_{n-1}|x|^{n-2}}\gamma+|x|^{2-n}\zeta_{\lambda}(x)\\
  &&{}-\frac{\lambda^{n-1}}{2^{n-2}\Gamma(\frac{n}{2})^2\omega_{n-1}}\ln|x|\gamma+\ln|x|\vartheta_{\lambda}(x),
\end{eqnarray*}
where for every $n$ and for every $\lambda$ the spinors $\vartheta_{\lambda}$, $\zeta_{\lambda}$ 
extend smoothly to $\R^n$ and satisfy 
\begin{displaymath}
  |\zeta_{\lambda}(x)|_{\geucl}=O(|x|),\quad |\vartheta_{\lambda}(x)|_{\geucl}=O(|x|)\quad\textrm{ as } x\to0
\end{displaymath}
and where for every $n$ and for every $x$ the spinors $\vartheta_{\lambda}(x),\zeta_{\lambda}(x)\in\Sigma_n$ 
are power series in $\lambda$ with $\vartheta_0(x)=\zeta_0(x)=0$.
\end{corollary}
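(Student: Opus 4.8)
The plan is to start from Theorem~\ref{theorem_green_eucl}, which exhibits a Green's function for $D^{\geucl}-\lambda$ as $G^{\geucl}_\lambda(x,0)\gamma=(D^{\geucl}+\lambda)(f_\lambda\gamma)(x)$, where $f_\lambda(x)=g_\lambda(|x|)$ for an explicit radial function $g_\lambda$ built from the Bessel functions $J_m$, $Y_m$ with $m=\frac{n-2}{2}$. First I would simplify this: since $\gamma$ is a constant, hence parallel, spinor on $(\R^n,\geucl)$, the Leibniz rule for the Dirac operator gives $D^{\geucl}(f_\lambda\gamma)=\grad^{\geucl}(f_\lambda)\cdot\gamma$, and $\grad^{\geucl}(g_\lambda(|x|))=g_\lambda'(|x|)\frac{x}{|x|}$, so that
\begin{displaymath}
  G^{\geucl}_\lambda(x,0)\gamma=g_\lambda'(|x|)\,\frac{x}{|x|}\cdot\gamma+\lambda\,g_\lambda(|x|)\,\gamma .
\end{displaymath}
The whole statement then reduces to a Taylor analysis at $0$ of the single scalar function $g_\lambda$.

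Next I would substitute the power series for $J_m$ and $Y_m$ from the list preceding Theorem~\ref{theorem_green_eucl}, treating separately the cases $\lambda=0$, and $\lambda\neq0$ with $n=2$, odd $n\geq3$, even $n\geq4$. In each case the half-integer powers of $|\lambda|$ and the $\ln|\lambda|$ terms built into $f_\lambda$ are exactly what is needed so that the outer prefactor cancels the matching factors produced by $Y_m$; for even $n$ the $J_m$-correction in $f_\lambda$ additionally cancels the $\ln|\lambda|$ hidden in the $\ln(z/2)$ inside $Y_m$. I expect the outcome to be a decomposition
\begin{displaymath}
  g_\lambda(r)=g_0(r)+r^{2-n}h_\lambda(r)+\ln r\cdot k_\lambda(r),
\end{displaymath}
where $g_0$ is the radial profile of $f_0$ (so that $f_0(x)=g_0(|x|)$), where $h_\lambda$ and $k_\lambda$ are convergent even power series in $r$ whose coefficients are power series in $\lambda$ vanishing at $\lambda=0$, where $h_\lambda$ vanishes to second order, and where $k_\lambda\equiv0$ for odd $n\geq3$ while for even $n\geq4$ one has $k_\lambda(0)=-\lambda^{n-2}/(2^{n-2}\Gamma(n/2)^2\omega_{n-1})$. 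Obtaining this last constant uses the identity $(n-2)\Gamma(m)\Gamma(m+1)=2\Gamma(n/2)^2$, which I would verify by hand.

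Then I would insert this decomposition into the formula for $G^{\geucl}_\lambda(x,0)\gamma$ and collect terms. The $g_0$-part reproduces precisely the explicitly written singular terms: $g_0'(|x|)\frac{x}{|x|}\cdot\gamma$ equals $-\frac{1}{\omega_{n-1}|x|^{n-1}}\frac{x}{|x|}\cdot\gamma$ for $n\geq3$ and $-\frac{1}{2\pi|x|}\frac{x}{|x|}\cdot\gamma$ for $n=2$, while $\lambda g_0(|x|)\gamma$ equals $\frac{\lambda}{(n-2)\omega_{n-1}|x|^{n-2}}\gamma$ for $n\geq3$ and $-\frac{\lambda}{2\pi}\ln|x|\gamma$ for $n=2$; moreover $\lambda\cdot k_\lambda(0)\cdot\ln|x|\cdot\gamma$ produces the explicit $-\frac{\lambda^{n-1}}{2^{n-2}\Gamma(n/2)^2\omega_{n-1}}\ln|x|\gamma$ term in even dimension. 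After differentiating and multiplying out, every remaining term is of the form $|x|^{2-n}$ or $\ln|x|$ times a non-negative even power of $|x|$ times $\gamma$ or $x\cdot\gamma$, with a coefficient that is a power series in $\lambda$ vanishing at $\lambda=0$ (for $n=2$ the factor $|x|^{2-n}$ is absent). I would collect the $\ln|x|$-terms, after splitting off the leading one just described, into $\ln|x|\,\vartheta_\lambda(x)$, and the rest into $|x|^{2-n}\zeta_\lambda(x)$ (into $\zeta_\lambda(x)$ when $n=2$). Since $|x|^{2j}$ for $j\geq0$ and $x\cdot\gamma$ are smooth on all of $\R^n$, both $\vartheta_\lambda$ and $\zeta_\lambda$ extend smoothly to $\R^n$; counting powers of $|x|$ gives $|\vartheta_\lambda(x)|_{\geucl},|\zeta_\lambda(x)|_{\geucl}=O(|x|)$; and tracking the $\lambda$-dependence gives that they are power series in $\lambda$ with $\vartheta_0=\zeta_0=0$.

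The step I expect to be the main obstacle is this final bookkeeping when $n\geq3$: one must check that dividing the collected remainder by $|x|^{n-2}$ still leaves something vanishing to first order in $|x|$, in other words that the explicitly subtracted terms account for all singular and constant-order contributions (in particular those coming from $k_\lambda(0)/r$ in $g_\lambda'$ when $n$ is even), and, in odd dimensions, that the half-integral powers $|\lambda|^{m}$ genuinely recombine into integer powers of $\lambda^2$, so that $\vartheta_\lambda$ and $\zeta_\lambda$ depend analytically rather than merely continuously on $\lambda$. The even-dimensional case carries the additional burden of verifying the cancellation of $\ln|\lambda|$ and of pinning down the leading coefficient $-\lambda^{n-1}/(2^{n-2}\Gamma(n/2)^2\omega_{n-1})$ exactly, which is precisely where the particular normalisation chosen in Theorem~\ref{theorem_green_eucl} is used.
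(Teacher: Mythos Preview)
Your proposal is correct and follows essentially the same approach as the paper: substitute the Bessel-function power series into the explicit $f_\lambda$ from Theorem~\ref{theorem_green_eucl}, observe the cancellation of the $\ln|\lambda|$ contributions built into the definition of $f_\lambda$, and read off the expansion of $G^{\geucl}_\lambda(x,0)\gamma=(D^{\geucl}+\lambda)(f_\lambda\gamma)$. The paper's own proof is terser---it writes out the resulting series for $f_\lambda$ in each case and then simply says ``the assertion follows''---so your more detailed bookkeeping (the formula $g_\lambda'(|x|)\tfrac{x}{|x|}\cdot\gamma+\lambda g_\lambda(|x|)\gamma$ and the explicit tracking of the remainder terms) fills in exactly what the paper leaves implicit.
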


\begin{proof}[Proof of Corollary \ref{coroll_green_asymp}]
We find
\begin{displaymath}
  Y_0(|\lambda x|)-\frac{2(\ln|\lambda|-\ln(2)+c)}{\pi}J_0(|\lambda x|)
  =\frac{2}{\pi}\ln|x|J_0(|\lambda x|)+\sum_{k=1}^{\infty}b_k |\lambda x|^{2k}
\end{displaymath}
and for $m\in\N\setminus\{0\}$
\begin{eqnarray*}
Y_m(|\lambda x|)-\frac{2(\ln|\lambda|-\ln(2))}{\pi}J_m(|\lambda x|)
&=&-\frac{2^m\Gamma(m)}{\pi|\lambda x|^m}\big(1+\sum_{k=1}^{\infty}d_k |\lambda x|^{2k}\big)\\
&&{}+\frac{2}{\pi}\ln|x|J_m(|\lambda x|).
\end{eqnarray*}
Thus we find for $n=2$
\begin{displaymath}
  f_{\lambda}(x)=-\frac{1}{2\pi}\ln|x|\big(1+\sum_{k=1}^{\infty}a_k|\lambda x|^{2k}\big)
  -\frac{1}{4}\sum_{k=1}^{\infty}b_k|\lambda x|^{2k},
\end{displaymath}
for odd $n\geq3$
\begin{displaymath}
  f_{\lambda}(x)=\frac{1}{(n-2)\omega_{n-1}|x|^{n-2}}\big(1+\sum_{k=1}^{\infty}c_k|\lambda x|^{2k}\big)
\end{displaymath}
and for even $n\geq4$
\begin{eqnarray*}
  f_{\lambda}(x)&=&\frac{1}{(n-2)\omega_{n-1}|x|^{n-2}}\big(1+\sum_{k=1}^{\infty}d_k|\lambda x|^{2k}\big)\\
  &&{}-\frac{\lambda^{n-2}}{2^{n-2}(m!)^2\omega_{n-1}}\ln|x|\big(1+\sum_{k=1}^{\infty}a_k|\lambda x|^{2k}\big).
\end{eqnarray*}
The assertion follows.
\end{proof}

\begin{proof}[Proof of Theorem \ref{theorem_green_eucl}]
Let $f_{\lambda}$ be as in the assertion and write $f_{\lambda}(x)=g_{\lambda}(|x|)$ 
with $g_{\lambda}$: $(0,\infty)\to\R$. 
One finds that $g_{\lambda}$ solves the equation (\ref{green_ode}). 
It remains to show that $(D^{\geucl}+\lambda)(f_{\lambda}\gamma)$ satisfies (\ref{green_function1}). 
The calculation of the proof of the Corollary shows
\begin{displaymath}
  (D^{\geucl}+\lambda)(f_{\lambda}\gamma)(x)=-\frac{1}{\omega_{n-1}} \frac{x}{|x|^n}\cdot\gamma+\zeta(x),
\end{displaymath}
where $|\zeta(x)|_{\geucl}=o(|x|^{1-n})$ as $x\to0$. 
It is well known (e.\,g.\,\cite{lm}, p. 115) that for a Riemannian spin manifold $(M,g,\Theta)$ with boundary $\partial M\neq\emptyset$ 
and $\psi$, $\varphi$ compactly supported spinors we have
\begin{displaymath}
  (D^g\psi,\varphi)_2-(\psi,D^g\varphi)_2=\int_{\partial M}\langle \nu\cdot\psi,\varphi\rangle\,dA,
\end{displaymath}
where $\nu$ is the outer unit normal vector field on $\partial M$. 
We apply this to $(\R^n\setminus B_{\varepsilon}(0),\geucl)$ and $\nu(x):=-\frac{x}{|x|}$ and we find
\begin{eqnarray*}
  &&\int_{\R^n\setminus B_{\varepsilon}(0)}\big\langle (D^{\geucl}-\lambda)\psi(x),(D^{\geucl}+\lambda)(f_{\lambda}\gamma)(x)\big\rangle \dv^g\\
  &=&{}-\int_{\partial B_{\varepsilon}(0)} \big\langle \frac{x}{|x|}\cdot\psi(x),(D^{\geucl}+\lambda)(f_{\lambda}\gamma)(x) \big\rangle\,dA\\
  &=&{}\int_{\partial B_{\varepsilon}(0)} \big\langle \psi(x),\frac{1}{\omega_{n-1}|x|^{n-1}}\gamma+\frac{x}{|x|}\cdot\zeta(x) \big\rangle\,dA.
\end{eqnarray*}
With the substitution $x=\varepsilon y$ we find that
\begin{displaymath}
  \lim_{\varepsilon\to0}
  \int_{\R^n\setminus B_{\varepsilon}(0)}\big\langle (D^{\geucl}-\lambda)\psi(x),(D^{\geucl}+\lambda)(f_{\lambda}\gamma)(x)\big\rangle \dv^g
  =\langle\psi(0),\gamma\rangle.
\end{displaymath}
The assertion follows.
\end{proof}

\begin{remark}
In the case $n=3$ we get more familiar expressions by using that
\begin{displaymath}
  \sqrt{\frac{\pi}{2z}}Y_{1/2}(z)=-\frac{\cos(z)}{z}
\end{displaymath}
(see \cite{as}, p. 437f). Similar formulas exist for all odd $n$.
\end{remark}

\begin{definition}
For $m\in\R$ we define 
\begin{displaymath}
  P_m(\R^n):=\sum_{r+s+t\geq m\atop r\geq-n} P_{r,s,t}(\R^n)
  +(\spinor{\R^n|_{\R^n\setminus\{0\}}}{}{\infty}\cap\spinor{\R^n}{}{0}),
\end{displaymath}
where the second space is the space of all spinors which are smooth 
on $\R^n\setminus\{0\}$ and have a continuous extension to $\R^n$.
\end{definition}

\begin{remark}
Let $\vartheta\in P_m(\R^n)$. 
Then we have $E_i\cdot\vartheta\in P_m(\R^n)$ for all $i\in\{1,...,n\}$. 
If $f\in\C^{\infty}(\R^n)$ then for every $s\in\N$ by Taylor's formula we may write 
\begin{displaymath}
  f(x)=\sum_{|\alpha|<s} \frac{1}{\alpha!} \frac{\partial^{|\alpha|}f(0)}{\partial x^{\alpha}}x^{\alpha}+R_s(x),
\end{displaymath}
where $|R_s(x)|=O(|x|^s)$ as $x\to0$. 
Thus by choosing $s\geq m+1$ we find that the spinor $f\vartheta$ is in $P_m(\R^n)$.
\end{remark}

\begin{remark}
\label{remark_invert_dirac}
A spinor $\vartheta\in P_m(\R^n)$ has a continuous extension to $\R^n$ if and only if $m>0$. 
Furthermore by Proposition \ref{prop_invert_dirac} it follows that for all $m\in(-n,0]$ we have 
\begin{eqnarray*}
  P_m(\R^n)&=&\sum_{r+s+t=m\atop r\geq-n} P_{r,s,t}(\R^n)+P_{m+1}(\R^n)\\
  &\subset& D^{\geucl}(P_{m+1}(\R^n))+P_{m+1}(\R^n).
\end{eqnarray*}
\end{remark}

\begin{lemma}
\label{green_p_2-n_lemma}
Let $(M,g,\Theta)$ be a closed Riemannian spin manifold of dimension $n$ 
and let $\lambda\in\R$. Let $\gamma\in\Sigma_n$ be a constant spinor on $\R^n$. 
Then the spinor $G^{\geucl}_{\lambda}(.,0)\gamma$ 
is in $P_{1-n}(\R^n)$. Let the matrix coefficients 
$B^j_i$ be defined as in (\ref{dirac_triv}). 
Then for all $i$ the spinor
\begin{displaymath}
  x\mapsto\sum_{j=1}^n (B^j_i(x)-\delta^j_i)\nabla_{E_j}G^{\geucl}_{\lambda}(x,0)\gamma
\end{displaymath}
is in $P_{2-n}(\R^n)$.
\end{lemma}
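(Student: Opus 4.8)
\noindent\emph{Proof strategy.}
The plan is to read both assertions off the data already available: the expansion of $G^{\geucl}_\lambda(.,0)\gamma$ in Corollary~\ref{coroll_green_asymp}, the expansion~(\ref{B-expansion}) of the $B^j_i$, and the fact that here we work in Riemannian normal coordinates, so that the radial direction is a $1$-eigenvector of $(g_{ij})(x)$. For the first assertion I would compare Corollary~\ref{coroll_green_asymp} term by term with Definition~\ref{pkmi_def}: the leading summand $-\frac{1}{\omega_{n-1}}|x|^{-n}x\cdot\gamma$ (respectively $-\frac{1}{2\pi}|x|^{-2}x\cdot\gamma$ when $n=2$) lies in $P_{-n,0,1}(\R^n)$, for which $r+s+t=1-n$ and $r=-n\geq -n$; the summand $\frac{\lambda}{(n-2)\omega_{n-1}}|x|^{2-n}\gamma$ lies in $P_{2-n,0,0}(\R^n)$ and the $\ln|x|\,\gamma$ summands in $P_{0,0,0}(\R^n)$, all with $r\geq -n$ and $r+s+t\geq 1-n$; and $|x|^{2-n}\zeta_\lambda$, $\ln|x|\,\vartheta_\lambda$ lie in $P_{2-n}(\R^n)$ respectively $P_0(\R^n)$, since $|x|^{2-n}E_k\in P_{2-n,0,0}(\R^n)$, $\ln|x|\,E_k\in P_{0,0,0}(\R^n)$ and $P_m(\R^n)$ is stable under multiplication by functions smooth on $\R^n$ (applied componentwise in the constant frame). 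As both $P_{2-n}(\R^n)$ and $P_0(\R^n)$ are contained in $P_{1-n}(\R^n)$, and $\zeta_\lambda$ (the extra term when $n=2$) is smooth, hence in the continuous part of every $P_m(\R^n)$, this gives $G^{\geucl}_\lambda(.,0)\gamma\in P_{1-n}(\R^n)$.

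For the second assertion I would first use the Leibniz rule and the fact that $\gamma$ is parallel to write $G^{\geucl}_\lambda(.,0)\gamma=(D^{\geucl}+\lambda)(f_\lambda\gamma)=g_\lambda'(|x|)\,\frac{x}{|x|}\cdot\gamma+\lambda\, g_\lambda(|x|)\gamma$ (Theorem~\ref{theorem_green_eucl}), where $f_\lambda(x)=g_\lambda(|x|)$ is radial. Differentiating componentwise in the constant frame, and using $x\cdot\gamma=\sum_k x_k\, E_k\cdot\gamma$, one finds
\begin{displaymath}
  \nabla_{E_j}\big(G^{\geucl}_\lambda(.,0)\gamma\big)=x_j\,\Xi(x)+\frac{g_\lambda'(|x|)}{|x|}\,E_j\cdot\gamma,
\end{displaymath}
where $\Xi(x)$ is a spinor not depending on $j$. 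Now comes the key point: since $\rho$ parametrises $M$ by Riemannian normal coordinates, the Gauss lemma gives $\sum_j g_{ij}(x)x_j=x_i$, so $x$ is a $1$-eigenvector of $(g_{ij})(x)$ and therefore of $B(x)=(g_{ij})(x)^{-1/2}$; that is, $\sum_{j=1}^n\big(B^j_i(x)-\delta^j_i\big)x_j\equiv 0$. Hence the $x_j$-proportional terms cancel in $\sum_j(B^j_i-\delta^j_i)\nabla_{E_j}(\cdot)$, and the spinor in question equals
\begin{displaymath}
  \frac{g_\lambda'(|x|)}{|x|}\sum_{j=1}^n\big(B^j_i(x)-\delta^j_i\big)E_j\cdot\gamma.
\end{displaymath}

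To finish, a short computation from the explicit $f_\lambda$ (Theorem~\ref{theorem_green_eucl}) shows that $r\mapsto g_\lambda'(r)/r$ is a finite sum of terms $r^{\nu}\phi(r^2)$ with $\nu\in\Z$, $\nu\geq -n$, and $\phi$ a power series in $r^2$, together with (when $n=2$ or $n$ even $\geq 4$) terms $\ln r\cdot\psi(r^2)$, $\psi$ a power series in $r^2$; and $B^j_i-\delta^j_i$ is smooth on $\R^n$ and vanishes to second order at $0$ by~(\ref{B-expansion}). Multiplying and Taylor-expanding the smooth factor $\phi(|x|^2)\big(B^j_i(x)-\delta^j_i\big)$, which again vanishes to second order, a short case distinction --- according to whether a negative power of $|x|$, no power, or $\ln|x|$ occurs --- shows, using $\nu\geq -n$, that every term of the product is continuous or lies in some $P_{r,s,t}(\R^n)$ with $r\geq -n$ and $r+s+t\geq 2-n$; the Taylor remainders are $O(|x|^N)$ for $N$ arbitrarily large, hence continuous. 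Therefore $\sum_j(B^j_i-\delta^j_i)\nabla_{E_j}G^{\geucl}_\lambda(.,0)\gamma\in P_{2-n}(\R^n)$.

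The cancellation step is, I expect, the main obstacle. A crude size estimate only gives $\big|(B^j_i-\delta^j_i)\nabla_{E_j}G^{\geucl}_\lambda(.,0)\gamma\big|=O(|x|^{2-n})$, which is not enough: the term proportional to $x_j|x|^{-n-2}x\cdot\gamma$ occurring in $\nabla_{E_j}G^{\geucl}_\lambda(.,0)\gamma$, even after multiplication by the $O(|x|^2)$ factor $B^j_i-\delta^j_i$, only lands in spaces $P_{r,s,t}(\R^n)$ with $r=-n-2<-n$, which are by definition excluded from $P_{2-n}(\R^n)$. It is precisely the normal-coordinate identity $\sum_j(B^j_i-\delta^j_i)x_j\equiv 0$ that removes these terms, so the argument really needs this cancellation rather than merely an order-of-vanishing estimate.
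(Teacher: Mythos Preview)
Your proof is correct and follows essentially the same route as the paper's: read off the first assertion from Corollary~\ref{coroll_green_asymp}, write $G^{\geucl}_\lambda(.,0)\gamma$ via the radial function $g_\lambda$, differentiate, use the Gauss-lemma identity $\sum_j B^j_i(x)x_j=x_i$ in normal coordinates to kill the $x_j$-proportional pieces of $\nabla_{E_j}G^{\geucl}_\lambda(.,0)\gamma$, and then estimate the surviving term $\frac{g_\lambda'(|x|)}{|x|}\sum_j(B^j_i-\delta^j_i)E_j\cdot\gamma$ using~(\ref{B-expansion}). The only difference is one of detail: where the paper simply records $g_\lambda'(|x|)=O(|x|^{1-n})$ and invokes the Taylor expansion of $B^j_i$, you spell out the structure of $g_\lambda'(r)/r$ as a sum of terms $r^\nu\phi(r^2)$ with $\nu\geq -n$ (plus logarithmic pieces in even dimension), making explicit why the product lands in $P_{2-n}(\R^n)$ and not merely in $O(|x|^{2-n})$ --- a point you rightly flag as the reason a naive size estimate would not suffice.
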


\begin{proof}
The assertion for $G^{\geucl}_{\lambda}(.,0)\gamma$ 
can be seen immediately from Corollary \ref{coroll_green_asymp}. 
Let $f_{\lambda}$ be as in Theorem \ref{theorem_green_eucl} and write $f_{\lambda}(x)=g_{\lambda}(|x|)$ 
with a func\-tion $g_{\lambda}$: $(0,\infty)\to\R$. Then we have 
\begin{displaymath}
  G^{\geucl}_{\lambda}(x,0)\gamma=\frac{g_{\lambda}'(|x|)}{|x|}x\cdot\gamma+\lambda g_{\lambda}(|x|)\gamma
\end{displaymath}
and for every $j\in\{1,...,n\}$ we have 
\begin{displaymath}
  \nabla_{E_j}G^{\geucl}_{\lambda}(x,0)\gamma=\frac{g_{\lambda}''(|x|)x_j}{|x|^2}x\cdot\gamma
  -\frac{g_{\lambda}'(|x|)x_j}{|x|^3}x\cdot\gamma
  +\frac{g_{\lambda}'(|x|)}{|x|}E_j\cdot\gamma+\lambda\frac{g_{\lambda}'(|x|)x_j}{|x|}\gamma.
\end{displaymath}
Since the exponential map is a radial isometry, we have 
$\sum_{j=1}^n g_{ij}(x)x_j=x_i$ and thus 
$\sum_{j=1}^n B^j_i(x)x_j=x_i$ for every fixed $i$. Thus we find 
\begin{displaymath}
  \sum_{j=1}^n (B^j_i(x)-\delta^j_i)\nabla_{E_j}G^{\geucl}_{\lambda}(x,0)\gamma
  =\sum_{j=1}^n (B^j_i(x)-\delta^j_i)\frac{g_{\lambda}'(|x|)}{|x|}E_j\cdot\gamma.
\end{displaymath}
Since we have $g_{\lambda}'(|x|)=O(|x|^{1-n})$ as $x\to0$ the assertion 
now follows from the Taylor expansion (\ref{B-expansion}) of $B^j_i(x)$. 
\end{proof}

Next we prove existence and uniqueness of Green's function for $D^g-\lambda$ 
on a closed Riemannian spin manifold in such a way that we also obtain 
the expansion of Green's function around the singularity. 
The idea is to apply the equation (\ref{dirac_triv}) for the Dirac operator 
in the trivialization to a Euclidean Green's function and then determine 
the correction terms. This has been carried out in \cite{ah}, 
where for some technical steps Sobolev embeddings were used. 
We present a more simple argument using the preimages under the 
Dirac operator from Proposition \ref{prop_invert_dirac}.

In the following for a fixed point $p\in M$ let $\rho$: $V\to U$ be a local 
parametrization of $M$ by Riemannian normal coordinates, where $U\subset M$ 
is an open neighborhood of $p$, $V\subset\R^n$ is an open neighborhood of $0$ 
and $\rho(0)=p$. 
Furthermore let 
\begin{displaymath}
  \beta:\quad \Sigma\R^n|_V\to\Sigma^gM|_U,\quad 
  A:\quad \spinor{M|_U}{g}{\infty}\to\spinor{\R^n|_V}{}{\infty}
\end{displaymath}
denote the maps which send a spinor to its 
corresponding spinor in the Bourguignon-Gauduchon trivialization 
defined in Section \ref{bourg-triv_section}.

\begin{theorem}
\label{theorem_green}
Let $(M,g,\Theta)$ be a closed $n$-dimensional Riemannian spin manifold, $p\in M$. 
For every $\varphi\in\Sigma^g_pM$ there exists a unique Green's function 
$G^g_{\lambda}(.,p)\varphi$. 
If $\gamma:=\beta^{-1}\varphi\in\Sigma_n$ is the constant spinor on $\R^n$ 
corresponding to $\varphi$, 
then the first two terms 
of the expansion of $AG^g_{\lambda}(.,p)\varphi$ at $0$ 
coincide with the first two terms of the expansion of  
$G^{\geucl}_{\lambda}(.,0)\gamma$ given in Corollary \ref{coroll_green_asymp}.
\end{theorem}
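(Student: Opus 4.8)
The plan is to write down an explicit approximate Green's function in the Bourguignon--Gauduchon trivialization around $p$, built from the Euclidean Green's function of Theorem~\ref{theorem_green_eucl}, and then to remove its error term in finitely many steps by inverting $D^{\geucl}$ as in Proposition~\ref{prop_invert_dirac} (in the packaged form of Remark~\ref{remark_invert_dirac}); the remaining error will be continuous and can be absorbed by the Green's operator $G$ of $D^g-\la$, after which one adjusts by an element of $\ker(D^g-\la)$ to enforce (\ref{green_function2}). Uniqueness is independent and short: if $G_1,G_2$ are two Green's functions for $\varphi$, then (\ref{green_function1}), (\ref{green_function2}) and (\ref{greens_function}) give $\int_{M\setminus\{p\}}\langle (D^g-\la)\psi, G_1-G_2\rangle\,\dv^g=0$ for every $\psi\in\spinor{M}{g}{\infty}$; since $D^g-\la$ is self-adjoint, $G_1-G_2$ is a distributional solution of $(D^g-\la)(G_1-G_2)=0$, hence smooth on all of $M$ by elliptic regularity, and then (\ref{green_function2}) applied to $\psi=G_1-G_2$ yields $G_1=G_2$.

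\textbf{The singular part.} Fix normal coordinates $\rho\colon V\to U$ around $p$ with the trivialization maps $\beta,A$ of Section~\ref{bourg-triv_section}, set $\ga:=\beta^{-1}\varphi\in\Sigma_n$, and pick $\chi\in C^\infty(M)$ with $\chi\equiv1$ near $p$ and $\supp\chi\subset U$. Put $\psi_0:=\beta\big(\chi\cdot G^{\geucl}_{\la}(\cdot,0)\ga\big)$. Since $(D^{\geucl}-\la)G^{\geucl}_{\la}(\cdot,0)\ga=\delta_0\ga$ distributionally by Theorem~\ref{theorem_green_eucl}, formula (\ref{dirac_triv}) gives $(D^g-\la)\psi_0=\delta_p\varphi+h_0$, where $h_0$ is smooth on $M\setminus\{p\}$ and, near $p$, $Ah_0$ equals the two correction terms of (\ref{dirac_triv}) applied to $G^{\geucl}_{\la}(\cdot,0)\ga$. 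By Lemma~\ref{green_p_2-n_lemma} the $(B^j_i-\delta^j_i)$--term lies in $P_{2-n}(\R^n)$, and by (\ref{gammatilde}), which gives $\widetilde{\Gamma}^k_{ij}=O(|x|)$, together with the Taylor estimate for elements of $P_m(\R^n)$, so does the $\widetilde{\Gamma}$--term; hence $Ah_0\in P_{2-n}(\R^n)$ near $0$. Now one iterates: given $\psi_k$ with $(D^g-\la)\psi_k=\delta_p\varphi+h_k$ and $Ah_k\in P_{2-n+k}(\R^n)$ near $0$, and as long as $2-n+k\le0$, Remark~\ref{remark_invert_dirac} yields $Ah_k=D^{\geucl}\eta_{k+1}+r_{k+1}$ with $\eta_{k+1},r_{k+1}\in P_{3-n+k}(\R^n)$; setting $\psi_{k+1}:=\psi_k-\beta(\chi\eta_{k+1})$ and applying (\ref{dirac_triv}) to $\beta(\chi\eta_{k+1})$, one checks using (\ref{B-expansion}) and (\ref{gammatilde}) that the trivialization error has gained one order, whence $Ah_{k+1}\in P_{3-n+k}(\R^n)=P_{2-n+(k+1)}(\R^n)$ near $0$. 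After $n-1$ steps the error $h:=h_{n-1}$ satisfies $Ah\in P_1(\R^n)$ near $0$, hence $h$ is continuous on $M$.

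\textbf{Absorbing the error; the expansion.} Let $G\colon\Lspinor{M}{g}{2}\to\Hspinor{M}{g}{1}$ be the Green's operator with $(D^g-\la)G=\id-P$, and set $\Psi:=\psi_{n-1}-Gh$; then $(D^g-\la)\Psi=\delta_p\varphi+Ph$ distributionally with $Ph\in\ker(D^g-\la)$, and by standard elliptic estimates $Gh$ is continuous on $M$ and smooth on $M\setminus\{p\}$. For $\psi\in\im(D^g-\la)$ one has $P\psi=0$; integrating by parts over $M\setminus B_\ep(p)$ exactly as in the proof of Theorem~\ref{theorem_green_eucl} --- in the limit $\ep\to0$ only the leading term $-\frac{1}{\omega_{n-1}}\frac{x}{|x|^n}\cdot\ga$ of $G^{\geucl}_{\la}(\cdot,0)\ga$ contributes to the boundary integral, all the correction terms being too regular --- gives $\int_{M\setminus\{p\}}\langle(D^g-\la)\psi,\Psi\rangle\,\dv^g=\langle\psi(p),\varphi\rangle$, where the $Ph$ term contributes $(\psi,Ph)_2=(P\psi,h)_2=0$; this is (\ref{green_function1}) for $\Psi$. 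The map $\phi\mapsto\int_{M\setminus\{p\}}\langle\phi,\Psi\rangle\,\dv^g$ is a linear functional on the finite-dimensional space $\ker(D^g-\la)$, so it equals $(\phi,\kappa)_2$ for a unique $\kappa\in\ker(D^g-\la)$, and $G^g_{\la}(\cdot,p)\varphi:=\Psi-\kappa$ then satisfies (\ref{green_function2}); it still satisfies (\ref{green_function1}) because $\int_{M\setminus\{p\}}\langle(D^g-\la)\psi,\kappa\rangle\,\dv^g=(\psi,(D^g-\la)\kappa)_2=0$. Finally, near $p$ one has $AG^g_{\la}(\cdot,p)\varphi=G^{\geucl}_{\la}(\cdot,0)\ga-\sum_k\eta_k-A(Gh)-A\kappa$, where each $\eta_k$ lies in $P_{3-n}(\R^n)$ and $Gh,\kappa$ are continuous, so every subtracted term is $o(|x|^{2-n})$ (respectively $o(\ln|x|)$ when $n=2$) as $x\to0$; since the first two terms of $G^{\geucl}_{\la}(\cdot,0)\ga$ in Corollary~\ref{coroll_green_asymp} are of orders $1-n$ and $2-n$ (respectively $-1$ and $\ln|x|$), those two terms are unchanged, which proves the last assertion.

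\textbf{Main difficulty.} The crux is the iteration and its bookkeeping inside the spaces $P_m(\R^n)$: one must verify that each correction step lowers the order of the error's singularity by exactly one and keeps it in the $P_m$--framework when (\ref{dirac_triv}) is re-applied to the corrections $\beta(\chi\eta_k)$, and --- most importantly --- that none of these corrections, which all have order at least $3-n$ up to logarithms, ever touches the two leading terms of the expansion. The remaining delicate point is the integration by parts around the puncture, showing that the leading singular term of $\psi_0$ reproduces $\delta_p\varphi$ exactly, together with the Riesz-representation step needed to handle the kernel projection despite $\Psi$ not lying in $\Lspinor{M}{g}{2}$.
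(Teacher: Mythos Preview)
Your proof is correct and follows essentially the same approach as the paper: both start from the cut-off Euclidean Green's function in the Bourguignon--Gauduchon trivialization, show via (\ref{dirac_triv}), Lemma~\ref{green_p_2-n_lemma} and (\ref{gammatilde}) that the error lies in $P_{2-n}(\R^n)$, iterate using Remark~\ref{remark_invert_dirac} until the error is continuous, absorb it with the Green's operator, and project off the kernel. The only cosmetic differences are that the paper writes the iteration with $D^{\geucl}-\lambda$ rather than $D^{\geucl}$ (equivalent since multiplying by $\lambda$ preserves the $P_m$ grading) and handles the kernel correction by writing $\Gamma-P\Gamma$ directly (well-defined since $\Gamma\in L^1$ and $\ker(D^g-\lambda)$ is smooth) instead of via Riesz representation.
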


\begin{proof}
Let $\varepsilon>0$ such that $B_{2\varepsilon}(0)\subset V$ and let 
$\eta:$ $\R^n\to[0,1]$ be a smooth function with $\supp(\eta)\subset B_{2\varepsilon}(0)$ 
and $\eta\equiv1$ on $B_{\varepsilon}(0)$. 
Then the spinor $\Theta_1$ defined on $\R^n\setminus\{0\}$ by 
$\Theta_1(x):=\eta(x)G^{\geucl}_{\lambda}(x,0)\gamma$ is smooth on $\R^n\setminus\{0\}$. 
For $r\in\{1,...,n\}$ we define smooth spinors $\Phi_r$ on $M\setminus\{p\}$ 
and $\Theta_{r+1}$ on $\R^n\setminus\{0\}$ inductively as follows. For $r=1$ we define 
\begin{displaymath}
  \Phi_1(q):=\left\{\begin{array}{ll}
  A^{-1}\Theta_1(q),&q\in U\setminus\{p\}\\ 
  0,&q\in M\setminus U\end{array}\right.
\end{displaymath}
and 
\begin{displaymath}
  \Theta_2(x):=\left\{\begin{array}{ll}
  A(D^g-\lambda)\Phi_1(x),&x\in V\setminus\{0\}\\ 
  0,&x\in \R^n\setminus V\end{array}\right. .
\end{displaymath}
By the formula (\ref{dirac_triv}) for the Dirac operator 
in the trivialization we have on $V\setminus\{0\}$
\begin{eqnarray*}
  \Theta_2
  &=&(D^{\geucl}-\lambda)\Theta_1
  +\sum_{i,j=1}^n(B^j_i-\delta^j_i)E_i\cdot\nabla_{E_j}\Theta_1\\
  &&{}+\frac{1}{4}\sum_{i,j,k=1}^n\widetilde{\Gamma}^k_{ij} E_i\cdot E_j\cdot E_k\cdot \Theta_1.
\end{eqnarray*}
The first term vanishes on $B_{\ep}(0)\setminus\{0\}$. 
It follows from 
the expansions of $\widetilde{\Gamma}^k_{ij}$ and $B^j_i-\delta^j_i$ 
in (\ref{B-expansion}), (\ref{gammatilde}) and from Lemma \ref{green_p_2-n_lemma} 
that $\Theta_2\in P_{2-n}(\R^n)$.

Next let $r\in\{2,...,n\}$ and assume that $\Phi_{r-1}$ and $\Theta_r$ 
have already been defined. We may assume that $\Theta_r\in P_{r-n}(\R^n)$. 
By Remark \ref{remark_invert_dirac} there exists 
$\beta_{r+1}\in P_{r+1-n}(\R^n)$ such that 
$\Theta_r-(D^{\geucl}-\lambda)\beta_{r+1}\in P_{r+1-n}(\R^n)$. 
We define $\Phi_r$ and $\Theta_{r+1}$ by 
\begin{displaymath}
  \Phi_r(q):=\left\{\begin{array}{ll}
  \Phi_{r-1}(q)-A^{-1}(\eta\beta_{r+1})(q),&q\in U\setminus\{p\}\\ 
  0,&q\in M\setminus U\end{array}\right.
\end{displaymath}
and 
\begin{displaymath}
  \Theta_{r+1}(x):=\left\{\begin{array}{ll}
  A(D^g-\lambda)\Phi_r(x),&x\in V\setminus\{0\}\\ 
  0,&x\in \R^n\setminus V\end{array}\right. .
\end{displaymath}
By the formula (\ref{dirac_triv}) for the Dirac operator 
in the trivialization we have on $B_{\ep}(0)\setminus\{0\}$
\begin{eqnarray*}
  \Theta_{r+1}&=&A(D^g-\lambda)\Phi_{r-1}-A(D^g-\lambda)A^{-1}\beta_{r+1}\\
  &=&\Theta_r-(D^{\geucl}-\lambda)\beta_{r+1}-\sum_{i,j=1}^n(B^j_i-\delta^j_i)E_i\cdot\nabla_{E_j}\beta_{r+1}\\
  &&{}-\frac{1}{4}\sum_{i,j,k=1}^n\widetilde{\Gamma}^k_{ij} E_i\cdot E_j\cdot E_k\cdot \beta_{r+1}.
\end{eqnarray*}
Using the expansions of $\widetilde{\Gamma}^k_{ij}$ and $B^j_i-\delta^j_i$ 
in (\ref{B-expansion}), (\ref{gammatilde}) we conclude that $\Theta_{r+1}\in P_{r+1-n}(\R^n)$. 

We see that $\Theta_{n+1}$ has a continuous extension to $\R^n$ 
and we obtain a continuous extension $\Psi$ of $(D^g-\lambda)\Phi_n$ to all of $M$. 
Thus there exists
\begin{displaymath}
  \Psi'\in\spinor{M|_{M\setminus\{p\}}}{g}{\infty}\cap\Hspinor{M}{g}{1}
\end{displaymath}
such that $(D^g-\lambda)\Psi'=P\Psi-\Psi$. Define
\begin{displaymath}
  \Gamma:=\Phi_n+\Psi',\quad \Theta:=-\eta\beta_3-...-\eta\beta_{n+1}+A\Psi'.
\end{displaymath}
Then on $B_{\ep}(0)\setminus\{0\}$ we have $A\Gamma=G^{\geucl}_{\lambda}(.,0)\gamma+\Theta$.

If $P$ is the $L^2$-orthogonal projection onto $\ker(D^g-\lambda)$, 
then 
\begin{displaymath}
  G^g_{\lambda}(.,p)\varphi:=\Gamma-P\Gamma
\end{displaymath}
satisfies (\ref{green_function1}), (\ref{green_function2}) 
and thus is a Green's function. 
Uniqueness also follows from (\ref{green_function1}), (\ref{green_function2}). 
The statement on the expansion of 
$AG^g_{\lambda}(.,p)\varphi$ is obvious, since we have $\Theta\in P_{3-n}(\R^n)$. 
\end{proof}

\section{Definition of the mass endomorphism}
\label{def_mass_endo_section}

Let $(M,g,\Theta)$ be a closed Riemannian spin manifold 
and let $p\in M$. 
Assume that the metric $g$ is flat on an open neighborhood of $p$. 
Let $U\subset M$ be an open 
neighborhood of $p$, let $V\subset\R^n$ be an open neighborhood 
of $0$ and let $\rho$: $V\to U$ be a local parametrization of $M$ 
by Riemannian normal coordinates sending $0$ to $p$. Let 
\begin{displaymath}
  \beta^g:\quad \Sigma\R^n|_V\to\Sigma^gM|_{U}.
\end{displaymath}
be the identification of the spinor bundles defined in Section 
\ref{bourg-triv_section} and let
\begin{displaymath}
  A^g:\quad \spinor{M|_U}{g}{\infty}\to\spinor{\R^n|_V}{}{\infty},\quad
  \psi\mapsto (\beta^g)^{-1}\circ\psi\circ\rho.
\end{displaymath}
Let $\varphi\in\Sigma^g_pM$ and let $\gamma:=(\beta^g)^{-1}\varphi$. 
Since $g$ is flat on an open neighborhood of $p$, the terms 
$\widetilde{\Gamma}^k_{ij}$ and $B^j_i-\delta^j_i$ in the formula 
(\ref{dirac_triv}) vanish on an open neighborhood of $0\in\R^n$. 
Thus in the proof of Theorem \ref{theorem_green} the spinor $\Theta_2$ 
has a smooth extension to $\R^n$ vanishing on an open neighborhood of $0$. 
Then one obtains a smooth extension 
$\Psi$ of $(D^g-\lambda)\Phi_1$ to all of $M$ 
vanishing on an open neighborhood of $p$. 
Now there exists $\Psi'\in C^\infty(\Sigma^gM)$ 
such that $(D^g-\lambda)\Psi'=P\Psi-\Psi$. 
As above we define 
\begin{displaymath}
  \Gamma:=\Phi_1+\Psi',\quad G^g_{\lambda}(.,p)\varphi:=\Gamma-P\Gamma. 
\end{displaymath}
Thus the spinor 
\begin{displaymath}
  x\mapsto w^g(x,p)\varphi:=\Psi'(x)-P\Gamma(x)
\end{displaymath}
is smooth on all of $M$. 
It is independent of the choice of $\Psi'$ and can be regarded as the difference 
between Green's function for $D^g-\lambda$ 
and the spinor $(A^g)^{-1}(\eta G^{\geucl}_{\lambda}(.,0)\gamma)$. 

In the following we are interested in the case $\lambda=0$ and we denote 
Green's function for $D^g$ by $G^g:=G^g_0$. 
By Theorem \ref{theorem_green_eucl} a Euclidean Green's function 
for $D^{\geucl}$ is given by 
\begin{displaymath}
  G^{\geucl}(x,y)\gamma:=-\frac{1}{\omega_{n-1}|x-y|^{n-1}}\frac{x-y}{|x-y|}\cdot\gamma.
\end{displaymath}
Using the definition of $\Phi_1$ and $w^g(.,p)\varphi$ 
we have for all $x\in M\setminus\{p\}$
\begin{displaymath}
  G^g(x,p)\varphi=(A^g)^{-1}(\eta G^{\geucl}(.,0)\gamma)(x)+w^g(x,p)\varphi,
\end{displaymath}
where the first term on the right hand side is understood to be zero for $x\in M\setminus U$. 
Since $\eta\equiv1$ on $B_{\ep}(0)$, the spinor $w^g(p,p)\varphi\in\Sigma^g_pM$ 
is independent of the choice of $\eta$. It can be regarded as the constant term in an 
expansion of Green's function for $D^g$ around $p$. 

As in \cite{ahm} we define the mass endomorphism.

\begin{definition}
Let $(M,g,\Theta)$ be a closed Riemannian spin manifold with $\dim M\geq2$, 
which is conformally flat on an open 
neighborhood of~$p\in M$. 
Choose a metric~$h\in[g]$, which is flat on an open neighborhood of~$p$ and such that~$h_p=g_p$. 
Let~$G^h$ be Green's function for $D^h$. 
Then, we define the mass endomorphism in~$p$ as
\begin{displaymath}
  m^g_p:\Sigma^g_pM\rightarrow\Sigma^g_pM,\qquad\varphi\mapsto \beta_{h,g}w^h(p,p)\beta_{g,h}\varphi,
\end{displaymath}
where $w^h$ is the term in the above expansion.
\end{definition}

It is shown in \cite{ahm} 
that this definition does not depend on the choice of~$h\in[g]$ and 
that~$m^g_p$ is linear and self-adjoint. 
There is an analogy in conformal geometry: the constant term of 
Green's function~$\Gamma(.,p)$ for the Yamabe operator in~$p$ can be interpreted 
as the mass of the asymptotically flat manifold~$(M\setminus\{p\},\Gamma(.,p)^{4/(n-2)}g)$ 
(see \cite{lp}). Therefore, the endomorphism is called mass endomorphism. 

The aim of introducing the mass endomorphism in \cite{ahm} is to obtain 
at least one of the strict inequalities
\begin{displaymath}
  \lambda_{\min}^+(M,[g],\Theta)<\lambda_{\min}^+(\mathbb{S}^n),\quad
  \lambda_{\min}^-(M,[g],\Theta)<\lambda_{\min}^+(\mathbb{S}^n)
\end{displaymath}
from the hypothesis of Theorem \ref{theorem_lambdamin+}. 
The result of this article then reads as follows. 

\begin{theorem}
Let $(M,g,\Theta)$ be a closed Riemannian spin manifold of dimension $n\geq2$ with 
$\ker(D^g)=0$. Assume that there is a point $p\in M$ which has a conformally 
flat neighborhood and that the mass endomorphism in~$p$ possesses a positive 
(resp. negative) eigenvalue. Then we have 
\begin{displaymath}
  \lambda_{\min}^+(M,[g],\Theta)<\lambda_{\min}^+(\mathbb{S}^n)\quad
  \textrm{resp. }\lambda_{\min}^-(M,[g],\Theta)<\lambda_{\min}^+(\mathbb{S}^n).
\end{displaymath}
\end{theorem}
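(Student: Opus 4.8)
The plan is to run a spinorial Aubin-type test-spinor argument, in the spirit of Schoen's resolution of the Yamabe problem: there the positive mass theorem is used to beat the sphere constant, and here the r\^ole of the mass is played by the mass endomorphism $m^g_p$. I will only discuss the statement for $\lambda_{\min}^+$; the one for $\lambda_{\min}^-$ follows from the same construction with the sign of the denominator of the relevant Rayleigh quotient reversed, using that a negative eigenvalue of $m^g_p$ is a positive eigenvalue of $-m^g_p$.

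First I would recall the variational characterization of the invariant. By \cite{am03}, \cite{am09} one has
\begin{displaymath}
  \lambda_{\min}^+(M,[g],\Theta)=\inf_{\psi}\mathcal{F}^g(\psi),\qquad
  \mathcal{F}^g(\psi):=\frac{\big(\int_M|D^g\psi|_g^{2n/(n+1)}\dv^g\big)^{(n+1)/n}}{\int_M\langle D^g\psi,\psi\rangle\dv^g},
\end{displaymath}
where the infimum runs over all smooth spinors $\psi$ with $\int_M\langle D^g\psi,\psi\rangle\dv^g>0$; the assumption $\ker(D^g)=0$ guarantees that this class is nonempty and keeps the denominator bounded away from $0$. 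Since by the conformal transformation law (\ref{dirac_conform}) the quotient $\mathcal{F}^g$ only depends on the conformal class $[g]$, I may replace $g$ by the metric $h\in[g]$ entering the definition of $m^g_p$, which is flat near $p$ and satisfies $h_p=g_p$. It then suffices to produce, for $\ep>0$ small, a spinor $\psi_\ep$ on $M$ with $\mathcal{F}^h(\psi_\ep)<\lambda_{\min}^+(\mathbb{S}^n)$.

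The test spinor is built as follows. On $\mathbb{S}^n$ the value $\lambda_{\min}^+(\mathbb{S}^n)=\frac{n}{2}\vol(S^n,\gcan)^{1/n}$ is attained by Killing spinors, which under stereographic projection become the explicit concentrating family
\begin{displaymath}
  \psi^{\mathrm{mod}}_\ep(x):=\frac{\ep^{(n-1)/2}}{(\ep^2+|x|^2)^{n/2}}\,(\ep-x)\cdot\gamma\qquad(\gamma\in\Sigma_n\textrm{ constant})
\end{displaymath}
on $\R^n$; these solve the Euclidean Euler-Lagrange equation $D^{\geucl}\psi^{\mathrm{mod}}_\ep=c\,|\psi^{\mathrm{mod}}_\ep|^{2/(n-1)}\psi^{\mathrm{mod}}_\ep$ for a suitable $c>0$, and their far field agrees with $\ep^{(n-1)/2}\omega_{n-1}\,G^{\geucl}(\cdot,0)\gamma$ up to a lower-order term. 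I would choose $\gamma$ so that $\varphi:=\beta^h\gamma\in\Sigma^h_pM$ is an eigenvector of $m^h_p$ for a positive eigenvalue (possible by hypothesis, since $m^h_p$ is self-adjoint). Working in the Bourguignon-Gauduchon trivialization of Section \ref{bourg-triv_section}, I would transplant $\psi^{\mathrm{mod}}_\ep$ to a small ball around $p$, interpolate on an intermediate annulus with $\ep^{(n-1)/2}\omega_{n-1}\,G^h(\cdot,p)\varphi$, and cut off to $0$ outside a fixed small ball. The point of using the true Green's function rather than its Euclidean model is that, by Theorem \ref{theorem_green} and the discussion of Section \ref{def_mass_endo_section}, near $p$ one has $A^hG^h(\cdot,p)\varphi=G^{\geucl}(\cdot,0)\gamma+w^h(\cdot,p)\varphi$ with $w^h(\cdot,p)\varphi$ smooth and $w^h(p,p)\varphi$ returning $m^h_p\varphi$ through the identifications; hence the mismatch along the annulus is under control and its leading effect on the energy is governed by $m^h_p$.

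The last step is to insert $\psi_\ep$ into $\mathcal{F}^h$ and expand in $\ep$. The leading order reproduces exactly the sphere's extremal energy $\lambda_{\min}^+(\mathbb{S}^n)$, since near $p$ the metric is flat and the problem is to highest order the Euclidean one. The next-order contribution comes from the cross terms between the concentrated profile $\psi^{\mathrm{mod}}_\ep$ and the regular part $w^h(\cdot,p)\varphi$, and turns out to be $-c(\ep)\,\langle m^h_p\varphi,\varphi\rangle$ with $c(\ep)>0$ and $c(\ep)\to0$ as $\ep\to0$ strictly slower than the remaining error (the precise power of $\ep$, and for $n=2$ a logarithmic factor, are not needed for the conclusion). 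Thus
\begin{displaymath}
  \mathcal{F}^h(\psi_\ep)=\lambda_{\min}^+(\mathbb{S}^n)-c(\ep)\,\langle m^h_p\varphi,\varphi\rangle+o\big(c(\ep)\big),
\end{displaymath}
and since $\langle m^h_p\varphi,\varphi\rangle>0$ by the choice of $\varphi$, for $\ep$ small the right-hand side is strictly below $\lambda_{\min}^+(\mathbb{S}^n)$, which is the claim. The main obstacle is precisely this expansion: the numerator is the critical nonlinear functional, so one must expand $|a+b|^{2n/(n+1)}=|a|^{2n/(n+1)}+\frac{2n}{n+1}|a|^{2n/(n+1)-2}\Re\langle a,b\rangle+O(|b|^2)$ with $a$ the concentrated model term and $b$ the perturbation, balance several cut-off scales, identify the dominant cross term with a positive multiple of $\langle m^h_p\varphi,\varphi\rangle$, and check that every remaining term is of strictly lower order; here the decay and smoothness of $w^h$ from Theorem \ref{theorem_green} and the hypothesis $\ker(D^g)=0$ are exactly what make the estimate close.
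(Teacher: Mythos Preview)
The paper does not itself prove this theorem: it is quoted from \cite{ahm} and only a remark on the r\^ole of the hypothesis $\ker(D^g)=0$ is added afterwards. So there is no proof in the paper to compare against; what you have written is in fact an outline of the argument of \cite{ahm}.

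Your sketch is the right one. The variational characterization of $\lambda_{\min}^+$, the concentrating model spinors coming from Killing spinors on $\mathbb{S}^n$ via stereographic projection, the gluing with the true Green's function $G^h(\cdot,p)\varphi$, and the identification of the next-order term with a positive multiple of $\langle m^h_p\varphi,\varphi\rangle$ are exactly the ingredients of \cite{ahm}. One small correction: the class $\{\psi:\int_M\langle D^g\psi,\psi\rangle\dv^g>0\}$ is always nonempty (take an eigenspinor for a positive eigenvalue), so $\ker(D^g)=0$ is not needed for that. Its actual r\^ole, as the paper notes right after the statement, is that it forces $D^hw^h(\cdot,p)\varphi=0$ on a neighborhood of $p$; without this the regular part $w^h$ contributes uncontrolled error terms in the expansion of the numerator and the estimate does not close. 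You allude to this at the end, but it is worth being precise about where the hypothesis enters.
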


If $h\in[g]$ is a Riemannian metric such that $m^g_p=\beta_{h,g}w^h(p,p)\beta_{g,h}$, 
then the additional assumption $\ker(D^g)=0$ implies that $D^hw^h(.,p)\varphi$ 
vanishes on an open neighborhood of $p$ and this fact is used in the proof. 
It is not clear how to obtain a proof without this assumption. 

Unfortunately it is only known for very few Riemannian spin manifolds, 
whether there are points with nonzero mass endomorphism. 
For example, on the flat torus and 
on the sphere with the standard metric 
it always vanishes, whereas on the projective spaces $\R P^{4k+3}$ 
with the standard metric one has points 
with nonzero mass endomorphism (see \cite{ahm}). 
We will prove in Section \ref{gen_massendo_section} 
that in dimension $3$ the mass endomorphism is not zero in the generic case.

\chapter{Eigenspinors for generic metrics}
\label{gen_eigenspinor_chapter}

\section{Transversality}
\label{transversality_section}

A nice introduction to finite dimensional transversality theory can be 
found in \cite{hir}, while the infinite dimensional case is treated in 
\cite{la}. 
Transversality theory has already been used in the literature to determine 
generic properties of eigenvalues and eigenfunctions of 
differential operators. 
Some examples are the scalar Laplacian \cite{u} and 
the Hodge Laplacian \cite{ep} for generic metrics 
and solutions of the Dirac equation in Minkowski spacetime 
for generic initial data \cite{tum}.
The goal of this section is to quote a 
transversality theorem which will be useful later. 

\begin{definition}
Let $f$: $Q\to N$ be a $C^1$ map between two smooth manifolds. 
Let $A\subset N$ be a submanifold. 
$f$ is called transverse to $A$,\index{transverse} if for all $x\in Q$ with $f(x)\in A$ 
we have
\begin{displaymath}
  T_{f(x)}A+\im(df|_x)=T_{f(x)}N.
\end{displaymath}
\end{definition}

One reason for the importance of transversality is the following theorem.

\begin{theorem}
\label{transvers_submfd}
Assume that $Q$, $N$ are smooth manifolds without boundary.  
Let $f$: $Q\to N$ be a $C^r$ map, $r\geq1$ and $A\subset N$ a $C^r$ submanifold. 
If $f$ is transverse to $A$, then $f^{-1}(A)$ is a $C^r$ submanifold of $Q$. 
The codimension of $f^{-1}(A)$ in $Q$ is the same as the codimension of $A$ in $N$.
\end{theorem}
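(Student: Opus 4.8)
The plan is to localise the problem near an arbitrary point of $f^{-1}(A)$ and reduce it to the regular value theorem. Fix $x\in f^{-1}(A)$ and put $y:=f(x)\in A$, and let $k$ denote the codimension of $A$ in $N$. Since $A$ is a $C^r$ submanifold, there is an open neighborhood $W\subset N$ of $y$ together with a $C^r$ submersion $g$: $W\to\R^k$ such that $g^{-1}(0)=A\cap W$ and $\ker(dg|_z)=T_zA$ for every $z\in A\cap W$; one produces $g$ by taking a $C^r$ submanifold chart for $A$ around $y$ and composing with the linear projection onto the last $k$ coordinates. The set $f^{-1}(W)$ is open in $Q$ because $f$ is continuous, and $f^{-1}(A)\cap f^{-1}(W)=(g\circ f)^{-1}(0)$.

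Next I would verify that $0$ is a regular value of the $C^r$ map $g\circ f$: $f^{-1}(W)\to\R^k$. Let $x'\in(g\circ f)^{-1}(0)$, so that $f(x')\in A\cap W$. By transversality of $f$ to $A$ we have $T_{f(x')}A+\im(df|_{x'})=T_{f(x')}N$. Applying the surjective linear map $dg|_{f(x')}$ to both sides and using that it annihilates $T_{f(x')}A$, we obtain $dg|_{f(x')}\big(\im(df|_{x'})\big)=\R^k$, i.e. $d(g\circ f)|_{x'}$ is surjective. Hence $0$ is a regular value of $g\circ f$, and by the regular value theorem (applicable precisely because $Q$ and $N$, and therefore $W$ and $f^{-1}(W)$, have no boundary) the set $(g\circ f)^{-1}(0)$ is a $C^r$ submanifold of the open set $f^{-1}(W)\subseteq Q$ of codimension $k$.

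It remains to glue: since $(g\circ f)^{-1}(0)=f^{-1}(A)\cap f^{-1}(W)$, the subset $f^{-1}(A)$ coincides near $x$ with a $C^r$ submanifold of codimension $k$, and $x\in f^{-1}(A)$ was arbitrary. As the local codimension equals the same constant $k$ at every point, it follows that $f^{-1}(A)$ is a $C^r$ submanifold of $Q$ with $\codim f^{-1}(A)=\codim A$. The argument is routine once the local defining submersion $g$ is in hand; the only step requiring genuine care is the deduction of the surjectivity of $d(g\circ f)$ from the transversality hypothesis, and the observation that the boundaryless assumption is exactly what is needed so that the regular value theorem outputs an honest submanifold. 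I do not anticipate any serious obstacle beyond bookkeeping.
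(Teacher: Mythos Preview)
Your argument is correct and is the standard proof via local defining submersions and the regular value theorem. The paper does not give its own proof of this statement; it simply cites \cite{hir}, p.~22, where essentially the same argument appears, so there is nothing to compare beyond noting that you have supplied the details the paper defers to the reference.
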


\begin{proof}
see \cite{hir}, p. 22 
\end{proof}

\begin{definition}
Let $X$ be a topological space. A subset $E\subset X$ 
is called nowhere dense in $X$, if the interior of $\overline{E}$ is empty. 
A subset of $X$ is called of first category, if it is a countable union of 
sets which are nowhere dense in $X$. Otherwise it is called of second 
category. A subset $B\subset X$ is called residual\index{residual} 
in $X$, if it contains a countable intersection of sets which are 
open and dense in $X$.
\end{definition}

If $A\subset X$ is of first category, it follows that the complement of $A$ 
is residual in $X$. An important result is the following theorem, 
which is called the Baire category theorem.

\begin{theorem}
\label{baire_theorem}
If $(X,d)$ is a complete metric space, then every residual subset of $X$ 
is dense in $X$.
\end{theorem}

\begin{proof}
see e.\,g.\,\cite{ru}, p. 43.
\end{proof}

An important result is Sard's theorem. 
\begin{theorem}
\label{sard_theorem}
Let $f$: $Q\to N$ be a $C^r$-map between smooth manifolds. If 
\begin{displaymath}
  r>\max\{0,\dim Q-\dim N\},
\end{displaymath}
then the set of all critical values of $f$ has measure zero in $N$. 
The set of all regular values of $f$ is residual and therefore dense. 
\end{theorem}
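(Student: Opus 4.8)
The plan is to prove first that the set of critical values has measure zero and then to upgrade this to the topological statement using the Baire category theorem (Theorem \ref{baire_theorem}). Since ``measure zero'' is a diffeomorphism-invariant, countably additive notion and $Q$ admits a countable atlas, the first part reduces to the following local statement: if $U\subset\R^m$ is open, $f\colon U\to\R^n$ is $C^r$ with $r>\max\{0,m-n\}$, and $C\subset U$ is the set of points where $df$ has rank $<n$, then $f(C)$ has Lebesgue measure zero in $\R^n$. I would prove this by induction on $m$, the case $m=0$ being trivial. When $m\leq n$ every point of $U$ is critical, but a direct argument suffices: partition a cube into subcubes of side $\delta$, and on each subcube meeting $C$ use the differentiability of $f$ to trap its image in a slab of thickness $o(\delta)$ and diameter $O(\delta)$ around an affine hyperplane (the image of $df$ at a base point), so that $f(C)$ is covered by $O(\delta^{-m})$ boxes of total volume $o(\delta^{n-m})=o(1)$; here only $C^1$ is needed.

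For $m>n$ I would use the Morse--Sard stratification of the critical set. Let $C_i\subset C$ be the closed set of points at which all partial derivatives of $f$ of orders $1,\dots,i$ vanish, so $C\supset C_1\supset C_2\supset\cdots$. It then suffices to show three things: that $f(C\setminus C_1)$ has measure zero, that $f(C_i\setminus C_{i+1})$ has measure zero for each $i\geq1$, and that $f(C_k)$ has measure zero for $k$ large. For the first two, at a point of $C\setminus C_1$ some first-order partial of some component of $f$ is nonzero, and at a point of $C_i\setminus C_{i+1}$ some $i$-th order partial $w$ has nonvanishing differential; in either case one straightens coordinates so that the relevant part of the critical set lies in a hypersurface, observes that every one of its points is a critical point of the restriction of $f$ to that hypersurface, and invokes the inductive hypothesis (covering by countably many such charts). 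For $C_k$ one covers a compact piece by cubes of side $\delta$; Taylor's formula together with $f\in C^{k+1}$ gives $|f(x)-f(a)|\leq c\,|x-a|^{k+1}$ on a cube meeting $C_k$ at $a$, so $f(C_k)$ is covered by $O(\delta^{-m})$ boxes of total volume $O(\delta^{\,n(k+1)-m})$, which tends to $0$ as soon as $n(k+1)>m$.

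The delicate point, and the one separating Sard's theorem from its easy $C^\infty$ analogue, is the bookkeeping of how many derivatives each step consumes, so that all constraints fit under the single hypothesis $r>\max\{0,m-n\}$: the restriction to the hypersurface in the $C_i\setminus C_{i+1}$ step is only $C^{r-i}$, while the $C_k$ step requires $f\in C^{k+1}$. Making these compatible forces one to refine the crude stratification above (subdividing the $C_i$ further and choosing the covering boxes anisotropically, stretched in the directions where extra derivatives vanish). This refinement is the actual content of the theorem; I would quote it from \cite{hir} rather than reproduce it.

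Finally, to pass from ``measure zero'' to ``residual and dense'': the critical set $C\subset Q$ is closed, since the condition $\rank df\geq n$ is open, and $Q$ is $\sigma$-compact, say $Q=\bigcup_j K_j$ with each $K_j$ compact. Hence each $f(C\cap K_j)$ is a closed subset of $N$ of measure zero, so it is nowhere dense, and the set of regular values $N\setminus f(C)=\bigcap_j\bigl(N\setminus f(C\cap K_j)\bigr)$ contains a countable intersection of open dense sets, i.e.\ it is residual. Since $N$, being a second countable manifold, carries a complete metric, the Baire category theorem (Theorem \ref{baire_theorem}) shows that this residual set is dense, which completes the argument.
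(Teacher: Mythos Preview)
Your proposal is correct and matches the paper's approach: the paper simply cites \cite{hir}, p.~69, and you likewise defer the sharp regularity bookkeeping to that reference while supplying a correct sketch of the standard stratification argument and of the passage from measure zero to residuality. One small slip: the claim that every point is critical when $m\leq n$ holds only for $m<n$, though your slab estimate covers the critical points in the equidimensional case just as well.
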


\begin{proof}
see \cite{hir}, p. 69.
\end{proof}

We quote the following transversality theorem from \cite{hir}, \cite{u} including the proof. 

\begin{theorem}
\label{param_transvers}
Let $V$, $M$, $N$ be smooth manifolds and let $A\subset N$ be a smooth submanifold. 
Let $F$: $V\to C^r(M,N)$ be a map, such that the evaluation map 
$F^{ev}$: $V\times M\to N$, $(v,m)\mapsto F(v)(m)$ is $C^r$ and transverse 
to $A$ and furthermore 
\begin{displaymath}
  r>\max\{0,\dim M+\dim A-\dim N\}.
\end{displaymath}
Then the set of all $v\in V$, 
such that the map $F(v)$ is transverse to $A$, 
is residual and therefore dense in $V$.
\end{theorem}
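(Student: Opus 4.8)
The plan is to reduce the statement to Sard's theorem (Theorem \ref{sard_theorem}) applied to a single projection map, following the classical argument from \cite{hir}, \cite{u}.

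First I would form the incidence manifold. Since $F^{ev}\colon V\times M\to N$ is $C^r$ and transverse to $A$, Theorem \ref{transvers_submfd} shows that $W:=(F^{ev})^{-1}(A)$ is a $C^r$ submanifold of $V\times M$ with $\codim W=\codim A=\dim N-\dim A$, so $\dim W=\dim V+\dim M-\dim N+\dim A$. Let $\pi\colon V\times M\to V$ be the projection and set $\pi_W:=\pi|_W\colon W\to V$; this is $C^r$ because $\pi$ is smooth and $W$ is a $C^r$ submanifold.

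The heart of the proof is the pointwise equivalence: for $v\in V$, the map $F(v)\colon M\to N$ is transverse to $A$ \emph{if and only if} $v$ is a regular value of $\pi_W$. To establish it I would fix $v$; if no $m\in M$ satisfies $F(v)(m)\in A$, both conditions hold vacuously, so I may assume $(v,m)\in W$ and put $a:=F^{ev}(v,m)\in A$. Decomposing $dF^{ev}|_{(v,m)}(\xi,\eta)=\phi_1(\xi)+\phi_2(\eta)$, where $\phi_1$ is the restriction of $dF^{ev}|_{(v,m)}$ to $T_vV$ and $\phi_2=dF(v)|_m\colon T_mM\to T_aN$, transversality of $F^{ev}$ at $(v,m)$ reads $\im\phi_1+\im\phi_2+T_aA=T_aN$, while $T_{(v,m)}W=\{(\xi,\eta)\colon \phi_1(\xi)+\phi_2(\eta)\in T_aA\}$. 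Since $d\pi_W|_{(v,m)}(\xi,\eta)=\xi$, surjectivity of $d\pi_W|_{(v,m)}$ means that for every $\xi\in T_vV$ there is $\eta$ with $\phi_1(\xi)+\phi_2(\eta)\in T_aA$, i.e. $\phi_1(T_vV)\subseteq \im\phi_2+T_aA$; combined with $\im\phi_1+\im\phi_2+T_aA=T_aN$ this is equivalent to $\im\phi_2+T_aA=T_aN$, i.e. to $F(v)$ being transverse to $A$ at $m$. Letting $m$ range over the fibre of $\pi_W$ over $v$ yields the equivalence.

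To finish, I would apply Sard's theorem to $\pi_W\colon W\to V$: the hypothesis $r>\max\{0,\dim M+\dim A-\dim N\}$ is exactly $r>\max\{0,\dim W-\dim V\}$, so Theorem \ref{sard_theorem} gives that the set of regular values of $\pi_W$ is residual in $V$; by the previous paragraph this set equals $\{v\in V\colon F(v)\text{ is transverse to }A\}$. Since $V$, being a (second countable) manifold, carries a complete metric, Theorem \ref{baire_theorem} then shows the set is dense. The only genuinely nontrivial step is the linear-algebra equivalence of the third paragraph; the rest is bookkeeping with the cited theorems, plus the routine checks that $\pi_W$ is $C^r$ and that $V$ is a Baire space.
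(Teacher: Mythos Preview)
Your proposal is correct and follows essentially the same argument as the paper: form the incidence manifold $W=(F^{ev})^{-1}(A)$, restrict the projection $\pi\colon V\times M\to V$ to it, and apply Sard's theorem, after checking that regularity of $v$ for $\pi_W$ is equivalent to transversality of $F(v)$. The only cosmetic difference is that you establish this equivalence via an explicit ``for every $\xi$ there exists $\eta$'' argument, whereas the paper phrases it as a codimension computation $\codim\im(d\alpha|_{(v,m)})=\codim(dF^{ev}|_{(v,m)}^{-1}(T_aA)+T_mM)$ and only verifies the direction actually needed (regular value $\Rightarrow$ transverse); both are the same linear algebra.
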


\begin{proof}
Since $F^{ev}$ is transverse to $A$, the set 
$P:=(F^{ev})^{-1}(A)$ is a $C^r$ submanifold of $V\times M$
of dimension $\dim V+\dim M+\dim A-\dim N$. 
Let $\pi_1$: $V\times M\to V$ be the projection and let $\alpha:=\pi_1|_P$. 
Let $v\in V$, $m\in M$ and $F^{ev}(v,m)=a\in A$. Then
\begin{displaymath}
  \im (d\alpha|_{(v,m)})=d\pi_1|_{(v,m)}(dF^{ev}|_{(v,m)}^{-1}(T_aA))
\end{displaymath}
and thus we find
\begin{eqnarray}
  \nonumber
  \codim\im (d\alpha|_{(v,m)})&=&\codim (dF^{ev}|_{(v,m)}^{-1}(T_aA)+\ker d\pi_1|_{(v,m)})\\
  \nonumber
  &=&\codim(dF^{ev}|_{(v,m)}^{-1}(T_aA)+T_mM).\quad
\end{eqnarray}
By Theorem \ref{sard_theorem} 
the set of all regular values 
of $\alpha$ is residual in $V$. 
Fix a regular value $v$ of $\alpha$. 
If $\alpha^{-1}(v)=\emptyset$, 
then $\im(F(v))\cap A=\emptyset$ 
and thus $F(v)$ is transverse to $A$. 
Now assume that there exists $(v,m)\in\alpha^{-1}(v)$. 
Since $\codim\im (d\alpha|_{(v,m)})=0$ we obtain 
\begin{displaymath}
  dF^{ev}|_{(v,m)}^{-1}(T_aA)+T_mM=T_{(v,m)}(V\times M).
\end{displaymath}
We apply $dF^{ev}|_{(v,m)}$ to both sides. Since $F^{ev}$ is transverse to $A$, 
the right hand side contains a complement to $T_aA$. Thus 
$dF^{ev}|_{(v,m)}(T_mM)$ also contains a complement to $T_aA$, 
which means that $F(v)$ is transverse to $A$. The assertion follows.
\end{proof}

\begin{remark}
There exist infinite dimensional versions of this theorem 
and of Sard's theorem (see e.\,g.\,\cite{u}, \cite{sm}, \cite{ab}). 
However for our purpose this theorem is sufficient. 
\end{remark}

\section{Eigenspinors in dimensions $2$ and $3$}
\label{gen_eigenspinor_section}

Let $M$ be a closed oriented spin manifold of dimension $n$ with a fixed spin structure. 
We define $R(M)$ as the set of all smooth Riemannian metrics on $M$. 

\begin{definition}
A subset of $R(M)$ is called generic\index{generic}, if it is open in $R(M)$ with respect 
to the $C^1$-topology and dense in $R(M)$ with respect to all $C^k$-topologies, $k\in\N$.
\end{definition}

\begin{remark}
Let $k\in\N$ and let $A\subset R(M)$. 
If $A$ is open in $R(M)$ with respect to the $C^k$-topology, 
then it is open in $R(M)$ with respect to the $C^m$-topology for all $m>k$. 
If $A$ is dense in $R(M)$ with respect to the $C^k$-topology, 
then it is dense in $R(M)$ with respect to the $C^m$-topology for all $m<k$. 
\end{remark}

It is an important result that the eigenvalues of $D^g$ depend continuously on $g$ 
with respect to the $C^1$-topology. 
Namely Proposition 7.1 in \cite{baer96a} states the following.

\begin{proposition}
\label{cont_eigenvalues}
Let $g$ be a smooth Riemannian metric on $M$ and let $\varepsilon>0$, $\Lambda>0$ 
such that $-\Lambda$, $\Lambda\notin\spec(D^g)$. Write
\begin{displaymath}
  \spec(D^g)\cap(-\Lambda,\Lambda)=\{\lambda_1\leq...\leq\lambda_k\}.
\end{displaymath}
Then there exists an open neighborhood $V$ of $g$ in the set of all smooth Riemannian 
metrics with respect to the $C^1$-topology such that for any 
$h\in V$ we have
\begin{displaymath}
  \spec(D^h)\cap(-\Lambda,\Lambda)=\{\mu_1\leq...\leq\mu_k\}
\end{displaymath}
and $|\lambda_i-\mu_i|<\varepsilon$ for all $i$. 
Here the eigenvalues $\lambda_i$ and $\mu_i$ are repeated 
according to their complex multiplicities.
\end{proposition}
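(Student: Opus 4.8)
The plan is to reduce the statement to a relatively-bounded perturbation argument on a fixed Hilbert space. Using the identification of spinor bundles from Section \ref{identification_section}, I would work entirely on $\Lspinor{M}{g}{2}$ and compare $D^g$ with the pulled-back operator $D^{g,h}=\overline{\beta}_{h,g}D^h\overline{\beta}_{g,h}$, which by Lemma \ref{definition_beta_bar} is self-adjoint on $\Lspinor{M}{g}{2}$ and, being unitarily equivalent to $D^h$, has the same spectrum as $D^h$ with the same complex multiplicities. By the explicit formula (\ref{Dgh}) one can write $D^{g,h}=D^g+P_h$, where $P_h$ is a first order differential operator on $\Sigma^gM$ whose coefficients are built out of $b_{g,h}$, $b_{h,g}$, $f_{g,h}$, their first derivatives, and the Christoffel symbols of $g$ and $h$; in particular $P_h$ involves $h$ only through quantities depending on at most the first derivatives of $h$, and $P_h=0$ when $h=g$. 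The first step is therefore a bookkeeping lemma: since $a_{g,h}$, and hence $b_{g,h}$ and $f_{g,h}$, depend smoothly and pointwise on $g$ and $h$, the zeroth and first order coefficients of $P_h$ tend to $0$ in the $C^0$-norm on the compact manifold $M$ as $h\to g$ in the $C^1$-topology. Combining this with the Schr\"odinger--Lichnerowicz formula (\ref{schroed_lichn}), which gives $\|\nabla^g\psi\|_2\leq\|D^g\psi\|_2+\frac12\|\scal^g\|_{\infty}^{1/2}\|\psi\|_2$ for $\psi\in\Hspinor{M}{g}{1}$, I would deduce that for every $\varepsilon>0$ there is a $C^1$-neighborhood $V$ of $g$ such that $\|P_h\psi\|_2\leq\varepsilon(\|D^g\psi\|_2+\|\psi\|_2)$ for all $h\in V$ and all $\psi\in\Hspinor{M}{g}{1}$; that is, $P_h$ is $D^g$-bounded with relative bound tending to $0$.

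From here the argument is standard perturbation theory (cf.\ the Appendix). For $\varepsilon<1$ the Kato--Rellich theorem gives that $D^{g,h}=D^g+P_h$ is self-adjoint with $\dom(D^{g,h})=\dom(D^g)=\Hspinor{M}{g}{1}$, and the resolvent identity $(D^{g,h}-z)^{-1}-(D^g-z)^{-1}=-(D^{g,h}-z)^{-1}P_h(D^g-z)^{-1}$ together with the uniform bound $\|(D^{g,h}-z)^{-1}\|\leq1/|\Im z|$ (valid because every $D^{g,h}$ is self-adjoint) shows that $(D^{g,h}-z)^{-1}\to(D^g-z)^{-1}$ in operator norm as $h\to g$ in $C^1$, uniformly on compact subsets of $\C\setminus\R$. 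Writing, for a bounded interval $I$ whose endpoints avoid $\spec(D^g)$, the spectral projection $\1_I(D^{g,h})$ as a contour integral of the resolvent around the finite set $\spec(D^g)\cap I$, norm resolvent convergence yields $\|\1_I(D^{g,h})-\1_I(D^g)\|<1$ for $h$ in a possibly smaller neighborhood, hence $\rank\1_I(D^{g,h})=\rank\1_I(D^g)$. Applying this with $I=(-\Lambda,\Lambda)$ and also with $I$ a small interval of radius $<\varepsilon$ around each distinct value among $\lambda_1,\dots,\lambda_k$ (chosen pairwise disjoint, contained in $(-\Lambda,\Lambda)$, and with endpoints off $\spec(D^g)$), one concludes that for $h$ near $g$ the operator $D^h$ has exactly $k$ eigenvalues $\mu_1\leq\dots\leq\mu_k$ in $(-\Lambda,\Lambda)$ counted with multiplicity, each lying within $\varepsilon$ of the corresponding $\lambda_i$.

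The main obstacle is the first step: establishing the $C^0$-smallness of the coefficients of $P_h$ uniformly on $M$ while assuming only $C^1$-control of $h$. One has to inspect (\ref{Dgh}) carefully to see that the terms $b_{h,g}\nabla^h_{b_{g,h}(e_i)}(b_{g,h}e_j)-\nabla^g_{b_{g,h}(e_i)}e_j$ and $\frac{1}{f_{g,h}}b_{g,h}(\grad^g f_{g,h})$ never involve more than first derivatives of $h$ --- the derivatives of $b_{g,h}$ and $f_{g,h}$ are of first order in $h$ because these endomorphisms are smooth algebraic functions of $a_{g,h}$, which itself is pointwise in $h$ --- and that each of these expressions, viewed as a continuous function of the $1$-jet of $h$, vanishes at $h=g$. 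This is precisely the point where the $C^1$-topology, rather than a $C^k$-topology with $k\geq2$, is both sufficient and necessary; it is essentially the content of Proposition 7.1 of \cite{baer96a}, which one may alternatively simply invoke.
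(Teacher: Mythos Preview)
The paper does not prove this proposition: it simply quotes it as Proposition~7.1 of \cite{baer96a}. Your proposal therefore goes beyond what the paper does, supplying an actual argument where the paper only cites one. The approach you outline --- identify spinors via $\overline{\beta}_{g,h}$ so as to compare $D^{g,h}$ and $D^g$ on the fixed Hilbert space $\Lspinor{M}{g}{2}$, read off from (\ref{Dgh}) that $P_h:=D^{g,h}-D^g$ is a first-order operator whose coefficients involve only the $1$-jet of $h$ and vanish at $h=g$, use (\ref{schroed_lichn}) to bound $\|\nabla^g\psi\|_2$ in terms of $\|D^g\psi\|_2$, and then run the standard norm-resolvent/spectral-projection argument --- is correct and is in fact essentially how the result is proved in \cite{baer96a}. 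Your observation that only $C^1$-closeness of $h$ is needed, because $b_{g,h}$ and $f_{g,h}$ are pointwise smooth functions of $a_{g,h}$ and hence their derivatives are of first order in $h$, is exactly the point that singles out the $C^1$-topology. In short, your argument is sound; the paper's ``proof'' is merely a citation, and you have reconstructed the cited content.
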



Assume that $n\equiv2,3,4\bmod 8$. Then there exists a quaternionic structure on $\Sigma^gM$, 
which commutes with the Dirac operator $D^g$ and thus the eigenspaces of $D^g$ have even 
complex dimension (see Remark \ref{remark_real_quat}). 
Therefore the following notation introduced in \cite{d} is useful. 

\begin{definition}
A non-zero eigenvalue $\lambda$ of $D^g$ is called simple, if 
\begin{displaymath}
  \dim_{\C}\ker(D^g-\lambda)=\left\{\begin{array}{ll}2, & n\equiv2,3,4\bmod 8 \\ 1, & \textrm{otherwise}\end{array}\right.
\end{displaymath}
\end{definition}

If $n\equiv2,3,4\bmod8$ and $\lambda$ is simple, then we can always choose 
an $L^2$-orthonormal basis of $\ker(D^g-\lambda)$ of the form $\{\psi,J\psi\}$. 

For every $g\in R(M)$ we enumerate the nonzero eigenvalues of $D^g$ in the following way
\begin{displaymath}
  ...\leq\lambda^-_2\leq\lambda^-_1<0<\lambda^+_1\leq\lambda^+_2\leq...\,.
\end{displaymath}
Here all the non-zero eigenvalues are repeated by their complex multiplicities, 
while $\dim\ker(D^g)\geq0$ is arbitrary. 
For $m\in\N\setminus\{0\}$ we define 
\begin{eqnarray*}
  S_m(M)&:=&\{g\in R(M)\,|\lambda^{\pm}_1,...,\lambda^{\pm}_m\textrm{ are simple}\}\\
  N_m(M)&:=&\{g\in R(M)\,|\textrm{all eigenspinors to }\lambda^{\pm}_1,...,\lambda^{\pm}_m\textrm{ are nowhere zero}\}.
\end{eqnarray*}

Using Proposition \ref{cont_eigenvalues} together with a study 
of the change of the non-zero eigenvalues under conformal deformations 
of the metric M.\,Dahl could prove the following result. 

\begin{theorem}[\cite{d}]
\label{gen_simple}
Let $M$ be a closed spin manifold of dimension $2$ or $3$ and let $m\in\N\setminus\{0\}$. 
Then for every $g\in R(M)$ the subset $S_m(M)\cap[g]$ is generic in $[g]$.
\end{theorem}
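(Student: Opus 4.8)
The plan is to verify the two ingredients of genericity in $[g]$ separately: that $S_m(M)\cap[g]$ is open in the $C^1$-topology, and that it is dense in every $C^k$-topology. Openness is the soft part, coming straight out of continuity of the spectrum, while density is where the geometry enters, through an explicit conformal deformation that breaks up multiple eigenvalues.

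For openness I would show that $S_m(M)$ is $C^1$-open in $R(M)$, which then restricts to $[g]$. Since $n\in\{2,3\}$ satisfies $n\equiv2$ or $3\bmod8$, by Theorem \ref{theorem_real_quat} and Remark \ref{remark_real_quat} every eigenspace of $D^g$ has even complex dimension, so a simple nonzero eigenvalue is precisely one of multiplicity $2$. Let $g\in S_m(M)$; the distinct values among $\la^{\pm}_1(g),\dots,\la^{\pm}_m(g)$ are finitely many nonzero eigenvalues, each of multiplicity exactly $2$. Choose $\Lambda>0$ with $\pm\Lambda\notin\spec(D^g)$ and $|\la^{\pm}_m(g)|<\Lambda$, and $\ep>0$ so small that the closed $\ep$-balls around these distinct values are pairwise disjoint and avoid $0$. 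By Proposition \ref{cont_eigenvalues} there is a $C^1$-neighborhood of $g$ on which $\spec(D^h)\cap(-\Lambda,\Lambda)$ has the same number of eigenvalues counted with multiplicity, each $\ep$-close to the corresponding one of $D^g$; hence each such ball carries exactly two eigenvalues of $D^h$ counted with multiplicity, i.e.\ a single simple eigenvalue, and one reads off that $\la^{\pm}_1(h),\dots,\la^{\pm}_m(h)$ are simple, so $h\in S_m(M)$.

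For density, fix $g_0\in[g]$ and a $C^k$-neighborhood $\mathcal V$ of $g_0$ in $[g]$; I want to produce $h\in\mathcal V\cap S_m(M)$. The engine is the following \emph{splitting lemma}: if $\la\neq0$ is an eigenvalue of $D^{g_0}$ with $d:=\dim_{\C}\ker(D^{g_0}-\la)\geq3$, then there is a real $f\in C^{\infty}(M,\R)$ such that, along the conformal family $g_t:=(1+tf)g_0$, for all small $t\neq0$ every eigenvalue of $D^{g_t}$ near $\la$ has multiplicity at most $d-2$. To prove it, first note that by the Leibniz rule $D^{g_0}(f\psi)=\grad^{g_0}(f)\cdot\psi+fD^{g_0}\psi$ together with self-adjointness of $D^{g_0}$, the integral $\int_M\langle\grad^{g_0}(f)\cdot\psi,\ph\rangle\dv^{g_0}$ vanishes for $\psi,\ph\in\ker(D^{g_0}-\la)$; combined with (\ref{dirac_derivative_conform}) this shows that the first-order perturbation operator of the family $D^{g_0,g_t}$, compressed to $\ker(D^{g_0}-\la)$, is exactly $-\frac{\la}{2}P_{\la}M_f$, where $M_f$ is multiplication by $f$ and $P_{\la}$ is the $L^2$-projection onto the eigenspace. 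Now suppose toward a contradiction that $P_{\la}M_f$ is a scalar on $\ker(D^{g_0}-\la)$ for every real $f$; picking an $L^2$-orthonormal basis $\psi_1,\dots,\psi_d$, this forces $\int_M f\big(|\psi_i|_{g_0}^2-|\psi_1|_{g_0}^2\big)\dv^{g_0}=0$ and $\int_M f\langle\psi_i,\psi_j\rangle\dv^{g_0}=0$ for $i\neq j$, and letting $f$ vary and using continuity, $|\psi_i(x)|_{g_0}^2=|\psi_1(x)|_{g_0}^2=:u(x)$ and $\langle\psi_i(x),\psi_j(x)\rangle=0$ pointwise on $M$. Since $\int_M u\,\dv^{g_0}=1$ there is $x_0$ with $u(x_0)>0$, whence $\psi_1(x_0),\dots,\psi_d(x_0)$ are $d$ pairwise orthogonal nonzero vectors in the fibre $\Sigma^{g_0}_{x_0}M$, of complex dimension $2^{[n/2]}=2$ since $n\in\{2,3\}$ --- impossible for $d\geq3$. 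So some $f$ makes $-\frac{\la}{2}P_{\la}M_f$ non-scalar; as $g_t=(1+tf)g_0$ is real-analytic in $t$, Theorem \ref{rellich_theorem} and Lemma \ref{analyticity_lemma} give real-analytic branches $\la_1(t),\dots,\la_d(t)$ with $\la_i(0)=\la$ and $\la_i'(0)$ the eigenvalues of $-\frac{\la}{2}P_{\la}M_f$, not all equal, so for small $t\neq0$ the value $\la$ separates into at least two distinct eigenvalues of $D^{g_t}$, each of even multiplicity, hence at most $d-2$.

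It then remains to iterate. Attach to $g_0$ the non-negative integer $\nu(g_0):=\sum_{\la}\big(\dim_{\C}\ker(D^{g_0}-\la)-2\big)$, summed over the finitely many distinct values among $\la^{\pm}_1(g_0),\dots,\la^{\pm}_m(g_0)$; by evenness $\nu(g_0)=0$ exactly when $g_0\in S_m(M)$. If $\nu(g_0)>0$, pick a non-simple $\la$ among these eigenvalues, apply the splitting lemma, and use Proposition \ref{cont_eigenvalues} to choose $t\neq0$ so small that $g_t\in\mathcal V$, that the eigenvalues of $D^{g_t}$ near the various $\la^{\pm}_i(g_0)$ stay in pairwise disjoint neighborhoods, and that $\la$ has in fact split; then $\nu(g_t)<\nu(g_0)$. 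After finitely many such small conformal perturbations one lands in $\mathcal V\cap[g]$ at a metric $h$ with $\nu(h)=0$, i.e.\ $h\in S_m(M)$; together with openness this makes $S_m(M)\cap[g]$ generic in $[g]$. The hard part is the splitting lemma, and within it the step that rules out a multiple nonzero eigenvalue being rigid under all conformal changes: this is exactly where the rank of the spinor bundle being $2$, and hence $\dim M\in\{2,3\}$, is used essentially; the analytic perturbation theory for the branches and the finite induction are routine.
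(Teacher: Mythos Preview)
The paper does not give its own proof of this theorem; it is quoted from \cite{d}, with the remark that Dahl uses Proposition~\ref{cont_eigenvalues} together with a study of nonzero eigenvalues under conformal deformations. Your proposal implements exactly this strategy and is correct: openness from spectral continuity, density by splitting a multiple eigenvalue via a conformal perturbation whose first-order effect on the eigenspace is $-\tfrac{\lambda}{2}P_\lambda M_f$, the non-scalar case being forced by the pointwise orthogonality argument that overflows the rank-$2$ fibre $\Sigma^g_xM$ when $d\geq3$. The computation that the gradient term in (\ref{dirac_derivative_conform}) contributes nothing to the off-diagonal matrix elements (via $D^g(f\psi_j)=\grad^g(f)\cdot\psi_j+\lambda f\psi_j$ and self-adjointness) is the one point worth making explicit, and you have it; the iteration with the defect counter $\nu$ and the evenness of multiplicities from the quaternionic structure are routine. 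This is, to the best of my knowledge, essentially Dahl's argument.
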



In order to prove Theorem \ref{main_theorem} it is sufficient to prove the following. 
\begin{theorem}
\label{theorem_no_zeros}
Let $M$ be a closed connected spin manifold of dimension $2$ or $3$ and let $m\in\N\setminus\{0\}$. 
Then for every $g\in R(M)$ the subset $N_m(M)\cap[g]$ is generic in $[g]$. 
\end{theorem}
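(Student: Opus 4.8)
The plan is to prove $N_m(M) \cap [g]$ is generic in $[g]$ by combining Dahl's genericity of simple eigenvalues (Theorem~\ref{gen_simple}) with an application of the parametrized transversality theorem (Theorem~\ref{param_transvers}). First I would reduce to a neighborhood of a fixed metric: by Theorem~\ref{gen_simple} the set $S_m(M)\cap[g]$ is generic in $[g]$, so it suffices to show that the set of metrics in $S_m(M)\cap[g]$ for which, in addition, all eigenspinors to $\lambda_1^\pm,\dots,\lambda_m^\pm$ are nowhere zero, is residual and open. Since being nowhere zero is an open condition and the eigenvalues and eigenspinors vary continuously (Proposition~\ref{cont_eigenvalues} together with the perturbation-theoretic statements referenced before \eqref{lambda_derivative1}), openness in the $C^1$-topology is not the difficulty; the heart of the matter is density. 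For density I would work near a given $g_0 \in S_m(M)\cap[g]$, pick a $C^k$-ball $V$ of conformal metrics $h = e^{2u}g_0$ around $g_0$ on which the relevant eigenvalues stay simple and stay isolated from the rest of the spectrum and from $0$, and choose the normalized eigenspinors $\psi_{j}(h)$ (together with $J\psi_j(h)$ when $n\equiv 2,3\bmod 8$) to depend real-analytically, hence continuously, on the parameter; transporting everything back to $\Sigma^{g_0}M$ via $\overline\beta_{h,g_0}$ gives continuous maps $F_j\colon V \to C^1(M,\Sigma^{g_0}M)$.

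Next I would set up the transversality argument. Fix one index $j$ and one sign; the map $F_j$ assigns to $h\in V$ the eigenspinor $\overline\beta_{h,g_0}\psi_{j,h}$, viewed as a section of $\Sigma^{g_0}M$, and I want to apply Theorem~\ref{param_transvers} with $N = \Sigma^{g_0}M$, $M$ the base manifold, and $A$ the zero section. The zero section has codimension $2N = 2^{\lceil n/2\rceil + 1}$ in real terms, i.e.\ codimension $2$ for $n=2$ and $4$ for $n=3$; the dimension count $r > \dim M + \dim A - \dim N$ reads $1 > n + \dim M - (\dim M + 2N) = n - 2N$, which holds since $2N \ge 4 > n+1$ in both dimensions $n=2,3$ — this is exactly why the theorem is restricted to low dimensions. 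So the only thing to check is that the evaluation map $F_j^{ev}\colon V\times M \to \Sigma^{g_0}M$, $(h,x)\mapsto (\overline\beta_{h,g_0}\psi_{j,h})(x)$, is transverse to the zero section. Once that is done, Theorem~\ref{param_transvers} gives a residual (hence dense, by Baire) set of $h\in V$ for which $F_j(h)$ is transverse to the zero section; but a section of a rank-$N$ bundle over an $n$-manifold with $2N > n$ that is transverse to the zero section must miss it entirely, so $\psi_{j,h}$ is nowhere zero. Intersecting over the finitely many indices $j\le m$ and signs, and using that this works on a neighborhood of every point of the dense set $S_m(M)\cap[g]$, yields density of $N_m(M)\cap[g]$ in $[g]$.

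The main obstacle — and where I expect the real work to lie — is verifying transversality of $F_j^{ev}$ to the zero section. Suppose it fails at some $(h,x_0)$: then $\psi_{j,h}(x_0)=0$ and the image of $dF_j^{ev}|_{(h,x_0)}$, together with $T_{x_0}M$ acting through $x$-derivatives, fails to span the fibre $\Sigma^{g_0}_{x_0}M \cong \C^N$. The $x$-derivatives contribute $\nabla\psi_{j,h}$ evaluated at $x_0$; the $h$-derivatives contribute, via first-order perturbation theory and the formula \eqref{dirac_derivative_conform} for $\frac{d}{dt}D^{g,g_t}$ under conformal variations $g_t = h + tfh$, the spinors $\frac{d}{dt}\psi_{j}\big|_{t=0}$, which one computes by applying Green's operator $G$ for $D^h - \lambda_j$ to $-\big(\frac{d}{dt}D^{h,g_t}\big|_{t=0} - \frac{d\lambda_j}{dt}\big|_{t=0}\big)\psi_{j,h} = \frac f2 D^h\psi_{j,h} + \frac14\grad^h(f)\cdot\psi_{j,h} + (\dots)\psi_{j,h}$. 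The assumption that these, for all test functions $f$, do not fill out $\Sigma^{g_0}_{x_0}M$ modulo $\nabla\psi_{j,h}(x_0)$ should force an identity of the form: for every $\varphi$ in some nontrivial subspace of $\Sigma^{g_0}_{x_0}M$, the pairing $\langle (\text{that spinor})(x_0), \varphi\rangle$ vanishes identically in $f$. Unwinding the Green's operator pairing via \eqref{greens_function} turns this into a statement about $\langle \psi_{j,h}, G^h_{\lambda_j}(\cdot,x_0)\varphi\rangle$ and its derivatives at $x_0$, and then plugging in the expansion of Green's function at the singularity from Theorem~\ref{theorem_green} (leading term $\sim -\omega_{n-1}^{-1}|x|^{1-n}\frac{x}{|x|}\cdot\gamma$) produces a contradiction with the unique continuation property of $D^h$ — $\psi_{j,h}$ would have to vanish to infinite order, forcing $\psi_{j,h}\equiv 0$, contradicting that it is an eigenspinor. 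Making this last chain precise — extracting the right finite Taylor jet of Green's function, keeping track of which components of $\C^N$ are hit by Clifford multiplication by $\grad f$ versus multiplication by $f$, and the case split $n=2$ versus $n=3$ (and $J\psi$ versus $\psi$) — is the technical core; everything else is bookkeeping with the continuity, openness, and Baire-category generalities already available in the excerpt.
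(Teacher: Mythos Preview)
Your strategy is essentially the paper's: reduce to $S_m(M)\cap[g]$ via Theorem~\ref{gen_simple}, build a continuous family $F_\psi$ of eigenspinors, verify transversality of $F_\psi^{ev}$ to the zero section by a Green's-function/unique-continuation argument, and apply Theorem~\ref{param_transvers}. The paper's Lemma~\ref{lemma_transverse} is precisely the ``technical core'' you flag, and it proceeds exactly as you sketch: failure of transversality gives $\Re\langle\psi,G^g_\lambda(\cdot,p)\varphi\rangle\equiv 0$ on $M\setminus\{p\}$, and an inductive Taylor argument at $p$ (separately for $n=2$ and $n=3$) forces $\nabla^r\psi(p)=0$ for all $r$, contradicting unique continuation.

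Two points need correction. First, you take $V$ to be a $C^k$-ball of conformal metrics, but Theorem~\ref{param_transvers} (as stated here) requires $V$ to be a finite-dimensional smooth manifold; moreover, if you pass to $C^k$-metrics to get a Banach manifold, the Dirac coefficients are no longer smooth and the unique continuation theorem (Theorem~\ref{unique_cont}) is not available. The paper handles this by working with finite-dimensional families $V_{f_1\ldots f_r}=\{(1+\sum t_if_i)g\}\cap R(M)$ of smooth metrics: Lemma~\ref{lemma_transverse} produces, for each zero $p$ of $\psi$, four functions $f_{p,1},\ldots,f_{p,4}$ on which the derivative of $F_\psi^{ev}$ already surjects onto the fibre, and compactness of the zero set lets one assemble finitely many such quadruples into a single finite-dimensional $V_{f_1\ldots f_{4r}}$ on which transversality holds. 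Second, your dimension count $n-2N<0$ is correct but holds for \emph{all} $n\geq 2$, so it is not the reason the theorem is restricted to $n\in\{2,3\}$. The actual obstruction, noted in Remark~\ref{remark_higher_dim}, is algebraic: for $n\geq 4$ the spinors $E_i\cdot E_j\cdot\gamma$ are no longer $\R$-linearly independent, and the inductive step in Lemma~\ref{lemma_transverse} that forces all Taylor coefficients of $\psi$ to vanish breaks down.
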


\begin{remark}
On a closed oriented surface $M$ of genus $2$ 
there exist spin structures, such that for every Riemannian 
metric on $M$ there exist harmonic spinors 
(see Proposition 2.3 in \cite{hit74}). 
By the result on the zero set of positive harmonic spinors 
in Theorem \ref{theorem_surfaces} we find that 
Theorem \ref{theorem_no_zeros} in the case $n=2$ is not true 
for harmonic spinors. 
\end{remark}

In the proof we will use the following unique continuation theorem 
due to Aronszajn (\cite{ar}, quoted from \cite{baer97}).
\begin{theorem}
\label{unique_cont}
Let $M$ be a connected Riemannian manifold and let $\Sigma$ be a vector 
bundle over $M$ with a connection $\nabla$. Let $P$ be an operator of 
the form $P=\nabla^*\nabla+P_1+P_0$ acting on sections of $\Sigma$, where 
$P_1$, $P_0$ are differential operators of order $1$ and $0$ respectively. 
Let $\psi$ be a solution to $P\psi=0$. If there exists a point, at which 
$\psi$ and all derivatives of $\psi$ of any order vanish, 
then $\psi$ vanishes identically.
\end{theorem}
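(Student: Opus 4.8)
The plan is to prove this by the classical route for strong unique continuation (Aronszajn's theorem, cf.\ \cite{ar}): reduce to a purely local statement, then establish a Carleman estimate adapted to $P$ and feed $\psi$ into it. First I would localize. Let $Z\subset M$ be the set of points at which $\psi$ together with all its covariant derivatives vanishes; it is closed and, by hypothesis, nonempty, so since $M$ is connected it suffices to show that $Z$ is open, which then forces $Z=M$ and hence $\psi\equiv0$. In a chart around a point $q$ with a local frame of $\Sigma$, vanishing of all covariant derivatives of $\psi$ at $q$ is equivalent to vanishing of all ordinary partial derivatives of the coordinate representation of $\psi$ at $q$, since the two families of jets are related by invertible operators built from the metric, the connection coefficients, and their derivatives. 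Thus openness of $Z$ reduces to the local claim: if the coordinate representation $u$ of $\psi$ vanishes to infinite order at an interior point, then $u\equiv0$ on a neighbourhood of that point --- every point of that neighbourhood then lies in $Z$.

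For the local claim I would fix Riemannian normal coordinates centred at the point, call it $0$, on a small ball $B_{\rho_0}$, trivialize $\Sigma$ by a frame, and write $Pu=-g^{ij}(x)\partial_i\partial_j u+a^i(x)\partial_i u+b(x)u$ for $\C^N$-valued $u$, where $g^{ij}(0)=\delta^{ij}$ and $g^{ij}(x)-\delta^{ij}=O(|x|^2)$ by the expansion of $g_{ij}$ recalled in Section~\ref{bourg-triv_section}. The decisive structural feature is that the principal part is the scalar operator $-g^{ij}\partial_i\partial_j$ times $\id_{\C^N}$: the $N$ components are coupled only through the lower order terms $a^i,b$. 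The key analytic input is a Carleman estimate: there are constants $C,\tau_0>0$ and a radius $\rho_0>0$ so that
\[
  \tau^3\!\int |x|^{-2\tau-3}|v|^2\,dx+\tau\!\int |x|^{-2\tau-1}|\nabla v|^2\,dx\;\le\;C\!\int |x|^{-2\tau+1}|Pv|^2\,dx
\]
for all $\tau\ge\tau_0$ and all $v\in C_c^\infty(B_{\rho_0}\setminus\{0\};\C^N)$. I would prove it by the conjugation method: set $P_\tau:=|x|^{-\tau}P|x|^{\tau}$, split it into its formally self-adjoint and skew-adjoint parts, and expand $\|P_\tau w\|_{L^2}^2$; the cross term is a commutator whose positivity for large $\tau$ reflects the logarithmic convexity of the weight $-\log|x|$, a pseudoconvexity condition for the flat Laplacian that persists under the $O(|x|^2)$ perturbation of the principal symbol provided $\rho_0$ is small. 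The zeroth and first order terms of $P$ (including the coupling between components) are then absorbed into the left side, because the gains $\tau^3$ and $\tau$ dominate their $O(1)$ coefficients once $\tau$ is large, while the second-order error $O(|x|^2)\partial^2 v$ of the variable principal part is absorbed using the smallness $|x|^2\le\rho_0^2$ together with an interior elliptic estimate for $\Delta$. Establishing this estimate --- and in particular checking that the pseudoconvexity survives the passage from $\Delta$ to the Laplace-type system $P$ --- is the main obstacle; the remaining steps are routine.

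Finally I would insert $\psi$. Choose $\eta\in C_c^\infty(B_{\rho_0})$ with $\eta\equiv1$ on $B_{\rho_0/2}$, together with cutoffs $\chi_\delta$ vanishing on $B_\delta$ and equal to $1$ outside $B_{2\delta}$. Applying the estimate to $v=\chi_\delta\,\eta\,u$ and letting $\delta\to0$: since $u$ and all its derivatives are $O(|x|^m)$ for every $m$, all terms produced by $\partial\chi_\delta$ vanish in the limit for each fixed $\tau$, and one obtains the estimate with $v=\eta u$, all weighted integrals being finite because of the infinite vanishing at $0$. As $Pu=0$, the right side equals $C\int|x|^{-2\tau+1}|[P,\eta]u|^2\,dx$, which is supported where $d\eta\ne0$, i.e.\ in the annulus $\{\rho_0/2\le|x|\le\rho_0\}$, hence is bounded by $C_1(\rho_0/2)^{-2\tau+1}$ with $C_1$ independent of $\tau$; the left side is at least $\tau^3(\rho_0/2)^{-2\tau-3}\int_{B_{\rho_0/4}}|u|^2\,dx$. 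Dividing by $(\rho_0/2)^{-2\tau-3}$ gives $\tau^3\int_{B_{\rho_0/4}}|u|^2\,dx\le C_1(\rho_0/2)^4$, and letting $\tau\to\infty$ forces $\int_{B_{\rho_0/4}}|u|^2\,dx=0$. Thus $\psi\equiv0$ near $q$, which proves the local claim, hence that $Z$ is open, hence that $\psi$ vanishes identically on $M$.
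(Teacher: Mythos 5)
The paper itself does not prove this theorem at all: it is quoted verbatim from Aronszajn via B\"ar's paper, so the only comparison available is with the classical Carleman-estimate proof, and your outline (the closed--open argument on the set of infinite-order zeros, localization in normal coordinates, a weight $|x|^{-\tau}$, cutoffs near the singular point that are harmless because $\psi$ vanishes to infinite order, absorption of $P_1,P_0$ by the weight shift, and $\tau\to\infty$) is exactly that classical route. The surrounding architecture is sound; the problem is the central estimate itself.

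The Carleman inequality you state is false as stated, and the proposed proof of it cannot work. For the flat Laplacian in $\R^3$ (a legitimate instance of $P$), take $h_k=r^k\phi_k(\theta)$ a degree-$k$ harmonic polynomial and $v=\chi_\delta(r)\,\eta(r)\,h_k\in C_c^\infty(B_{\rho_0}\setminus\{0\})$ with the usual inner and outer cutoffs, and choose $\tau=k$. Then $\Delta v$ is supported in the two cutoff annuli and of size $O(k)\,r^{k-2}$ there, so the right-hand side $\int|x|^{-2\tau+1}|\Delta v|^2\,dx$ stays bounded by $C(1+k)^2$ uniformly as $\delta\to0$, while the left-hand side contains $\tau^3\int|x|^{-2k-3}r^{2k}\,r^2\,dr\sim k^3\log(1/\delta)\to\infty$. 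So no constant $C$ works for these resonant values of $\tau$, and near-resonant $\tau$ make the constant blow up; the estimate cannot hold ``for all $\tau\ge\tau_0$''. The underlying reason is precisely what your sketch leans on: $-\log|x|$ is only a \emph{limiting} Carleman weight, so the commutator in the conjugation argument is degenerate --- in coordinates $t=\log r$ the conjugated symbol vanishes to second order at radial frequency $0$ when $\tau^2+(n-2)\tau$ hits a spherical eigenvalue --- and no uniform $\tau^3$ (or even $\tau$) gain comes out of ``positivity of the cross term''. The standard repairs are either to restrict $\tau$ to a sequence avoiding these resonances (e.g.\ $\tau\in\N+\frac12$), which still suffices for your final $\tau\to\infty$ argument since absorbing $P_1$, $P_0$ only needs the weight shift $|x|^{-2}\ge\rho_0^{-2}$, or to convexify the weight; in either case the $O(|x|^2)$ error in the principal part must be handled by building a weighted second-derivative term into the left side of the estimate (this is where Aronszajn's actual proof does real work), not by an unrelated interior elliptic estimate. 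As written, the key lemma is misstated and its proof strategy fails at exactly the point you yourself identify as the main obstacle, so the proposal has a genuine gap there.
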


We will also use Theorem \ref{param_transvers} for families of spinors 
parametrized by Riemannian metrics. A basic problem is that $[g]$ is not 
a smooth manifold. If one replaces $[g]$ by the space of all $k$ times 
continuously differentiable metrics, $k\geq1$, which are conformal to $g$, 
then we see from formula (\ref{connection_local}) that the coefficients 
of the Dirac operator are not smooth in general. 
In this case we cannot expect that the eigenspinors are smooth and that 
Theorem \ref{unique_cont} remains valid. 
In order to get around this problem we will use finite dimensional 
manifolds of smooth Riemannian metrics of the form 
\begin{displaymath}
  V_{f_1...f_r}:=\Big\{\Big(1+\sum_{i=1}^r t_i f_i\Big)g\,\Big|t_1,...,t_r\in\R\Big\}\cap R(M),
\end{displaymath}
where $r\in\N\setminus\{0\}$ and $f_1,...,f_r\in C^{\infty}(M,\R)$. 

Our first aim is to construct a map, 
which associates to a Riemannian metric $h\in[g]$ 
an eigenspinor of $D^{g,h}$ in a continuous way. 

\begin{lemma}
\label{lemma_F_psi}
Let $k,m\in\N\setminus\{0\}$. Let $g\in S_m(M)$ and equip $[g]$ with the $C^k$-topology. 
Let $\lambda\in\{\lambda^{\pm}_1(g),...,\lambda^{\pm}_m(g)\}$ 
and let $\psi$ be an eigenspinor of $D^g$ corresponding to $\lambda$. 
Then there exists an open neighborhood $V\subset[g]$ of $g$
and a map $F_{\psi}$: $V\to\spinor{M}{g}{\infty}$ 
such that for every $h\in V$ the spinor 
$F_{\psi}(h)$ is an eigenspinor of $D^{g,h}$ and such that 
the map $F^{ev}_{\psi}$: $V\times M\to\Sigma^gM$ 
defined by $F^{ev}_{\psi}(h,x):=F_{\psi}(h)(x)$ is continuous 
and such that for all functions $f_1,...,f_r\in C^{\infty}(M,\R)$ 
the restriction $F^{ev}_{\psi}|_{V_{f_1...f_r}\times M}$ 
is differentiable. 
\end{lemma}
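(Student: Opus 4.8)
The idea is to obtain $F_\psi(h)$ as the image of the fixed spinor $\psi$ under a spectral (Riesz) projection associated with $D^{g,h}$. First I would shrink the neighborhood so that it is contained in $S_m(M)\cap[g]$: by Theorem \ref{gen_simple} this set is generic in $[g]$, hence open in the $C^1$-topology and therefore $C^k$-open for every $k\geq1$. Choose $\delta>0$ so small that the closed disc $\overline{B_\delta(\lambda)}\subset\C$ meets $\spec(D^g)$ only in $\lambda$, and set $\Gamma_\lambda:=\partial B_\delta(\lambda)$. Since $\overline{\beta}_{g,h}$ conjugates $D^{g,h}$ to $D^h$ and is an $L^2$-isometry (Lemma \ref{definition_beta_bar}), these two operators are isospectral, so Proposition \ref{cont_eigenvalues} provides a $C^k$-open neighborhood $V$ of $g$, contained in $S_m(M)\cap[g]$, such that for all $h\in V$ the circle $\Gamma_\lambda$ is disjoint from $\spec(D^{g,h})$ and encloses a part of $\spec(D^{g,h})$ of total multiplicity $d:=\dim_\C\ker(D^g-\lambda)$. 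Because $h\in S_m(M)$, that part is a single eigenvalue $\lambda(h)$ and $\dim_\C\ker(D^{g,h}-\lambda(h))=d$ (with $d=1$, or $d=2$ in the quaternionic dimensions of Remark \ref{remark_real_quat}). I then put
\begin{displaymath}
  P_h:=\frac{1}{2\pi i}\oint_{\Gamma_\lambda}(z-D^{g,h})^{-1}\,dz,\qquad F_\psi(h):=P_h\psi .
\end{displaymath}
As $D^{g,h}$ is self-adjoint, $P_h$ is the orthogonal projection onto the enclosed spectral subspace, which here equals $\ker(D^{g,h}-\lambda(h))$; hence $F_\psi(h)$ is an eigenspinor of $D^{g,h}$ for $\lambda(h)$ as soon as it is nonzero. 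At $h=g$ we have $P_g\psi=\psi\neq0$, and $h\mapsto P_h\psi$ is $L^2$-continuous (see below), so after a further shrinking of $V$ we may assume $P_h\psi\neq0$ on $V$. Finally $F_\psi(h)\in\ker(D^{g,h}-\lambda(h))$ is smooth by elliptic regularity, since $D^{g,h}$ is a first order elliptic operator with smooth coefficients --- here it is essential that $V\subset[g]\cap R(M)$ consists of smooth metrics, and likewise that the families $V_{f_1\ldots f_r}$ used below consist of smooth metrics --- so $F_\psi$ genuinely maps $V$ into $\spinor{M}{g}{\infty}$.

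It remains to prove the two regularity statements, and this is where the real work lies. For continuity of $F^{ev}_\psi$ on $V\times M$: the coefficients of $D^{g,h}$ depend continuously on $h$ through formula (\ref{Dgh}), so the resolvents $(z-D^{g,h})^{-1}$ depend continuously on $h$, uniformly for $z\in\Gamma_\lambda$, and hence so does $P_h$, in the operator norm on $\Lspinor{M}{g}{2}$; thus $h\mapsto F_\psi(h)$ is continuous into $\Lspinor{M}{g}{2}$. One upgrades this to continuity into $\spinor{M}{g}{0}$ by bootstrapping the equation $(D^{g,h}-\lambda(h))F_\psi(h)=0$ through elliptic estimates combined with the Sobolev embedding $\Hspinor{M}{g}{s}\hookrightarrow\spinor{M}{g}{0}$ for $s>n/2$, using the perturbation results collected in the Appendix; joint continuity of $(h,x)\mapsto F_\psi(h)(x)$ then follows because each $F_\psi(h)$ is itself a continuous section. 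The subtle point --- and the main obstacle of the lemma --- is the uniformity of these elliptic estimates for $h$ near $g$, which forces one to keep careful track of how many derivatives of the metric enter the coefficients of $D^{g,h}$.

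For differentiability of $F^{ev}_\psi$ on $V_{f_1\ldots f_r}\times M$ (intersected with $V$, which does not affect the argument): the family $t\mapsto g_t:=\bigl(1+\sum_{i=1}^r t_if_i\bigr)g$ is a real-analytic family of smooth Riemannian metrics, so by (\ref{Dgh}) the operators $D^{g,g_t}$ form a real-analytic family of self-adjoint operators. By Theorem \ref{rellich_theorem} and Lemma \ref{analyticity_lemma} --- equivalently, by analyticity of $P_{g_t}=\frac{1}{2\pi i}\oint_{\Gamma_\lambda}(z-D^{g,g_t})^{-1}\,dz$ --- the map $t\mapsto P_{g_t}\psi=F_\psi(g_t)$ is real-analytic into $\Lspinor{M}{g}{2}$, and by the elliptic bootstrap, whose coefficients now depend analytically on $t$, it is real-analytic into $\Hspinor{M}{g}{s}$ for every $s$, in particular into $\spinor{M}{g}{1}$ (taking $s>n/2+1$). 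Evaluating, $(t,x)\mapsto F_\psi(g_t)(x)$ then has continuous partial derivatives both in $t$ (from analyticity into $\spinor{M}{g}{1}$) and in $x$ (each $F_\psi(g_t)$ being smooth), so it is of class $C^1$, which in particular gives the required differentiability of $F^{ev}_\psi|_{V_{f_1\ldots f_r}\times M}$.
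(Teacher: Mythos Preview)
Your proposal is correct but takes a genuinely different route from the paper.

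The paper defines $F_\psi(h)$ by analytic continuation along paths rather than by a spectral projection. It connects $g$ to $h$ by the linear path $g_t=g+t(h-g)$ (which stays in $[g]$ since $h-g$ is a function times $g$), applies Rellich's theorem (Theorem~\ref{rellich_theorem} together with Lemma~\ref{analyticity_lemma}) to obtain real-analytic families $t\mapsto\lambda_t$, $t\mapsto\psi_t$ with $\psi_0=\psi$, and sets $F_\psi(h):=\psi_1$. To establish continuity and differentiability at a base point $h\neq g$, the paper uses a conjugation trick that is specific to the conformal class: for $h_t=h+tk$ with $k=fg$, the endomorphisms $a_{g,h}$ and $a_{h,h_t}$ commute (both are scalar multiples of the identity), so by Lemma~\ref{lemma_agh_ahk} one has $\overline{\beta}_{h,h_t}\circ\overline{\beta}_{g,h}=\overline{\beta}_{g,h_t}$ and hence $D^{g,h_t}=\overline{\beta}_{h,g}\,D^{h,h_t}\,\overline{\beta}_{g,h}$. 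This lets one transport the Rellich family based at $h$ back to sections of $\Sigma^gM$, yielding $F_\psi(h_t)=\overline{\beta}_{h,g}(\chi(t))$ for a real-analytic family $\chi$, from which both continuity and differentiability are read off.

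Your Riesz-projection definition $F_\psi(h)=P_h\psi$ is more direct: it is uniform in $h$, does not rely on the conformal commutation trick, and makes the regularity statements follow from standard resolvent continuity and elliptic bootstrap rather than from patching one-parameter analytic families. The paper's approach, on the other hand, delivers $L^2$-normalized eigenspinors for free (as in Rellich's theorem), which ties in neatly with the variational formulas (\ref{lambda_derivative2}) and (\ref{lambda_derivative_conform}); your $P_h\psi$ is not normalized, but the lemma does not require it, and the derivative identity (\ref{derivative_of_eigenspinor_equation}) used downstream holds for your family just as well.
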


\begin{proof}
Let $V\subset[g]$ be an open convex neighborhood of $g$, 
which is contained in $S_m(M)\cap[g]$. 
Let $h\in V$ and let $I\subset\R$ be an open interval containing $[0,1]$ 
such that for every $t\in I$ the tensor field $g_t:=g+t(h-g)$ 
is a Riemannian metric on $M$. 
Let $\lambda\in\{\lambda^{\pm}_1(g),...,\lambda^{\pm}_m(g)\}$. 
Then there exists exactly one real-analytic 
family $t\mapsto\lambda_t$ of eigenvalues of $D^{g,g_t}$ such that $\lambda_0=\lambda$. 
Furthermore there exists a real-analytic family $\psi_t$ of spinors, 
such that $\{\psi_t\}$ respectively $\{\psi_t,J\psi_t\}$ 
forms an $L^2$-orthonormal basis of $\ker(D^{g,g_t}-\lambda_t)$ for every $t$. 
After possibly shrinking $V$ we may assume that for all metrics $h\in V$ 
and for all $\lambda\in\{\lambda^{\pm}_1(g),...,\lambda^{\pm}_m(g)\}$ 
the families $\lambda_t$ and $\psi_t$ 
are defined for all $t\in[0,1]$. 
Since we are free to 
replace $\psi_t$, $J\psi_t$ by linear combinations 
$a\psi_t+bJ\psi_t$ with $a,b\in\C$, $|a|^2+|b|^2>0$, 
we may assume that $\psi_0=\psi$. 
We define $F_{\psi}(h):=\psi_1$.

Let $h\in V$ and let $k=fg$ for some $f\in C^{\infty}(M,\R)$. 
With $h_t:=h+tk$ consider $F_{\psi}(h_t)$ for small $|t|$ such that $h_t\in V$. 
Since $a_{g,h}$ and $a_{h,h_t}$ commute, 
we have $b_{h,h_t}\circ b_{g,h}=b_{g,h_t}$ 
by Lemma \ref{lemma_agh_ahk}. 
and thus $\overline{\beta}_{h,h_t}\circ\overline{\beta}_{g,h}=\overline{\beta}_{g,h_t}$. 
It follows that 
\begin{displaymath}
  D^{g,h_t}=\overline{\beta}_{h,g}\circ D^{h,h_t}\circ \overline{\beta}_{g,h}.
\end{displaymath}
There exists a real-analytic family $t\mapsto\chi(t)\in\spinor{M}{h}{\infty}$ 
of eigenspinors of $D^{h,h_t}$ with $\chi(0)=\overline{\beta}_{g,h}(F_{\psi}(h))$. 
It follows that 
\begin{displaymath}
  F_{\psi}(h_t)=\overline{\beta}_{h,g}(\chi(t)).
\end{displaymath}
By taking $k:=h-h'$ for $h,h'\in V$ we find that $F^{ev}_{\psi}$ is continuous. 
Furthermore for all functions $f_1,...,f_r\in C^{\infty}(M,\R)$ 
we can use this equation to see that the restriction $F^{ev}_{\psi}|_{V_{f_1...f_r}\times M}$ 
is differentiable. 
\end{proof}

The strategy for the proof of Theorem \ref{theorem_no_zeros} is based 
on the following remark.

\begin{remark}
\label{remark_transverse}
Let $A\subset\Sigma^gM$ be the zero section.
Since the dimension of the total space $\Sigma^gM$ of the spinor bundle is 
\begin{displaymath}
  \dim\Sigma^gM=n+2^{1+[n/2]}>2n=\dim M+\dim A,
\end{displaymath}
a map $f$: $M\to\Sigma^gM$ is transverse to $A$ if and only if 
$f^{-1}(A)=\emptyset$.
\end{remark}

If for every 
$\lambda_i\in\{\lambda^{\pm}_1,...,\lambda^{\pm}_m\}$ we choose an 
eigenspinor $\psi_i$ and define the map 
\begin{displaymath}
  F_{\psi_i}:\quad V_{f_1...f_r}\to\spinor{M}{g}{\infty}
\end{displaymath}
as in Lemma \ref{lemma_F_psi}, 
then by this remark $V_{f_1...f_r}\cap N_m(M)$ is the set of all Riemannian metrics 
$h\in V_{f_1...f_r}$ such that all the $F_{\psi_i}(h)$ are transverse to the zero section. 
Therefore in order to prove that $N_m(M)\cap[g]$ is dense in $[g]$ 
we would like to apply Theorem \ref{param_transvers}. 
Our aim is then to show that a suitable restriction of 
the map $F_{\psi}^{ev}$ defined as in Lemma \ref{lemma_F_psi} 
is transverse to the zero section. 

Let $p\in M$ with $\psi(p)=0$. We have a canonical decomposition of the tangent space
\begin{displaymath}
  T_{\psi(p)}\Sigma^gM\cong \Sigma^g_pM\oplus T_pM
\end{displaymath}
and thus
\begin{displaymath}
  dF_{\psi}^{ev}|_{(g,p)}:\quad T_gV_{f_1...f_r}\oplus T_pM\to\Sigma^g_pM\oplus T_pM.
\end{displaymath}

For a given $h\in V_{f_1...f_r}$ we will write $g_t=g+t(h-g)$ and 
\begin{displaymath}
  \psi_t:=F_{\psi}(g_t),\quad \frac{d\psi_t(x)}{dt}\big|_{t=0}:=\pi_1(dF_{\psi}^{ev}|_{(g,x)}(h-g,0)).
\end{displaymath}
Then it follows that
\begin{equation}
  \label{derivative_of_eigenspinor_equation}
  0=\big(\frac{d}{dt}D^{g,g_t}\big|_{t=0}-\frac{d\lambda_t}{dt}\big|_{t=0}\big)\psi
  +\big(D^g-\lambda\big)\frac{d\psi_t}{dt}\big|_{t=0}.
\end{equation}

\begin{remark}
\label{remark_real_basis}
Let $\varphi\in\Sigma^g_pM$ and $X$, $Y\in T_pM$. If one polarizes 
the identity
\begin{displaymath}
  \langle X\cdot\varphi,X\cdot\varphi\rangle=g(X,X)\langle\varphi,\varphi\rangle,
\end{displaymath}
then one obtains
\begin{equation}
  \label{Xgamma_Ygamma}
  \Re\langle X\cdot\varphi,Y\cdot\varphi\rangle=g(X,Y)\langle\varphi,\varphi\rangle.
\end{equation}
Since Clifford multiplication with vectors is antisymmetric, it follows that 
$\Re\langle X\cdot\varphi,\varphi\rangle=0$. Let $\varphi\neq0$ and let 
$(e_i)_{i=1}^n$ be an orthonormal basis of $T_pM$. 
It follows that for $n=2$ the spinors
\begin{displaymath}
  \varphi,\,e_1\cdot\varphi,\,e_2\cdot\varphi,\,e_1\cdot e_2\cdot\varphi
\end{displaymath}
form an orthogonal basis of $\Sigma^g_pM$ with respect to the real 
scalar product $\Re\langle.,.\rangle$. Similarly for $n=3$ the spinors 
\begin{displaymath}
  \varphi,\,e_1\cdot\varphi,\,e_2\cdot\varphi,\,e_3\cdot\varphi
\end{displaymath}
form an orthogonal basis of $\Sigma^g_pM$ with respect 
to $\Re\langle.,.\rangle$.
\end{remark}

The following rather long lemma is the most important step in showing 
that a suitable restriction of $F^{ev}_{\psi}$ is transverse to the 
zero section.

\begin{lemma}
\label{lemma_transverse}
Let $n\in\{2,3\}$ and $m\in\N\setminus\{0\}$. Let $g\in S_m(M)$ and 
let $\lambda\in\{\lambda^{\pm}_1(g),...,\lambda^{\pm}_m(g)\}$. 
Let $\psi$ be an eigenspinor of $D^g$ corresponding to $\lambda$ 
and let $p\in M$ with $\psi(p)=0$. 
Then there exist $f_1,...,f_4\in C^{\infty}(M,\R)$ such that the map 
$F_{\psi}^{ev}$: $V_{f_1...f_4}\times M\to\Sigma^gM$ satisfies
\begin{displaymath}
  \pi_1(dF_{\psi}^{ev}|_{(g,p)}(T_gV_{f_1...f_4}\oplus\{0\}))=\Sigma^g_pM.
\end{displaymath}
\end{lemma}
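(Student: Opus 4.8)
The plan is to show that by choosing the functions $f_1,\dots,f_4$ cleverly we can make the first-order variation $\frac{d\psi_t}{dt}|_{t=0}$ at the point $p$ hit a spanning set of $\Sigma^g_pM$. By Remark \ref{remark_real_basis}, since $\psi(p)=0$ but $\psi$ is not identically zero, a convenient $\Re\langle.,.\rangle$-orthogonal basis of $\Sigma^g_pM$ is given by $e_1\cdot\nabla^g\psi(p)$-type expressions; more precisely, because $\psi(p)=0$, the covariant derivative $\nabla^g_X\psi(p)$ depends only on $X$, and one checks (using the unique continuation theorem, Theorem \ref{unique_cont}, applied to $(D^g)^2-\lambda^2$ via Schr\"odinger-Lichnerowicz) that $\nabla^g\psi(p)\neq 0$; in fact $\varphi_0:=\sum_i e_i\cdot\nabla^g_{e_i}\psi(p)=D^g\psi(p)=\lambda\psi(p)=0$, so the image of $X\mapsto\nabla^g_X\psi(p)$ is at least a line, and one argues it is large enough that the vectors $\{X\cdot\nabla^g_Y\psi(p)\}$ span $\Sigma^g_pM$. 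This is where the low-dimensional hypothesis $n\in\{2,3\}$ enters: the rank of $\Sigma^gM$ is $2^{[n/2]}$, which is $2$ in both cases, so we only need to produce two linearly independent values.

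The main computation is to extract $\frac{d\psi_t}{dt}|_{t=0}(p)$ from equation (\ref{derivative_of_eigenspinor_equation}). Here I would pair (\ref{derivative_of_eigenspinor_equation}) against Green's function $G^g_\lambda(.,p)\varphi$ for $D^g-\lambda$ (Theorem \ref{theorem_green}), using the defining identity (\ref{greens_function}). Since $\frac{d\psi_t}{dt}|_{t=0}$ need not be orthogonal to $\ker(D^g-\lambda)$, one first adjusts it by an element of the eigenspace (which does not change the value we need once we also use the normalization $\|\psi_t\|_2=1$ forcing $\Re(\psi,\frac{d\psi_t}{dt})_2=0$, together with the $J$-symmetry when $n\in\{2,3\}$). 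Then
\begin{displaymath}
  \Big\langle \tfrac{d\psi_t}{dt}\big|_{t=0}(p)-P\tfrac{d\psi_t}{dt}\big|_{t=0}(p),\varphi\Big\rangle
  = \int_{M\setminus\{p\}}\Big\langle \big(\tfrac{d}{dt}D^{g,g_t}\big|_{t=0}-\tfrac{d\lambda_t}{dt}\big|_{t=0}\big)\psi,\,G^g_\lambda(.,p)\varphi\Big\rangle\,\dv^g,
\end{displaymath}
and using (\ref{dirac_derivative_conform}) for the conformal variation $g_t=g+tf_ig$ together with (\ref{lambda_derivative_conform}) for $\frac{d\lambda_t}{dt}|_{t=0}$, the right-hand side becomes an explicit linear functional of $f_i$ built from $\psi$, $\grad^g(f_i)$ and $G^g_\lambda(.,p)\varphi$.

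The hard part will be arranging that, as $\varphi$ runs over $\Sigma^g_pM$ and $i$ over $1,\dots,4$, these functionals cannot all vanish — i.e. proving that the span in question is everything. Suppose the variation at $p$ is orthogonal to some fixed $\varphi\neq 0$ for every choice of $f_i$ supported near $p$. Concentrating the $f_i$ near $p$ and using the expansion of $G^g_\lambda(.,p)\varphi$ from Theorem \ref{theorem_green} and Corollary \ref{coroll_green_asymp} — whose leading singular term is $-\tfrac{1}{\omega_{n-1}|x|^{n-1}}\tfrac{x}{|x|}\cdot\gamma$ — I would extract the local obstruction: integrating by parts to move $\grad^g(f_i)$ onto $G^g_\lambda$ and letting the support shrink, the only surviving contribution comes from the principal part of Green's function paired with the leading Taylor coefficient of $\psi$ at $p$, namely $\nabla^g\psi(p)$. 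This yields an identity of the form ``$\nabla^g\psi(p)$ is orthogonal (in a suitable sense) to $\varphi$'', and choosing $f_1,\dots,f_4$ so that their gradients at $p$ span $T_pM$ (possible since $n\le 4$; in fact $n+1\le 4$ directions suffice, and one spare function absorbs the trace/eigenvalue term) forces $\varphi$ to be orthogonal to all of $\{X\cdot\nabla^g_Y\psi(p)\}$, contradicting the spanning statement from the first paragraph. Hence no such nonzero $\varphi$ exists and $\pi_1(dF^{ev}_\psi|_{(g,p)}(T_gV_{f_1\dots f_4}\oplus\{0\}))=\Sigma^g_pM$. The delicate points are the unique-continuation argument ensuring $\nabla^g\psi(p)\neq 0$, and controlling the non-principal (in particular the $\ln|x|$) terms in the Green's function expansion so that they contribute nothing in the limit.
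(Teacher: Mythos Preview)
Your overall setup --- assuming a nonzero $\varphi$ is orthogonal to all conformal first-order variations and pairing (\ref{derivative_of_eigenspinor_equation}) with $G^g_\lambda(.,p)\varphi$ --- matches the paper's starting point. But the heart of your argument has a genuine gap: you rely on $\nabla^g\psi(p)\neq 0$, asserting this follows from unique continuation for $(D^g)^2-\lambda^2$. Unique continuation (Theorem \ref{unique_cont}) only says that if $\psi$ vanishes to \emph{infinite} order at $p$ then $\psi\equiv 0$; it gives no control over any finite jet. In fact the paper exhibits, on the round sphere (Section \ref{sphere_section}), eigenspinors vanishing to arbitrarily high finite order at a point. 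So $\nabla^g\psi(p)$ can perfectly well be zero, and with it your proposed spanning set $\{X\cdot\nabla^g_Y\psi(p)\}$ collapses. The ``shrink the support and keep only the leading Taylor term'' step therefore fails.

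The paper avoids this entirely. After the Green's function pairing it does \emph{not} localize $f$; instead, using $\grad^g(f)\cdot\psi=(D^g-\lambda)(f\psi)$ together with $\psi(p)=0$ and the simplicity of $\lambda$ (so that $P(\cdot)(p)=0$), the gradient term integrates away and one is left with $\lambda\int f\,\Re\langle\psi,G^g_\lambda(.,p)\varphi\rangle=0$ for \emph{all} $f$. This yields the pointwise identity $\Re\langle\psi,G^g_\lambda(.,p)\varphi\rangle\equiv 0$ on $M\setminus\{p\}$. From there the paper runs an induction on $r$: combining the Taylor expansion of $A\psi$ with the first two terms of the Green's function expansion (Corollary \ref{coroll_green_asymp}), and using the eigenspinor equation and Schr\"odinger--Lichnerowicz to relate successive jets, one shows that \emph{every} derivative $\nabla^rA\psi(0)$ vanishes. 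Only then does unique continuation give $\psi\equiv 0$. This induction is exactly where the restriction $n\in\{2,3\}$ is genuinely used (cf.\ Remark \ref{remark_higher_dim}); your dimension count via $\rank\Sigma^gM=2$ is not the real obstruction.
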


\begin{proof}
Assume that the claim is wrong. 
Then there exists $\varphi\in\Sigma^g_pM\setminus\{0\}$ 
such that for all $f\in C^{\infty}(M,\R)$ we have 
\begin{displaymath}
  0=\Re\langle\pi_1(dF_{\psi}^{ev}|_{(g,p)}(fg,0)),\varphi\rangle
  =\Re\langle\frac{d\psi_t}{dt}|_{t=0}(p),\varphi\rangle.
\end{displaymath}
From the formula (\ref{greens_function}) for Green's function it follows that
\begin{displaymath}
  0=\Re \int_{M\setminus\{p\}} \big\langle \big(D^g-\lambda\big)\frac{d\psi_t}{dt}\big|_{t=0}, G^g_{\lambda}(.,p)\varphi \big\rangle \dv^g 
  +\Re \big\langle P\big(\frac{d\psi_t}{dt}\big|_{t=0}\big)(p), \varphi \big\rangle.
\end{displaymath}
Since $\lambda$ is a simple eigenvalue, all spinors in $\ker(D^g-\lambda)$ 
vanish at $p$. 
Thus the last term vanishes. 
By (\ref{green_function2}) and (\ref{derivative_of_eigenspinor_equation}) we have
\begin{eqnarray*}
  0&=&-\Re \int_{M\setminus\{p\}} \big\langle \big(\frac{d}{dt}D^{g,g_t}\big|_{t=0}
  -\frac{d\lambda_t}{dt}\big|_{t=0}\big)\psi, G^g_{\lambda}(.,p)\varphi \big\rangle \dv^g\\
  &=&-\Re \int_{M\setminus\{p\}} \big\langle \frac{d}{dt}D^{g,g_t}\big|_{t=0}\psi, G^g_{\lambda}(.,p)\varphi \big\rangle \dv^g
\end{eqnarray*}
for all $f\in C^{\infty}(M,\R)$. 
If we use the formula (\ref{dirac_derivative_conform}) 
for the derivative of the Dirac operator and
\begin{displaymath}
  \grad^g(f)\cdot\psi=(D^g-\lambda)(f\psi)
\end{displaymath}
it follows that
\begin{eqnarray*}
  0&=&\frac{1}{2}\,\Re \int_{M\setminus\{p\}} \lambda f \big\langle \psi, G^g_{\lambda}(.,p)\varphi \big\rangle \dv^g\\
  &&{}+\frac{1}{4}\,\Re \int_{M\setminus\{p\}} \big\langle (D^g-\lambda)(f\psi), G^g_{\lambda}(.,p)\varphi \big\rangle \dv^g.
\end{eqnarray*}
Using the definition of Green's function and that all spinors in $\ker(D^g-\lambda)$ 
vanish at $p$, we find that
\begin{eqnarray*}
  0&=&\frac{1}{2}\,\Re \int_{M\setminus\{p\}} \lambda f \big\langle \psi, G^g_{\lambda}(.,p)\varphi \big\rangle \dv^g
  +\frac{1}{4}\,\Re \big\langle (f\psi)(p)-P(f\psi)(p),\varphi \big\rangle\\
  &=&\frac{1}{2}\,\Re \int_{M\setminus\{p\}} \lambda f \big\langle\psi, G^g_{\lambda}(.,p)\varphi \big\rangle \dv^g
\end{eqnarray*}
for all $f\in C^{\infty}(M,\R)$. Since we have $\lambda\neq0$ it follows that 
$\Re\langle\psi,G^g_{\lambda}(.,p)\varphi\rangle$ vanishes identically on $M\setminus\{p\}$. 

Our aim is now to conclude that all the derivatives of $\psi$ at the point $p$ vanish. 
Then by Theorem \ref{unique_cont} it follows that $\psi$ is identically zero, which is a contradiction. 
In order to show this we choose a local parametrization $\rho$: $V\to U$ of $M$ 
by Riemannian normal coordinates, where $U\subset M$ is an open neighborhood of $p$, 
$V\subset\R^n$ is an open neighborhood of $0$ and $\rho(0)=p$. 
Furthermore let 
\begin{displaymath}
  \beta:\quad \Sigma\R^n|_V\to\Sigma^gM|_U,\quad 
  A:\quad \spinor{M|_U}{g}{\infty}\to\spinor{\R^n|_V}{}{\infty}
\end{displaymath}
denote the maps which send a spinor to its 
corresponding spinor in the Bourguignon-Gauduchon trivialization 
defined in Section \ref{bourg-triv_section}.
We show by induction that $\nabla^r A\psi(0)=0$ for all $r\in\N$, where $\nabla$ 
denotes the covariant derivative on $\Sigma\R^n$. 
The case $r=0$ is clear. 

Let $r\geq1$ and assume that we have $\nabla^s A\psi(0)=0$ for all $s\leq r-1$. 
Let $(E_i)_{i=1}^n$ be the standard basis of $\R^n$. 
First consider the case $n=2$. 
In the Bourguignon-Gauduchon trivialization we have
\begin{displaymath}
  A\psi(x)=\frac{1}{r!}\sum_{j_1,...,j_r=1}^2 x_{j_1}...x_{j_r}\nabla_{E_{j_1}}...\nabla_{E_{j_r}}A\psi(0)+O(|x|^{r+1})
\end{displaymath}
by Taylor's formula and
\begin{displaymath}
  A (G^g_{\lambda}(.,p)\varphi)(x)=-\frac{1}{2\pi|x|^2} x\cdot\gamma-\frac{\lambda}{2\pi}\ln|x|\gamma+O(|x|^0)
\end{displaymath}
by Theorem \ref{theorem_green}, where $\gamma:=\beta^{-1}\varphi\in\Sigma_n$ is the constant spinor 
on $\R^n$ corresponding to $\varphi$. It follows that
\begin{eqnarray*}
  0&=&-2\pi r!|x|^2 \Re\langle A (G^g_{\lambda}(.,p)\varphi)(x),A\psi(x)\rangle\\
  &=&\sum_{i,j_1,...,j_r=1}^2 x_ix_{j_1}...x_{j_r}
  \Re\langle E_i\cdot\gamma,\nabla_{E_{j_1}}...\nabla_{E_{j_r}}A\psi(0) \rangle\\
  &&{}+\lambda\sum_{j_1,...,j_r=1}^2 x_{j_1}...x_{j_r}|x|^2\ln|x|
  \Re\langle\gamma,\nabla_{E_{j_1}}...\nabla_{E_{j_r}}A\psi(0)\rangle+O(|x|^{r+2})
\end{eqnarray*}
and therefore
\begin{eqnarray}
  \nonumber
  0&=&\Re\langle E_i\cdot\gamma,\nabla_{E_{j_1}}...\nabla_{E_{j_r}}A\psi(0) \rangle\\
  \label{taylor_dim2_1}
  &&{}+\sum_{s=1}^r \Re\langle E_{j_s}\cdot\gamma,\nabla_{E_i}\nabla_{E_{j_1}}...\widehat{\nabla_{E_{j_s}}}...\nabla_{E_{j_r}}A\psi(0) \rangle\\
  \label{taylor_dim2_2}
  0&=&\Re\langle\gamma,\nabla_{E_{j_1}}...\nabla_{E_{j_r}}A\psi(0)\rangle
\end{eqnarray}
for all $j_1,...,j_r,i\in\{1,2\}$, where the hat means that the operator is left out. 
By (\ref{taylor_dim2_2}) and Remark \ref{remark_real_basis} 
there exist $a_{j_1,...,j_r,k}$, $b_{j_1,...,j_r}\in\R$ such that 
\begin{displaymath}
  \nabla_{E_{j_1}}...\nabla_{E_{j_r}}A\psi(0)=\sum_{k=1}^2 a_{j_1,...,j_r,k}E_k\cdot\gamma
  +b_{j_1,...,j_r}E_1\cdot E_2\cdot\gamma.
\end{displaymath}
Observe that the coefficients $a_{j_1,...,j_r,k}$ are symmetric in the first $r$ indices. 
We insert this into (\ref{taylor_dim2_1}) and we obtain
\begin{equation}
  \label{taylor_dim2_3}
  0=a_{j_1,...,j_r,i}+\sum_{k=1}^r a_{i,j_1,...,\widehat{j_k},...,j_r,j_k}
\end{equation}
for all $j_1,...,j_r,i\in\{1,2\}$. On the other hand since $\psi\in\ker(D^g-\lambda)$ 
we find using the induction hypothesis 
\begin{eqnarray}
  \nonumber
  0&=&\lambda\nabla_{E_{j_1}}...\nabla_{E_{j_{r-1}}}A\psi(0)\\
  \nonumber
  &=&\nabla_{E_{j_1}}...\nabla_{E_{j_{r-1}}}\sum_{i=1}^2 E_i\cdot\nabla_{E_i}A\psi(0)\\
  \nonumber
  &=&\sum_{i,k=1}^2 a_{j_1,...,j_{r-1},i,k}E_i\cdot E_k\cdot\gamma
  +\sum_{i=1}^2 b_{j_1,...,j_{r-1},i}E_i\cdot E_1\cdot E_2\cdot\gamma\\
  \nonumber
  &=&-(a_{j_1,...,j_{r-1},1,1}+a_{j_1,...,j_{r-1},2,2})\gamma\\
  \nonumber
  &&{}+(a_{j_1,...,j_{r-1},1,2}-a_{j_1,...,j_{r-1},2,1})E_1\cdot E_2\cdot\gamma\\
  \label{taylor_dim2_4}
  &&{}+b_{j_1,...,j_{r-1},2}E_1\cdot\gamma-b_{j_1,...,j_{r-1},1}E_2\cdot\gamma
\end{eqnarray}
for all $j_1,...,j_{r-1}\in\{1,2\}$. We conclude $b_{j_1,...,j_r}=0$ 
for all $j_1,...,j_r\in\{1,2\}$. 
Next consider $a_{j_1,...,j_r,i}$ with fixed $j_1,...,j_r,i\in\{1,2\}$. 
If we have $j_k=i$ for all $k\in\{1,...,r\}$, then by (\ref{taylor_dim2_3}) we know that 
$a_{j_1,...,j_r,i}=0$. If there exists $k$ such that $j_k\neq i$ it follows from 
the coefficient of $E_1\cdot E_2\cdot\gamma$ in (\ref{taylor_dim2_4}) that
\begin{displaymath}
  a_{i,j_1,...\widehat{j_k}...,j_r,j_k}=a_{j_1,...,j_r,i}.
\end{displaymath}
Again (\ref{taylor_dim2_3}) yields $a_{j_1,...,j_r,i}=0$. 
We conclude that all $a_{j_1,...,j_r,i}$ vanish and that 
$\nabla^r A\psi(0)=0$. This proves the assertion in the case $n=2$.

Next consider $n=3$. 
In the Bourguignon-Gauduchon trivialization we have
\begin{eqnarray*}
  A\psi(x)&=&\frac{1}{r!}\sum_{j_1,...,j_r=1}^3 x_{j_1}...x_{j_r}\nabla_{E_{j_1}}...\nabla_{E_{j_r}}A\psi(0)\\
  &&{}+\frac{1}{(r+1)!}\sum_{j_1,...,j_r,i=1}^3 x_{j_1}...x_{j_r}x_i\nabla_{E_{j_1}}...\nabla_{E_{j_r}}\nabla_{E_i}A\psi(0)+o(|x|^{r+1})
\end{eqnarray*}
by Taylor's formula and
\begin{displaymath}
  A (G^g_{\lambda}(.,p)\varphi)(x)=-\frac{1}{4\pi|x|^3}x\cdot\gamma+\frac{\lambda}{4\pi|x|}\gamma+o(|x|^{-s})
\end{displaymath}
for every $s>0$ by Theorem \ref{theorem_green}, where $\gamma$ is as above. It follows that
\begin{eqnarray}
  \nonumber
  0&=&-4\pi r!|x|^3 \Re\langle A (G^g_{\lambda}(.,p)\varphi)(x),A\psi(x)\rangle\\
  \nonumber
  &=&\sum_{i,j_1,...,j_r=1}^3 x_{j_1}...x_{j_r}x_i
  \Re\langle E_i\cdot\gamma,\nabla_{E_{j_1}}...\nabla_{E_{j_r}}A\psi(0) \rangle\\
  \nonumber
  &&{}+\frac{1}{r+1}\sum_{i,j_1,...,j_r,m=1}^3 x_{j_1}...x_{j_r}x_i x_m
  \Re\langle E_i\cdot\gamma,\nabla_{E_{j_1}}...\nabla_{E_{j_r}}\nabla_{E_m}A\psi(0) \rangle\\
  \label{taylor_dim3}
  &&{}-\lambda\sum_{j_1,...,j_r=1}^3 x_{j_1}...x_{j_r}|x|^2
  \Re\langle\gamma,\nabla_{E_{j_1}}...\nabla_{E_{j_r}}A\psi(0)\rangle+o(|x|^{r+2}).
\end{eqnarray}
From the first term on the right hand side we obtain
\begin{eqnarray}
  \nonumber
  0&=&\Re\langle E_i\cdot\gamma,\nabla_{E_{j_1}}...\nabla_{E_{j_r}}A\psi(0) \rangle\\
  \label{taylor_dim3_1}
  &&{}+\sum_{s=1}^r \Re\langle E_{j_s}\cdot\gamma,\nabla_{E_i}\nabla_{E_{j_1}}...\widehat{\nabla_{E_{j_s}}}...\nabla_{E_{j_r}}A\psi(0) \rangle.
\end{eqnarray}
for all $j_1,...,j_r,i\in\{1,2,3\}$, where the hat means that the operator is left out. 
Our next aim is to obtain an analogue of (\ref{taylor_dim2_2}) from the second and third term on 
the right hand side of (\ref{taylor_dim3}). It is more difficult than in the case $n=2$, 
since derivatives of both orders $r$ and $r+1$ appear. 
The equation (\ref{dirac_triv}) reads
\begin{eqnarray*}
  \lambda A\psi&=&D^{\geucl}A\psi
  +\sum_{i,j=1}^3 (B^j_i-\delta^j_i)E_i\cdot\nabla_{E_j}A\psi\\
  &&{}+\frac{1}{4}\sum_{i,j,k=1}^3 \widetilde{\Gamma}^k_{ij} E_i\cdot E_j\cdot E_k\cdot A\psi.
\end{eqnarray*}
Using (\ref{B-expansion}), (\ref{gammatilde}) and that $|A\psi(x)|_{\geucl}=O(|x|^r)$ as $x\to0$ we find
\begin{displaymath}
  \lambda A\psi=D^{\geucl}A\psi+O(|x|^{r+1})
\end{displaymath}
and therefore
\begin{equation}
  \label{taylor_dim3_2}
  \nabla_{E_{j_1}}...\nabla_{E_{j_r}}D^{\geucl}A\psi(0)
  =\lambda\nabla_{E_{j_1}}...\nabla_{E_{j_r}}A\psi(0)
\end{equation}
for all $j_1,...,j_r\in\{1,2,3\}$. Using the equation (\ref{nabla_triv}) we find
\begin{displaymath}
  A\nabla^g_{e_i}\psi=\nabla_{E_i}A\psi+O(|x|^{r+1}),\quad
  A\nabla^g_{e_i}\nabla^g_{e_j}\psi=\nabla_{E_i}\nabla_{E_j}A\psi+O(|x|^r)
\end{displaymath}
for all $i,j\in\{1,2,3\}$. Since by definition $d\rho|_x^{-1}(e_i)=E_i+O(|x|^2)$ 
the second term in the local formula (\ref{laplacian_local}) for $\nabla^*\nabla$ 
vanishes at $p$ and therefore we find
\begin{displaymath}
  A\nabla^*\nabla\psi=-\sum_{i=1}^3 \nabla_{E_i}\nabla_{E_i}A\psi+O(|x|^r).
\end{displaymath}
From the Schr\"odinger-Lichnerowicz formula (\ref{schroed_lichn}) it follows that
\begin{displaymath}
  \lambda^2 A\psi-\frac{\scal}{4}A\psi=-\sum_{i=1}^3\nabla_{E_i}\nabla_{E_i}A\psi+O(|x|^r)
\end{displaymath}
and thus
\begin{equation}
  \label{taylor_dim3_3}
  \nabla_{E_{j_1}}...\nabla_{E_{j_{r-1}}}\sum_{i=1}^3\nabla_{E_i}\nabla_{E_i}A\psi(0)=0
\end{equation}
for all $j_1,...,j_{r-1}\in\{1,2,3\}$. Now recall the second and third term on the right hand side 
of (\ref{taylor_dim3})
\begin{eqnarray*}
  0&=&\frac{1}{r+1}\sum_{j_1,...,j_r,i,m=1}^3 x_{j_1}...x_{j_r}x_i x_m
  \Re\langle E_i\cdot\gamma,\nabla_{E_{j_1}}...\nabla_{E_{j_r}}\nabla_{E_m}A\psi(0) \rangle\\
  &&{}-\lambda\sum_{j_1,...,j_r=1}^3 x_{j_1}...x_{j_r}|x|^2
  \Re\langle\gamma,\nabla_{E_{j_1}}...\nabla_{E_{j_r}}A\psi(0)\rangle
\end{eqnarray*}
and let $k_1$, $k_2$, $k_3\in\N$ such that $k_1+k_2+k_3=r$. Then from the coefficient of 
$x_1^{k_1+2}x_2^{k_2}x_3^{k_3}$ we find
\begin{eqnarray*}
  0&=&\Re\langle E_1\cdot\gamma,\nabla_{E_1}^{k_1}\nabla_{E_2}^{k_2}\nabla_{E_3}^{k_3}\nabla_{E_1}A\psi(0)\rangle
  \frac{r!}{(k_1+1)!k_2!k_3!} \hspace{4 em} (I)\\
  &&{}+\Re\langle E_2\cdot\gamma,\nabla_{E_1}^{k_1}\nabla_{E_2}^{k_2-1}\nabla_{E_3}^{k_3}\nabla_{E_1}^2 A\psi(0)\rangle
  \frac{r!k_2}{(k_1+2)!k_2!k_3!} \hspace{2 em} (III)\\
  &&{}+\Re\langle E_3\cdot\gamma,\nabla_{E_1}^{k_1}\nabla_{E_2}^{k_2}\nabla_{E_3}^{k_3-1}\nabla_{E_1}^2 A\psi(0)\rangle
  \frac{r!k_3}{(k_1+2)!k_2!k_3!} \hspace{2 em}(IV)\\
  &&{}-\lambda\Re\langle\gamma,\nabla_{E_1}^{k_1}\nabla_{E_2}^{k_2}\nabla_{E_3}^{k_3}A\psi(0)\rangle
  \frac{r!}{k_1!k_2!k_3!} \hspace{8.7 em} (V)\\
  &&{}-\lambda\Re\langle\gamma,\nabla_{E_1}^{k_1}\nabla_{E_2}^{k_2-2}\nabla_{E_3}^{k_3}\nabla_{E_1}^2 A\psi(0)\rangle
  \frac{r!k_2(k_2-1)}{(k_1+2)!k_2!k_3!} \hspace{3.4 em} (VII)\\
  &&{}-\lambda\Re\langle\gamma,\nabla_{E_1}^{k_1}\nabla_{E_2}^{k_2}\nabla_{E_3}^{k_3-2}\nabla_{E_1}^2 A\psi(0)\rangle
  \frac{r!k_3(k_3-1)}{(k_1+2)!k_2!k_3!} \hspace{3.2 em} (VIII).\\
\end{eqnarray*}
From the coefficient of $x_1^{k_1}x_2^{k_2+2}x_3^{k_3}$ we find
\begin{eqnarray*}
  0&=&\Re\langle E_1\cdot\gamma,\nabla_{E_1}^{k_1-1}\nabla_{E_2}^{k_2}\nabla_{E_3}^{k_3}\nabla_{E_2}^2 A\psi(0)\rangle
  \frac{r!k_1}{k_1!(k_2+2)!k_3!} \hspace{3.4 em} (II)\\
  &&{}+\Re\langle E_2\cdot\gamma,\nabla_{E_1}^{k_1}\nabla_{E_2}^{k_2}\nabla_{E_3}^{k_3}\nabla_{E_2} A\psi(0)\rangle
  \frac{r!}{k_1!(k_2+1)!k_3!} \hspace{3.1 em} (I)\\
  &&{}+\Re\langle E_3\cdot\gamma,\nabla_{E_1}^{k_1}\nabla_{E_2}^{k_2}\nabla_{E_3}^{k_3-1}\nabla_{E_2}^2 A\psi(0)\rangle
  \frac{r!k_3}{k_1!(k_2+2)!k_3!} \hspace{2.2 em} (IV)\\
  &&{}-\lambda\Re\langle\gamma,\nabla_{E_1}^{k_1-2}\nabla_{E_2}^{k_2}\nabla_{E_3}^{k_3}\nabla_{E_2}^2 A\psi(0)\rangle
  \frac{r!k_1(k_1-1)}{k_1!(k_2+2)!k_3!} \hspace{3.5 em} (VI)\\
  &&{}-\lambda\Re\langle\gamma,\nabla_{E_1}^{k_1}\nabla_{E_2}^{k_2}\nabla_{E_3}^{k_3} A\psi(0)\rangle
  \frac{r!}{k_1!k_2!k_3!} \hspace{8.7 em} (V)\\
  &&{}-\lambda\Re\langle\gamma,\nabla_{E_1}^{k_1}\nabla_{E_2}^{k_2}\nabla_{E_3}^{k_3-2}\nabla_{E_2}^2 A\psi(0)\rangle
  \frac{r!k_3(k_3-1)}{k_1!(k_2+2)!k_3!} \hspace{3.6 em} (VIII).\\
\end{eqnarray*}
From the coefficient of $x_1^{k_1}x_2^{k_2}x_3^{k_3+2}$ we find
\begin{eqnarray*}
  0&=&\Re\langle E_1\cdot\gamma,\nabla_{E_1}^{k_1-1}\nabla_{E_2}^{k_2}\nabla_{E_3}^{k_3}\nabla_{E_3}^2 A\psi(0)\rangle
  \frac{r!k_1}{k_1!k_2!(k_3+2)!} \hspace{3.5 em} (II)\\
  &&{}+\Re\langle E_2\cdot\gamma,\nabla_{E_1}^{k_1}\nabla_{E_2}^{k_2-1}\nabla_{E_3}^{k_3}\nabla_{E_3}^2 A\psi(0)\rangle
  \frac{r!k_2}{k_1!k_2!(k_3+2)!} \hspace{2.2 em} (III)\\
  &&{}+\Re\langle E_3\cdot\gamma,\nabla_{E_1}^{k_1}\nabla_{E_2}^{k_2}\nabla_{E_3}^{k_3}\nabla_{E_3} A\psi(0)\rangle
  \frac{r!}{k_1!k_2!(k_3+1)!} \hspace{3.2 em} (I)\\
  &&{}-\lambda\Re\langle\gamma,\nabla_{E_1}^{k_1-2}\nabla_{E_2}^{k_2}\nabla_{E_3}^{k_3}\nabla_{E_3}^2 A\psi(0)\rangle
  \frac{r!k_1(k_1-1)}{k_1!k_2!(k_3+2)!} \hspace{3.5 em} (VI)\\
  &&{}-\lambda\Re\langle\gamma,\nabla_{E_1}^{k_1}\nabla_{E_2}^{k_2-2}\nabla_{E_3}^{k_3}\nabla_{E_3}^2 A\psi(0)\rangle
  \frac{r!k_2(k_2-1)}{k_1!k_2!(k_3+2)!} \hspace{3.6 em} (VII)\\
  &&{}-\lambda\Re\langle\gamma,\nabla_{E_1}^{k_1}\nabla_{E_2}^{k_2}\nabla_{E_3}^{k_3} A\psi(0)\rangle
  \frac{r!}{k_1!k_2!k_3!} \hspace{8.7 em} (V).
\end{eqnarray*}
We multiply the first equation with $\frac{(k_1+2)!k_2!k_3!}{r!}$, the second equation with $\frac{k_1!(k_2+2)!k_3!}{r!}$ 
and the third equation with $\frac{k_1!k_2!(k_3+2)!}{r!}$ and then add the multiplied equations. 
If we consider the lines with the same Roman numbers separately and use (\ref{taylor_dim3_2}), (\ref{taylor_dim3_3}), 
then we find
\begin{eqnarray*}
  0&=&-2\lambda\Re\langle\gamma,\nabla_{E_1}^{k_1}\nabla_{E_2}^{k_2}\nabla_{E_3}^{k_3}A\psi(0)\rangle\\
  &&{}+\Re\langle E_1\cdot\gamma,\nabla_{E_1}^{k_1}\nabla_{E_2}^{k_2}\nabla_{E_3}^{k_3}\nabla_{E_1}A\psi(0)\rangle k_1\\
  &&{}+\Re\langle E_2\cdot\gamma,\nabla_{E_1}^{k_1}\nabla_{E_2}^{k_2}\nabla_{E_3}^{k_3}\nabla_{E_2}A\psi(0)\rangle k_2\\
  &&{}+\Re\langle E_3\cdot\gamma,\nabla_{E_1}^{k_1}\nabla_{E_2}^{k_2}\nabla_{E_3}^{k_3}\nabla_{E_3}A\psi(0)\rangle k_3\hspace{7 em} (I)\\
  &&{}-\Re\langle E_1\cdot\gamma,\nabla_{E_1}^{k_1}\nabla_{E_2}^{k_2}\nabla_{E_3}^{k_3}\nabla_{E_1}A\psi(0)\rangle k_1\hspace{7 em} (II)\\
  &&{}-\Re\langle E_2\cdot\gamma,\nabla_{E_1}^{k_1}\nabla_{E_2}^{k_2}\nabla_{E_3}^{k_3}\nabla_{E_2}A\psi(0)\rangle k_2\hspace{7 em} (III)\\
  &&{}-\Re\langle E_3\cdot\gamma,\nabla_{E_1}^{k_1}\nabla_{E_2}^{k_2}\nabla_{E_3}^{k_3}\nabla_{E_3}A\psi(0)\rangle k_3\hspace{7 em} (IV)\\
  &&{}-\lambda\Re\langle\gamma,\nabla_{E_1}^{k_1}\nabla_{E_2}^{k_2}\nabla_{E_3}^{k_3}A\psi(0)\rangle\sum_{i=1}^3(k_i+2)(k_i+1)\hspace{3 em} (V)\\
  &&{}+\lambda\Re\langle\gamma,\nabla_{E_1}^{k_1}\nabla_{E_2}^{k_2}\nabla_{E_3}^{k_3}A\psi(0)\rangle k_1(k_1-1)\hspace{6.8 em} (VI)\\
  &&{}+\lambda\Re\langle\gamma,\nabla_{E_1}^{k_1}\nabla_{E_2}^{k_2}\nabla_{E_3}^{k_3}A\psi(0)\rangle k_2(k_2-1)\hspace{6.8 em} (VII)\\
  &&{}+\lambda\Re\langle\gamma,\nabla_{E_1}^{k_1}\nabla_{E_2}^{k_2}\nabla_{E_3}^{k_3}A\psi(0)\rangle k_3(k_3-1)\hspace{6.8 em} (VIII).
\end{eqnarray*}
Therefore we obtain the analogue of (\ref{taylor_dim2_2}) namely
\begin{displaymath}
  \Re\langle \gamma,\nabla_{E_{j_1}}...\nabla_{E_{j_r}}A\psi(0)\rangle=0
\end{displaymath}
for all $j_1,...,j_r\in\{1,2,3\}$. 
Thus there exist $a_{j_1,...,j_r,k}\in\R$ such that 
\begin{displaymath}
  \nabla_{E_{j_1}}...\nabla_{E_{j_r}}A\psi(0)=\sum_{k=1}^3 a_{j_1,...,j_r,k}E_k\cdot\gamma.
\end{displaymath}
Observe that the coefficients $a_{j_1,...,j_r,k}$ are symmetric in the first $r$ indices. 
We insert this into (\ref{taylor_dim3_1}) and we obtain
\begin{equation}
  \label{taylor_dim3_4}
  0=a_{j_1,...,j_r,i}+\sum_{k=1}^r a_{i,j_1,...,\widehat{j_k},...,j_r,j_k}
\end{equation}
for all $j_1,...,j_r,i\in\{1,2,3\}$. On the other hand since $\psi\in\ker(D^g-\lambda)$ 
we find using the induction hypothesis
\begin{eqnarray}
  \nonumber
  0&=&\lambda\nabla_{E_{j_1}}...\nabla_{E_{j_{r-1}}}A\psi(0)\\
  \nonumber
  &=&\nabla_{E_{j_1}}...\nabla_{E_{j_{r-1}}}\sum_{i=1}^3 E_i\cdot\nabla_{E_i}A\psi(0)\\
  \nonumber
  &=&\sum_{i,k=1}^3 a_{j_1,...,j_{r-1},i,k}E_i\cdot E_k\cdot\gamma\\
  \nonumber
  &=&-\sum_{i=1}^3 a_{j_1,...,j_{r-1},i,i}\gamma\\
  \label{taylor_dim3_5}
  &&{}+\sum_{i,k=1\atop_{i<k}}^3 (a_{j_1,...,j_{r-1},i,k}-a_{j_1,...,j_{r-1},k,i})E_i\cdot E_k\cdot\gamma
\end{eqnarray}
for all $j_1,...,j_{r-1}\in\{1,2,3\}$. Consider $a_{j_1,...,j_r,i}$ with 
$j_1,...,j_r,i\in\{1,2,3\}$. 
If $j_k=i$ for all $k\in\{1,...,r\}$ then by (\ref{taylor_dim3_4}) we know that 
$a_{j_1,...,j_r,i}=0$. If there exists $k$ such that $j_k\neq i$ it follows from 
the coefficient of $E_{j_k}\cdot E_i\cdot\gamma$ in (\ref{taylor_dim3_5}) that
\begin{displaymath}
  a_{i,j_1,...\widehat{j_k}...,j_r,j_k}=a_{j_1,...,j_r,i}.
\end{displaymath}
Again (\ref{taylor_dim3_4}) yields $a_{j_1,...,j_r,i}=0$. 
We conclude that all $a_{j_1,...,j_r,i}$ vanish and that 
$\nabla^r A\psi(0)=0$. This proves the assertion 
in the case $n=3$.
\end{proof}

\begin{remark}
\label{remark_higher_dim}
It is not clear how to prove this lemma for $n\geq4$. Namely the condition 
$\Re\langle\gamma,\nabla_{E_i}A\psi(0)\rangle=0$ for all $i$ leads to
\begin{displaymath}
  \nabla_{E_i}A\psi(0)=\sum_{k=1}^n a_{ik}E_k\cdot\gamma+\sum_{k=1}^n b_{ik}\cdot\gamma
\end{displaymath}
with $a_{ik}\in\R$ and $b_{ik}\in\Cl(n)$. As above it follows that $a_{ik}=-a_{ki}$ 
for all $i$, $k$ and furthermore
\begin{eqnarray*}
  0&=&\lambda A\psi(0)\\
  &=&\sum_{i=1}^n E_i\cdot\nabla_{E_i}A\psi(0)\\
  &=&2\sum_{i,k=1\atop{i<k}}^n a_{ik}E_i\cdot E_k\cdot\gamma+\sum_{i,k=1}^n E_i\cdot b_{ik}\cdot\gamma.
\end{eqnarray*}
But for $n\geq4$ the spinors $E_1\cdot E_2\cdot\gamma$ and $E_3\cdot E_4\cdot\gamma$ 
are not linearly independent in general. 
Thus we cannot conclude immediately that all the $a_{ik}$ vanish.
\end{remark}

\begin{proof}[Proof of Theorem \ref{theorem_no_zeros}]
Let $k,m\in\N\setminus\{0\}$, let $g\in R(M)$ 
and equip $[g]$ with the $C^k$-topology. 
Let $U\subset [g]$ be open. 
In order to show that $N_m(M)\cap[g]$ is dense in $[g]$ 
we have to show that $U\cap N_m(M)$ is not empty. 
Since $S_m(M)\cap[g]$ is dense in $[g]$, 
there exists a metric in $U\cap S_m(M)$, 
which we denote again by $g$. 
Let $\lambda$ be one of the 
eigenvalues $\{\lambda^{\pm}_1,...,\lambda^{\pm}_m\}$ of $D^g$ 
and let $\psi$ be an eigenspinor corresponding to $\lambda$. 
Choose an open neighborhood $V\subset [g]$ of $g$ 
which is contained in $U\cap S_m(M)$ and define 
\begin{displaymath}
  F_{\psi}:\quad V\to\spinor{M}{g}{\infty}
\end{displaymath}
as in Lemma \ref{lemma_F_psi}

Next we show that a suitable restriction of $F_{\psi}^{ev}$ is 
transverse to the zero section of $\Sigma^gM$. 
Let $p\in M$ with $\psi(p)=0$. By Lemma \ref{lemma_transverse} 
there exists an open neighborhood $U_p\subset M$ of $p$ 
and $f_{p,1},...,f_{p,4}\in C^{\infty}(M,\R)$ 
and an open neighborhood $V_p\subset V_{f_{p,1}...f_{p,4}}$ 
of $g$ such that for all $(h,q)\in V_p\times U_p$ 
we have
\begin{equation}
  \label{pr_1}
  \pi_1(dF_{\psi}^{ev}|_{(h,q)}(T_hV_{f_{p,1}...f_{p,4}}\oplus\{0\}))=\Sigma^g_q M.
\end{equation}
Since the zero set of $\psi$ is compact, there exist 
points $p_1$,...,$p_r\in M$ and open neighborhoods $U_{p_i}\subset M$ 
of $p_i$ and $f_{p_i,1},...,f_{p_i,4}\in C^{\infty}(M,\R)$ 
and open neighborhoods $V_{p_i}\subset V_{f_{p_i,1}...f_{p_i,4}}$ of $g$, $1\leq i\leq r$, 
such that the open neighborhoods $U_{p_1}$,...,$U_{p_r}$ cover the zero set 
of $\psi$ and such that for every $i$ the equation (\ref{pr_1})
holds for all $(h,q)\in V_{p_i}\times U_{p_i}$. 
We label the functions $f_{p_i,j}$ by $f_1,...,f_{4r}$ 
and define 
\begin{displaymath}
  V_{f_1...f_{4r}}:=\Big\{\Big(1+\sum_{i=1}^{4r} t_i f_i\Big)g\Big| t_i\in\R\Big\}\cap V.
\end{displaymath}
and $F_{\psi}$: $V_{f_1...f_{4r}}\to\spinor{M}{g}{\infty}$ as in Lemma \ref{lemma_F_psi}. 
Since $M$ is compact there exists 
$C>0$ such that $|\psi|_g\geq C$ on the complement of the union of the 
sets $U_{p_i}$. Thus we can find an open neighborhood $V_{\psi}\subset V_{f_1...f_{4r}}$ 
of $g$ such that the equation (\ref{pr_1}) holds 
for all $(h,q)\in V_{\psi}\times M$. 
It follows that the restriction of $F_{\psi}^{ev}$ to $V_{\psi}\times M$ 
is transverse to the zero section of $\Sigma^gM$. 

Define $W_{\psi}$ as the subset of all $h\in V_{\psi}$ 
such that $F_{\psi}(h)$ is nowhere zero on $M$. 
By Remark \ref{remark_transverse} this condition is 
equivalent to the condition that 
$F_{\psi}(h)$ is transverse to the zero section of $\Sigma^gM$. 
By Theorem \ref{param_transvers} 
the set $W_{\psi}$ is dense in $V_{\psi}$. 
Since the zero section is closed in $\Sigma^gM$ and $F_{\psi}^{ev}$ 
is continuous, the set $W_{\psi}$ is also open in $V_{\psi}$. 
If $h\in W_{\psi}$, then the eigenspinor $F_{\psi}(h)$ 
of $D^{g,h}$ is nowhere zero on $M$ and it corresponds to a 
simple eigenvalue of $D^{g,h}$. 
Thus all the eigenspinors of $D^{g,h}$ corresponding to this eigenvalue 
are nowhere zero on $M$.

For every one of the finitely many eigenvalues
$\lambda^{\pm}_1,...,\lambda^{\pm}_m$ of $D^g$ 
we choose an eigenspinor $\psi$ 
and obtain an open subset $W_{\psi}\subset V$ as above. 
Let $W$ be the intersection of these open subsets $W_{\psi}$. 
It is not empty, since the $W_{\psi}$ are dense in a neighborhood of $g$. 
If $h\in W$, then all the eigenspinors of $D^h$ corresponding to 
the eigenvalues $\lambda^{\pm}_1,...,\lambda^{\pm}_m$ of $D^h$ 
are nowhere zero on $M$. 
Since $W\subset U$ by construction, we have $U\cap N_m(M)\neq\emptyset$. 
Thus $N_m(M)\cap[g]$ is dense in $[g]$. 
We have already seen that $N_m(M)\cap[g]$ is open in $[g]$. 
\end{proof}

\section{Mass endomorphism in dimension $3$}
\label{gen_massendo_section}

Let $M$ be a closed spin manifold of dimension $n$ 
with a fixed spin structure and let 
$R(M)$ be the set of all smooth Riemannian metrics on $M$ 
equipped with the $C^1$-topology. We define 
\begin{displaymath}
  R^*(M):=\{g\in R(M)|\ker(D^g)=0\}.
\end{displaymath}
It follows from the Atiyah-Singer index theorem that there exist spin manifolds $M$ 
and spin structures $\Theta$ on $M$ such that for every Riemannian metric $g$ on $M$ 
there exist harmonic spinors on $(M,g,\Theta)$ (see e.\,g.\,\cite{lm}, Section 3 in \cite{bd}). 
This shows that $R^*(M)$ can be empty. 
Let $p\in M$ and define 
\begin{displaymath}
  R_p(M):=\{g\in R^*(M)|g\textrm{ is flat on an open neighborhood of }p\}.
\end{displaymath}
We consider the case $n=3$. Then by Theorem 1.2 in \cite{m} for every spin structure the 
subset $R^*(M)$ of $R(M)$ is generic. 
Let $g$ be a Riemannian metric on $M$, which is flat on an open neighborhood of $p$. 
It is shown in \cite{adh2} that by changing the metric $g$ on an arbitrarily small 
open subset of $M$ one obtains a metric in $R^*(M)$. 
Thus for every closed spin manifold $M$ of dimension $3$ with a fixed spin structure 
the set $R_p(M)$ is not empty for every $p\in M$. 
For every $g\in R_p(M)$ we define the mass endomorphism $m^g_p\in\End(\Sigma^g_pM)$ 
as in Section \ref{def_mass_endo_section}. Our aim is now to find Riemannian 
metrics on $M$, for which the mass endomorphism does not vanish. Thus we define 
\begin{displaymath}
  S_p(M):=\{g\in R_p(M)|\,m^g_p\neq0\}.
\end{displaymath}
In this section we prove the following result which has been published in \cite{he}.

\begin{theorem}
\label{mass_endo_theorem}
Let $M$ be a closed spin manifold of dimension $3$ with a fixed spin 
structure and let $p\in M$. Then $S_p(M)$ is dense in $R_p(M)$.
\end{theorem}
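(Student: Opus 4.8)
The plan is to argue by contradiction, in the spirit of Lemma~\ref{lemma_transverse}. Since $R^*(M)$ is open in the $C^1$-topology and $t\mapsto m^{g_t}_p$ is differentiable along the curves used below, it suffices to prove: for $g\in R_p(M)$ with $m^g_p=0$ there is a symmetric $(2,0)$-tensor field $k$ vanishing on a neighborhood of $p$ with $\frac{d}{dt}m^{g+tk}_p\big|_{t=0}\neq0$. Indeed, $g+tk$ then lies in $R_p(M)$ for small $t$, so $g$ is a limit in $R_p(M)$ of metrics with nonvanishing mass endomorphism; as metrics with $m^g_p\neq0$ already lie in $S_p(M)$, density follows. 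So I would assume, for contradiction, that $m^g_p=0$ and $\frac{d}{dt}m^{g+tk}_p\big|_{t=0}=0$ for \emph{every} such $k$.

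First I would fix a ball $B_{2\ep}(p)$ on which $g$ is flat, take Riemannian normal coordinates there and the Bourguignon--Gauduchon trivialization of Section~\ref{bourg-triv_section} (the standard one, since $g$ is flat near $p$). Recall from Section~\ref{def_mass_endo_section} that, with the cut-off $\eta$ supported in $B_{2\ep}(p)$ and $\Phi_1=(A^g)^{-1}(\eta\,G^{\geucl}(\cdot,0)\gamma)$, one has $G^g(\cdot,p)\varphi=(A^g)^{-1}(\eta\,G^{\geucl}(\cdot,0)\gamma)+w^g(\cdot,p)\varphi$ on $M\setminus\{p\}$, with $w^g(\cdot,p)\varphi$ smooth on $M$, $m^g_p\varphi=w^g(p,p)\varphi$, and (using $\ker D^g=0$) $w^g(\cdot,p)\varphi=-(D^g)^{-1}\Psi$ where $\Psi:=D^g\Phi_1$ is smooth and supported in the annulus $\overline{B_{2\ep}(p)}\setminus B_\ep(p)$. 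If $k$ vanishes on $B_{2\ep}(p)$, then for small $t$ the metric $g_t:=g+tk$ lies in $R_p(M)$, the trivialization near $p$ and the spinors $\Phi_1,\Psi$ are unchanged, and (transporting by $\overline{\beta}_{g_t,g}$, which is the identity near $p$) $w^{g_t}(\cdot,p)\varphi=-(D^{g,g_t})^{-1}\Psi$. Differentiating in $t$ at $0$, using $\frac{d}{dt}(D^{g,g_t})^{-1}\big|_{t=0}=-(D^g)^{-1}\big(\frac{d}{dt}D^{g,g_t}\big|_{t=0}\big)(D^g)^{-1}$, the vanishing hypothesis gives
\begin{displaymath}
  \Big[(D^g)^{-1}\Big(\frac{d}{dt}D^{g,g_t}\big|_{t=0}\Big)w^g(\cdot,p)\varphi\Big](p)=0
\end{displaymath}
for all $\varphi\in\Sigma^g_pM$ and all $k$ vanishing near $p$. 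Feeding this into the defining property~(\ref{greens_function}) of Green's function for $D^g$ (the projection term is absent since $\ker D^g=0$), it says precisely that $\int_M\langle\big(\frac{d}{dt}D^{g,g_t}\big|_{t=0}\big)w^g(\cdot,p)\varphi,\ G^g(\cdot,p)\varphi'\rangle\,\dv^g=0$ for all $\varphi,\varphi'$ and all such $k$; the integrand is supported in $\supp k$, away from the singularity of $G^g(\cdot,p)\varphi'$.

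Next I would insert formula~(\ref{dirac_derivative}) for $\frac{d}{dt}D^{g,g_t}\big|_{t=0}$ and integrate by parts (no boundary terms) to move all derivatives off $k$: the identity becomes $\int_M(k,T_{\varphi,\varphi'})\,\dv^g=0$, where $T_{\varphi,\varphi'}$ is a symmetric $(2,0)$-tensor field built pointwise and bilinearly from $w^g(\cdot,p)\varphi$, $G^g(\cdot,p)\varphi'$ and their first covariant derivatives (the polarized energy--momentum tensor of the pair, plus a divergence-type term). Since $k$ ranges over all of $\sym{\infty}$ supported in $M\setminus\overline{B_{2\ep}(p)}$, we conclude $T_{\varphi,\varphi'}\equiv0$ on $\Omega:=M\setminus\overline{B_{2\ep}(p)}$ for all $\varphi,\varphi'$. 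Note that on $\Omega$ we have $\eta\equiv0$, hence $G^g(\cdot,p)\varphi=w^g(\cdot,p)\varphi$ there, and both satisfy $D^g(\cdot)=0$ on $\Omega$ (because $\supp\Psi\subset B_{2\ep}(p)$).

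The final step, which I expect to be the main obstacle, is to turn $T_{\varphi,\varphi'}\equiv0$ on $\Omega$ into a contradiction. Here I would differentiate the identity $T_{\varphi,\varphi'}\equiv0$ repeatedly in the interior of $\Omega$ and combine it with the Dirac equations $D^gw^g(\cdot,p)\varphi=0=D^gG^g(\cdot,p)\varphi'$ there, with the Schr\"odinger--Lichnerowicz formula~(\ref{schroed_lichn}) (which produces the needed second-order recursions), and with the fact that for $n=3$ the spinors $\chi,\,e_1\cdot\chi,\,e_2\cdot\chi,\,e_3\cdot\chi$ form a real basis of each fibre whenever $\chi\neq0$ (Remark~\ref{remark_real_basis}); arguing exactly as in the induction of Lemma~\ref{lemma_transverse} one obtains that $w^g(\cdot,p)\varphi$ vanishes, together with all its derivatives, at some point of $\Omega$ for a suitable $\varphi\neq0$, hence on a nonempty open subset of $\Omega$. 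Since $G^g(\cdot,p)\varphi=w^g(\cdot,p)\varphi$ on $\Omega$ and $D^g$ is of the form $\nabla^*\nabla+(\text{lower order})$ after passing to $(D^g)^2$, the unique continuation Theorem~\ref{unique_cont} forces $G^g(\cdot,p)\varphi\equiv0$ on $M\setminus\{p\}$; but a Green's function has a nontrivial $|x|^{1-n}$ singularity at $p$ (Theorem~\ref{theorem_green}), so it cannot vanish identically. This contradiction proves Theorem~\ref{mass_endo_theorem}. The delicate difference from Lemma~\ref{lemma_transverse} is that $T_{\varphi,\varphi'}$ is bilinear in two spinors that are \emph{a priori} distinct on $\Omega$, and that the vanishing and the Taylor-coefficient bookkeeping take place at an interior point of $\Omega$ rather than at the singular point $p$; so one must either symmetrize (taking $\varphi=\varphi'$, where $T$ is the genuine energy--momentum tensor) or carry the whole combinatorial argument through for a pair of harmonic spinors.
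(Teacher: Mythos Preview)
Your variational setup is essentially the paper's Lemma~\ref{dtm_lemma}: from the vanishing of the first variation you correctly obtain (taking $\varphi=\varphi'$) that the energy--momentum tensor $Q_{G^g(\cdot,p)\varphi}$ vanishes on $\Omega=M\setminus\overline{B_{2\ep}(p)}$. The gap is in the final step. The claim that ``arguing exactly as in the induction of Lemma~\ref{lemma_transverse} one obtains that $w^g(\cdot,p)\varphi$ vanishes, together with all its derivatives, at some point of $\Omega$'' is false: a harmonic spinor with $Q_\psi=0$ need not vanish anywhere. The simplest counterexample is a parallel spinor on any flat region, and indeed on the flat torus $Q_{G^g(\cdot,p)\varphi}$ \emph{does} vanish while the Green's function does not. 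The induction in Lemma~\ref{lemma_transverse} works only because one expands at the \emph{singular} point $p$, where the leading term of $G^g_\lambda(\cdot,p)\varphi$ is prescribed and provides an anchor for the recursion; at an interior point of $\Omega$ there is no such anchor.

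What $Q_\psi=0$ actually forces is geometric. The paper rescales conformally by $e^{2u}$ with $u=\ln|G^g(\cdot,p)\varphi|_g$ on the set $W$ where the Green's function is nonzero; the resulting spinor has unit length, is $D^h$-harmonic, and still has vanishing energy--momentum tensor (Lemma~\ref{Q_conform}). In dimension~$3$ one then shows, using $e_1\cdot e_2\cdot e_3=-\id$, that $\nabla^h_X\psi=A(X)\cdot\psi$ with $A$ symmetric, and $Q_\psi=0$ kills the eigenvalues of $A$, so $\psi$ is \emph{parallel}. Hence $(W,h)$ is Ricci-flat, hence flat, hence $g$ is conformally flat on $W$. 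To turn this into a contradiction the paper first uses the assumption that $m^{\,\cdot}_p$ vanishes on an \emph{open} set $Q\subset R_p(M)$: this freedom lets one deform $g$ on $M\setminus U$ (staying in $Q$) so that $g$ is nowhere conformally flat on some open $V$, and then unique continuation gives $V\cap W\neq\emptyset$. Your reduction (``for each $g$ with $m^g_p=0$ the first variation is nonzero in some direction'') throws away exactly this freedom, and as the flat-torus example shows, it asserts something that cannot be proved by your method. You should instead contradict non-density by working inside an open set of metrics with vanishing mass endomorphism, and replace the Taylor-coefficient argument by the conformal/parallel-spinor argument above.
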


\begin{remark}
If $M$ is a closed spin manifold of dimension $2$ and if the 
mass endomorphism in $p\in M$ can be defined, 
then by \cite{ahm} it always vanishes. 
On the other hand it is conjectured that Theorem \ref{mass_endo_theorem} holds 
for every closed spin manifold $M$ of dimension $n\geq3$ 
if $R^*(M)$ is not empty. A proof has been proposed in \cite{adhh}. 
The idea is to use surgery methods to construct a Riemannian metric $g$ 
on $M$ which is flat near some point $p$ and has nonzero 
mass endomorphism. Then from perturbation theory it follows 
that $S_p(M)$ is open and dense in $R_p(M)$.
\end{remark}

We will need the following properties of the energy momentum tensor\index{energy momentum tensor} 
which was defined in Section \ref{identification_section}. 

\begin{lemma}
\label{Q_conform}
Let $g$ and $h=e^{2u}g$ be conformally related Riemannian metrics, $u\in C^{\infty}(M,\R)$. 
Let~$\psi\in\spinor{M}{g}{\infty}$ and let $\beta:=\beta_{g,h}$ as 
in Section \ref{identification_section}. Let $f\in C^{\infty}(M,\R)$. 
Then we have 
\begin{displaymath}
  Q_{f\psi}=f^2Q_{\psi},\quad Q_{\beta\psi}=e^u Q_{\psi}.
\end{displaymath}
\end{lemma}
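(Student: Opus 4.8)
The plan is to compute both identities directly from the definition
\[
Q_{\psi}(X,Y)=\tfrac{1}{2}\Re\langle X\cdot\nabla^g_Y\psi+Y\cdot\nabla^g_X\psi,\psi\rangle,
\]
using the Leibniz rule for the first identity and the conformal change formula \eqref{nabla_conform} for the second. First I would handle $Q_{f\psi}=f^2Q_{\psi}$. Since $f$ is real-valued, $\nabla^g_Y(f\psi)=Y(f)\psi+f\nabla^g_Y\psi$, so
\[
X\cdot\nabla^g_Y(f\psi)+Y\cdot\nabla^g_X(f\psi)
=f\bigl(X\cdot\nabla^g_Y\psi+Y\cdot\nabla^g_X\psi\bigr)
+Y(f)\,X\cdot\psi+X(f)\,Y\cdot\psi.
\]
Pairing with $f\psi$ and taking the real part, the main term gives $f^2Q_{\psi}(X,Y)$, while the two remaining terms contribute $f\bigl(Y(f)\Re\langle X\cdot\psi,\psi\rangle+X(f)\Re\langle Y\cdot\psi,\psi\rangle\bigr)$, which vanishes because Clifford multiplication by a vector is $\langle.,.\rangle$-antisymmetric, hence $\Re\langle Z\cdot\psi,\psi\rangle=0$ for every real vector $Z$ (cf. Remark \ref{remark_real_basis}).

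For the second identity I would use that $\beta=\beta_{g,h}$ is a fibrewise isometry with $\beta(v\cdot\varphi)=b_{g,h}v\cdot\beta\varphi$ (Lemma \ref{definition_beta}), and that $b_{g,h}=e^{-u}\id$ in the conformal case. By \eqref{nabla_conform},
\[
\nabla^h_X\beta\psi=\beta\Bigl(\nabla^g_X\psi-\tfrac12 X\cdot\grad^g(u)\cdot\psi-\tfrac12 X(u)\psi\Bigr).
\]
Hence, using $b_{g,h}X=e^{-u}X$ on the outer Clifford factor,
\[
b_{g,h}X\cdot\nabla^h_{Y}\beta\psi
=e^{-u}\beta\Bigl(X\cdot\nabla^g_Y\psi-\tfrac12 X\cdot Y\cdot\grad^g(u)\cdot\psi-\tfrac12 Y(u)\,X\cdot\psi\Bigr),
\]
and symmetrically in $X,Y$. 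Now $Q_{\beta\psi}$ is defined with respect to the metric $h$; I would note that for vectors $X,Y$ the expression appearing is $Q_{\beta\psi}(X,Y)=\tfrac12\Re\langle b_{g,h}X\cdot\nabla^h_Y\beta\psi+b_{g,h}Y\cdot\nabla^h_X\beta\psi,\beta\psi\rangle$ when the same $X,Y$ are plugged in — more precisely, one should keep careful track that $Q$ is a $(2,0)$-tensor and compute $Q_{\beta\psi}$ as a bilinear form, evaluating on an arbitrary pair $X,Y\in T_pM$. Pulling the isometry $\beta$ out of the inner product reduces everything to pairings on $\Sigma^gM$: the main term becomes $e^{-u}\cdot\tfrac12\Re\langle X\cdot\nabla^g_Y\psi+Y\cdot\nabla^g_X\psi,\psi\rangle=e^{-u}Q_\psi(X,Y)$, modulo the conventional factor relating the $h$-pointwise tensor norm to the $g$-one. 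The remaining terms, which involve $\Re\langle X\cdot Y\cdot\grad^g(u)\cdot\psi,\psi\rangle$ and $Y(u)\Re\langle X\cdot\psi,\psi\rangle$ together with their $X\leftrightarrow Y$ symmetrizations, must be shown to cancel: the latter vanishes by antisymmetry as before, and for the former one symmetrizes in $X,Y$ and uses $X\cdot Y+Y\cdot X=-2g(X,Y)$ plus antisymmetry of $\grad^g(u)\cdot$ to collapse it.

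The main obstacle is bookkeeping rather than mathematics: one has to be careful about the convention by which $Q_{\beta\psi}$, a priori a tensor on $(M,h)$, is compared with $Q_\psi$ on $(M,g)$ — whether the claimed relation $Q_{\beta\psi}=e^uQ_\psi$ refers to the $(2,0)$-tensors identified via the identity on $M$, and how the factors of $b_{g,h}=e^{-u}$ entering through Clifford multiplication combine with the volume-form normalization $\overline\beta$ versus $\beta$. Once the convention is fixed consistently with how $Q_\psi$ is used later in Section \ref{gen_massendo_section}, the computation is a short symmetrization argument; I expect no analytic difficulty, only the need to present the index manipulations cleanly.
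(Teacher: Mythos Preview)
Your approach matches the paper's: prove $Q_{f\psi}=f^2Q_\psi$ by the Leibniz rule and the fact that $\Re\langle Z\cdot\psi,\psi\rangle=0$, and prove $Q_{\beta\psi}=e^uQ_\psi$ by inserting the conformal formula \eqref{nabla_conform} and symmetrizing. The first part is correct.

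In the second part there is a genuine slip, and your hedging about ``conventional factors'' is the symptom. By definition,
\[
Q_{\beta\psi}(X,Y)=\tfrac12\,\Re\big\langle X\cdot_h\nabla^h_Y\beta\psi+Y\cdot_h\nabla^h_X\beta\psi,\ \beta\psi\big\rangle_h,
\]
with $X\cdot_h$ the Clifford multiplication on $\Sigma^hM$; no $b_{g,h}$ appears here. To pass through $\beta$ one uses Lemma~\ref{definition_beta} in the form $\beta(v\cdot_g\phi)=b_{g,h}v\cdot_h\beta\phi$, which gives
\[
X\cdot_h\beta\phi=\beta\big(b_{h,g}X\cdot_g\phi\big)=e^{u}\,\beta\big(X\cdot_g\phi\big),
\]
so the factor that comes out is $e^{u}$, not $e^{-u}$. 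Combined with \eqref{nabla_conform} and the isometry of $\beta$, the main term is $e^{u}Q_\psi(X,Y)$, in agreement with the statement. Your displayed formula
\[
b_{g,h}X\cdot\nabla^h_Y\beta\psi=e^{-u}\beta\bigl(X\cdot\nabla^g_Y\psi-\tfrac12X\cdot Y\cdot\grad^g(u)\cdot\psi-\tfrac12Y(u)X\cdot\psi\bigr)
\]
is doubly off: the left-hand side is not what the definition of $Q_{\beta\psi}$ asks for, and even as written there should be no $e^{-u}$ in front (the identity $b_{g,h}X\cdot_h\beta\phi=\beta(X\cdot_g\phi)$ has no exponential factor). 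Once you fix this, the cross terms cancel exactly as you outlined: $\Re\langle Z\cdot\psi,\psi\rangle=0$ kills the $Y(u)X\cdot\psi$ terms, and $X\cdot Y+Y\cdot X=-2g(X,Y)$ reduces the $X\cdot Y\cdot\grad^g(u)\cdot\psi$ symmetrization to $-2g(X,Y)\grad^g(u)\cdot\psi$, whose real pairing with $\psi$ again vanishes. This is precisely the paper's computation.
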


\begin{proof}
The first equation follows, since $\Re\langle Y(f) X\cdot\psi,\psi\rangle=0$ 
for all $X$, $Y\in TM$. 
Recall that we have $b_{g,h}X=e^{-u}X$ for all $X\in TM$ and therefore 
$\beta(X\cdot\psi)=e^{-u}X\cdot\beta\psi$. It follows that
\begin{eqnarray*}
  \langle X\cdot\nabla^h_Y\beta\psi, \beta\psi \rangle_h
  &=&\langle X\cdot\beta(\nabla^g_Y\psi-\frac{1}{2}\,Y\cdot\grad^g(u)\cdot\psi-\frac{1}{2}\,Y(u)\psi), \beta\psi \rangle_h\\
  &=&e^u\langle X\cdot\nabla^g_Y\psi, \psi\rangle_g
  -\frac{e^u}{2}\,\langle X\cdot Y\cdot\grad^g(u)\cdot\psi, \psi \rangle_g\\
  &&{}-\frac{e^u}{2}\,Y(u)\langle X\cdot\psi, \psi \rangle_g.
\end{eqnarray*}
If we add the corresponding equation for $\langle Y\cdot\nabla^h_X\beta\psi, \beta\psi \rangle_h$ 
to this equation and use that $X\cdot Y+Y\cdot X=-2g(X,Y)$, we see that the 
sum of the second terms on the right hand side is purely imaginary. Since the 
third terms on the right hand side are each purely imaginary, the assertion follows. 
\end{proof}

Let $g\in R_p(M)$ and let $k\in\sym{\infty}$ such that $k=0$ on an open neighborhood $U$ of $p$. 
Let $I\subset\R$ be an open interval around $0$ such that for all $t\in I$ 
the tensor field $g_t:=g+tk$ is a Riemannian metric on $M$ and is in $R^*(M)$. 
Then we have $g_t\in R_p(M)$ for all $t\in I$. 
For every $t$ we obtain an isomorphism of spinor bundles 
\begin{displaymath}
  \overline{\beta}_{g,g_t}:\quad\Sigma^gM\to\Sigma^{g_t}M
\end{displaymath}
as in Lemma \ref{definition_beta_bar}. 
Let $\varepsilon>0$ such that $B_{2\varepsilon}(p)\subset U$. 
Recall from Section \ref{def_mass_endo_section} that 
Green's function $G^{g_t}$ for $D^{g_t}$ 
on $M\setminus\{p\}$ can be written as 
\begin{equation}
  \label{green_expansion}
  G^{g_t}(x,p)\overline{\beta}_{g,g_t}\varphi=(A^{g_t})^{-1}(\eta G^{\geucl}(.,0)\gamma_t)(x)+w^{g_t}(x,p)\overline{\beta}_{g,g_t}\varphi,
\end{equation}
where $\eta$: $\R^n\to[0,1]$ is a smooth function such that $\eta\equiv1$ on $B_{\ep}(0)$ and 
$\supp(\eta)\subset B_{2\ep}(0)$ and where $\varphi\in\Sigma^g_pM$ 
and $\gamma_t:=(\beta^{g_t})^{-1}\overline{\beta}_{g,g_t}\varphi$. 
Recall that the mass endomorphism for the metric $g_t$ at $p$ is by definition $m^{g_t}_p=w^{g_t}(p,p)$. 
We define a spinor $w^{g_t}_{\varphi}\in\spinor{M}{g}{\infty}$ by 
\begin{displaymath}
  w^{g_t}_{\varphi}(x):=\overline{\beta}_{g_t,g}w^{g_t}(x,p)\overline{\beta}_{g,g_t}\varphi
\end{displaymath}
and we define 
$m^{g,g_t}\varphi:=w^{g_t}_{\varphi}(p)=\overline{\beta}_{g_t,g}m^{g_t}_p\overline{\beta}_{g,g_t}\varphi$ for all $t$. 

\begin{lemma}
\label{dtm_lemma}
We have 
\begin{displaymath}
  \frac{d}{dt}\, \big\langle m^{g,g_t}\varphi, \varphi \big\rangle \big|_{t=0}
  =\frac{1}{2}\,\int_{M\setminus\{p\}} (k,Q_{G^g(.,p)\varphi})\dv^g,
\end{displaymath}
where $(.,.)$ denotes the standard pointwise inner product of $(2,0)$ tensor fields.
\end{lemma}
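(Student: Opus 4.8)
\textbf{Proof plan for Lemma \ref{dtm_lemma}.}

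The plan is to differentiate the defining relation for $w^{g_t}_{\varphi}$ and then use the characterizing equations for Green's function, much as in the proof of Lemma \ref{lemma_transverse}. First I would record what equation $w^{g_t}_{\varphi}$ satisfies. Since $k$ vanishes near $p$, the spinor $G^{g_t}(.,p)\overline{\beta}_{g,g_t}\varphi$ coincides with the Euclidean model $(A^{g_t})^{-1}(\eta G^{\geucl}(.,0)\gamma_t)$ near $p$ up to the smooth remainder $w^{g_t}$, and $D^{g_t}$ applied to $G^{g_t}(.,p)\overline{\beta}_{g,g_t}\varphi$ equals (minus) the projection onto $\ker D^{g_t}$ away from $p$; but $\ker D^{g_t}=0$ since $g_t\in R^*(M)$, so in fact $D^{g_t}$ annihilates Green's function on $M\setminus\{p\}$. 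Pulling back by $\overline{\beta}_{g_t,g}$ and using $D^{g,g_t}=\overline{\beta}_{g_t,g}D^{g_t}\overline{\beta}_{g,g_t}$, I would conclude that $w^{g_t}_{\varphi}$ solves, on all of $M$, an equation of the form $D^{g,g_t}w^{g_t}_{\varphi}=\text{(explicit source supported near }p\text{)}$, where the source comes from applying $D^{g,g_t}$ to the pulled-back Euclidean model term. Because this source is supported in the region where $k=0$ (hence where $g_t=g$ and $\overline{\beta}_{g,g_t}=\mathrm{id}$), it is in fact independent of $t$; call it $\Xi$. So $D^{g,g_t}w^{g_t}_{\varphi}=\Xi$ for all $t\in I$, with $\Xi$ fixed.

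Next I would differentiate this identity in $t$ at $t=0$. Writing $\dot w:=\frac{d}{dt}w^{g_t}_{\varphi}\big|_{t=0}$, one gets $(D^g)\dot w=-\big(\frac{d}{dt}D^{g,g_t}\big|_{t=0}\big)w^g_{\varphi}$, where $w^g_{\varphi}(x)=w^g(x,p)\varphi$. The quantity we want is $\frac{d}{dt}\langle m^{g,g_t}\varphi,\varphi\rangle|_{t=0}=\Re\langle\dot w(p),\varphi\rangle$ (the pairing is real because $m^g_p$ is self-adjoint and the family is, after suitable normalization, smooth in $t$; in any case we may take the real part at the end). Now I would feed $\dot w$ into the Green's-function identity (\ref{greens_function}): since $\ker D^g=0$ the projection $P$ is zero, so $\int_{M\setminus\{p\}}\langle D^g\dot w,G^g(.,p)\varphi\rangle\,\dv^g=\langle\dot w(p),\varphi\rangle$. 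Substituting $D^g\dot w=-\big(\frac{d}{dt}D^{g,g_t}\big|_{t=0}\big)w^g_{\varphi}$ and using the formula (\ref{dirac_derivative}) for the derivative of the Dirac operator, namely $\frac{d}{dt}D^{g,g_t}\big|_{t=0}\psi=-\frac12\sum_i e_i\cdot\nabla^g_{a_{g,k}(e_i)}\psi-\frac14\sum_i\div^g(k)(e_i)e_i\cdot\psi$, the term $\langle\dot w(p),\varphi\rangle$ becomes an integral over $M\setminus\{p\}$ of $\Re$ of $\langle(\text{derivative of }D)w^g_{\varphi},G^g(.,p)\varphi\rangle$.

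Finally I would recognize the resulting integrand as $\tfrac12(k,Q_{G^g(.,p)\varphi})$. The key computational fact is the pointwise identity, obtained by the same polarization/integration-by-parts manipulation used to pass from (\ref{lambda_derivative1}) to (\ref{lambda_derivative2}): for any spinors $\chi,\xi$ one has, upon taking real parts and integrating, $\Re\int_M\langle-\tfrac12\sum_i e_i\cdot\nabla^g_{a_{g,k}(e_i)}\chi-\tfrac14\sum_i\div^g(k)(e_i)e_i\cdot\chi,\xi\rangle\,\dv^g$ equals $-\tfrac12\int_M(k,Q^{\chi,\xi})\,\dv^g$ for the appropriate polarized energy-momentum tensor, the divergence term dropping out after integration by parts (its contribution is purely imaginary pointwise, or integrates against the real part to zero). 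Here one uses that $w^g_{\varphi}=\overline{\beta}_{g,g}w^g(.,p)\varphi$ is genuinely $G^g(.,p)\varphi$ minus the model term, but the model term is supported where $k=0$, so in the pairing against $k$ only the smooth part $w^g_{\varphi}$ contributes and one may replace $w^g_{\varphi}$ by $G^g(.,p)\varphi$ throughout the $k$-weighted integral; with $\chi=\xi=G^g(.,p)\varphi$ the polarized tensor is exactly $Q_{G^g(.,p)\varphi}$, giving the stated formula with the sign flipped by the overall minus, i.e. $+\tfrac12\int_{M\setminus\{p\}}(k,Q_{G^g(.,p)\varphi})\,\dv^g$.

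The main obstacle I anticipate is the bookkeeping at the singularity: $G^g(.,p)\varphi$ blows up like $|x|^{-(n-1)}$ and $Q_{G^g(.,p)\varphi}$ like $|x|^{-2(n-1)}\cdot|x|^{-1}$-type terms, so in dimension $3$ the integral $\int_{M\setminus\{p\}}(k,Q_{G^g(.,p)\varphi})\,\dv^g$ is only conditionally convergent (interpreted as $\lim_{\ep\to0}\int_{M\setminus B_\ep(p)}$). One must check that all integrations by parts are legitimate in this regularized sense, that the boundary contributions over $\partial B_\ep(p)$ vanish as $\ep\to0$, and that differentiation in $t$ commutes with the $\ep$-limit — this is exactly where the hypothesis that $k$ vanishes near $p$ is essential, since it confines all the $t$-dependence and all the delicate manipulations to a region where the integrand is smooth and the $G^g(.,p)\varphi$ appearing is evaluated away from $p$.
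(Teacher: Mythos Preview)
Your proposal is correct and follows essentially the same route as the paper: derive the $t$-independent equation $D^{g,g_t}w^{g_t}_{\varphi}=\Xi$ from the fact that the Euclidean model term is supported where $g_t=g$, differentiate to get $D^g\dot w=-\big(\tfrac{d}{dt}D^{g,g_t}\big|_{t=0}\big)w^g_{\varphi}$, apply the Green's-function identity (with $\ker D^g=0$), replace $w^g_{\varphi}$ by $G^g(.,p)\varphi$ on $\supp(k)$, and finish via the computation leading from (\ref{lambda_derivative1}) to (\ref{lambda_derivative2}). Your anticipated obstacle is a non-issue: since $k\equiv0$ on a full neighborhood of $p$, the integrand $(k,Q_{G^g(.,p)\varphi})$ vanishes identically near $p$ and the integral is over a region where everything is smooth, so no regularization, boundary terms, or interchange-of-limits arguments are needed.
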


\begin{proof}
Since $g_t=g$ on $\supp(\eta)$ for all $t$, the first term on the right hand side 
of (\ref{green_expansion}) is independent of $t$. Therefore we find 
\begin{displaymath}
  \overline{\beta}_{g_t,g} G^{g_t}(x,p)\overline{\beta}_{g,g_t}\varphi
  =(A^g)^{-1}(\eta G^{\geucl}(.,0)\gamma)(x)+w^{g_t}_{\varphi}(x)
\end{displaymath}
for all $x\in M\setminus\{p\}$. We apply $D^{g,g_t}$ defined as in 
Section \ref{identification_section} and since $g_t=g$ on $\supp(\eta)$ 
we find 
\begin{displaymath}
  \delta_p=D^g(A^g)^{-1}(\eta G^{\geucl}(.,0)\gamma)+D^{g,g_t}w^{g_t}_{\varphi}.
\end{displaymath}
We take the derivative with respect to $t$ at $t=0$ and we obtain 
\begin{displaymath}
  0=\big(\frac{d}{dt}D^{g,g_t}\big|_{t=0}\big) w^g_{\varphi}+D^g\big(\frac{d}{dt}w^{g_t}_{\varphi}\big|_{t=0}\big).
\end{displaymath}
By the formula for Green's function (\ref{greens_function}) 
and since $\ker(D^g)=0$, we find that 
\begin{eqnarray*}
  \frac{d}{dt}\, \big\langle m^{g,g_t}\varphi, \varphi \big\rangle \big|_{t=0}
  &=&\big\langle \frac{d}{dt} w^{g_t}_{\varphi}(p)\big|_{t=0},\varphi \big\rangle \big|_{t=0}\\
  &=&\int_{M\setminus\{p\}} \big\langle D^g\big(\frac{d}{dt} w^{g_t}_{\varphi}\big|_{t=0} \big),G^g(.,p){\varphi} \big\rangle \dv^g\\
  &=&-\int_{M\setminus\{p\}} \big\langle \frac{d}{dt} D^{g,g_t}\big|_{t=0} w^g_{\varphi}, G^g(.,p){\varphi} \big\rangle \dv^g
\end{eqnarray*}
Recall that if $(e_i)_{i=1}^n$ is a local orthonormal frame, then by (\ref{dirac_derivative}) we have locally 
\begin{displaymath}
  \frac{d}{dt}D^{g,g_t}\big|_{t=0} w^g_{\varphi}=-\frac{1}{2}\sum_{i=1}^n e_i\cdot\nabla^g_{a_{g,k}(e_i)} w^g_{\varphi}
  -\frac{1}{4}\sum_{i=1}^n \div^g(k)(e_i)e_i\cdot w^g_{\varphi}.
\end{displaymath}
Since $k=0$ on $B_{2\ep}(p)$ this spinor vanishes on $B_{2\ep}(p)$. 
On the other hand the spinors $w^g_{\varphi}$ and $G^g(.,p)\varphi$ coincide on $M\setminus B_{2\ep}(p)$. 
Thus we find 
\begin{displaymath}
  \frac{d}{dt}\, \big\langle m^{g,g_t}\varphi, \varphi \big\rangle \big|_{t=0}
  =-\int_{M\setminus B_{2\ep}(p)} \big\langle \frac{d}{dt} D^{g,g_t}\big|_{t=0} G^g(.,p)\varphi, G^g(.,p)\varphi \big\rangle \dv^g.
\end{displaymath}
The assertion is now obtained in the way that (\ref{lambda_derivative2}) 
was obtained from (\ref{lambda_derivative1}).
\end{proof}

\begin{proof}[Proof of Theorem \ref{mass_endo_theorem}]
Assume that the claim is wrong. Then there is an open subset $Q\subset R_p(M)$ 
such that for all $g\in Q$ we have $m^g_p=0$. Let $g\in Q$ be flat on an open 
neighborhood $U\subset M$ of $p$. By a small deformation of $g$ on $M\setminus U$ we obtain 
a Riemannian metric, which is not conformally flat on $M\setminus U$ and is in $Q$. 
We denote the deformed Riemannian metric again by $g$. Thus there exists 
an open subset $V\subset M\setminus U$, such that $g$ is nowhere conformally flat on $V$.

Let $k\in\sym{\infty}$ such that $k=0$ on an open neighborhood of $p$ and let $I\subset\R$ 
be an open interval around $0$ such that for all $t\in I$ the tensor field $g_t:=g+tk$ 
is a Riemannian metric on $M$ and is in $Q$. 
By our assumption we have $m^{g_t}_p=0$ for all $t\in I$. For every $\varphi\in\Sigma^g_pM$ 
it follows that $\int_{M\setminus\{p\}} (k,Q_{G^g(.,p)\varphi})\dv^g=0$ by Lemma \ref{dtm_lemma}. 
Since this holds for every $k\in\sym{\infty}$ which is zero on an open neighborhood of $p$, 
it holds for $k=fQ_{G^g(.,p)\varphi}$, where $f$ is an arbitrary smooth function with 
$p\notin\supp(f)$. We conclude that $Q_{G^g(.,p)\varphi}=0$ on $M\setminus\{p\}$. 
Let 
\begin{displaymath}
  W:=\{x\in M\setminus\{p\}|G^g(x,p)\varphi\neq0\}.
\end{displaymath}
Then $W$ is an open subset of $M$. 
The function $u$: $W\to\R$, defined by 
\begin{displaymath}
  u(x):=\ln|G^g(x,p)\varphi|_g
\end{displaymath}
is smooth on $W$, 
and thus $h:=e^{2u}g$ defines a Riemannian metric on $W$. 
We obtain an isomorphism of spinor bundles 
\begin{displaymath}
  \beta_{g,h}:\quad\Sigma^gW\to\Sigma^hW,
\end{displaymath}
which is a fibrewise isometry as in Lemma \ref{definition_beta}. The spinor 
\begin{displaymath}
  \psi:=e^{-u}\beta_{g,h}G^g(.,p)\varphi\in\spinor{W}{h}{\infty}
\end{displaymath}
satisfies $D^h\psi=0$ by (\ref{dirac_conform}). By definition of $u$ we have 
$|\psi|_h\equiv1$ on $W$. Furthermore by Lemma \ref{Q_conform} 
we find $Q_{\psi}=0$ on $W$. 

Let $(e_i)_{i=1}^3$ be a local orthonormal frame of $TW$ defined on an open 
subset $S\subset W$. Then for every $x\in S$ 
the system
\begin{displaymath}
  \{\psi(x),e_1\cdot\psi(x),e_2\cdot\psi(x),e_3\cdot\psi(x)\}
\end{displaymath}
is a real basis of $\Sigma^h_xW$, which is orthonormal with respect to the 
real scalar product $\Re\langle.,.\rangle$ (see Remark \ref{remark_real_basis}). 
Since $|\psi|_h$ is constant on $W$, we have 
\begin{displaymath}
  \Re\langle\nabla^h_X\psi,\psi\rangle=0
\end{displaymath}
for all $X\in TM$. 
It follows that there exists an endomorphism $A$ of $TW$ 
such that for all $X\in TW$ we have $\nabla^h_X\psi=A(X)\cdot\psi$. 

We will now show that in each fibre the endomorphism~$A$: $T_xW\to T_xW$ 
is symmetric with respect to $h$ using an observation from \cite{amm}. 
Since $\omega_{\C}=\id$ we have $e_1\cdot e_2\cdot e_3=-\id$. 
In the following we abbreviate~$\psi:=\psi(x)$. 
Using (\ref{Xgamma_Ygamma}) we calculate
\begin{eqnarray*}
  h(Ae_2,e_1)
  &=&\Re\langle Ae_2\cdot\psi,e_1\cdot\psi\rangle\\
  &=&\Re\langle\nabla^h_{e_2}\psi,e_1\cdot\psi\rangle\\
  &=&-\Re\langle e_2\cdot\nabla^h_{e_2}\psi,e_3\cdot\psi\rangle\\
  &=&\Re\langle e_1\cdot\nabla^h_{e_1}\psi,e_3\cdot\psi\rangle
  +\Re\langle e_3\cdot\nabla^h_{e_3}\psi,e_3\cdot\psi\rangle\\
  &=&-\Re\langle e_2\cdot\nabla^h_{e_1}\psi,\psi\rangle\\
  &=&\Re\langle Ae_1\cdot\psi,e_2\cdot\psi\rangle\\
  &=&h(e_2,Ae_1).
\end{eqnarray*}
Similarly we obtain~$h(Ae_1,e_3)=h(e_1,Ae_3)$ 
and~$h(Ae_2,e_3)=h(e_2,Ae_3)$, 
i.\,e.\,$A$ is symmetric with respect to~$h$. 
Therefore, we can choose the basis vectors~$e_1$,~$e_2$,~$e_3$ 
as eigenvectors of $A$, such that $Ae_j=\lambda_je_j$, where $\lambda_j\in\R$. 
It follows that
\begin{displaymath}
  0=\Re\big\langle \psi,e_j\cdot\nabla^h_{e_j}\psi\big\rangle
  =-\lambda_j.
\end{displaymath}
Hence,~$\psi$ is a parallel spinor on~$W$. 
By \cite{fr00} the Riemannian manifold~$(W,h)$ is Ricci flat. 
Since~$\dim W=3$ it follows that~$(W,h)$ is flat. 
However, by definition of the metric $g$ there exists an open subset 
$V\subset M\setminus U$, such that $g$ is nowhere conformally flat 
on $V$. By Theorem \ref{unique_cont} the spinor 
$G^g(.,p)\varphi$ is not identically zero on $V$. Thus on 
$V\cap W\neq\emptyset$ the metric $h$ cannot be flat. 
This is a contradiction.
\end{proof}

\begin{appendix}
\chapter{Analytic perturbation theory}
\label{perturb_theory}

Results in perturbation theory 
show that if a deformation of the Riemannian metric depends real-analytically 
on a parameter, then for a given eigenspinor $\psi$ of $D^g$ one obtains a real 
analytic family of eigenspinors for the perturbed Dirac operators which 
extends $\psi$. There is a short note concerning this fact in \cite{bg}, but 
we will give a more precise explanation here following the book \cite{k}.

In this section $(X,\|.\|_X)$, $(Y,\|.\|_Y)$ will be complex Banach spaces. 
The space of bounded linear operators $X\to Y$ is denoted by $B(X,Y)$. 
It is equipped with the usual operator norm $\|.\|_{B(X,Y)}$. 
The space of closed linear operators $X\to Y$ is denoted by $C(X,Y)$. 
We will write $C(X):=C(X,X)$. The dual space of $X$ is denoted by $X^*$. 
Furthermore $\Omega\subset\C$ will always denote an open and connected subset 
of the complex plane and $I\subset\R$ will denote an open interval.

\begin{definition}
A family of vectors $u(z)\in X$ which depend on $z\in\Omega$ 
is called holomorphic on $\Omega$, if for each $z_0\in\Omega$ there exists 
$u'\in X$ such that
\begin{displaymath}
  \lim_{h\to0}\Big\|\frac{u(z_0+h)-u(z_0)}{h}-u'\Big\|_X=0.
\end{displaymath}
\end{definition}

The following theorem (\cite{k}, p. 139) gives a criterion for holomorphy of vectors.

\begin{theorem}
\label{hol_vector_theorem}
Let $u(z)\in X$ be a family of vectors which depend on $z\in\Omega$. 
If for all $f\in X^*$ the function $f(u(z))$ is holomorphic on $\Omega$, 
then $u(z)$ is holomorphic on $\Omega$.
\end{theorem}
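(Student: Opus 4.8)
The plan is to carry out the classical argument that weak (scalar) holomorphy forces strong (norm) holomorphy, the two analytic ingredients being the Cauchy integral formula applied to the scalar functions $f(u(\cdot))$ and the uniform boundedness principle.

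First I would fix $z_0\in\Omega$ and pick $r>0$ so small that the closed disc $\overline{D}:=\{z\in\C:|z-z_0|\le r\}$ lies in $\Omega$; let $\Gamma$ be its positively oriented boundary circle. For every $f\in X^*$ the function $z\mapsto f(u(z))$ is holomorphic on an open neighbourhood of $\overline{D}$, hence continuous on the compact set $\Gamma$, so $\sup_{\zeta\in\Gamma}|f(u(\zeta))|<\infty$. Viewing each $u(\zeta)$, $\zeta\in\Gamma$, as an element of $X^{**}$ via the canonical isometric embedding, the family $\{u(\zeta)\}_{\zeta\in\Gamma}$ is therefore pointwise bounded on the Banach space $X^*$; the uniform boundedness principle then yields $M:=\sup_{\zeta\in\Gamma}\|u(\zeta)\|_X<\infty$.

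Next I would apply the Cauchy integral formula to $f(u(\cdot))$. For $0<|h|<r/2$ one obtains
\[
  f\!\left(\frac{u(z_0+h)-u(z_0)}{h}\right)=\frac{1}{2\pi i}\int_{\Gamma}\frac{f(u(\zeta))}{(\zeta-z_0-h)(\zeta-z_0)}\,d\zeta ,
\]
and subtracting this identity for two parameters $h_1,h_2$ and simplifying the rational kernel gives
\[
  f\!\left(\frac{u(z_0+h_1)-u(z_0)}{h_1}-\frac{u(z_0+h_2)-u(z_0)}{h_2}\right)
  =\frac{h_1-h_2}{2\pi i}\int_{\Gamma}\frac{f(u(\zeta))}{(\zeta-z_0)(\zeta-z_0-h_1)(\zeta-z_0-h_2)}\,d\zeta .
\]
On $\Gamma$ one has $|\zeta-z_0|=r$ and $|\zeta-z_0-h_i|\ge r/2$, so estimating the integral by $M$ and then taking the supremum over $f$ in the unit ball of $X^*$ (so that, by Hahn--Banach, the left-hand side becomes $\|\cdot\|_X$) gives
\[
  \left\|\frac{u(z_0+h_1)-u(z_0)}{h_1}-\frac{u(z_0+h_2)-u(z_0)}{h_2}\right\|_X\le\frac{4M}{r^{2}}\,|h_1-h_2| .
\]
Hence the difference quotients form a Cauchy net as $h\to0$; by completeness of $X$ they converge to some $u'\in X$, which is by definition the derivative of $u$ at $z_0$. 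Since $z_0\in\Omega$ was arbitrary, $u$ is holomorphic on $\Omega$.

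I expect the only genuinely delicate step to be the passage from the pointwise (weak) bound on $\Gamma$ to the uniform norm bound $M<\infty$: this is precisely where one uses that $X^*$ is complete, so that the uniform boundedness principle applies, together with the isometry $X\hookrightarrow X^{**}$. Everything afterwards is a routine estimate of a Cauchy kernel, and the only bookkeeping point to check is the harmless fact that $f(u(z))$, a priori holomorphic merely on $\Omega$, is holomorphic on a neighbourhood of $\overline{D}$ — which holds because $\Omega$ is open, so $r$ may be chosen slightly larger than the radius actually used.
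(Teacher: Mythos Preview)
Your proof is correct and follows essentially the same approach as the paper's: Cauchy integral formula applied to the scalar functions $f(u(\cdot))$, followed by a uniform bound over $f$ (which the paper defers with ``one can show that $C$ can be chosen independently of $f$'' and you make explicit via the uniform boundedness principle). The only cosmetic difference is that the paper compares the difference quotient with the scalar derivative $\frac{d(f\circ u)}{dz}$, whereas you compare two difference quotients directly and conclude via a Cauchy-net argument; both routes lead to the same estimate and the same conclusion.
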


\begin{proof}
Let $\Gamma$ be a positively oriented circle in $\Omega$. 
The Cauchy integral formula gives
\begin{displaymath}
  f(u(z))=\frac{1}{2\pi i}\oint_{\Gamma}\frac{f(u(\zeta))}{\zeta-z}d\zeta
\end{displaymath}
for all $z$ in the interior of $\Gamma$. Let $h\neq0$ such that $z+h$ is 
in the interior of $\Gamma$. Then we find
\begin{displaymath}
  \frac{f(u(z+h))-f(u(z))}{h}-\frac{d(f\circ u)(z)}{dz}
  =\frac{h}{2\pi i}\oint_{\Gamma}\frac{f(u(\zeta))}{(\zeta-z-h)(\zeta-z)^2}d\zeta.
\end{displaymath}
But the function $f(u(z))$ is bounded on $\Gamma$ since it is continuous. 
Thus there exists a constant $C>0$ such that
\begin{displaymath}
  \Big|\frac{f(u(z+h))-f(u(z))}{h}-\frac{d(f\circ u)(z)}{dz}\Big|\leq C|h|\,\|f\|_{X^*}.
\end{displaymath}
One can show that $C$ can be chosen independently of $f$ 
and that this estimate implies the convergence of $\frac{u(z+h)-u(z)}{h}$ as $h\to0$.
\end{proof}

\begin{definition}
A family of operators $D(z)\in B(X,Y)$ which depend on $z\in\Omega$ 
is called holomorphic on $\Omega$, if for each $z_0\in\Omega$ there exists 
a bounded linear operator $D'\in B(X,Y)$ such that
\begin{displaymath}
  \lim_{h\to0}\Big\|\frac{D(z_0+h)-D(z_0)}{h}-D'\Big\|_{B(X,Y)}=0.
\end{displaymath}
\end{definition}

The next theorem (\cite{k}, p. 152) gives a criterion for holomorphy of 
bounded linear operators.

\begin{theorem}
\label{hol_operator_theorem}
Let $D(z)\in B(X,Y)$ be a family of bounded linear operators which depend 
on $z\in\Omega$. If for all $u\in X$ and for all $g\in Y^*$ the function 
$g(D(z)u)$ is holomorphic on $\Omega$, then $D(z)$ is holomorphic on $\Omega$.
\end{theorem}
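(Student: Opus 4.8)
The plan is to reduce the statement to the vector-valued criterion of Theorem \ref{hol_vector_theorem} and then repeat, at the level of operators, the Cauchy-integral computation used in its proof; the only genuinely new ingredient is a local uniform bound on $\|D(z)\|_{B(X,Y)}$, which comes from the uniform boundedness principle.

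First I would fix $u\in X$. By hypothesis $g(D(z)u)$ is holomorphic on $\Omega$ for every $g\in Y^*$, so applying Theorem \ref{hol_vector_theorem} with $Y$ in place of $X$ shows that $z\mapsto D(z)u$ is a holomorphic (hence continuous) $Y$-valued family for each fixed $u$. Next comes the key step, and the one I expect to be the main obstacle: producing a bound on the operator norm that is uniform on compact subsets of $\Omega$. Fix $z_0\in\Omega$ and choose $r>0$ so small that the closed disk $\{|z-z_0|\le 2r\}$ lies in $\Omega$, and let $\Gamma$ be the circle $|z-z_0|=2r$. For each $u\in X$ the continuous function $\zeta\mapsto D(\zeta)u$ is bounded on the compact set $\Gamma$, so $\sup_{\zeta\in\Gamma}\|D(\zeta)u\|_Y<\infty$; since this holds for every $u\in X$ and $X$ is a Banach space, the Banach--Steinhaus theorem yields a finite constant $M:=\sup_{\zeta\in\Gamma}\|D(\zeta)\|_{B(X,Y)}$. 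This is exactly the point where completeness of $X$ is used and where the argument goes beyond the scalar computation in Theorem \ref{hol_vector_theorem}.

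Finally, for $z$ with $|z-z_0|<r$ I would define $D'(z)$ by $D'(z)u:=\frac{1}{2\pi i}\oint_{\Gamma}\frac{D(\zeta)u}{(\zeta-z)^2}\,d\zeta$. The $Y$-valued integral exists because the integrand is continuous on $\Gamma$, the map $u\mapsto D'(z)u$ is clearly linear, and $\|D'(z)u\|_Y\le \frac{1}{2\pi}\cdot 4\pi r\cdot r^{-2}M\|u\|_X=2r^{-1}M\|u\|_X$, so $D'(z)\in B(X,Y)$. Since $z\mapsto D(z)u$ is holomorphic, the vector-valued Cauchy formula applies to it, and arguing exactly as in the proof of Theorem \ref{hol_vector_theorem} one obtains, for $z$ and $z+h$ in the disk of radius $r$,
\[
  \frac{D(z+h)u-D(z)u}{h}-D'(z)u=\frac{h}{2\pi i}\oint_{\Gamma}\frac{D(\zeta)u}{(\zeta-z-h)(\zeta-z)^2}\,d\zeta .
\]
On $\Gamma$ one has $|\zeta-z|>r$ and $|\zeta-z-h|>r$, so estimating the right-hand side with $\|D(\zeta)u\|_Y\le M\|u\|_X$ gives a bound $C\,M\,|h|\,\|u\|_X$ with $C$ depending only on $r$; hence
\[
  \Big\|\frac{D(z+h)-D(z)}{h}-D'(z)\Big\|_{B(X,Y)}\le C\,M\,|h|\longrightarrow 0\qquad(h\to 0).
\]
As $z_0\in\Omega$ was arbitrary, this shows $D(z)$ is holomorphic on $\Omega$. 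All remaining points (existence and linearity of the Banach-space-valued integrals, the derivation of the difference-quotient identity from Cauchy's formula) are routine and parallel to the scalar case already carried out in the excerpt.
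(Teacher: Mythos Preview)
Your proof is correct and follows essentially the approach the paper indicates (``analogous to the proof of Theorem~\ref{hol_vector_theorem}''): a Cauchy-integral identity for the difference quotient together with a uniform bound on the data over the contour. The only difference is organizational---you first pass through the vector-valued case to obtain strong holomorphy of $z\mapsto D(z)u$ and then invoke Banach--Steinhaus explicitly, whereas the paper's sketch hides this step in the phrase ``$C$ can be chosen independently of $f$'' (now independently of $g$ and $u$); both routes rest on the same uniform boundedness principle and yield the same estimate.
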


\begin{proof}
The proof is analogous to the proof of Theorem \ref{hol_vector_theorem}.
\end{proof}

We generalize the notion of a holomorphic family of operators to families 
of closed linear operators which are not necessarily bounded (see \cite{k}, p. 366).

\begin{definition}
A family of operators $D(z)\in C(X,Y)$ which depend on $z\in\Omega$ is called 
holomorphic at $z_0\in\Omega$, if there exists a third complex Banach space $Z$ 
and two families $E(z)\in B(Z,X)$, $F(z)\in B(Z,Y)$, which are holomorphic at $z_0$ 
such that for all $z\in\Omega$ the operator $E(z)$ maps $Z$ onto the domain of $D(z)$ 
one to one and such that for all $z\in\Omega$ we have $D(z)E(z)=F(z)$. 
The family $D(z)$ is called holomorphic on $\Omega$, if it is holomorphic 
at every $z_0\in\Omega$.
\end{definition}

If $X$, $Y$ are Hilbert spaces and $D\in C(X,Y)$, we denote by $D^*$ 
the adjoint operator of $D$. The following definition is from \cite{k}, p. 385.

\begin{definition}
Let $H$ be a Hilbert space and let $\Omega$ be an open and connected subset of $\C$ which 
is symmetric with respect to the real axis. A holomorphic family of operators 
$D(z)\in C(H)$ is called a self-adjoint holomorphic family, if for all $z\in\Omega$ 
the operator $D(z)$ is densely defined and if for all $z\in\Omega$ 
we have $D(z)^*=D(\overline{z})$.
\end{definition}

If $H$ is a Hilbert space, $D(z)\in C(H)$ a self-adjoint holomorphic family 
on $\Omega$ and if $\lambda$ is an eigenvalue of $D(z_0)$ of finite multiplicity $r$ 
for some $z_0\in\R$, 
then one obtains families of eigenvalues $\lambda_i(z)$ of $D(z)$, 
$1\leq i\leq r$, which are real-analytic in $z$ on an open interval 
$I\subset\Omega\cap\R$ around $z_0$ and such that 
$\lambda_i(z_0)=\lambda$ for all $i$ (\cite{k}, p. 386). In general $I$ depends 
on $\lambda$. 
However for the so called holomorphic families 
of type (A), one obtains an open interval around $z_0$, such that all the eigenvalues 
and eigenvectors are real-analytic on this interval.

\begin{definition}
A family of operators $D(z)\in C(X,Y)$ which depend on $z\in\Omega$ is called 
holomorphic of type (A), if the domain $\dom(D(z))$ is independent of $z$, 
and if for all $z\in\Omega$ and for all $u\in\dom(D(z))$ the family of vectors 
$D(z)u\in Y$ is holomorphic on $\Omega$.
\end{definition}

\begin{remark}
If the family $D(z)\in C(X,Y)$ is holomorphic of type (A) on $\Omega$, then 
it is holomorphic on $\Omega$ in the sense of the definition above.
\end{remark}

\begin{proof}[Proof (see \cite{k}, p. 375)]
Choose $z_0\in\Omega$ and set $D:=D(z_0)$. 
The space $Z:=\dom(D)$ with the norm 
$\|u\|_Z:=\|u\|_X+\|Du\|_Y$ is then a Banach space. Define $E$: $Z\to X$ by $E(u):=u$. 
Since $\|u\|_X\leq\|u\|_Z$ we find that $E\in B(Z,X)$. For all $z\in\Omega$ define 
$F(z)$: $Z\to Y$ by $F(z)u:=D(z)u$. Using that $D(z)$ is closed and that 
$\|u\|_X\leq\|u\|_Z$ we find that $F(z)\in C(Z,Y)$. Furthermore since $\dom(F(z))=Z$ 
the closed graph theorem (\cite{k}, p. 166) implies that $F(z)\in B(Z,Y)$. 
By hypothesis $F(z)u=D(z)u$ is holomorphic for all $u\in Z$. By Theorem 
\ref{hol_operator_theorem} it follows that $F(z)\in B(Z,Y)$ is holomorphic. 
Since $E$: $Z\to\dom(D)$ is one to one and $D(z)E=F(z)$ for all $z\in\Omega$ 
the assertion follows.
\end{proof}

We have the following result due to Rellich.

\begin{theorem}
\label{rellich_theorem}
Let $D(z)\in C(H)$ be a self-adjoint holomorphic family of type (A) defined on an 
open neighborhood $\Omega\subset\C$ of an open interval $I\subset\R$. 
Furthermore let $D(z)$ have compact resolvent for some $z\in\Omega$. Then there 
is a sequence of scalar-valued functions $(\lambda_n)_{n\in\N}$ 
and a sequence of vector-valued functions $(u_n)_{n\in\N}$ all real-analytic 
on $I$, such that for $t\in I$ the $\lambda_n(t)$ represent all the repeated 
eigenvalues of $D(t)$ and the $u_n(t)$ form a complete orthonormal family of 
the associated eigenvectors of $D(t)$.
\end{theorem}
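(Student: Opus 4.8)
The plan is to follow Kato \cite{k}: reduce the global statement on $I$ to a local analysis near each point, then to a finite-dimensional self-adjoint perturbation problem, and finally patch the local data together.

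\textbf{Compact resolvent everywhere, then local Riesz reduction.} First I would upgrade the hypothesis ``$D(z_0)$ has compact resolvent for some $z_0$'' to ``$D(z)$ has compact resolvent for every $z\in\Omega$''. If $(D(z_0)-\zeta_0)^{-1}$ is compact, then for $z$ near $z_0$ the operator $(D(z)-D(z_0))(D(z_0)-\zeta_0)^{-1}$ is bounded, vanishes at $z_0$, and is holomorphic in $z$ (here the type (A) hypothesis is essential: $\dom(D(z))$ is fixed and $z\mapsto D(z)u$ is holomorphic for each $u$, and one uses Theorem \ref{hol_operator_theorem}); inverting $1+(D(z)-D(z_0))(D(z_0)-\zeta_0)^{-1}$ by a Neumann series expresses $(D(z)-\zeta_0)^{-1}$ as a compact operator. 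Thus the set of $z$ with compact resolvent is open, and the symmetric argument shows its complement is open, so by connectedness of $\Omega$ it is all of $\Omega$. In particular every $D(t)$, $t\in I$, is self-adjoint with purely discrete spectrum. Fixing $t_0\in I$ and an eigenvalue $\lambda$ of $D(t_0)$ of multiplicity $m$, choose a small circle $\Gamma$ enclosing $\lambda$ and no other point of $\spec(D(t_0))$; the resolvent is jointly holomorphic in $(z,\zeta)$ where it exists (same Neumann-series argument), so the Riesz projection
\[
  P(z):=-\frac{1}{2\pi i}\oint_{\Gamma}(D(z)-\zeta)^{-1}\,d\zeta
\]
is a bounded projection, holomorphic in $z$ near $t_0$, of constant rank $m$ (norm-continuity plus the fact that projections of different finite rank are at mutual distance $1$).

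\textbf{Reduction to a Hermitian matrix family.} Using Kato's transformation function $U(z)$ built from $P(z)$ and $P(t_0)$ (holomorphic in $z$, unitary for real $z$), I would conjugate the restriction of $D(z)$ to $\operatorname{ran}P(z)$ onto the fixed $m$-dimensional space $\operatorname{ran}P(t_0)$, obtaining an $m\times m$ matrix $\widetilde D(z)$ depending holomorphically on $z$ on a complex neighbourhood of $t_0$, whose eigenvalues are exactly those of $D(z)$ inside $\Gamma$. Since $D(t)^*=D(t)$ and $U(t)$ is unitary for real $t$, the matrix $\widetilde D(t)$ is Hermitian for real $t$ near $t_0$.

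\textbf{The Hermitian matrix perturbation lemma.} The remaining local statement is that a matrix family holomorphic near $t_0\in\R$ and Hermitian on the real axis has all its eigenvalues, together with an orthonormal basis of eigenvectors, real-analytic on a real interval around $t_0$ --- Rellich's theorem in finite dimensions. I expect this to be the main obstacle. In the general (non-self-adjoint) case the eigenvalues near a crossing are only Puiseux series and the eigenprojections may have poles; the point is that self-adjointness on the real axis forces the Puiseux exponents to be $1$ and removes the poles, so the branches continue analytically through crossings, and analytic eigenvectors are then obtained by normalising holomorphic sections of the (constant-dimensional) kernels. One also checks that the finitely many analytic branches produced for each eigenvalue of $D(t_0)$ exhaust all repeated eigenvalues of $D(t)$ for $t$ near $t_0$.

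\textbf{Globalisation on $I$.} Finally I would assemble the local branches into sequences on all of $I$. On a compact subinterval $[a,b]\subset I$ and for any $\Lambda>0$, locally uniform bounds on $\|(D(t)-i)^{-1}\|$ and uniform compactness of the resolvents imply that only finitely many eigenvalue branches meet $(-\Lambda,\Lambda)$ over $[a,b]$; each such branch is locally real-analytic by the previous steps and uniquely determined by analytic continuation, hence extends to a real-analytic function on $[a,b]$, with eigenvectors propagated analytically (renormalising where two branches meet). Exhausting $I$ by compact subintervals and $\R$ by the windows $(-\Lambda,\Lambda)$, and indexing all branches thus obtained, produces the sequences $(\lambda_n)_{n\in\N}$ and $(u_n)_{n\in\N}$, real-analytic on $I$, such that for each $t\in I$ the $\lambda_n(t)$ are all repeated eigenvalues of $D(t)$ and the $u_n(t)$ a complete orthonormal family of eigenvectors. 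The detailed bookkeeping is exactly as in \cite{k}.
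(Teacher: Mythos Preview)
Your proposal is correct and follows precisely the Kato approach that the paper invokes: the paper's own ``proof'' of Theorem~\ref{rellich_theorem} is simply the citation ``see \cite{k}, Thm.~VII, 3.9, p.~392'', with no argument given. Your outline (compact resolvent propagates, Riesz projections, reduction via the transformation function to a finite-dimensional Hermitian family, Rellich's matrix lemma, and globalisation) is exactly the content of that reference, so there is nothing to compare.
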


\begin{proof}
see \cite{k}, Thm. VII, 3.9, p. 392.
\end{proof}

The following Lemma is a criterion for a family of operators to be holomorphic 
of type (A).

\begin{lemma}
Let $D$: $X\to Y$ be a closable operator with domain $\dom(D)$ 
and let $D_j$, $j\in\N\setminus\{0\}$, be operators $X\to Y$ with 
domain containing $\dom(D)$. If there exist real constants $a$, $b$, $c\geq0$ such 
that $\|D_j u\|\leq c^{j-1}(a\|u\|+b\|Du\|)$ for all $u\in\dom(D)$ and all $j\in\N\setminus\{0\}$, 
then the series $D(z)=D+\sum_{j=1}^{\infty}D_jz^j$ for $z\in\C$, $|z|<(b+c)^{-1}$ 
defines a family of closable operators with domain $\dom(D)$. 
Furthermore the closures of these operators form a holomorphic family of type (A).
\end{lemma}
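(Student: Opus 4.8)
The plan is to reduce the statement to the elementary theory of Banach-space-valued power series together with Kato's stability theorem for relatively bounded perturbations of closable operators.

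First I would fix $z\in\C$ with $(b+c)|z|<1$ and, for $u\in\dom(D)$ (so that $D_ju$ is defined for all $j$ since $\dom(D)\subset\dom(D_j)$), consider the series $B(z)u:=\sum_{j=1}^{\infty}D_ju\,z^j$. From the hypothesis $\|D_ju\|\leq c^{j-1}(a\|u\|+b\|Du\|)$ one obtains, using $c|z|<1$ (which holds because $|z|<(b+c)^{-1}\leq c^{-1}$),
\[
  \sum_{j=1}^{\infty}\|D_ju\|\,|z|^j\leq(a\|u\|+b\|Du\|)\sum_{j=1}^{\infty}c^{j-1}|z|^j=\frac{|z|}{1-c|z|}\,(a\|u\|+b\|Du\|).
\]
Hence $B(z)u\in Y$ is well defined on $\dom(D)$, and $B(z)$ is $D$-bounded with $D$-bound at most $b'(z):=b|z|/(1-c|z|)$; the inequality $b'(z)<1$ is equivalent to $(b+c)|z|<1$. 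Thus $D(z)=D+B(z)$ is an operator with domain $\dom(D)$.

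Next I would invoke the basic stability theorem for relatively bounded perturbations (Kato, Thm.~IV.1.1 in \cite{k}): since $D$ is closable and $B(z)$ is $D$-bounded with $D$-bound $<1$, the operator $D(z)=D+B(z)$ is closable, one has $\overline{D(z)}=\overline{D}+\overline{B(z)}$, the domain $\dom(\overline{D(z)})=\dom(\overline{D})$ is independent of $z$, and $\overline{B(z)}$ is $\overline{D}$-bounded with the same bound. This yields the first assertion of the lemma and, crucially, a common domain for all the closures.

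For holomorphy of type (A), I would argue as follows. For $u\in\dom(D)$ the map $z\mapsto D(z)u=Du+\sum_{j\geq1}(D_ju)z^j$ is a $Y$-valued power series with $\limsup_j\|D_ju\|^{1/j}\leq c$, hence with radius of convergence at least $1/c\geq(b+c)^{-1}$; thus it is holomorphic on $\{|z|<(b+c)^{-1}\}$. For general $u\in\dom(\overline{D})$ choose $u_n\in\dom(D)$ with $u_n\to u$ and $Du_n\to\overline{D}u$; the relative bound gives $\|B(z)u_n-B(z)u_m\|\leq a'(z)\|u_n-u_m\|+b'(z)\|Du_n-Du_m\|$ with $a'(z):=a|z|/(1-c|z|)$, and since $a'(z),b'(z)$ are bounded on compact subsets of the disc, $z\mapsto B(z)u_n$ converges to $z\mapsto\overline{B(z)}u$ uniformly on compacta. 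Each $B(\cdot)u_n$ being holomorphic, for every $f\in Y^*$ the scalar functions $f(B(z)u_n)$ converge locally uniformly to $f(\overline{B(z)}u)$, which is therefore holomorphic; by Theorem \ref{hol_vector_theorem} the map $z\mapsto\overline{B(z)}u$ is holomorphic, hence so is $z\mapsto\overline{D(z)}u=\overline{D}u+\overline{B(z)}u$. Together with the $z$-independence of $\dom(\overline{D})$, this is exactly the definition of a holomorphic family of type (A). The main technical obstacle is precisely this passage to closures: one needs that a relatively bounded (bound $<1$) perturbation of a merely closable operator is again closable with the expected closure and a $z$-independent domain (Kato's Thm.~IV.1.1), and that the holomorphy on $\dom(D)$ propagates to $\dom(\overline{D})$ via the uniform-on-compacta limiting argument above; the remaining steps are the elementary power-series estimate.
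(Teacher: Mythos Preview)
The paper does not give its own proof of this lemma: it simply cites Kato, Thm.~VII.2.6. Your argument is precisely the one underlying that reference --- establish the relative $D$-bound of $B(z)=\sum_{j\geq1}D_jz^j$ via the geometric series, invoke the stability of closability under relatively bounded perturbations with bound $<1$ (Kato, Thm.~IV.1.1) to get a $z$-independent domain for the closures, and then verify holomorphy of $z\mapsto\overline{D(z)}u$ first on $\dom(D)$ as a power series and then on $\dom(\overline{D})$ by the uniform-on-compacta limiting argument. Your proof is correct and matches what one finds in Kato.
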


\begin{proof}
see \cite{k}, Thm. VII, 2.6, p. 377.
\end{proof}

Now let $M$ be a compact Riemannian spin manifold, let $I\subset\R$ 
be an open interval and let $(g_t)_{t\in I}$ be a family of smooth 
Riemannian metrics on $M$. 

\begin{definition}
The family $(g_t)_{t\in I}$ is called real-analytic, 
if for every $t_0\in I$ and for every $j\in\N$ there exists a section 
$h_j\in\sym{\infty}$, 
such that for all charts $(U,\ph)$ of $M$ and for all compact subsets $L\subset U$ 
there exists $\ep_L>0$, such that for all 
$t$ with $|t-t_0|<\ep_L$, for all coordinate vector fields 
$\partial_a$, $\partial_b$, $1\leq a,b\leq n$, and for all $r\in\N$ 
we have 
\begin{displaymath}
  \big\|g_t(\partial_a,\partial_b)-\sum_{j=0}^N (t-t_0)^j h_j(\partial_a,\partial_b)\big\|_{C^r(L)}
  \to0\textrm{ as }N\to\infty.
\end{displaymath}
\end{definition}

We apply the criterion above to show that for a real-analytic family 
$(g_t)_{t\in I}$ of Riemannian metrics on a compact spin manifold $M$ the family 
of operators $D^{g,g_t}$ constructed in Section 
\ref{identification_section} can be extended to a holomorphic family of type (A). 
Here we consider the Dirac operator as a closed operator on the Sobolev space 
$\Hspinor{M}{g}{m}$ with domain $\dom(D^g)=\Hspinor{M}{g}{m+1}$ for some $m\in\N$. 

\begin{lemma}
\label{analyticity_lemma}
Let $(M,g,\Theta)$ be a compact Riemannian spin manifold 
and let $D^g\in C(\Hspinor{M}{g}{m})$ be the Dirac operator. 
Let $(g_t)_{t\in I}$ be a real-analytic family of Riemannian 
metrics on $M$ with $g_0=g$. Then there exists an open neighborhood 
$\Omega\subset\C$ of $0$ and a self-adjoint holomorphic family of type (A) 
of operators $D(z)\in C(\Hspinor{M}{g}{m})$ on $\Omega$, 
such that for all $t\in\Omega\cap I$ we have $D(t)=D^{g,g_t}$.
\end{lemma}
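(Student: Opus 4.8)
The plan is to apply the criterion from the previous Lemma (that is, \cite{k}, Thm.~VII.2.6) to the operator $D^g$ together with the coefficients of a power-series expansion of $D^{g,g_t}$ in $t$. First I would fix the reference metric $g=g_0$ and work throughout in the fixed Sobolev space $\Hspinor{M}{g}{m}$ with domain $\Hspinor{M}{g}{m+1}$; the point of pulling back $D^{g_t}$ via $\overline{\beta}_{g,g_t}$ to $D^{g,g_t}$ is precisely that all these operators act on the \emph{same} Hilbert space with the \emph{same} domain, so one is in the type~(A) setting from the start. The formula (\ref{Dgh}) expresses $D^{g,g_t}$ explicitly in terms of $b_{g,g_t}$, $\nabla^{g_t}$, the Christoffel-type symbols, and $f_{g,g_t}$; the dependence of all of these on $t$ is, by real-analyticity of $(g_t)$ and by the explicit algebraic formulas ($b_{g,g_t}=(\id+ta_{g,k}+\dots)^{-1/2}$, $f_{g,g_t}=\det(\dots)^{1/4}$, and the Koszul formula for $\nabla^{g_t}$), real-analytic in $t$ with coefficients that are smooth tensor/endomorphism fields on $M$. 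Hence one may write, at least formally, $D^{g,g_t}=D^g+\sum_{j\geq1}D_j\,t^j$, where each $D_j$ is a first-order differential operator on $M$ with smooth coefficients.

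Next I would carry out the estimate needed to feed into the criterion. Each $D_j$ has the form $D_j\psi=\sum_i P_j^{(i)}\cdot\nabla^g_{X_j^{(i)}}\psi+R_j\cdot\psi$ for smooth bundle endomorphisms $P_j^{(i)},R_j$ and smooth vector fields $X_j^{(i)}$ obtained from the Taylor coefficients; the key quantitative input is that these coefficients, being the $j$-th Taylor coefficients of real-analytic functions of $t$ built out of finitely many smooth tensor fields, satisfy $C^r$-bounds on $M$ of the form $\|(\text{coeff of }D_j)\|_{C^0}\leq c^{\,j-1}\,C$ for suitable constants $c,C>0$ (this is just the statement that a real-analytic $M$-family has a positive radius of convergence in every $C^r$-norm, applied here with $r$ large enough to control the coefficients of a first-order operator acting $\Hspinor{M}{g}{m+1}\to\Hspinor{M}{g}{m}$). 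Combining this with the elementary mapping bound $\|Q\cdot\nabla^g_X\psi\|_{H^m}\leq \text{const}\cdot\|Q\|_{C^m}\|X\|_{C^m}\|\psi\|_{H^{m+1}}$ yields
\begin{displaymath}
  \|D_j\psi\|_{H^m}\leq c^{\,j-1}\big(a\|\psi\|_{H^m}+b\|D^g\psi\|_{H^m}\big)
\end{displaymath}
for all $\psi\in\Hspinor{M}{g}{m+1}$ and all $j\geq1$, where I have used elliptic estimates for $D^g$ (on the closed manifold $M$ the graph norm of $D^g$ on $\Hspinor{M}{g}{m}$ is equivalent to the $H^{m+1}$-norm) to replace $\|\psi\|_{H^{m+1}}$ by $a\|\psi\|_{H^m}+b\|D^g\psi\|_{H^m}$. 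By the cited criterion the series $D(z):=D^g+\sum_{j\geq1}D_j z^j$ then defines, for $|z|<(b+c)^{-1}$, a holomorphic family of type~(A) of closed operators on $\Hspinor{M}{g}{m}$ with the common domain $\Hspinor{M}{g}{m+1}$, and by construction $D(t)=D^{g,g_t}$ for real $t$ in $\Omega\cap I$ (agreement of the convergent series with the explicitly defined operator follows because both agree coefficientwise in the $t$-expansion, or directly by uniqueness of analytic continuation applied to $g(D(\cdot)u,v)$).

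Finally I would check self-adjointness of the holomorphic family in the sense of the relevant definition, i.e.\ $D(z)^*=D(\overline z)$ for $z\in\Omega$. For real $t$ this is Lemma \ref{definition_beta_bar}: $\overline{\beta}_{g,g_t}$ is an isometry of $\Lspinor{M}{g}{2}\to\Lspinor{M}{g_t}{2}$, so $D^{g,g_t}=\overline{\beta}_{g_t,g}D^{g_t}\overline{\beta}_{g,g_t}$ inherits essential self-adjointness, hence $D(t)^*=D(t)$; the identity $D(z)^*=D(\overline z)$ for general $z$ then follows by analytic continuation from the real axis, since $z\mapsto D(z)$ and $z\mapsto D(\overline z)^*$ are both holomorphic families (the $D_j$ having smooth, in particular real, coefficients so that the adjoint of $D_j z^j$ is $D_j^*\overline z^{\,j}$ with $D_j^*=D_j$). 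After possibly shrinking $\Omega$ to a disk inside $\{|z|<(b+c)^{-1}\}$ that is symmetric about $\R$, this gives exactly the asserted self-adjoint holomorphic family of type~(A). The main obstacle I anticipate is the bookkeeping in the second paragraph: getting the geometric series bound $\|D_j\psi\|\leq c^{j-1}(a\|\psi\|+b\|D^g\psi\|)$ with a uniform $c$, which requires extracting from the real-analyticity of $(g_t)$ a uniform-in-$j$ control of the $C^r$-norms of the Taylor coefficients of the (rational/root) functions $b_{g,g_t}$, $f_{g,g_t}$ and of $\nabla^{g_t}$ — this is where the analyticity hypothesis is genuinely used, and where one must be careful that taking square roots, determinants and inverses of the convergent tensor series preserves the geometric growth rate of the coefficients.
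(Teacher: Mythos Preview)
Your proposal is correct and follows essentially the same approach as the paper: expand the explicit formula (\ref{Dgh}) for $D^{g,g_t}$ as a power series in $t$ using the real-analyticity of $g_t$ (and hence of $b_{g,g_t}$, $f_{g,g_t}$, $\nabla^{g_t}$), verify the geometric growth bound $\|D_j\psi\|_{H^m}\leq c^{j-1}(a\|\psi\|_{H^m}+b\|D^g\psi\|_{H^m})$ on the coefficients, apply Kato's criterion, and deduce self-adjointness of the family from self-adjointness of each $D^{g,g_t}$ for real $t$. Your write-up is in fact more explicit than the paper's on two points it leaves implicit: the use of the elliptic estimate to trade $\|\psi\|_{H^{m+1}}$ for the graph norm of $D^g$, and the passage from $D(t)^*=D(t)$ on $\R$ to $D(z)^*=D(\overline z)$ on $\Omega$.
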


\begin{proof}
Since $(g_t)_{t\in I}$ is a real-analytic family of Riemannian metrics, there exist 
$h_j\in\sym{\infty}$, $j\in\N$, 
such that for all charts $(U,\varphi)$ of $M$ and for all 
compact subsets $L\subset U$ there exists $\ep_L>0$ such that 
for all $t$ with $|t|<\ep_L$ and for all coordinate vector fields 
$\partial_a$, $\partial_b$, $1\leq a,b\leq n$ and for all $r\in\N$ we have
\begin{displaymath}
  \big\|g_t(\partial_a,\partial_b) - \sum_{j=0}^N t^j h_j(\partial_a,\partial_b) \big\|_{C^r(L)}\to0,\quad N\to\infty.
\end{displaymath}
As in Section \ref{identification_section} we define 
endomorphisms $a_{g,g_t}$ and $a_{g,h_j}$, $j\in\N$, of $TM$ 
such that for all $v$, $w$ in $TM$ we have
\begin{displaymath}
  g(a_{g,g_t}v,w)=g_t(v,w), \quad
  g(a_{g,h_j}v,w)=h_j(v,w).
\end{displaymath}
Let $(U,\varphi)$ be a chart of $M$ and let $L\subset U$ be compact. 
For $v\in TM$ we abbreviate $|v|:=g(v,v)^{1/2}$. 
Let $(e_i)_{i=1}^n$ be a local $g$-orthonormal frame on $L$. 
Since $(g_t)_{t\in I}$ is real-analytic, there exists 
$\ep_L>0$ such that for all $t$ with $|t|<\ep_L$ we have
\begin{eqnarray*}
  &&\sup_{v\in TM|_L,\,|v|\leq1} \big| a_{g,g_t}v - \sum_{j=0}^N t^j a_{g,h_j} v \big|\\
  &=&\sup_{v\in TM|_L,\,|v|\leq1}
  \big| \sum_{i=1}^n g(a_{g,g_t}v,e_i) e_i 
  - \sum_{i=1}^n \sum_{j=0}^N t^j g(a_{g,h_j}v,e_i) e_i\big|\\
  &=&\sup_{v\in TM|_L,\,|v|\leq1}
  \big| \sum_{i=1}^n g_t(v,e_i) e_i 
  - \sum_{i=1}^n \sum_{j=0}^N t^j h_j(v,e_i) e_i \big|\\
  &\leq&\sum_{i,j=1}^n\big\| g_t(e_j,e_i) - \sum_{m=0}^N t^m h_m(e_j,e_i)\big\|_{C^0(L)}\to0,
  \quad N\to\infty.
\end{eqnarray*}
Therefore we find $\delta_L>0$ such that for all 
$t$ with $|t|<\delta_L$ and for all $v\in TM|_L$ 
the vector $b_{g,g_t}v$ is given by a power series, 
which converges uniformly in $v$, $|v|\leq1$.
Since $M$ is compact there exists $\delta>0$ and for all $j\in\N$ 
there exists $b_j\in\End(TM)$ such that 
for all $t\in I$ with $|t|<\delta$ we have
\begin{displaymath}
  \sup_{v\in TM,|v|\leq1} \big|b_{g,g_t}v-\sum_{j=0}^N t^j b_j v\big|\to0,\quad N\to\infty.
\end{displaymath}
By similar arguments one finds that also for any fixed $t$ the covariant derivatives 
of $b_{g,g_t}$ of any order are given by convergent power series in this sense. 
We insert the power series for $b_{g,g_t}$, $g_t$ and $f_{g,g_t}$ 
into (\ref{Dgh}) and define the operator $D_j$ 
as the coefficient of $t^j$ for $j\in\N$. Clearly $D_j\in C(\Hspinor{M}{g}{m})$ 
and for $\psi\in\dom(D_k)=\Hspinor{M}{g}{m+1}$ the spinor $D_j\psi$ locally has the form
\begin{displaymath}
  D_j\psi(x)=\sum_{i=1}^n e_i\cdot\nabla^g_{b_j(e_i)}\psi(x)+c_j(x)\cdot\psi(x),
\end{displaymath}
where $c_j$ are functions with values in the Clifford algebra. 
Since the series $\sum_{j=0}^{\infty}b_jt^j$ and $\sum_{j=0}^{\infty}c_jt^j$ 
converge uniformly in $x\in M$ for $|t|<\delta$, there exists $C>0$ such that 
for all $j\geq1$ we have
\begin{displaymath}
  \sup\{|b_jv|\,\,|v\in TM,|v|\leq1\}^{1/j}\leq\frac{C}{\delta},\quad
  \sup\{|c_j(x)|\,\,|x\in M\}^{1/j}\leq\frac{C}{\delta}.
\end{displaymath}
Again similar estimates hold for the covariant derivatives of $b_j$ and $c_j$. 
We find that for all $j\geq1$ and for all $\psi\in\dom(D_j)$
\begin{displaymath}
  \|D_j\psi\|_{H^m}\leq \frac{C^j}{\delta^j} (n\|D^g\psi\|_{H^m}+\|\psi\|_{H^m}).
\end{displaymath}
By the criterion above the family $D(z):=\sum_{j=0}^{\infty}z^jD_j$ defines 
a holomorphic family of type (A) on $\Omega:=\{z\in\C|\,|z|<\delta\}$ such that 
for all $t\in\Omega\cap I$ we have $D(t)=D^{g,g_t}$. 
Since $D^{g,g_t}$ is self-adjoint by the construction in Section \ref{identification_section}, 
all the operators $D_j$ are self-adjoint. Therefore the family $D(z)$ is self-adjoint.
\end{proof}

\end{appendix}

\newpage
\printindex

\end{document}